\numberwithin{equation}{section}
\newif\ifdraft\drafttrue
\newcommand{\lab}{\label}
\newcommand{\ben}{\begin{enumerate}}
\newcommand{\een}{\end{enumerate}}
\newcommand{\bea}{\begin{eqnarray}}
\newcommand{\ba}{\begin{array}}
\newcommand{\bean}{\begin{eqnarray*}}
\newcommand{\ea}{\end{array}}
\newcommand{\eea}{\end{eqnarray}}
\newcommand{\eean}{\end{eqnarray*}}
\newcommand{\beq}{\begin{equation}}
\newcommand{\eeq}{\end{equation}}
\newcommand{\bthm}{\begin{thm}}
\newcommand{\ethm}{\end{thm}}
\newcommand{\blem}{\begin{lem}}
\newcommand{\elem}{\end{lem}}
\newcommand{\bprop}{\begin{prop}}
\newcommand{\eprop}{\end{prop}}
\newcommand{\bcor}{\begin{cor}}
\newcommand{\ecor}{\end{cor}}
\newcommand{\bdfn}{\begin{dfn}}
\newcommand{\edfn}{\end{dfn}}
\newcommand{\brem}{\begin{rem}}
\newcommand{\erem}{\end{rem}}
\newcommand{\bpf}{\begin{proof}}
\newcommand{\epf}{\end{proof}}
\newcommand{\bfact}{\begin{fact}}
\newcommand{\efact}{\end{fact}}
\newcommand{\bobs}{\begin{observation}}
\newcommand{\eobs}{\end{observation}}
\newtheorem{thm}{Theorem}[section]
\newtheorem{prop}[thm]{Proposition}
\newtheorem{lem}[thm]{Lemma}
\newtheorem{cor}[thm]{Corollary}
\theoremstyle{definition}
\newtheorem{dfn}[thm]{Definition}
\newtheorem{rem}[thm]{Remark}
\newtheorem{fact}[thm]{Fact}
\newtheorem{observation}[thm]{Observation}
\def\cA{\mathcal A}             \def\cB{\mathcal B}       \def\cC{\mathcal C}
             \def\cF{\mathcal F}       
\def\cL{{\mathcal L}}                  
                    \def\cJ{\mathcal J}
\def\cS{\mathcal S}             
             \def\cO{\mathcal O}
\def\endpf{\qed}
\def\N{{\mathbb N}}                  \def\R{{\mathbb R}}            \def\cR{{\mathcal R}}
\def\C{{\mathbb C}}                     \def\oc{\widehat \C}
\def\1{1\!\!\text{{\rm 1}}}
\def\and{\text{ and }}        
          \def\Con{\text{Con}} 
\def\Comp{\text{{\rm Comp}}}        \def\diam{\text{\rm {diam}}}
\def\dist{\text{{\rm dist}}}  
\def\Crit{\text{Crit}}        
\def\Sing{\text{Sing}}        
\def\F{{\mathcal F}}          \def\osc{\text{{\rm osc}}}
\def\h{{\text h}}
\def\hmu{\h_\mu}           \def\htop{{\text h_{\text{top}}}}
\def\H{\text{{\rm H}}}     \def\HD{\text{{\rm HD}}}   
            \def\PC{\text{{\rm PC}}}
\def\re{\text{{\rm Re}}}      
\def\Int{\text{{\rm Int}}}    
 \def\PS{\text{{\rm PS}}} \def\WBT{\text{{\rm WBT}}}
      \def\GDS{{\rm GDS}}        \def\IFS{{\rm IFS}}
         \def\P{\text{{\rm P}}}     
 \def\sign{\text{{\rm sgn}}}
\def\Leb{{\rm Leb}}
         \def\Ba{\mathcal B}       
        \def\cK{\mathcal K}
\def\cL{{\mathcal L}}
\def\a{\alpha}                \def\b{\beta}             \def\d{\delta}
\def\De{\Delta}               \def\e{\varepsilon}          \def\f{\phi}
\def\g{\gamma}                \def\Ga{\Gamma}           
\def\La{\Lambda}              \def\om{\omega}           \def\Om{\Omega}
               \def\sg{\sigma}
               \def\th{\theta}           
\def\ka{\kappa}               \def\vp{\varphi}          \def\phi{\varphi}
               \def\vep{\varepsilon}
\def\bal{\begin{aligned}}
\def\eal{\begin{aligned}}
\def\bi{\bigcap}              \def\bu{\bigcup}
\def\({\big(}                 \def\){\big)}
\def\lt{\left}                \def\rt{\right}
\def\ld{\ldots}               \def\bd{\partial}         \def\^{\widetilde}
      \def\du{\bigoplus}
\def\es{\emptyset}            \def\sms{\setminus}
\def\sbt{\subseteq}             \def\spt{\supseteq}
             \def\lek{\preceq}
\def\eqv{\Leftrightarrow}     
\def\comp{\asymp}
\def\upto{\nearrow}           \def\downto{\searrow}
\def\sp{\medskip}             \def\fr{\noindent}        
\def\ov{\overline}            \def\un{\underline}
\def\ess{{\rm ess}}           
\def\om{\omega}
\def\re{\text{{\rm Re}}}
\def\supp{\text{{\rm supp}}}
\def\endpf{{\hfill $\square$}}
\newcommand{\lam}{\lambda}
\newcommand{\ep}{\varepsilon}
\newcommand{\al}{\alpha}
\newcommand{\pf}{{\mathcal{L}}}
\newcommand{\Lp}{{\mathcal{L}}_\phi}
\def\J{\mathcal J}
\begin{document}

\title[]
{\bf{{\large {N}}on-Escaping Sets  \\  in \\
   Conformal Dynamical Systems   
   \\ \, \, \, \,  and \\  \, \, \,  \,
   Singular Perturbations \\ of \\ Perron-Frobenius Operators
  }}
\date{\today}

\author{Mark Pollicott}
\address{University of Warwick, Institute of Mthematics, UK} 
\email{masdbl@warwick.ac.uk}

\author{Mariusz Urba\'nski}
\address{University of North Texas, Department of Mathematics, 1155
  Union Circle \#311430, Denton, TX 76203-5017, USA} 
\email{urbanski@unt.edu  \newline \hspace*{0.3cm} Web:\!\!\!
www.math.unt.edu/$\sim$urbanski}

%
\thanks{The research of both authors supported in part by the NSF
  Grant DMS 0700831 and DMS 0400481.} 
\keywords{}
\subjclass{Primary:}

\begin{abstract}
 The study of  escape rates for a ball in a dynamical systems has been much studied.  Understanding the asymptotic behavior of the escape rate as the radius of the ball tends to zero is an especially  subtle problem. In the case of hyperbolic conformal systems this has been addressed by various authors
 \cite{Bun}, \cite{FP}, \cite{KL} and these results apply in the case of real one dimensional expanding maps and conformal expanding repellers, particularly hyperbolic rational maps.   
 
\sp In this paper we consider a far more general realm of conformal maps where the analysis is correspondingly more complicated. We prove the existence of escape rates and calculate them in the context of countable alphabets, either finite or infinite, uniformly contracting conformal graph directed Markov systems (see \cite{GDMS}, \cite{MU_lms}) with their special case of conformal countable alphabet iterated function systems. The reference measures are the projections of Gibbs/equilibrium states of H\"older continuous summable potentials from a countable alphabet subshifts of finite type to the limt set of the graph directed Markov system under consideration. 

This goal is achieved firstly by developing the appropriate theory of singular perturbations of Perron-Frobenius (transfer) operators associated with countable alphabet subshifts of finite type and H\"older continuous summable potentials, see \cite{MU_Israel} and \cite{GDMS} for the theory of such unperturbed operators, and \cite{KL} and \cite{FP} for singular perturbations which motivated our methods. 

In particular, this includes, as a second ingredient in its own right, the asymptotic behavior of leading eigenvalues of perturbed operators and their first and second derivatives. 

Our third ingredient is to relate the geometry and dynamics, roughly speaking to relate the case of avoiding cylinder sets and that of avoiding Euclidean geometric balls. Towards this end, in particular, we investigate in detail  thin boundary properties relating the measures of thin annuli to the measures of the balls they enclose. In particular we clarify the results in the case of expanding repellers and conformal graph directed Markov systems with finite alphabet.

\sp The setting  of conformal graph directed Markov systems is interesting in its own and moreover, in our approach, it forms the key ingredient for further results about other conformal systems. These include topological Collet-Eckmann multimodal interval maps and rational maps of the Riemann sphere (an equivalent formulation is to be uniformly hyperbolic on periodic points), and also a large class of transcendental meromorphic functions, such as those introduced and explored in \cite{MayUrbETDS} and \cite{MayUrb10}. 

Our approach here is firstly to note that all of these systems yield some sets, commonly referred to as nice ones, the first return (induced)  map to which is isomorphic to a conformal countable alphabet iterated function system with some additional properties. Secondly, with the help of appropriate large deviation results, to relate escape rates of the original system with the induced one and then to apply the results of graph directed Markov systems. The reference measures are again Gibbs/equilibrium states of some large classes of H\"older continuous potentials. 
\end{abstract}

\maketitle

\tableofcontents

%
\section{Introduction}\label{intro}

The escape rate for a dynamical system is a natural concept which describes the speed at which orbits of   points first enter a small region of the space.  The size of these sets is usually measured with respect to an appropriate probability.
More precisely, given a metric space $(X,d)$, we can consider a (usually) continuous  transformation  $T: X \to X$ and a ball 
$$
B(z,\epsilon)= \{x \in X \hbox{ : } d(x,z) < \epsilon \}
$$ 
of radius $\epsilon>0$ about a given point $z$. We then obtain an open system by removing $B(x_0,\epsilon)$ and 
considering the new space $X\sms B(z,\epsilon)$ and truncating those orbits that land in the ball $B(z,\epsilon)$, which can be thought of informally  as a ``hole'' in the system.   This is the reason that many authors speak of \emph{escape rates} for the system, whereas it might be a more suitable nomenclature to call them \emph{avoidable sets}.

We can then consider for each $n > 0$ the set $R_n(z, \epsilon)$ of points $x \in X$ for which all the first $n$ terms in the orbit omit the ball, i.e., 
$x, T(x), \cdots T^{n-1}(x) \not\in B(z, \epsilon)$.
It is evident that these sets are nested in both parameters $\e$ and $n$, i.e., 
$$
R_{n+1}(z, \epsilon) \subset R_n(z, \epsilon)
$$ 
for all $n \geq 1$ and that 
$$
R_n(z, \epsilon) \subset R_n(z, \epsilon')
$$
for $\epsilon > \epsilon'$.
We can first ask about the behavior of the size of the sets $R_n(z,\epsilon)$ as $n \to +\infty$.

If we assume that $\mu$ is a $T$-invariant probability measure, say, then we can consider the measures   $\mu(R_n(z, \epsilon))$ of the sets $R_n(z, \epsilon)$ as $n \to +\infty$.  In particular, we can define the \emph{lower and upper escape rates} respectively as
$$
\un R_\mu(B(z,\ep))
 = -\varlimsup_{n \to +\infty} \frac{1}{n} \log \mu(R_n(z, \epsilon)) 
\  \  \and \  \ 
\ov R_\mu(B(z,\ep))
 = -\varliminf_{n \to +\infty} \frac{1}{n} \log \mu(R_n(z, \epsilon)).
$$
One can  further consider how the escape rate behaves as the radius of the ball 
 $\epsilon$ tends to zero. 
 An early influential result in this direction was \cite{U-non}.
Perhaps the simplest case is that of  the doubling map $E_2:[0,1) \to [0,1)$ defined by 
 $E_2(x) = 2x\,(\!\!\!\mod\! 1)$ and the usual Lebesgue measure $\lam$. For this example it was Bunimovitch and Yurchenko \cite{Bun} (see also \cite{KL}) who showed the following, perhaps surprising, result showing  that
\beq\label{1_2016_06_27}
\begin{aligned}
\lim_{\ep\to 0}\frac{\un R_{\lam}(B(z,\ep))}{\lam(B(x_0,\ep))} &= 
\lim_{\ep\to 0}\frac{\ov R_{\lam}(B(z,\ep))}{\lam(B(x_0,\ep))} = \\
&=\begin{cases}
 1 & \hbox{ if } z \hbox{ is not periodic}\\
 1 - 2^{-p} & \hbox{ if } E_2^p(z)=z\hbox{ is periodic (with minimal period $p$)}.\\
 \end{cases}
 \end{aligned}
\eeq
In particular, the asymptotic escape rate can only take a certain set of values which are determined by the periods of periodic points. More results in this direction followed, particularly in \cite{KL} and \cite{FP}. We will return to generalizations of these ideas  after discussing a related problem.
 
One can also ask what is happening to full escaping/avoiding sets when $\e>0$ decreases to zero. By full escaping sets we mean the sets of the form
$$
K_z(\e)=\{x\in X: T^n(x)\notin B(z,\e)\, \, \forall n\ge 0\}
$$
Such sets are usually of measure $\mu$ zero, but there is another natural quantity to measure their size and complexity, namely their Hausdorff dimension. The second named author already addressed this question in the early 80s  by showing in \cite{U86} and \cite{U87} that in the case of the the doubling map $E_2$, or more generally, of any map $E_q(x)=qx\,(\!\!\!\mod\! 1)$, $q$ being an integer greater than $1$ in absolute value, or even more generally, in the case of any $C^{1+\eta}$ expanding map of the unit circle, the map
$$
\e\mapsto \HD(K_z(\e))
$$
is continuous.  Moreover, it was  also shown that this function is almost everywhere locally constant, in fact, the set of points where it fails to be locally constant is a closed set of Hausdorff dimension $1$ and Lebesgue measure zero. Rather curiously, the local Hausdorff dimension at each point $r$ of this set is equal to $\HD(K_z(\e))$. All of this suggests that it is interesting  to study the asymptotic properties of $\HD(K_z(\e))$ when $\e\downto 0$. Andrew Ferguson and the first named author of this paper took up the challenge by proving in \cite{FP} that 
\beq\label{2_2016_06_27} 
\lim_{\e\to 0}\frac{\HD(J)-\HD(K_z(r))}{\mu_b(B(z,r))}=
\begin{cases}
1/\chi_{\mu_b}  \!\! &\text{if } \, z\, \text{ is not a periodic point of }\, T \\
\frac{1-|(T^p)'(z)|^{-1}}{\chi_{\mu_b}}
 \!\! &\text{\!\! if } \,z\,  \text{ is a periodic point of } T \, \text{ with prime period }\\
      &p\ge 1.
\end{cases}
\eeq
in the case of any conformal expanding repeller $T:J\to J$; where $b$  here is just the Hausdorff dimension $\HD(J)$ and $\mu_b$ is the equilibrium state of the H\"older continuous potential $J\ni x\mapsto -b\log|T'(x)|$. They have also established the analogue of \eqref{1_2016_06_27} for such systems. 

The approach of \cite{FP} was based on the method of singular perturbations of the Perron--Frobenius operators determined by the open sets $B(z,\e)$. They first did this for neighborhoods of $z$ formed from finite unions of cylinders of $n$th refinements of a Markov partition and then used appropriate approximation.  This required leaving  the realm of the familiar Banach space of H\"older continuous functions, to work with a more refined space, and they applied the seminal results of Keller and Liverani from \cite{KL} to control the spectral properties of perturbed operators.
 
\sp In the current paper we want to understand the escape rates, in the sense of equations 
\eqref{1_2016_06_27} and \eqref{2_2016_06_27}, of essentially all conformal dynamical systems with an appropriate type of expanding dynamics. By this we  primarily mean all topologically exact piecewise smooth maps of the interval $[0,1]$, many rational functions of the Riemann sphere $\oc$ with degree $\ge 2$, a vast class of transcendental meromorphic functions from $\C$ to $\oc$, and last, but not least, the class of all countable alphabet conformal iterated function systems, and somewhat more generally, the class of all countable alphabet conformal graph directed Markov systems. This last class, i.e the collection of all countable alphabet conformal iterated function systems (IFSs), has a  special status for us. The reasons for this are two-folded. Firstly, this class is interesting by itself, and secondly, by means of appropriate inducing schemes (involving the first return map), it is our indispensable tool for understanding the escape rates of all other systems mentioned above. 

\sp In order to deal with escape rates for countable alphabet conformal IFSs and conformal graph directed Markov systems (GDMSs), motivated by the work \cite{FP} of Andrew Ferguson and the first named author of this paper, we first develop the singular perturbation theory for Perron-Frobenius operators associated to H\"older continuous summable potentials on countable alphabet shift of finite type symbol space. A comprehensive account of the thermodynamic formalism in the symbolic context can be found in \cite{GDMS}, cf. also \cite{MU_Israel} and \cite{MU_lms}. The general approach to control these perturbations is again based on the spectral results of Keller and Liverani from \cite{KL}. The perturbations in the case of a countable infinite alphabet require further refinement of the Banach space on which the original and perturbed Perron--Frobenius operators act. This space, $\cB_\th$, is defined already in the beginning of Section~\ref{PFOriginal}. Its definition, through the definition of the norm, involves 
the corresponding Gibbs/equilibrium measures. These measures play a further prominent role when investigating singular perturbations.
Qualitatively new difficulties here, caused by an infinite alphabet, are many fold and a great deal of them are related to the facts that the symbol space $E_A^\infty$ need not longer be compact, that there are infinitely many cylinders of given finite length, and that summable (particular geometric) potentials are unbounded in the supremum norm. Some remedy to this unboundedness issue is our repetitive use of H\"older inequalities rather than estimating by the supremum norms. 

Having analyzed  the symbolic part of the problem, we turn to escape rates for conformal GDMSs. With regard to formula \eqref{1_2016_06_27}, we consider the, already mentioned, measures on the limit set of the given conformal GDMS, that are projections of Gibbs/equilibrium states of H\"older continuous potentials from the symbol space. With respect to formula \eqref{2_2016_06_27}, we must consider geometric potentials, i.e. those of the form
$$
E_A^\infty\ni\om\longmapsto t\log\big|\phi_{\om_0}'(\pi_\cS(\sg(\om)))\big|\in\R
$$
where $\pi: E_A^\infty \to X$ is the canonical map for modelling the dynamics on $X$.
Of particular interest are those for which $t$ is close to $b_\cS$, the Bowen parameter of the system conformal GDMS$\cS$, which is defined as the only solution to the pressure equation 
$$
\P\(\sg, t\log\big|\phi_{\om_0}'(\pi_\cS(\sg(\om)))\big|\)=0,
$$
provided that such solution exists. We can then consider the projection of the Gibbs/equilibrium state $\mu_b$ for the potential $t\log\big|\phi_{\om_0}'(\pi_\cS(\sg(\om)))\big|$ on the limit set $J_\cS$ . This leads to the particularly technically involved  task of calculating the asymptotic behavior of derivatives $\lam_n'(t)$ and $\lam_n''(t)$ of leading eigenvalues of perturbed operators when the integer $n\ge 0$ diverges to infinity and the parameter $t$ approaches $b_\cS$. This is again partially due to unboundedness of the function $E_A^\infty\ni\om\longmapsto t\log\big|\phi_{\om_0}'(\pi_\cS(\sg(\om)))\big|\in\R$ in the supremum norm and partially due to lack of uniform topological mixing on the sets $K_z(\e)$.

We say that a set $J\sbt\R^d$, $d\ge 1$, is geometrically irreducible if it is not contained in any countable union of conformal images of hyperplanes or spheres of dimension $\le d-1$ (see Definition~\ref{d7_2016_07_07}). 
Our most general  results about escape rates for conformal GDMSs can now be formulated in the following four theorems. We postpone detailled definitions of the hypotheses until later.

\bthm\label{t1fp83}
Let $\cS=\{\phi_e\}_{e\in E}$ be a finitely primitive Conformal GDMS with limit set $J_{\cS}$. Let $\phi:E_A^\infty\to\R$ be a H\"older continuous summable potential with  equilibrium/Gibbs state $\mu_\phi$. Assume that the measure $\mu_\phi\circ\pi^{-1}_\cS$ is weakly boundary thein {\rm (WBT)} at a point $z\in J_\cS$. If $z$ is either 

\begin{itemize}
\item[(a)] not pseudo-periodic, 

or 

\item[(b)] uniquely periodic, it belongs to $\Int X$ (and $z=\pi(\xi^\infty)$ for a (unique) irreducible word $\xi\in E_A^*$), and $\phi$ is the amalgamated function of a summable H\"older continuous system of functions, 
\end{itemize}
then, with $\un R_{\cS,\phi}(B(z,\ep)):=\un R_{\mu_\phi}\(\pi_{\cS}^{-1}(B(z,\ep))\)$ and $\ov R_{\cS,\phi}(B(z,\ep)):=\ov R_{\mu_\phi}\(\pi_{\cS}^{-1}(B(z,\ep))\)$, we have that 
\beq\label{1fp83}
\begin{aligned}
\lim_{\ep\to 0}\frac{\un R_{\cS,\phi}(B(z,\ep))}{\mu_\phi\circ\pi_{\cS}^{-1}(B(z,\ep))}
&=\lim_{\ep\to 0}\frac{\ov R_{\cS,\phi}(B(z,\ep))}{\mu_\phi\circ\pi_{\cS}^{-1}(B(z,\ep))} =\\
&=d_\phi(z)
:=\begin{cases}
1 \  &\text{{\rm if (a) holds}}   \\
1-\exp\({S_p\phi(\xi)}-p\P(\phi)\) &\text{{\rm  if (b) holds}},
\end{cases}
\end{aligned}
\eeq
where in {\rm (b)}, $\{\xi\}=\pi_\cS^{-1}(z)$ and $p\ge 1$ is the prime period of $\xi$ under the shift map. 
\ethm

\sp
\bthm\label{t3_2016_05_27}
Assume that $\cS$ is a finitely primitive conformal {\rm GDMS} whose limit set $J_{\mathcal S}$ is geometrically irreducible.
Let $\phi:E_A^\infty\to\R$ be a H\"older continuous strongly summable potential. As usual, denote its equilibrium/Gibbs state by $\mu_\phi$. Then, with $R_{\cS,\phi}(B(z,\ep)):=R_{\mu_\phi}\(\pi_{\cS}^{-1}(B(z,\ep))\)$, we have that
\beq\label{1fp83+1}
\lim_{\ep\to 0}\frac{\un R_{\cS,\phi}(B(z,\ep))}{\mu_\phi\circ\pi_{\cS}^{-1}(B(z,\ep))}
=\lim_{\ep\to 0}\frac{\ov R_{\cS,\phi}(B(z,\ep))}{\mu_\phi\circ\pi_{\cS}^{-1}(B(z,\ep))} 
=1
\eeq
for $\mu_\phi\circ\pi_{\cS}^{-1}$--a.e. point $z$ of $\cJ_\cS$.
\ethm

\sp\fr These two theorems address the issue of \eqref{1_2016_06_27}. 
We would like to bring to the reader's attention a preprint  \cite{BDT} by H. Bruin, M.F.Demers and M.Todd, with results related to the above, which we recently received.
In regard to \eqref{2_2016_06_27}, we have proved for conformal GDMSs the following two theorems. In regard to \eqref{2_2016_06_27}, we have proved  the following two theorems for conformal GDMSs.

\bthm\label{t2had30}
Let $\cS$ be a finitely primitive strongly regular conformal {\rm GDMS}. Assume both that $\cS$ is {\rm (WBT)} and the  parameter $b_\cS$ is powering at some point $z\in J_\cS$ which is either

\begin{itemize}
\item[(a)] not pseudo-periodic
or else 
\item [(b)] uniquely periodic and belongs to $\Int X$ (and $z=\pi(\xi^\infty)$ for a (unique) irreducible word $\xi\in E_A^*$).
\end{itemize}
Then 
\beq\label{1had31}
\lim_{r\to 0}\frac{\HD(J_\cS)-\HD(K_z(r))}{\mu_b\(\pi^{-1}(B(z,r))\)}=
\begin{cases}
1/\chi_{\mu_b} \  &\text{ if } \ (a) \  \text{ holds } \\
\(1-|\phi_\xi'(z)|\)/\chi_{\mu_b}
\  &\text{ if } \ (b) \  \text{ holds }.
\end{cases}
\eeq
\ethm

\sp
\bcor\label{t2_2016_05_27}
If $\cS$ be a finitely primitive strongly regular conformal {\rm GDMS} whose limit set $J_{\mathcal S}$ is a geometrically irreducible, then 
\beq\label{1had31+2}
\lim_{r\to 0}\frac{\HD(J_\cS)-\HD(K_z(r))}{\mu_b\(\pi^{-1}(B(z,r))\)}=
\frac1{\chi_{\mu_b}}
\eeq
at $\mu_{b_\cS}\circ\pi^{-1}$--a.e. point $z$ of $J_\cS$.
\ecor

\sp\fr As we have previously remarked,  these four results are of independent interest, but they also provide a gateway to all other results on escape rates in this paper. There are necessarily several technical 
terms involved in formulations of these theorems. However, we hope that they do not obscure the overall meaning of the four theorems and all terms are carefully introduced and explained in appropriate sections dealing with them. 

We would however like to comment on one of these terms, namely on (WBT). Its meaning can be understood  as follows. Let 
$$
A(z;r,R):=B(z,R)\sms \ov{B(z,r)}
$$
be the annulus centered at $z$ with the inner radius $r$ and the outer radius $R$. 
We say that a finite Borel measure $\mu$ is weakly boundary thin (WBT) (with exponent $\b>0$) at  the point $x$ if 
$$
\lim_{r\to 0}\frac{\mu\(A_\mu^\b(x,r)\)}{\mu(B(x,r))}=0,
$$ 
where we denote
$$
A_\mu^\b(x,r):=A\(x;r-\mu(B(x,r))^\b,r+\mu(B(x,r))^\b\).
$$
This is a version of the problem of thin annuli, one that is notoriously challenging in dealling with the issue of relating  dynamical and geometric properties, and which is particularly acute in the contexts of escape rates and return rates. Due to the breakthrough of \cite{Pawelec-Urbanski-Zdunik},
where some strong versions of the thin annuli properties are proved, we have been able in the current paper to prove (WBT) for almost all points, which is reflected in both  Theorem~\ref{t3_2016_05_27} and Corollary~\ref{t2_2016_05_27}. 

\sp In the case of finite alphabets $E$ we have the following two results.

\bthm\label{t1fp83_Finite} 
Let $\cS=\{\phi_e\}_{e\in E}$ be a primitive conformal GDMS with a finite alphabet $E$ acting in the space $\R^d$, $d\ge 1$. Assume that either $d=1$ or that the system $\cS$ is geometrically irreducible. Let $\phi:E_A^\infty\to\R$ be a H\"older continuous potential. As usual, denote its equilibrium/Gibbs state by $\mu_\phi$. Let $z\in J_\cS$ be arbitrary. If either $z$ is 

\begin{itemize}
\item[(a)] not pseudo-periodic, 

or 

\item[(b)] uniquely periodic, it belongs to $\Int X$ (and $z=\pi(\xi^\infty)$ for a (unique) irreducible word $\xi\in E_A^*$), and $\phi$ is the amalgamated function of a summable H\"older continuous system of functions, 
\end{itemize}
then, 
\beq\label{1fp83B}
\begin{aligned}
\lim_{\ep\to 0}\frac{\un R_{\cS,\phi}(B(z,\ep))}{\mu_\phi\circ\pi_{\cS}^{-1}(B(z,\ep))}
&=\lim_{\ep\to 0}\frac{\ov R_{\cS,\phi}(B(z,\ep))}{\mu_\phi\circ\pi_{\cS}^{-1}(B(z,\ep))} =\\
&=d_\phi(z)
:=\begin{cases}
1 \  &\text{{\rm if (a) holds}}   \\
1-\exp\({S_p\phi(\xi)}-p\P(\phi)\) &\text{{\rm  if (b) holds}},
\end{cases}
\end{aligned}
\eeq
where in {\rm (b)}, $\{\xi\}=\pi_\cS^{-1}(z)$ and $p\ge 1$ is the prime period of $\xi$ under the shift map. 
\ethm

\bthm\label{t1fp83_Finite_B} 
Let $\cS=\{\phi_e\}_{e\in E}$ be a primitive conformal GDMS with a finite alphabet $E$ acting in the space $\R^d$, $d\ge 1$. Assume that either $d=1$ or that the system $\cS$ is geometrically irreducible. Let $z\in J_\cS$ be arbitrary. If either $z$ is 

\begin{itemize}
\item[(a)] not pseudo-periodic
or else 
\item [(b)] uniquely periodic and belongs to $\Int X$ (and $z=\pi(\xi^\infty)$ for a (unique) irreducible word $\xi\in E_A^*$).
\end{itemize}
Then 
\beq\label{1had31}
\lim_{r\to 0}\frac{\HD(J_\cS)-\HD(K_z(r))}{\mu_b\(\pi^{-1}(B(z,r))\)}=
\begin{cases}
1/\chi_{\mu_b} \  &\text{ if } \ (a) \  \text{ holds } \\
\(1-|\phi_\xi'(z)|\)/\chi_{\mu_b}
\  &\text{ if } \ (b) \  \text{ holds }.
\end{cases}
\eeq
\ethm

\sp\fr For these two theorems the two Thin Annuli Properties, Theorem~\ref{t2wbt11} and Theorem~\ref{t1wbt15}, were also instrumental. With having both Theorem~\ref{t1fp83_Finite} and Theorem~\ref{t1fp83_Finite_B} proved we have fully recovered the results of \cite{FP}.

\sp As we have already explained, our next goal in this paper is to get the existence of escape rates in the sense of \eqref{1_2016_06_27} and \eqref{2_2016_06_27} for all topologically exact piecewise smooth maps of the interval $[0,1]$, many rational functions of the Riemann sphere $\oc$ with degree $\ge 2$, and  a vast class of transcendental meromorphic functions from $\C$ to $\oc$. In order to do this we employ two principle tools. The first is formed by the  escape rates results, described above in detail,  for the class of all countable alphabet conformal graph directed Markov systems. The second is a method based on the first return (induced) map developed in Section~\ref{FRM}, Section~\ref{FRMERI}, and Section~\ref{FRMERII}. This method closely relates the escape rates of the original map and the induced map. It turns out that for the above mentioned classes of systems one can find a set of positive measure which gives rise to a first return map which is isomorphic to a countable alphabet conformal IFS or full shift map; the task being highly 
non-trivial and technically involved.  But this allows us to conclude, for suitable systems,  the existence of escape rates in the sense of \eqref{1_2016_06_27} and \eqref{2_2016_06_27}. However, in order to reach this conclusion we need to know some non-trivial properties of the original systems. Firstly, that the tails of the first return time and the first entrance time decay exponentially fast, and secondly that the Large Deviation Property (LDP) of Section~\ref{FRMERI} holds. This in turn leads to Theorem~\ref{t1ld6}, a kind of Large Deviation Theorem.

\sp We shall now describe in some detail the above mentioned applications to (quite) general conformal systems. We start with one-dimensional systems. We consider the class of topologically exact piecewise $C^3$--smooth multimodal maps $T$ of the interval $I=[0,1]$ with non-flat critical points and uniformly expanding periodic points, the property commonly referred to as Topological Collet--Eckmann. Topological exactness means that for every non-empty subset $U$ of $I$ there exists an integer $n \geq 0$ such that $T^n(U) = I$. 
Furthermore, our multimodal map $T: I \to I$  is assumed to be tame, meaning that
$$
\overline{\PC(T)}\ne I,
$$
where
$$
\Crit(T):=\{c\in I:T'(c)=0\}
$$ 
is the critical set for $T$ and 
$$
\PC(T):=\bu_{n=1}^\infty T^n(\Crit(T)),
$$
is the postcritical set of $T$.
A familiar example would be the famous unimodal map $x \mapsto \lambda x (1-x)$ with $0 < \lambda < 4$ for which the critical point $1/2$ is not in its own omega limit set, for example where $\lambda$ is a Misiurewicz point.

The class of potentials, called acceptable in the sequel, is provided by all Lipschitz continuous functions $\psi:I\to\R$ for which 
$$
\sup(\psi) - \inf (\psi) < \h_{\rm top}(T).
$$

\fr The first escape rates theorem in this setting is this.

\bthm
Let $T: I \to I$ be a tame topologically exact Topological Collet--Eckmann map.
Let $\psi:  I \to \mathbb R$ be an acceptable potential.
Let $z \in I \sms \ov{\PC(T)}$ be a recurrent point. 
Assume that the equilibrium state $\mu_\psi$ is {\rm (WBT)} at $z$. Then
$$
\begin{aligned}
\lim_{\ep\to 0}\frac{\un R_{\mu_\psi}(B(z,\ep))}{\mu_\psi(B(z,\ep))}
&=\lim_{\ep\to 0}\frac{\ov R_{\mu_\psi}(B(z,\ep))}{\mu_\psi(B(z,\ep))}= \\
&=\begin{cases}
  &{\rm if} \ z \ \text{{\rm is not any periodic point of }} T, \\
1-\exp\(S_p\psi(z)-p\P(f,\psi)\) &{\rm if} \ z  \ \text{{\rm is a periodic point of }} T.  
\end{cases}
\end{aligned}
$$
\ethm

\fr We have used here the usual notation 
$$
S_p\psi(x) = \sum_{k=0}^{p-1} \psi(T^k(x))
$$ 
of Birkhoff's sums, and  $\P(f, \psi)$ denotes the topological pressure of the potential $\psi$ with respect to the dynamical system  $T: I \to I$. We have also the following.

\bthm\label{t1mi7}
Let $T: I \to I$ be a tame topologically exact Topological Collet--Eckmann map map.
Let $\psi:  I \to \mathbb R$ be an acceptable potential. 
Then
$$
\lim_{\ep\to 0}\frac{\un R_{\mu_\psi}(B(z,\ep))}{\mu_\psi(B(z,\ep))}=
\lim_{\ep\to 0}\frac{\ov R_{\mu_\psi}(B(z,\ep))}{\mu_\psi(B(z,\ep))}=1
$$
for $\mu_\psi$--a.e. point $z\in I$. 
\ethm

\sp In order to address formula \eqref{2_2016_06_27} in this context we need a stronger assumption on the map $T:I\to I$. Our multimodal map $T: I \to I$ is said to be subexpanding if
$$
\hbox{\rm Crit}(T) \cap \overline{\PC(T)} = \emptyset.
$$
It is evident that each subexpanding map is tame and it is not hard to see that the subexpanding property entails being Topological Collet--Eckmann.
It is well known that in this case there exists a unique Borel probability $T$-invariant measure $\mu$
absolutely continuous with respect to Lebesgue measure $\lambda$.
In fact, $\mu$ is equivalent to $\lambda$ and (therefore) has full topological
support.  It is ergodic, even $K$-mixing, has Rokhlin's natural extension metrically isomorphic to some two sided Bernoulli shift. The Radon--Nikodym derivative $\frac{d\mu}{d\lam}$ is uniformly bounded above and separated from zero on the complement of every fixed neighborhood of $\ov{\PC(T)}$. We prove in this setting the following.

\begin{thm}\label{t2mi8}
Let $T: I \to I$  be a topologically exact multimodal subexpanding map.
Fix $\xi \in I \backslash \overline{\PC(T)}$. Assume that the parameter $1$ is powering at $\xi$ with respect to the conformal \GDS \ $\cS_T$ defined in Section~\ref{Interval_Maps}. Then the following limit exists, is finite, 
and positive:
$$
\lim_{r \to 0}  \frac{1 - \HD(K_\xi(r))}{\mu(B(\xi,r))}.
$$ 
\end{thm}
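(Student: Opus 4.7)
The plan is to reduce Theorem~\ref{t2mi8} to Theorem~\ref{t2had30} applied to the conformal IFS $\cS_T$ of Section~\ref{Interval_Maps}. Because $T$ is subexpanding and $\xi\in I\sms\ov{\PC(T)}$, one can fix a small closed interval $U$ centered at $\xi$ with $U\cap\ov{\PC(T)}=\es$. The first--return map $T_U:U\to U$ is then isomorphic, off a set of zero Lebesgue (and $\mu$) measure, to a full--shift countable--alphabet conformal IFS $\cS_T=\{\phi_e\}_{e\in E}$ whose inverse branches are uniformly contracting by the subexpansion hypothesis. Topological exactness of $T$, together with the fact that the density of $\mu$ is bounded and bounded away from zero on $U$, forces the limit set $J_{\cS_T}$ to carry full Lebesgue measure on $U$, so that $\HD(J_{\cS_T})=1$ and $b_{\cS_T}=1$.

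\sp Next I would pass from $K_\xi(r)$ to its analogue for the induced system. For $r$ small enough that $B(\xi,r)\sbt U$, put
$$
K_\xi^U(r):=\{x\in U:T_U^n(x)\notin B(\xi,r)\ \text{for all }\, n\ge 0\}.
$$
A point $x\in K_\xi(r)$ either visits $U$ infinitely often under $T$, in which case its first hitting point to $U$ lies in $K_\xi^U(r)$, or else its forward orbit eventually remains in $I\sms U$. The latter possibility contributes a set of Hausdorff dimension strictly less than $1$ by topological exactness and tameness, while pullbacks by branches of $T$ (which are bi--Lipschitz away from $\Crit(T)$) preserve Hausdorff dimension. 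Consequently $\HD(K_\xi(r))=\HD(K_\xi^U(r))$ for all sufficiently small $r>0$.

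\sp The induced system $\cS_T$ is finitely primitive and strongly regular, $\xi$ lies in $\Int X$, and the Bowen parameter $b_{\cS_T}=1$ is powering at $\xi$ by hypothesis. The remaining {\rm (WBT)} hypothesis of Theorem~\ref{t2had30} is to be deduced from the powering assumption together with the absolute continuity of the projected geometric Gibbs measure $\mu_b\circ\pi_{\cS_T}^{-1}$ in a neighborhood of $\xi$, which holds because $\xi\notin\ov{\PC(T)}$. One then applies Theorem~\ref{t2had30} in case (a) or (b), according as $\xi$ is not pseudo--periodic for $\cS_T$ or is uniquely periodic, to obtain existence, finiteness and positivity of
$$
\lim_{r\to 0}\frac{1-\HD(K_\xi^U(r))}{\mu_b\(\pi_{\cS_T}^{-1}(B(\xi,r))\)}.
$$
Finally one replaces $\mu_b\circ\pi_{\cS_T}^{-1}$ by $\mu$: both measures are absolutely continuous with continuous, strictly positive, bounded densities on a neighborhood of $\xi$ (by the standard regularity theory of the acim of a subexpanding multimodal map on the complement of $\ov{\PC(T)}$, and by the H\"older regularity of projected Gibbs states of $\cS_T$ transferred to the base), so the ratio $\mu(B(\xi,r))/\mu_b\(\pi_{\cS_T}^{-1}(B(\xi,r))\)$ converges, as $r\to 0$, to the positive finite ratio of densities at $\xi$. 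Combining these three ingredients yields Theorem~\ref{t2mi8}.

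\sp The most delicate point is the verification of the weakly--boundary--thin property at $\xi$ for $\cS_T$ from the single powering hypothesis, in a setting where the reference Gibbs measure is not a priori controlled by a H\"older density. This is where I expect the main obstacle to lie, and I would expect to overcome it by invoking the strong thin--annuli estimates of \cite{Pawelec-Urbanski-Zdunik} in the induced coordinates, together with the standard bounded--distortion properties of $\cS_T$ that come from $\xi$ being bounded away from $\ov{\PC(T)}$.
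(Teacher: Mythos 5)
Your overall strategy --- induce on a nice set around $\xi$, reduce the computation of $\HD(K_\xi(r))$ to the induced conformal system via a ``max of dimensions'' decomposition, apply Theorem~\ref{t2had30} with Bowen parameter equal to $1$, and then renormalize the reference measure --- is the same as the paper's. The genuine gap is in your reduction step. You induce on a single interval $U$ around $\xi$ and assert that the points whose forward orbit eventually remains in $I\sms U$ form a set of Hausdorff dimension strictly less than $1$ ``by topological exactness and tameness.'' That set has the same dimension as $K(U)$, the set of points never entering $U$, and the inequality $\HD(K(U))<1$ for an arbitrary small interval $U$ is not elementary: it is essentially a statement of the same depth as the theorem you are proving, namely that forbidding a small ball strictly drops the dimension. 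The paper sidesteps this by taking the inducing domain to be $V=\bu\{V_z: z\in\Crit(T)\cup\{\xi\}\}$, a union of nice intervals around $\xi$ \emph{and around every critical point}; for this particular $V$ the bound $\HD(K(V))<1$ is quoted from \cite{PR-Geometric}, where it is accessible because away from neighborhoods of $\Crit(T)$ the dynamics is uniformly expanding. This is also why the system $\cS_T$ of Section~\ref{Interval_Maps}, to which the powering hypothesis of the theorem refers, is a graph directed system over several vertices rather than the full--shift IFS on one interval that you construct. With $\HD(K(V))<1$ in hand, the paper combines it with $\lim_{r\to 0}\HD(K_V(B(\xi,r)))=1$ (which comes from Theorem~\ref{t2had30} applied to $\cS_T$) and Corollary~\ref{c1rm3} to get $\HD(K(B(\xi,r)))=\HD(K_V(B(\xi,r)))$ for all small $r$. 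Note that ``dimension $<1$'' alone would not suffice for your step either: you need the exceptional set to have dimension smaller than $\HD(K_\xi^U(r))$, which again requires the asymptotics supplied by Theorem~\ref{t2had30}.

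Two smaller remarks. First, the (WBT) hypothesis that you single out as the main obstacle is in fact the cheapest ingredient here: by Theorem~\ref{t1mi8} (c) the density $d\mu/d\lam$ is bounded above and below away from $\ov{\PC(T)}$, so $\mu$ is comparable to Lebesgue measure near $\xi$, and Corollary~\ref{c1_2016_07_06} gives (WBT) at every point of $I\sms\ov{\PC(T)}$ directly; no appeal to the thin--annuli estimates of \cite{Pawelec-Urbanski-Zdunik} is needed in the subexpanding setting. Second, your density--ratio argument for replacing $\mu_b\circ\pi_{\cS_T}^{-1}$ by $\mu$ can be replaced by the cleaner identification used in the paper, $\mu_{h,V}=\mu_{b_{\cS_T}}\circ\pi_{\cS_T}^{-1}$ (both being the unique invariant measure absolutely continuous with respect to the conformal measure of the induced system), so the two denominators differ by the constant factor $\mu(V)$, which affects neither the existence nor the finiteness and positivity of the limit.
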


\sp 

\begin{thm}\label{t2mi8+1}
If $T: I \to I$  is a topologically exact multimodal subexpanding map, then for Lebesgue--a.e. point $\xi\in I\sms \ov{\PC(T)}$ the following limit exists, is finite and positive:
$$
\lim_{r \to 0}  \frac{1 - \HD(K_\xi(r))}{\mu(B(\xi,r))}.
$$ 
\end{thm}

\sp We now turn to complex one-dimensional maps. Let $f:\oc\to\oc$ be a rational map of the Riemann sphere with degree $\deg(f)\ge 2$. 
The sets $\Crit(f)$ and $\PC(f)$ have the same meaning as for the multimodal maps of the interval $I$. Let $\psi:\oc\to\R$ be a H\"older continuous function. Following \cite{DU} we say that $\psi:\oc\to\R$ has a pressure gap if
\beq\label{1_2016_07_07}
n\P(f, \psi)-\sup\(\psi_n\)>0
\eeq
for some integer $n\ge 1$. It was proved in \cite{DU} that there  exists a  unique equilibrium state $\mu_\psi$ for such  $\psi$. Some more ergodic properties of $\mu_\psi$ were established there, and a fairly extensive account of them was provided in \cite{SUZ_I}. 
For example, if $\psi=0$ then $\P(f,0)=\log\deg(f) > 0$ is the topological entropy of $f$ and the condition automatically holds. More generally, it always holds whenever
$$
\sup(\psi)-\inf(\psi)<\htop(f)\,(=\log\deg(f)).
$$
We would like to also add that \eqref{1_2016_07_07} always holds (with all $n\ge 0$ sufficiently large) if the function $f:\oc\to\oc$ restricted to its Julia set is expanding (also frequently referred to as hyperbolic). 
This is the best understood and the easiest to deal with class of rational functions. The rational map $f:\oc\to\oc$ is said to be expanding if the restriction  $f|_{J(f)}: J(f) \to J(f)$ satisfies 
\beq\label{5_2016_07_07_B}
\inf\{|f'(z)|:z\in J(f)\} > 1
\eeq
or, equivalently, 
\beq\label{6_2016_07_07_B}
|f'(z)|>1
\eeq
for all $z\in J(f)$. Another, topological, characterization of expandingness is the following.

\bfact
A rational function $f:\oc\to\oc$ is expanding if and only if 
$$
J(f)\cap\ov{\PC(f)}=\es.
$$
\efact
It is immediate from this characterization that all the polynomials $z\mapsto z^d$, $d\ge 2$, are expanding along with their small perturbations $z\mapsto z^d+\e$; in fact expanding rational functions are commonly believed to form the vast majority amongst all rational functions.

Being a tame rational function and Topological Collet--Eckmann both mean the same as in the setting of multimodal interval maps. Nowadays this property is somewhat more frequently used in its equivalent form of exponential shrinking (see \eqref{2_2016_07_07}) (ESP), and we this follow tradition. All expanding functions are tame and (ESP). Finally, as in the context of interval maps, we have the following. 

\bthm\label{t1ma7}
Let $f:\oc\to\oc$ be a tame rational function having the exponential shrinking property {\rm (ESP)}. Let $\psi:\oc\to\R$ be a H\"older continuous potential with pressure gap. Let $z\in J(f)\sms \ov{\PC(f)}$ be recurrent. Assume that the equilibrium state $\mu_\psi$ is {\rm (WBT)} at $z$. Then
$$
\begin{aligned}
\lim_{\ep\to 0}\frac{\un R_{\mu_\psi}(B(z,\ep))}{\mu_\psi(B(z,\ep))} 
&=\lim_{\ep\to 0}\frac{\ov R_{\mu_\psi}(B(z,\ep))}{\mu_\psi(B(z,\ep))} \\
&=
\begin{cases}
1 &{\rm if} \ z \ \text{{\rm is not a periodic point for }} f, \\
1-\exp\(S_p\psi(z)-p\P(f,\psi)\) &{\rm if} \ z  \ \text{{\rm is a periodic point of }} f.
\end{cases}
\end{aligned}
$$
\ethm

\sp

\bcor\label{t1ma7B}
Let $f:\oc\to\oc$ be a tame rational function having the exponential shrinking property {\rm (ESP)} whose Julia set $J(f)$ is geometrically irreducible. If $\psi:\oc\to\R$ is a H\"older continuous potential with pressure gap, then  
$$
 \lim_{\ep\to 0}\frac{\un R_{\mu_\psi}(B(z,\ep))}{\mu_\psi(B(z,\ep))}
=\lim_{\ep\to 0}\frac{\ov R_{\mu_\psi}(B(z,\ep))}{\mu_\psi(B(z,\ep))}
=1 
$$
for $\mu_\psi$--a.e. $z\in J(f)$.
\ecor

\sp As for the case of maps of an interval,  in order to establish formula \eqref{2_2016_06_27} in this context we need a stronger assumption on the rational map $f:\oc\to \oc$. Because the Julia set need not be equal to $\oc$ (and usually it is not) the definition of subexpanding rational functions is somewhat more involved, see Definition~\ref{d1ma8}.
It is evident that each subexpanding map is tame and it is not hard to see that being subexpanding entails also being Topological Collet--Eckmann. All expanding functions are necessarily  subexpanding.

\bthm\label{t2ma8}
Let $f:\oc\to\oc$ be a subexpanding rational function of degree $d\ge 2$. Fix $\xi\in J(f)\sms \ov{\PC(f)}$. Assume that the measure $\mu_h$ is {\rm (WBT)} at $\xi$ and the parameter $h:=\HD(J(f))$ is powering at $\xi$ with respect to the conformal \GDS \ $\cS_f$ defined in Section~\ref{Rational Functions}. Then the following limit exists, is finite and positive:
$$
\lim_{r\to 0}\frac{\HD(J(f))-\HD(K_\xi(r))}{\mu_h(B(\xi,r))}.
$$
\ethm

\bthm\label{t2ma8B}
If $f:\oc\to\oc$ be a subexpanding rational function of degree $d\ge 2$ whose Julia set $J(f)$ is geometrically irreducible, then for $\mu_h$--a.e. point $\xi\in J(f)\sms \ov{\PC(f)}$ the following limit exists, is finite and positive:
$$
\lim_{r\to 0}\frac{\HD(J(f))-\HD(K_\xi(r))}{\mu_h(B(\xi,r))}.
$$
\ethm

\brem 
We would like to note that if the rational function $f:\oc\to\oc$ is expanding (or hyperbolic as such functions are frequently called), then it is subexpanding and each H\"older continuous potential has a pressure gap. In particular all four theorems above pertaining to rational functions hold for it. 
\erem

\fr In both theorems $\mu_h$ is a unique (ergodic) Borel probability $f$--invariant measure on $J(f)$ equivalent to $m_h$, a unique $h$-conformal measure $m_h$ on $J(f)$ for $f$. Th was proved studied in \cite{U2}, comp. also   \cite{U1}.

\sp The last applications are in the realm of transcendental meromorphic functions. There is a large class of such systems, introduced in \cite{MayUrbETDS} and \cite{MayUrb10} for which it is possible to build (see these two papers) a fairly rich and complete account of thermodynamic formalism. Applying again  our escape rates theorems for conformal graph directed Markov systems, one prove in this setting four main theorems which are analogous of  those for the multimodal maps of an interval and rational functions of the Riemann sphere. These can be found with complete proofs in Section~\ref{Transcendental_Functions}, the last section of our manuscript. 

\

\part{Singular Perturbations of Countable Alphabet Symbol Space Classical Perron--Frobenius Operators}

\sp

\section{The classical original Perron-Frobenius Operator, \\ Gibbs and Equilibrium States, \\ Thermodynamic Formalism; Preliminaries}

In this section we present some notation and basic results on Thermodynamic Formalism as developed in \cite{GDMS}, see also \cite{MU_lms} and \cite{CTU}. It will be the base for our subsequent work.

Let $E$ be a countable, either finite or infinite, set, called in the sequel the alphabet. Let $A:E\times E\to \{0,1\}$ an arbitrary matrix. For every integer $n\ge 0$ let
$$
E_A^n:=\{\om\in E^n:A_{\om_j\om_{j+1}}=1 \ \forall\, 0\le j\le n-1\},
$$
denote the finite words of length $n$, 
let
$$
E_A^\infty:=\{\om\in E^\N:A_{\om_j\om_{j+1}}=1\ \forall\, j\ge 0\},
$$
denote the space of one-sided infinite sequences, 
and let
$$
E^*:=\bu_{n=0}^\infty E^n,  \  \text{ and }  \  E_A^*:=\bu_{n=0}^\infty E_A^n.
$$
be set of all finite strings of words, the former being without restrictions and the latter being called $A$-admissible. 
  
We call elements of $E_A^*$ and $E_A^\infty$ $A$-admissible. The matrix $A$ is called  finitely primitive
(or aperiodic) if there exist an integer $p\ge 0$ and a finite set $\La\sbt E^p$ such that for all $i,j\in E$ there exists $\om\in\La$ such that $i\om j\in E_A^*$. Denote by $\sg:E_A^\infty\to E_A^\infty$ the shift map, i. e. the map uniquely defined by the property that 
$$ 
\sg(\om)_n:=\om_{n+1}
$$
for every $n\ge 0$. Fixing $\th\in(0,1)$ endow $E_A^\infty$ with the standard metric
$$
d_\th(\om,\tau):=\th^{|\om\wedge\tau|},
$$
where for every $g\in E^*\cup E^{\N}$, $|\g|$ denotes the length of $\g$, i. e. the unique $n\in \N\cup \{\infty\}$ such that $\g\in E^n$. Given $0\le k\le |\g|$, we set 
$$
\g|_k:=\g_1\g_2\ld\g_k.
$$
We then also define
$$
[\g]:=\{\om\in E_A^\infty:\om|_n=\g\},
$$
and call $[\g]$ the (initial) cylinder generated by $\g$.
Let $\vp:E_A^\infty\to\R$ be a H\"older continuous function, called in the sequel potential. We assume that $\phi$ is summable, meaning that
$$
\sum_{e\in E}\exp\(\sup(\vp|_{[e]}\)<+\infty.
$$
It is well known (see \cite{GDMS} or \cite{MU_Israel}) that the following limit 
$$
\P(\vp):=\lim_{n\to\infty}\frac1n\log\sum_{\om\in E_A^n}\exp\(\sup(\vp|_{[\om]}\)
$$
exists. It is called the topological pressure of $\vp$. It was proved in \cite{MU_Israel} (compare \cite{GDMS}) that there exists a unique shift-invariant Gibbs/equilibrium measure $\mu_\vp$ for the potential $\vp$. The Gibbs property means that 
$$
C_\vp^{-1}\le\frac{\mu_\vp([\om|_n])}{\exp\(\vp_n(\om)-\P(\vp)n\)}\le C_\vp
$$
with some constant $C_\vp\ge 1$ for every $\om\in E_A^\infty$ and every integer $n\ge 1$, where here and in the sequel
$$
g_n(\om):=\sum_{j=0}^{n-1}g\circ \sg^j
$$
for every function $g:E_A^\infty\to\C$. For the measure $\mu_\phi$ being an equilibrium state for the potential $\vp$ means that
$$
\h_{\mu_\phi}(\sg)+\int_{E_A^\infty}\phi d\mu_\phi=\P(\phi).
$$
It has been proved in \cite{GDMS} that
$$
\h_{\mu}(\sg)+\int_{E_A^\infty}\phi d\mu<\P(\phi)
$$
for any other Borel probability $\sg$-invariant measure $\mu$ such that $\int\phi d\mu>-\infty$. For every bounded function $g:E_A^\infty\to\R$ define $\Lp(g):E_A^\infty\to\R$ as follows
$$
\Lp(g)(\om):=\sum_{e\in E:A_{e\om_0}=1}g(e\om)\exp(\phi(e\om)).
$$
Then $\Lp(g)$ is bounded again, and we get by induction that
$$
\Lp^k(g)(\om)
:=\sum_{\tau\in E_A^k:A_{\tau_{k-1}\om_0}=1}g(\tau\om)\exp(\phi_k(\tau\om)).
$$
Let $C_b(A)$ be the Banach space of all complex-valued bounded continuous functions defined on $E_A^\infty$ endowed with the supremum norm $||\cdot||_\infty$. Let $\H_\th^b(A)$ be its vector subspace consisting of all Lipschitz continuous functions with respect to the metric $d_\th$. Equipped with the norm
\beq\label{2poll29}
\H_\th(g):=||g||_\infty+v_\th(g),
\eeq
where $v_\th(g)$ is the least constant $C\ge 0$ such that
\beq\label{1poll29}
|g(\om)-g(\tau)|\le Cd_\th(\om,\tau),
\eeq
whenever $d_\th(\om,\tau)\le \th$ (i. e. 
$\omega_0 = \tau_0$), the vector space $\H_\th^b(A)$ becomes a Banach space. It is easy to see that the operator $\Lp$ preserves both Banach spaces $C_b(A)$ (as we have observed some half-page ago) and $\H_\th^b(A)$ and also  acts continuously on each of them. The adjective ``original'' indicates that we do not deal with its perturbations while ``classical'' refers to standard Banach spaces $C_b(A)$ and $\H_\th^b(A)$.
The following theorem, describing fully the spectral properties of $\pf_\phi$,  has been proved in \cite{GDMS} and \cite{MU_Israel}.

\bthm\label{t1poll31}
If $A:E\times E\to\{0,1\}$ is finitely primitive and $\phi\in \H_\th^b(A)$, then
\begin{itemize}
\item[(a)] The spectral radius of the operator $\Lp$ considered as acting either on $C_b(A)$ or $\H_\th^b(A)$ is in both cases equal to $e^{\P(\phi)}$.

\item[(b)] In both cases of (a) the number $e^{\P(\phi)}$ is a simple eigenvalue of $\Lp$ and there exists corresponding to it an everywhere positive eigenfunction $\rho_\phi\in \H_\th^b(A)$ such that $\log\rho_\phi$ is a bounded function.

\item[(c)] The reminder of the spectrum of the operator $\Lp:\H_\th^b(A)\to\H_\th^b(A)$ is contained in a closed disk centered at $0$ with radius strictly smaller than $e^{\P(\phi)}$. In particular, the operator $\Lp:\H_\th^b(A)\to\H_\th^b(A)$ is quasi-compact.

\item[(d)] There exists a unique Borel probability measure $m_\phi$ on $E_A^\infty$ such that 
$$
\Lp^*m_\phi=e^{\P(\phi)}m_\phi,
$$
where $\Lp^*:C_b^*(A)\to C_b^*(A)$, is the operator dual to $\Lp$ acting on the space of all bounded linear functionals from $C_b(A)$ to $\C$.

\item[(e)] If $\rho_\phi:E_A^\infty\to (0,\infty)$ is normalized so that $m_\phi(\rho_\phi)=1$, then $\rho_\phi m_\phi=\mu_\phi$, where, we recall, the latter is the unique shift-invariant Gibbs/equilibrium measure for the potential $\vp$.

\item[(e)] The Riesz projector $Q_1:\H_\th^b(A)\to\H_\th^b(A)$, corresponding to the eigenvalue $e^{\P(\phi)}$, is given by the formula 
$$
Q_1(g)=e^{\P(\phi)}m_\phi(g)\rho_\phi.
$$
\end{itemize}
\ethm

\fr If we multiply the operator $\Lp:\H_\th^b(A)\to\H_\th^b(A)$ by $e^{-\P(\phi)}$ and conjugate it via the linear homeomorphism 
$$
g\mapsto \rho_\phi^{-1}g,
$$
then the resulting operator $T:\H_\th^b(A)\to\H_\th^b(A)$ has the same properties, described above, as the operator $\Lp$, with $e^{\P(\phi)}$ replaced by $1$, 
$\rho_\phi$ by the function $\1$ which is identically equal to $1$, and $m_\phi$ replaced by $\mu_\phi$. Since in addition it is equal to $\pf_{\tilde\phi}:\H_\th^b(A)\to\H_\th^b(A)$ with 
$$
\tilde\phi:=\phi-\P(\phi)+\log\rho_\phi,
$$
we will frequently deal with the operator $\pf_{\tilde\phi}$ instead of $\Lp$, exploiting its useful  property
$$
\pf_{\tilde\phi}\1=\1.
$$
We will occasionally refer to $\pf_{\tilde\phi}$ as fully normalized. Sometimes, we will only need the  semi-normalized operator $\Lp$ given by the formula
$$
\hat{\pf}_\phi:=e^{-\P(\phi)}\Lp.
$$
It essentially differs from only by having $e^{\P(\phi)}$ replaced by $1$. Now we bring up two standard well-known technical facts about the above concepts. These can be found for example in \cite{GDMS}.
 
\blem\label{l1_2014_08_29}
There exists a constant $M_\phi\in (0,+\infty)$ such that 
$$
|\phi_k(\om)-\phi_k(\tau)|\le M_\phi\th^m
$$
for all integers $k, m\ge 1$, and all words $\om, \tau\in E_A^\infty$ such that 
$\om|_{k+m}=\tau|_{k+m}$.
\elem 

\blem\label{l2_2014_08_29}
With the hypotheses of Lemma~\ref{l1_2014_08_29} and increasing the constant $M_\phi$ if necessary, we have that
$$
\big|1-\exp\(\phi_k(\g\om)-\phi_k(\g\tau)\)\big|
\le M_\phi|\phi_k(\om)-\phi_k(\tau)|.
$$
\elem

\section{Non-standard original Perron-Frobenius Operator; \\ Definition and first technical Results}\label{PFOriginal}

We keep the setting of the previous section. We still deal with the original operator $\pf_\phi$ but we let it act on a different non-standard Banach space $\Ba_\th$ defined below. This space is more suitable for consideration of perturbations of $\pf_\phi$.
 
Given a function $g\in L^1(\mu_\phi)$ and an integer $m\ge 0$, we define the function $\osc_m(g):E_A^\infty\to [0,\infty)$ by the following formula:
\beq\label{1fp61}
\osc_m(g)(\om):=\ess\sup\{|g(\a)-g(\b)|:\a,\b\in[\om|_m]\}
\eeq
and 
$$
\osc_0(g):= \hbox{\rm esssup}(g)-\hbox{\rm essinf}(g).
$$
We further define:
\beq\label{2fp61}
|g|_\th:=\sup_{m\ge 0}\{\th^{-m}||\osc_m(g)||_1\},
\eeq
where $|\cdot|$ denotes the $L^1$-norm with respect to the measure $\mu_\phi$. Note the subtle difference between this definition and the  analogous one, which motivated us, from \cite{FP}. Therein in the analogue of formula \eqref{2fp61} the supremum is taken over integers $m\ge 1$ only. Including $m=0$ causes some technical difficulties, particularly the (tedious) part of the  proof of Lemma~\ref{l1fp63} for the integer $m=0$.  However, without the case $m=0$ we would not be able to prove Lemma~\ref{l1fp61}, in contrast to  the finite alphabet case of \cite{FP}, which is indispensable for our entire approach. The, previously  announced, non-standard (it even depends on the dynamics -- via $\mu_\phi$) Banach space is defined as follows:
$$
\Ba_\th:=\{g\in L^1(\mu_\phi):|g|_\th<+\infty\}
$$
and we denote
\beq\label{3fp61}
||g||_\th:=||g||_1+|g|_\th.
\eeq
Of course $\Ba_\th$ is a vector space and the function 
\beq\label{4fp61}
\Ba_\th\ni g\mapsto ||g||_\th
\eeq
is a norm on $\Ba_\th$. This is the non-standard Banach space we will be working with throughout the whole manuscript. We shall prove the following.

\blem\label{l1fp61}
If $g\in\Ba_\th$, then $g$ is essentially bounded and 
$$
||g||_\infty\le  ||g||_\th.
$$
\elem
\begin{proof}
For all $\om\in E_A^\infty$, we have
$$
\begin{aligned}
|g(\om)|
&\le \Big|\int_{E_A^\infty}g\, d\mu_\phi+\osc_0(g)(\om)\Big|
=   \Big|\int_{E_A^\infty}g\,  d\mu_\phi+\int_{E_A^\infty}\osc_0(g)\, d\mu_\phi\Big| \\
&\le \int_{E_A^\infty}|g|\,  d\mu_\phi+||\osc_0(g)||_1 \\
&\le ||g||_\th.
\end{aligned}
$$
The proof is complete.
\end{proof}

\fr From now on, unless otherwise stated, we assume that the potential $\vp:E_A^\infty\to\R$ is normalized (by adding a constant and a coboundary) so that 
$$
\pf_\phi\1=\1.
$$
For ease of notation we also abbreviate $\pf_\phi$ to $\pf$. We shall prove the following.

\blem\label{l1fp63}
There exists a constant $C>0$ for every integer $k\ge 0$ and every $g\in\Ba_\th$, we have 
$$
|\pf^kg|_\th\le C(\th^k|g|_\th+||g||_1).
$$
\elem
\begin{proof}
For every $e\in E$ let
$$
E_A^k(e):=\{\g\in E_A^k:A_{\g_k e}=1\}.
$$
Fix first an integer $m\ge 1$ and then $\om,\tau\in E_A^\infty$ such that $\om|_m=\tau|_m$. Using Lemmas~\ref{l1_2014_08_29} and \ref{l2_2014_08_29}, we then get
$$
\begin{aligned}
|\pf^kg(\om)-\pf^kg(\tau)|
&\le \sum_{\g\in E_A^k(\om_1)}e^{\phi_k(\g\om)}|g(\g\om)-g(\g\tau)| +
     \sum_{\g\in E_A^k(\om_1)}|g(\g\tau)|\Big|e^{\phi_k(\g\om)}-e^{\phi_k(\g\tau)}\Big|\\
&\le \sum_{\g\in E_A^k(\om_1)}\osc_{k+m}(g)(\g\om)e^{\phi_k(\g\om)}\ + \\
&  \  \  \  \  \  \  \  \  \  \  + \sum_{\g\in E_A^k(\om_1)}|g(\g\tau)|e^{\phi_k(\g\tau)}\big|1-\exp\(\phi_k(\g\om)-\phi_k(\g\tau)\)\big| \\
&\le \sum_{\g\in E_A^k(\om_1)}\osc_{k+m}(g)(\g\om)e^{\phi_k(\g\om)}\ + \\
&  \  \  \  \  \  \  \  \  \  \  + \sum_{\g\in E_A^k(\om_1)}|g(\g\tau)|e^{\phi_k(\g\tau)}
   M_\phi|\phi_k(\g\om)-\phi_k(\g\tau)| \\
&\le \pf^k(\osc_{k+m}(g))(\om)+
     M_\phi^2\th^m\sum_{\g\in E_A^k(\om_1)}\(|g(\g\om)|+
       \osc_{k+m}(g)(\g\om)\)e^{\phi_k(\g\om)} \\
&\le \pf^k(\osc_{k+m}(g))(\om)+M_\phi^2\th^m\pf^k(|g|)(\om)+
     M_\phi^2\th^m\pf^k(\osc_{k+m}(g))(\om) \\
&\le (1+M_\phi^2)\pf^k(\osc_{k+m}(g))(\om)+M_\phi^2\th^m\pf^k(|g|)(\om)
\end{aligned}
$$
Hence, 
$$
\osc_m(\pf^kg)(\om)
\le (1+M_\phi^2)\pf^k(\osc_{k+m}(g))(\om)+M_\phi^2\th^m\pf^k(|g|)(\om)
$$
Integrating against the measure $\mu_\phi$, this yields
\beq\label{1fp65}
\begin{aligned}
\th^{-m}||\osc_m(\pf^kg)||_1
&\le (1+M_\phi^2)\th^{-m}\int_{E_A^\infty}\pf^k(\osc_{k+m}(g))\,d\mu_\phi +
       M_\phi^2\int_{E_A^\infty}\pf^k(|g|)\,d\mu_\phi \\
&=   (1+M_\phi^2)\th^{-m}\int_{E_A^\infty}\osc_{k+m}(g)\,d\mu_\phi +
       M_\phi^2\int_{E_A^\infty}|g|\,d\mu_\phi. \\
&\le (1+M_\phi^2)\th^k|g|_\th+ M_\phi^2||g||_1 \\
&\le (1+M_\phi^2)(\th^k|g|_\th+||g||_1).
\end{aligned}       
\eeq
Some separate considerations are needed if $m=0$. However, we note that it would require no special treatment in the case of a full shift, i. e. when the incidence matrix $A$ consists of $1$s only. Let $p\ge 1$ be the value in the definition of finite primitivity of the matrix $A$. Replacing $p$ by a sufficiently large integral multiple, we will have that the set
$$
E_A^p(a,b):=\{\a\in E_A^p:a\a b\in E_A^*\}
$$
consisting of words of length $p$ prefixed by $a$ and suffixed by $b$
is non-empty for all $a, b\in E$ and it is countable infinite if the alphabet $E$ is infinite. For every function $h:E_A^\infty\to\R$ and every finite word $\g\in E_A^*$ with associated cylinder $[\g]$ consisting of all infinite sequences beginning with $\gamma$ let $\hat h(\g)\in\R$ be a number with the following two properties:
\sp
\begin{itemize}
\item[(a)] $\hat h(\g)\in\ov{h([\g])}$
and
\item[(b)] $|\hat h(\g)|=\inf\{|h(\rho)|:\rho\in[\g]\}$.
\end{itemize}

\sp\fr Let us introduce the following two functions:
$$
\De_1\pf^{k+p}(g)(\rho)
:=\sum_{|\g|=k}\sum_{\a\in E_A^p(\g_k,\rho_1)}
  \(g(\g\a\rho)e^{\phi_k(\g\a\rho)}e^{\phi_p(\a\rho)}
   -\hat g(\g)e^{\hat\phi_k(\g)}e^{\phi_p(\a\rho)}\)
$$
and 
$$
\De_2\pf^{k+p}(g)(\om,\tau)
:=\sum_{|\g|=k}\hat g(\g)e^{\hat\phi_k(\g)}
  \lt(\sum_{\a\in E_A^p(\g_k,\om_1)}e^{\phi_p(\a\om)}
  -\sum_{\b\in E_A^p(\g_k,\tau_1)}e^{\phi_p(\b\tau)}\rt).
$$
We then have 
\beq\label{1fp64}
\pf^{k+p}(g)(\om)-\pf^{k+p}(g)(\tau)
=\De_1\pf^{k+p}(g)(\om)+\De_2\pf^{k+p}(g)(\om,\tau)-\De_1\pf^{k+p}(g)(\tau).
\eeq
We will estimate the absolute value of each of these three summands in terms of $\om$ only (i. e. independently of $\tau$) and then we will integrate against the measure $\mu_\phi$. First:
\beq\label{1fp64A}
\begin{aligned}
|\De_1\pf^{k+p}(g)(\rho)|
&\le \sum_{|\g|=k}\sum_{\a\in E_A^p(\g_k,\rho_1)}
|g(\g\a\rho)e^{\phi_k(\g\a\rho)}-\hat g(\g)e^{\hat\phi_k(\g)}|e^{\phi_p(\a\rho)}\\
&\le \sum_{|\g|=k}\sum_{\a\in E_A^p(\g_k,\rho_1)}\!\!\!\!\!
     \lt(|g(\g\a\rho)-\hat g(\g)|e^{\phi_{k+p}(\g\a\rho)}
    +|e^{\phi_k(\g\a\rho)}-e^{\hat\phi_k(\g)}|
     \cdot|\hat g(\g)|e^{\phi_p(\a\rho)}\rt) \\
&\le \sum_{|\g|=k}\sum_{\a\in E_A^p(\g_k,\rho_1)}
\lt(\osc_k\(g|_{[\g]}\)e^{\phi_{k+p}(\g\a\rho)}
    +M_\phi e^{\phi_k(\g\a\rho)}e^{\phi_p(\a\rho)}|\hat g(\g)|\rt) \\
&\le \sum_{|\g|=k}\sum_{\a\in E_A^p(\g_k,\rho_1)}\!\!\!\!\!
     \osc_k\(g|_{[\g]}\)e^{\phi_{k+p}(\g\a\rho)}
     +M_\phi\sum_{|\g|=k}\sum_{\a\in E_A^p(\g_k,\rho_1)}
     |g(\g\a\rho)|e^{\phi_{k+p}(\g\a\rho)} \\
&=\pf^{k+p}(\osc_k(g))(\rho)+M_\phi\pf^{k+p}(|g|)(\rho),
\end{aligned}
\eeq
with some appropriately large constant $M_\phi$. Plugging into the above inequality, 
$\rho=\om$, this gives
\beq\label{2fp64A}
|\De_1\pf^{k+p}(g)(\om)|
\le \pf^{k+p}(\osc_k(g))(\om)+M_\phi\pf^{k+p}(|g|)(\om).
\eeq
Now notice that because of our choice of $p\ge 1$ there exists a number $Q\ge 1$ and for every $e\in E$ there exists an at most $Q$-to-$1$ function $f_e:E_A^p(e,\tau_1)\to E_A^p(e,\om_1)$ (can be chosen to be a bijection if the alphabet $E$ is infinite). So, plugging in turn $\rho=\tau$ to \eqref{1fp64A}, we get
\beq\label{2fp64B}
\begin{aligned}
|\De_1&\pf^{k+p}(g)(\tau)| \le \\
&\le \sum_{|\g|=k}\sum_{\b\in E_A^p(\g_k,\tau_1)}\!\!\!\!\!
     \osc_k\(g|_{[\g]}\)e^{\phi_{k+p}(\g\b\tau)}
     +M_\phi\sum_{|\g|=k}\sum_{\b\in E_A^p(\g_k,\tau_1)}
     |g(\g\b\tau)|e^{\phi_{k+p}(\g\b\tau)} \\
&\le M_\phi\sum_{|\g|=k}\sum_{\b\in E_A^p(\g_k,\tau_1)}\!\!\!\!\!
     \lt(\osc_k(g)(\g f_e(\b)\om)e^{\phi_{k+p}(\g f_e(\b)\om)}
    +M_\phi|\hat g(\g)|e^{\phi_{k+p}(\g f_e(\b)\om)}\rt)\\
&\le M_\phi\!\!\!\sum_{|\g|=k}\sum_{\b\in E_A^p(\g_k,\tau_1)}\!\!\!\!\!\!\!\!
     \osc_k(g)(\g f_e(\b)\om)e^{\phi_{k+p}(\g f_e(\b)\om)}
    +M_\phi\!\!\!\sum_{|\g|=k}\sum_{\b\in E_A^p(\g_k,\tau_1)}\!\!\!\!\!\!\!\!
      |g(\g f_e(\b)\om)|e^{\phi_{k+p}(\g f_e(\b)\om)}\\
&\le QM_\phi\lt(\sum_{|\g|=k}\sum_{\a\in E_A^p(\g_k,\om_1)}\!\!\!\!\!
     \osc_k(g)(\g\a\om)e^{\phi_{k+p}(\g\a\om)} 
     +M_\phi\sum_{|\g|=k}\sum_{\a\in E_A^p(\g_k,\om_1)}\!\!\!\!\!
     |g(\g\a\om)|e^{\phi_{k+p}(\g\a\om)}\rt)\\
&=QM_\phi\(\pf^{k+p}(\osc_k(g))(\om)+M_\phi\pf^{k+p}(|g|)(\om)\)
\end{aligned}
\eeq
with some appropriate constant $Q>0$. Turning to $\De_2\pf^{k+p}(g)$, we get
\beq\label{1fp64B}
\begin{aligned}
|\De_2\pf^{k+p}(g)(\om,\tau)|
&\le \sum_{|\g|=k}|\hat g(\g)|e^{\hat\phi_k(\g)}\lt(
    \sum_{\a\in E_A^p(\g_k,\om_1)}e^{\phi_p(\a\om)}
   +\sum_{\b\in E_A^p(\g_k,\tau_1)}e^{\phi_p(\b\tau)}\rt)\\
&\le \sum_{|\g|=k}|\hat g(\g)|e^{\hat\phi_k(\g)}\(\pf^p\1(\om)+\pf^p\1(\tau)\)\\
&=2\sum_{|\g|=k}|\hat g(\g)|e^{\hat\phi_k(\g)} \\
&\le 2M_\phi\sum_{|\g|=k}|g(\g\a(\g_k,\om_1)\om)|
     e^{\phi_{k+p}(\g\a(\g_k,\om_1)\om)}e^{-\phi_p(\a(\g_k,\om_1)\om)}\\
&\le 2M_\phi e^{-C_p}\sum_{|\g|=k}|g(\g\a(\g_k,\om_1)\om)|
     e^{\phi_{k+p}(\g\a(\g_k,\om_1)\om)}\\
&\le 2M_\phi e^{-C_p}\pf^{k+p}(|g|)(\om),
\end{aligned}
\eeq  
where $\a(\g_k,\om_1)$ is one, arbitrarily chosen, element from $\La$, a finite set witnessing finite primitivity of $A$, such that $\g\a(\g_k,\om_1)\in E_A^*$, and $C_p:=\min\{\inf\{\phi_p|_{[\a]}:\a\in\La\}>0$. Inserting now \eqref{1fp64B}, \eqref{2fp64B}, and \eqref{2fp64A} to \eqref{1fp64}, we get for all $\om, \tau\in E_A^\infty$ that 
$$
\big|\pf^{k+p}(g)(\om)-\pf^{k+p}(g)(\tau)\big|
\le C(\pf^{k+p}(\osc_k(g))(\om)+\pf^{k+p}(|g|)(\om))
$$
with some universal constant $C>0$. Integrating against the measure $\mu_\phi$, this gives
\beq\label{2fp64B+1}
\begin{aligned}
\th^{-0}||\osc_0(\pf^{k+p}(g))||_1
&\le C\lt(\int_{E_A^\infty}\pf^{k+p}(\osc_k(g))\,d\mu_\phi +\int_{E_A^\infty}\pf^{k+p}(|g|)\,d\mu_\phi\rt) \\
&=C\lt(\int_{E_A^\infty}\osc_k(g)\,d\mu_\phi+\int_{E_A^\infty}|g|\,d\mu_\phi\rt) \\
&\le C(\th^k|g|_\th+||g||_1) \\
&\le C\th^{-p}(\th^{k+p}|g|_\th+||g||_1).
\end{aligned}
\eeq 
Along with \eqref{1fp65} this gives that 
\beq\label{2fp65}
|\pf^kg|_\th\le C(\th^k|g|_\th+||g||_1)
\eeq
for all $k\ge p$ with some suitable constant $C>0$. Also, for every $0\le k\le p$ we have
$$
\begin{aligned}
|\pf^kg|_\th
&\le ||\pf^kg||_\th
\le \max\{||\pf||_\th^j:0\le j\le p\}||g||_\th \\
&\le \th^{-p}\max\{||\pf||_\th^j:0\le j\le p\}||g||_\th(\th^k||g||_\th) \\
&\le \th^{-p}\max\{||\pf||_\th^j:0\le j\le p\}||g||_\th(\th^k|g|_\th+||g||_1).
\end{aligned}
$$
Along with \eqref{2fp65} this finishes the proof.
\end{proof}

\sp

\section{Singular Perturbations of (original) Perron--Frobenius Operators I: Fundamental Inequalities}\label{SecSingPerturb}
This is the first section in which we deal with singular perturbations of the operator $\pf_\phi$. We work in the quite general setting  described below. We keep the same
non-standard Banach space $\Ba_\th$ but, motivated by \cite{FP}, we introduce an even more exotic norm $||\cdot||_*$, which depends even more on dynamics than $||\cdot||_\th$.

Passing to details, in this section we assume that $(U_n)_{n=0}^\infty$, a nested sequence of open subsets of $E_A^\infty$ is given, with the following properties:
\begin{itemize}
\item[(U0)] $U_0=E_A^\infty$,
\item[(U1)] For every $n\ge 0$ the open set $U_n$ is a (disjoint) union of cylinders all of which are of length $n$,
\item[(U2)] There exists $\rho\in (0,1)$  such that such that 
$$
\mu_\phi(U_n)\le \rho^n
$$
for all $n\ge 0$.
\end{itemize}
Let $|\cdot|_*,\, ||\cdot||_*:\Ba_\th\to[0,+\infty]$ be the functions defined by respective formulas
$$
|g|_*:=\sup_{j\ge 0}\sup_{m\ge 0}\lt\{\th^{-m}\int_{\sg^{-j}(U_m)}|g|\,d\mu_\phi\rt\}
$$
and 
$$
||g||_*:=||g||_1+|g|_*.
$$
Without loss of generality assume from now on that $\th\in (\rho,1)$. We shall prove the following.

\blem\label{l1fp65}
For all $g\in \Ba_\th$, we have that
$$
||g||_*\le2||g||_\infty\le 2||g||_\th.
$$
\elem

\begin{proof}
By virtue of (U2), we get
$$
|g|_*
\le\sup_{m\ge 0}\big\{\th^{-m}\mu_\phi(U_m)||g||_\infty\big\}
\le \sup_{m\ge 0}\big\{\th^{-m}\rho^m||g||_\infty\big\}
=\sup_{m\ge 0}\{(\rho/\th)^m||g||_\infty\}
=||g||_\infty.
$$
Hence,
$$
||g||_*
=||g||_1+|g|_*
\le ||g||_\infty+||g||_\infty
=2||g||_\infty.
$$
Combining this  with Lemma~\ref{l1fp61} completes the proof.
\end{proof}

\fr In particular, this lemma assures us that $|\cdot|_*$ and $||\cdot||_*$, respectively, are a semi-norm and a norm on $\Ba_\th$. It is straightforward to check that $\Ba_\th$ endowed with the norm $||\cdot||_*$ becomes a Banach space. For all integers $k\ge 1$ and $n\ge 0$ let
\beq\label{1fp67}
\1_n^k:=\prod_{j=0}^{k-1}\1_{\sg^{-j}(U_n^c)}
=\prod_{j=0}^{k-1}\1_{U_n^c}\circ\sg^j.
\eeq
We also abbreviate 
$$
\1_n:=\1_n^1
$$
and set
$$
\1_n^c:=\1_{U_n}=\1-\1_n.
$$
Let $\pf_n:\Ba_\th\to\Ba_\th$ be defined by the formula
$$
\pf_n(g):=\pf(\1_n^1g).
$$
These, for $n\ge 0$, are our perturbations of the operator $\pf$. The difference 
$\pf-\pf_n$ in the supremum, or even $||\cdot||_\th$, norm can be quite large even for arbitrarily large $n$, however, as  Lemma~\ref{l2fp129} shows, the incorporation of the $||\cdot||_*$ norm makes this difference kind of small. The main result of this section is Proposition~\ref{p1fp80}, complemented by Proposition~\ref{p1fp131}, which describes in detail how well the spectral properties of the operator $\pf$ are preserved under perturbations $\pf_n$. Note that for every $k\ge 1$, we then have 
$$
\pf_n^k(g):=\pf^k(\1_n^kg).
$$
The results we now obtain, leading ultimately to Proposition~\ref{p1fp80} and Proposition~\ref{p1fp131}, stem from  Lemma~3.9 and Lemma~3.10 in \cite{FP}. We develop these and extend them to the case of infinite alphabets. Since the sets $U_n$ may, and in applications, will, consist of infinitely many cylinders (of the same length), we are cannot take advantage of good mixing properties of the symbol dynamical system $(\sg:E_A^\infty\to E_A^\infty, \mu\phi)$. We use instead the H\"older inequality, which also, as a by-product, simplifies some of the reasoning of \cite{FP}. In what follows, the last fragment, directly preceding Proposition~\ref{p1fp80}, and leading to verifying the requirements from Remark~3 in \cite{KL}, is particularly delicate and entirely different to \cite{FP}.

\blem\label{l1fp127}
For all integers $k\ge 1$ and $n\ge 0$ , we have
$$
||\pf_n^k||_*\le 1.
$$
\elem

\begin{proof}
Let $g\in L^1(\mu_\phi)$. Then,
\beq\label{FP3}
||\pf_n^k(g)||_1
=\int|\pf^k(\1_n^kg))|\,d\mu_\phi
\le \int\pf^k(|\1_n^kg|)\,d\mu_\phi
= \int|\1_n^kg|\,d\mu_\phi
\le ||g||_1.
\eeq
Also, for all integers $j, m\ge 0$, we have
$$
\begin{aligned}
\th^{-m}\int_{\sg^{-j}(U_m)}|\pf^k(\1_n^kg))|\,d\mu_\phi
&\le \th^{-m}\int_{\sg^{-j}(U_m)}\pf^k(|\1_n^kg|)\,d\mu_\phi
= \th^{-m}\int_{\sg^{-(j+1)}(U_m)}|\1_n^kg|\,d\mu_\phi \\
&\le \th^{-m}\int_{\sg^{-(j+1)}(U_m)}|g|\,d\mu_\phi \\
&\le |g|_*.
\end{aligned}
$$
Taking the supremum over $j$ and $m$ yields
$$
|\pf_n^k(g)|_*\le|g|_*.
$$
Combining this and \eqref{FP3} completes the proof.
\end{proof}

\blem\label{l2fp127}
For all integers $j, n\ge 0$ and for  $g\in \Ba_\th$, we have that 
$$
|g\1_{\sg^{-j}(U_n^c)}|_\th\le |g|_\th+\th^{-j}||g||_*
$$
\elem

\begin{proof}
Fix an integer $m\ge 1$. We consider two cases. Namely: $j+n\le m$ and $m<j+n$. Suppose first that $j+n\le m$. Then, $\osc_m\(g\1_{\sg^{-j}(U_n^c)}\)(\om)\le \osc_m(g)(\om)$ for all $\om\in E_A^\infty$. Thus 
\beq\label{fp11.6}
\th^{-m}\int\osc_m\(g\1_{\sg^{-j}(U_n^c)}\)\,d\mu_\phi
\le\th^{-m}\int\osc_m(g)\,d\mu_\phi
\le |g|_\th.
\eeq
On the other hand, if $m<j+n$, then it is easy to see that if $[\om|_m]\sbt\sg^{-j}(U_n^c)$, then 
\beq\label{1mu_2014_10_15}
\osc_m\(g\1_{\sg^{-j}(U_n^c)}\)(\om)=\osc_m(g)(\om).
\eeq
On the other hand, if $[\om|_m]\cap\sg^{-j}(U_n)\ne\es$, then 
$$
\osc_m\(g\1_{\sg^{-j}(U_n^c)}\)(\om)=\max\{\osc_m(g)(\om),||g\1_{[\om|_m]}||_\infty\}.
$$
In this latter case
$$
\osc_m\(g\1_{\sg^{-j}(U_n^c)}\)
\le \max\{\osc_m(g)(\om),||g\1_{[\om|m]}||_\infty\}
\le \osc_m(g)(\om)+\frac1{\mu_\phi\([\om|_m]\)}\int_{[\om|_m]}|g|\,d\mu_\phi.
$$
Together with \eqref{1mu_2014_10_15} this implies that
\beq\label{fp11.8}
\th^{-m}\int\osc_m\(g\1_{\sg^{-j}(U_n^c)}\)\,d\mu_\phi
\le |g|_\th+\th^{-m}\int_{\{\om\in E_A^\infty:[\om|_m]\cap\sg^{-j}(U_n)\ne\es\}} |g|\,d\mu_\phi.
\eeq
We now consider two further sub-cases. If $m\le j$, then we see that
\beq\label{fp11.8+1}
\th^{-m}\int_{\{\om\in E_A^\infty:[\om|_m]\cap\sg^{-j}(U_n)\ne\es\}} |g|\,d\mu_\phi
\le \th^{-j}\int_{\{\om\in E_A^\infty:[\om|_m]\cap\sg^{-j}(U_n)\ne\es\}} |g|\,d\mu_\phi
\le \th^{-j}||g||_1.
\eeq
If $j<m<j+n$, the descending property of the sequence $\(U_n\)_{n=0}^\infty$ yields
$$
\{\om\in E_A^\infty:[\om|_m]\cap\sg^{-j}(U_n)\ne\es\}\sbt \sg^{-j}(U_{m-j}).
$$
In this case
\beq\label{fp11.9}
\th^{-m}\int_{\{\om\in E_A^\infty:[\om|_m]\cap\sg^{-j}(U_n)\ne\es\}} |g|\,d\mu_\phi
\le \th^{-j}\th^{-(m-j)}\int_{\sg^{-j}(U_{m-j})}|g|\,d\mu_\phi
\le \th^{-j}|g|_*.
\eeq
Combining \eqref{fp11.6}, \eqref{fp11.8}, and \eqref{fp11.9} yields the desired inequality, and completes the proof.
\end{proof}

\fr As a fairly straightforward  inductive argument using  Lemma~\ref{l2fp127}, we shall prove the following.

\blem\label{l1fp129}
For all integers $k\ge 1$ and $n\ge 0$, and all functions $g\in \Ba_\th$ , we have that
\beq\label{1fp129}
|\1_n^kg|_\th\le |g|_\th+\th(1-\th)^{-1}\th^{-k}||g||_*.
\eeq
\elem

\begin{proof}
Keeping $n\ge 0$ fixed, we will proceed by induction with respect to the integer $k\ge 1$. The case of $k=1$ follows directly from Lemma~\ref{l2fp127}. Assuming for the inductive step that \eqref{1fp129} for some integer $k\ge 1$ and applying again Lemma~\ref{l2fp127}, we get
$$
\begin{aligned}
|\1_n^{k+1}g|_\th
&=\big|\1_{\sg^{-k}(U_n^c)}(\1_n^kg)\big|_\th
\le |\1_n^kg|_\th+\th^{-k}||\1_n^kg||_* \\
&\le |\1_n^kg|_\th+\th^{-k}||g||_* \\
&\le |g|_\th+\th(1-\th)^{-1}\th^{-k}||g||_*+\th^{-k}||g||_* \\
&=|g|_\th+\th(1-\th)^{-1}\th^{-(k+1)}||g||_*.
\end{aligned}
$$
The proof is complete.
\end{proof}

\fr As a fairly immediate consequence of Lemma~\ref{l1fp129} and Lemma~\ref{l1fp63}, we get the following.

\bcor\label{c1fp67}
There exists a constant $c>0$ such that
$$
||\pf_n^kg||_\th\le c\(\th^k||g||_\th+||g||_*\)
$$
for all $g\in\Ba_\th$ and all integers $k, n\ge 0$.
\ecor

\begin{proof}
Substituting $\1_n^kg$ for $g$ into the statement of Lemma~\ref{l1fp63} and then applying Lemma~\ref{l2fp127}, we get
$$
\begin{aligned}
|\pf_n^kg|_\th 
&=|\pf^k(\1_n^kg)|_\th
\le C\(\th^k|\1_n^kg|_\th+||g||_1\) \\
&\le C\(\th^k(|g|_\th+\th(1-\th)^{-1}\th^{-k}||g||_*)+||g||_1\) \\
&\le C\(\th^k(|g|_\th+\th(1-\th)^{-1}||g||_*+||g||_1\).
\end{aligned}
$$
Hence, 
$$
\begin{aligned}
||\pf_n^kg||_\th 
&=|\pf_n^kg|_\th+||\pf_n^kg||_1
\le |\pf_n^kg|_\th+||g||_1 \\
&\le(C+1)\(\th^k|g|_\th+\th(1-\th)^{-1}||g||_*+||g||_1\)\\
&\le\tilde C\(\th^k||g||_\th+||g||_*\),
\end{aligned}
$$
for some sufficiently large $\tilde C>0$ depending only on $C$ and $\th$. The proof is complete.
\end{proof}

\sp

\section{Singular Perturbations of (original) Perron--Frobenius Operators II: Stability of the Spectrum}
For a linear operator $Q:\Ba_\th\to\Ba_\th$ define
$$
|||Q|||:=\sup\{||Qg||_*:||g||_\th\le 1\}.
$$
\fr From now on fix $p, q>1$ such that $\frac1p+\frac1q=1$ and, by taking $0 < \rho < 1$ coming from (U2), sufficiently close to $1$, assume without loss of generality that
$$
\th\in(\rho^{1/p},1).
$$
We shall prove the following.

\blem\label{l2fp129}
For every $n\ge 0$ we have 
$$
|||\pf-\pf_n|||\le 2(\rho^{1/q})^n.
$$
\elem

\begin{proof}
Fix an arbitrary $g\in\Ba_\th$ with $||g||_\th\le 1$. Using Lemma~\ref{l1fp61} we then get
\beq\label{1FP140628}
\begin{aligned}
||(\pf-\pf_n)g||_1
&=||\pf(\1_n^1g)||_1
=||\1_n^1g||_1
\le \mu_\phi(U_n)||g||_\infty \\
&\le \mu_\phi(U_n)||g||_\th
\le \mu_\phi(U_n)\le \rho^n \\
&\le (\rho^{1/q})^n\end{aligned}
\eeq
Now fix also two integers $m, j\ge 0$. Using the H\"older Inequality, we get 
\beq\label{1fp89}
\begin{aligned}
\th^{-m}\int_{\sg^{-j}(U_m)}
&|(\pf-\pf_n)g|\,d\mu_\phi 
\le \th^{-m}\mu_\phi(\sg^{-(j+1)}(U_m)\cap U_n)||g||_\th= \\
&=\th^{-m}||g||_\th\int\1_{\sg^{-(j+1)}(U_m)}\1_{U_n}\,d\mu_\phi \\
&\le ||g||_\th\th^{-m}\lt(\int \1_{\sg^{-(j+1)}(U_m)}\,d\mu_\phi\rt)^{1/p}
\lt(\int \1_{U_n}\,d\mu_\phi\rt)^{1/q}  \\
&=||g||_\th\th^{-m}\mu_\phi(U_m)^{1/p}\mu_\phi(U_n)^{1/q} \\
&\le ||g||_\th\(\rho^{1/p}/\th\)^m\rho^{n/q} 
 \le (\rho^{1/q})^n||g||_\th
 \le (\rho^{1/q})^n,
\end{aligned}
\eeq
where the second to the last inequality follows from the fact that $\th\in (\rho^{1/p},1)$. Along with \eqref{1FP140628} this implies that $||\pf-\pf_n||_*\le 2(\rho^{1/q})^n$. So, taking the supremum over all $g\in\Ba_\th$ with $||g||_\th\le 1$, we get that $|||\pf-\pf_n|||\le 2(\rho^{1/q})^n$. The proof is complete.
\end{proof}

\sp\fr With Lemma~\ref{l1fp127}, Corollary~\ref{c1fp67}, and Lemma~\ref{l2fp127}, we have checked that the respective conditions (2), (3), and (5), from \cite{KL} are satisfied. We shall now check that condition (4) from there also holds. We will do this by showing that the requirements from Remark~3 in \cite{KL} hold.

\sp\fr For every integer $k\ge 1$ let $\cA^k$ be the partition of $E_A^\infty$ into cylinders of length $k$. Let $\pi_k^*:L^1(\mu_\phi)\to L^1(\mu_\phi)$ be the operator of expected value with respect to the probability measure $\mu_\phi$ and the $\sg$-algebra $\sg(\cA^k)$ generated by the elements of $\cA^k$; i. e. 
$$
\pi_k^*(g)=E_{\mu_\phi}(g|\sg(\cA^k)).
$$
If $g\in\Ba_\th$ then $|\pi_k^*(g)-g|\le\osc_k(g)$, and therefore
\beq\label{1fp77}
||\pi_k^*(g)-g||_1
=\int_{E_A^\infty}|\pi_k^*(g)-g|\,d\mu_\phi
\le \int_{E_A^\infty}\osc_k(g)\,d\mu_\phi
\le \th^k|g|_\th.
\eeq
Let now $\cA_0^k$ be a finite subset of $\cA^k$ such that
\beq\label{2fp77}
\mu_\phi\(A_c^k\)\le\th^k,
\eeq
where 
$$
A_c^k:=\bu_{A\in \cA^k\sms \cA_0^k}A.
$$
Let also 
$$
A_0^k:=\bu_{A\in \cA_0^k}A.
$$
Let $\hat\cA^k$ be the partition of $E_A^\infty$ consisting of $A_c^k$ and all elements of $\cA_0^k$. Similarly as above, let $\pi_k:L^1(\mu_\phi)\to L^1(\mu_\phi)$ be defined by the formula
$$
\pi_k(g)=E_{\mu_\phi}(g|\sg(\hat\cA^k)).
$$
We then have that
\beq\label{3fp77}
||\pi_k||_1\le 1,
\eeq
and for every $g\in\Ba_\th$, because of \eqref{1fp77} and Lemma~\ref{l1fp61}, and \eqref{2fp77}:
\beq\label{1fp79}
\begin{aligned}
||\pi_k(g)-g||_1
&=\int_{E_A^\infty}|\pi_k(g)-g|\,d\mu_\phi
 =\int_{A_0^k}|\pi_k(g)-g|\,d\mu_\phi + \int_{A_c^k}|\pi_k(g)-g|\,d\mu_\phi \\
&=\int_{A_0^k}|\pi_k^*(g)-g|\,d\mu_\phi + \int_{A_c^k}|\pi_k(g)-g|\,d\mu_\phi \\
&\le \int_{E_A^\infty}|\pi_k^*(g)-g|\,d\mu_\phi +2||g||_\infty\mu_\phi(A_c^k) \\
&\le \th^k|g|_\th +2||g||_\infty \theta^k  \\
&\le 3\th^k||g||_\th.
\end{aligned}
\eeq
Now, for all $m$ and $k$ we have that
$$
\osc_m(\pi_k(g))=
\begin{cases}
0 &\text{ if } m\ge k \\
\le \osc_0(g)\le 2||g||_\infty\le 2||g||_\th \  &\text{ if } m< k.
\end{cases}
$$
Moreover, if $\om\in A_0^k$ and $m<k$, then
$$
\osc_m(\pi_k(g))(\om)
=\osc_m(\pi_k^*(g))(\om)
\le \osc_m(g)(\om).
$$
Thus,
$$
\begin{aligned}
\th^{-m}||\osc_m(\pi_k(g))||_1
&=\th^{-m}\int_{E_A^\infty}\osc_m(\pi_k(g)\,d\mu_\phi \\
&=\th^{-m}\int_{A_0^k}\osc_m(\pi_k(g))\,d\mu_\phi +
 \th^{-m}\int_{A_c^k}\osc_m(\pi_k(g))\,d\mu_\phi \\
&\le \th^{-m}\int_{A_0^k}\osc_m(g)\,d\mu_\phi + 2\th^{-k}||g||_\th\mu_\phi(A_c^k) \\
&\le |g|_\th + 2||g||_\th  \\
&\le 3||g||_\th.
\end{aligned}
$$
Therefore $|\pi_k(g)|_\th\le 3||g||_\th$. Together with \eqref{3fp77}, this gives $||\pi_k||_\th\le 4$. In other words:
\beq\label{1fp81}
\sup_{k\ge 1}\{||\pi_k||_\th\}\le 4<+\infty.
\eeq
Now assume that $||g||_\th\le 1$. Recall that we have fixed $p, q>1$ such that $(1/p)+(1/q)=1$. Using H\"older's Inequality and \eqref{1fp79} we then get for all integers $k\ge 1$, $j\ge 0$, and $n\ge 0$, that
$$
\begin{aligned}
\int_{\sg^{-j}(U_n)}|\pi_k(g)-g|\,d\mu_\phi
&=\int_{E_A^\infty}\1_{\sg^{-j}(U_n)}|\pi_k(g)-g|\,d\mu_\phi \\
&\le \lt(\int_{E_A^\infty}\1_{\sg^{-j}(U_n)}\,d\mu_\phi\rt)^{1/p}
\lt(\int_{E_A^\infty}|\pi_k(g)-g|^q\,d\mu_\phi\rt)^{1/q} \\
&\le \mu_\phi(U_n)^{1/p}2^{\frac{q-1}q}\lt(\int_{E_A^\infty}|\pi_k(g)-g|\,d\mu_\phi\rt)^{1/q}\\ 
&\le  \mu_\phi(U_n)^{1/p}\(3\th^k||g||_\th\)^{1/q} \\
&\le 3\rho^{n/p}\th^{k/q}.
\end{aligned}
$$
Recall that $\th\in(0,1)$ was fixed so large that $\th>\rho^{1/p}$. In other words $\rho^{1/p}/\th<1$, and we get
$$
\th^{-n}\int_{\sg^{-j}(U_n)}|\pi_k(g)-g|\,d\mu_\phi
\le 3\(\rho^{1/p}/\th\)^n\th^{k/q}
\le 3 \th^{k/q}.
$$
In other words $\|\pi_k(g)-g\|_*\le 3 \th^{k/q}$. Together with \eqref{1fp79} this gives
\beq\label{2fp81}
|\pi_k(g)-g|_*\le 2\th^k + 3\th^{k/q}
\le 5\(\th^{1/q}\)^k.
\eeq
Since all the operators $\pi_k:\cB_\th\to\cB_\th$ have finite--dimensional ranges, all the operators $\pf_n\circ\pi_k:\cB_\th\to\cB_\th$ are compact. As $\th\le \th^{1/q}$, in conjunction with \eqref{2fp81} and \eqref{1fp81}, this shows that all the requirements of Remark~3 in \cite{KL} are satisfied and (4) (as well as (3)) hold with $\a=\th^{1/q}$ and $M=1$. All the hypotheses of Theorem~1 in \cite{KL} have been thus verified. Note also that the number $1$ is a simple eigenvalue of the operator $\pf:\cB_\th\to\cB_\th$ as there exists exactly one Borel probability $\sg$-invariant measure absolutely continuous with respect to the Gibbs measure $\mu_\phi$. Applying Theorem~1 in \cite{KL} and the appropriate corollaries therein, we get the following fundamental perturbative result which extends Propositions~3.17, 3.19, and 3.7 from \cite{FP} to the case of infinite alphabet.

\bprop\label{p1fp80} 
For all $n\ge 0$ sufficiently large there exist two bounded linear operators $Q_n,\De_n:\cB_\th\to\cB_\th$ and complex numbers $\lam_n\ne 0$ with the following properties:
\begin{itemize}
\item[(a)] $\lam_n$ is a simple eigenvalue of the operator $\pf_n:\cB_\th\to\cB_\th$.

\sp\item[(b)] $Q_n:\cB_\th\to\cB_\th$ is a projector ($Q_n^2=Q_n$) onto the $1$--dimensional eigenspace of $\lam_n$.

\sp\item[(c)] $\pf_n=\lam_nQ_n+\De_n$.

\sp\item[(d)] $Q_n\circ\De_n=\De_n\circ Q_n=0$.

\sp\item[(e)] There exist $\ka\in (0,1)$ and $C>0$ such that
$$
||\De_n^k||_\th\le C\ka^k
$$
for all $n\ge 0$ sufficiently large and all $k\ge 0$. In particular,
$$
||\De_n^kg||_\infty\le ||\De_n^kg||_\th\le C\ka^k||g||_\th
$$
for all $g\in\Ba_\th$.

\sp\item[(f)] $\lim_{n\to\infty}\lam_n=1$.

\sp\item[(g)] Enlarging the above constant $C>0$ if necessary, we have
$$
||Q_n||_\th\le C.
$$
In particular, 
$$
||Q_ng||_\infty\le ||Q_ng||_\th\le C||g||_\th
$$
for all $g\in\Ba_\th$.

\sp\item[(h)] $\lim_{n\to\infty}|||Q_n-Q|||=0$. 
\end{itemize}
\eprop

\fr The proof of the next proposition is fairly standard. We provide it here for the sake of completeness. 

\bprop\label{p1fp131}
All eigenvalues $\lam_n$ produced in Proposition~\ref{p1fp80} are real and positive, and all operators $Q_n:\cB_\th\to\cB_\th$ preserve $\Ba_\th(\R)$ and $\Ba_\th^+(\R)$, the subsets of $\Ba_\th$ consisting, respectively, of real--valued functions and positive real--valued functions.
\eprop

\begin{proof}
Let $\rho_n\in\Ba_\th$ be an eigenfunction of the eigenvalue $\lam_n$. Write $\lam_n=|\lam_n|e^{i\g_n}$, with $\g_n\in [0,2\pi)$. It follows from (b), (c), and (d) of Proposition~\ref{p1fp80} that
\beq\label{1fp131}
|\lam_n|^ke^{-ik\g_n}\pf_n^k\1=Q_n\1+\lam_n^{-k}\De_n^k\1.
\eeq
By (1) of Corollary~1 in \cite{KL} we have that $Q_n\1\ne0$ for all $n\ge 0$ large enough (so after disregarding finitely many terms, we can assume this for all $n\ge 0$) and $|\lam_n|>(1+\ka)/2$. Since also $\pf_n^k\1$ is a real--valued function, it therefore follows from \eqref{1fp131} and (e) that the arguments of $Q_n\1(\om)$ are the same (mod $2\pi$) whenever $Q_n\1\ne0$. This in turn implies that the set of accumulation points of the sequence $(k\g_n)_{k=0}^\infty$ is a singleton (mod $2\pi$). This yields $\g_n=0$ (mod $2\pi$). Thus $\lam_n\in\R$, and, as $\lam_n$ is close to $1$ (by Proposition~\ref{p1fp80}), it is positive. Knowing this and assuming $g\ge 0$, the equality
$$
Q_ng=\lam_n^{-k}\pf_n^kg-\lam_n^{-k}\De_n^k(g),
$$
along with (e) of Proposition~\ref{p1fp80}, non-negativity of $\pf_n^kg$, and inequality $|\lam_n|>(1+\ka)/2$, yield $Q_ng\ge 0$. Finally, for $g\in \Ba_\th(\R)$, write canonically $g=g_+-g_-$ with $g_+,g_-\in \Ba_\th^+(\R)$ and apply the invariance of $\Ba_\th^+(\R)$ under the action of $\pf_n$. The proof is complete.
\end{proof}

\brem\label{r2fp131}
We would like to note that unlike \cite{FP}, we did not use the dynamics (i.e., the interpretation of $\log\lam_n$ as some topological pressure) to demonstrate item (f) of 
Proposition~\ref{p1fp80} and to prove Proposition~\ref{p1fp131}. We instead used the full power of the perturbation results from \cite{KL}. The dynamical interpretation will eventually emerge, and will be important for us, but not until  Section~\ref{derivatives_of_eigenvalues}. Therein Lemma~\ref{l1had28.1} will provide, at least in part, a dynamical interpretation.
\erem

\section{An Asymptotic Formula for $\lam_n$s, \\  the Leading Eigenvalues of Perturbed Operators}\label{Asymptotic_formula_for_lam_n}

\fr In this section we keep the setting of the previous sections. Our goal here is to estblish the asymptotic behavior of eigenvalues $\lam_n$ as  $n$ diverges to $+\infty$. Let 
$$
U_\infty:=\bi_{n=0}^\infty\ov U_n.
$$
In addition to (U0), (U1), and (U2), we now also  assume that:
\begin{itemize}
\item[(U3)] $U_\infty$ is a finite set.

\sp\item[(U4)] 
Either

\begin{itemize}
\item[(U4A)] 
$$
U_\infty\cap\bu_{n=1}^\infty\sg^n(U_\infty)=\es
$$ 
or 

\sp\item[(U4B)] 
$U_\infty=\{\xi\}$, where $\xi$ is a periodic point of $\sg$ of prime period equal to some integer $p\ge 1$,
the pre-concatenation by the first $p$ terms of $\xi$ with elements of $U_n$ satisfy
\beq\label{1_2014_12_19}
[\xi|_p]U_n\sbt U_n
\eeq
for all $n\ge 0$, and 
\beq\label{2_2014_12_19}
\lim_{n\to\infty}\sup\{|\phi(\om)-\phi(\xi)|:\om\in U_n\}=0.
\eeq
\end{itemize}

\sp\item[(U5)]
There are no integer $l\ge 1$, no sequence $\(\om^{(n)}\)_{n=0}^\infty$ of points in $E_A^\infty$, and no increasing sequence $\(s_n\)_{n=0}^\infty$ of positive integers with the following properties:

\begin{itemize}
\sp\item[(U5A)] 
$$
\om^{(n)}, \sg^l(\om^{(n)})\in U_{s_n}
$$
for all $n\ge 0$,

\sp\item[(U5B)]
$$
\varliminf_{n\to\infty}d_\th(\om^{(n)},U_\infty) 
\begin{cases}
\, >0  &\text{ if (U4A) holds, } \\
\, >\th^l  &\text{ if (U4B) holds, }
\end{cases}
$$
\item[(U5C)]
$$
\varlimsup_{n\to\infty}\sum_{i=1}^l\om_i^{(n)}<+\infty,
$$
for fixed $l$,
where we identify $E$ with the natural numbers to give $\om_i^{(n)}$ their numerical values.
\end{itemize} 
\end{itemize}

\fr Having proved all the perturbation results of the previous section, we shall now prove the following analogue of Proposition~4.1 in \cite{FP},  which is our main result concerning the asymptotic behavior of eigenvalues $\lam_n$ as $n\to+\infty$. 

\bprop\label{p1fp133} With the setting of Sections~\ref{PFOriginal} and \ref{SecSingPerturb}, assume that {\rm (U0)--(U5)} hold. Then
$$
\lim_{n\to\infty}\frac{\lam-\lam_n}{\mu_\phi(U_n)}=
\begin{cases}
\lam &\text{{\rm if (U4A) holds,}} \\
\lam(1-\lam^{-p}e^{\phi_p(\xi)}) &\text{{\rm if (U4B) holds,}}
\end{cases}
$$
where $\lam$ and $\lam_n$ are respective eigenvalues of original (i. e., we recall, not normalized) operators $\pf$ and $\pf_n$. 
\eprop

\fr For every integer $n\ge 0$ let $\nu_n$ be $\mu_\phi$-conditional measure on $U_n$, i. e.:
$$
\nu_n:=\frac{\mu_\phi|_{U_n}}{\mu_\phi(U_n)}.
$$
We denote
$$
\1_n^c:=\1_{U_n}=\1-\1_n^1.
$$
We start with the following.
\blem\label{l1fp132}
If {\rm (U0)--(U4A)} and {\rm (U5)} hold, then
$$
\lim_{n\to\infty}\frac{\int Q_n\(\pf\1_n^c\)\,d\nu_n}{\lam-\lam_n} 
=\lim_{n\to\infty}\int_{E_A^\infty}Q_n\1\,d\nu_n
=1.
$$
\elem

\begin{proof}
Assume without loss of generality that $\pf$ is normalized so that $\lam=1$ and $\pf\1=\1$. With an aim to prove the first equality, we note that
$$
\begin{aligned}
\int Q_n(\pf\1_n^c)\,d\nu_n
&=\int Q_n(\pf\1-\pf\1_n^1)\,d\nu_n
 =\int Q_n(\1-\pf_n\1)\,d\nu_n \\
&= \int Q_n\1\,d\nu_n-\int Q_n\pf_n\1\,d\nu_n 
 =\int Q_n\1\,d\nu_n-\int \pf_nQ_n\1\,d\nu_n \\
&=\int Q_n\1\,d\nu_n-\lam_n\int Q_n\1\,d\nu_n \\
&=(1-\lam_n)\int Q_n\1\,d\nu_n,
\end{aligned}
$$
using Proposition \ref{p1fp80}. 
and the first equality is established. Now, fix an arbitrary integer $k\ge 1$. For every $\om\in U_n$ let
\beq\label{1fp132}
\sg_0^{-k}(\om):=\{\tau\in\sg^{-k}(\om): \exists\,_{0\le j\le k-1}\, \sg^j(\tau)\in U_n\}
\eeq
and 
\beq\label{2fp132}
\sg_c^{-k}(\om):=\sg^{-k}(\om)\sms \sg^{-k}(\om).
\eeq
If $\tau\in \sg_0^{-k}(\om)$, then $\sg^j(\tau)\in U_n$ for some $0\le j\le k-1$. Denote  $\sg^j(\tau)$ by $\g$. Then
$$
\g\in U_n \ \text{ and }  \  \sg^{k-j}(\g)\in U_n; \  1\le k-j\le k.
$$
Fix an arbitrary $M>0$. We claim that for all $n\ge 0$ sufficiently large, say $n\ge N:=N_k(M)$, we have that
\beq\label{2fp132a}
\sum_{i=1}^{k-j}\g_i\ge Mk
\eeq
for any $\gamma = \sigma^j(\tau)$ for any $\tau \in \sigma_0^{-k}(\omega)$
Indeed, seeking a contradiction we assume that there exist an increasing sequence $(s_n)_0^\infty$ of positive integers, a sequence $\(\g^{(n)}\)_0^\infty\sbt E_A^\infty$, and  an integer $l\in[1,k]$ such that
\beq\label{1fp132a}
\g^{(n)}, \sg^l(\g^{(n)})\in U_{s_n},
\eeq
and 
$$
\sum_{i=1}^l \g_i^{(n)}<Mk
$$
for all $n\ge 0$. It then follows from conditions (U4A) and (U5) that the contrapositive of (U5B) holds, i.e.:
$$
\varliminf_{n\to\infty}d_\th(\g^{(n)},U_\infty)=0.
$$
Hence, from continuity of the shift map $\sg:E_A^\infty\to E_A^\infty$ and from the finiteness of the set $U_\infty$ (by (U3)), 
$$
\varliminf_{n\to\infty}d_\th\(\sg^l(\g^{(n)}),\sg^l(U_\infty))\)=0.
$$
So, passing to a subsequence, and invoking finitenes of the set $\sg^l(U_\infty)$, we may assume without loss of generality that the sequence $\(\sg^l(\g^{(n)})\)_0^\infty$  has a limit, call it $\b$, and then $\b\in 
\sg^l(U_\infty)$. But, since the sequence $\(\ov U_n\)_0^\infty$ is descending, it follows from \eqref{1fp132a} that $\b\in \ov{U}_q$ for every $q\ge 0$. Thus $\b\in \bi_{q=0}^\infty\ov{U}_q=U_\infty$. We have therefore obtained that $U_\infty\cap \sg^l(U_\infty)\ne\es$ as this set contains $\b$. This contradicts (U4A) and finishes the proof of \eqref{2fp132a}. So, letting $n\ge N_k(M)$ and $\om \in U_n$, we get
\beq\label{2fp132b}
\begin{aligned}
\pf_n^k\1(\om)
& = \pf^k(\1_n^1)(\om) \\
& =  \sum_{\tau\in \sg_c^{-k}(\om)}\1_n^1(\tau)e^{\phi_k(\tau)}+ 
     \sum_{\tau\in \sg_0^{-k}(\om)}\1_n^1(\tau)e^{\phi_k(\tau)}  \\
& =  \sum_{\tau\in \sg_c^{-k}(\om)}e^{\phi_k(\tau)}
  = \pf^k\1(\om)-\sum_{\tau\in \sg_0^{-k}(\om)}e^{\phi_k(\tau)}  \\
&=  \1(\om)-\sum_{\tau\in \sg_0^{-k}(\om)}e^{\phi_k(\tau)}.
\end{aligned}
\eeq
Now, if $\tau\in \sg_0^{-k}(\om)$, then $\g:=\sg^{j_\tau}(\tau)\in U_n$ with some $0\le j_\tau\le k-1$, and using \eqref{2fp132a}, we get
\beq\label{1fp132b}
\aligned
S_0(\om)
:&=\sum_{\tau\in \sg_0^{-k}(\om)}e^{\phi_k(\tau)}
 \lek \sum_{\tau\in \sg_0^{-k}(\om)}\mu_\phi([\tau]) 
=\mu_\phi\lt(\sum_{\tau\in \sg_0^{-k}(\om)}[\tau]\rt) \\
&\le \mu_\phi\lt(\bu_{j=0}^{k-1}\sg^{-j}\lt(\bu_{e\ge M}[e]\rt)\rt) \\
&\le \sum_{j=0}^{k-1}\mu_\phi\lt(\sg^{-j}\lt(\bu_{e\ge M}[e]\rt)\rt)   =\sum_{j=0}^{k-1}\mu_\phi\lt(\bu_{e\ge M}[e]\rt)\\
&=k\mu_\phi\lt(\bu_{e\ge M}[e]\rt).
\endaligned
\eeq
This means that there exists a constant $C>0$ such that
$$
S_0(\om)\le Ck\mu_\phi\lt(\bu_{e\ge M}[e]\rt).
$$
Denote the number $C\mu_\phi\lt(\bu_{e\ge M}[e]\rt)$ by $\eta_M$.
Using \eqref{1fp132b}, \eqref{2fp132b}, and Proposition~\ref{p1fp80}, we get the following.
$$
\begin{aligned}
\lt|1-\int Q_n\1\,\nu_n\rt|
&=\lt|\int\1\,d\nu_n-\int Q_n\1\,d\nu_n\rt|
 =\lt|\int\(\pf_n^k\1+S_0\)\,d\nu_n-\int Q_n\1\,\nu_n\rt| \\
&=\lt|\int\(\pf_n^k-\lam_n^kQ_n\)\1\,d\nu_n+\int(\lam_n^k-1)Q_n\1\,d\nu_n+
     \int S_0\,d\nu_n \rt| \\
&\le \int|\De_n^k\1|\,d\nu_n+|\lam_n^k-1|\cdot||Q_n\1||_\infty+  
     \int S_0\,d\nu_n\\
&\le C\ka^n+C|\lam_n^k-1|+k\eta_M.
\end{aligned}
$$
Now, fix $\e>0$. Take then $n\ge 1$ so large that $C\ka^n<\e/3$. Next, take $M\ge 1$ so large that $k\eta_M<\e/3$. Finally take any $n\ge N_k(M)$ so large that $C|\lam_n^k-1|<\e/3$. Then $\lt|1-\int Q_n\1\,\nu_n\rt|<\e$, and the proof is complete. 
\end{proof}

\fr The proof of the next lemma, corresponding to Lemma~4.3 in \cite{FP}, goes through unaltered in the case of an infinite alphabet. We include it here for the sake of completeness and for the convenience of the reader.

\blem\label{l1fp132b}
If \emph{(U1)-(U4A)} and \emph{(U5)} hold, then
$$
\lim_{n\to\infty}\frac{\int Q_n\(\pf\1_n^c\)\,d\nu_n}{\mu_\phi(U_n)}=\lam.
$$
\elem

\bpf
We assume without loss of generality that $\lam=1$. Let $\tau_n:U_n\to U_n$ be the first return time from $U_n$ to $U_n$ under the shift map $\sg:E_A^\infty\to E_A^\infty$. It is defined as
$$
\tau_n(\om):=\inf\{k\ge 1:\sg^k(\om)\in U_n\}.
$$
By Poincar\'e's Recurrence Theorem, $\tau_n(\om)<+\infty$ for $\mu_\phi$--a.e. $\om\in E_A^\infty$. We deal with the concept of first return time and first return time more thoroughly in Sections~\ref{FRM}, \ref{FRMERI}, and \ref{FRMERII}. We have 
$$
\begin{aligned}
\int_{U_n}\tau_n\,d\nu_n
&=\sum_{i=1}^\infty i\nu_n(\tau_n^{-1}(i))
 =\sum_{i=1}^\infty i\nu_n\(\1_{\tau_n^{-1}(i)}\)
 =\nu_n(\tau_n^{-1}(1))
     +\sum_{i=2}^\infty i\nu_n\(\1_n^{i-1}\circ\sg\cdot\1_n^c\circ\sg^i\) \\
&=\nu_n(\tau_n^{-1}(1))
     +\sum_{i=2}^\infty\frac{i}{\mu_\phi(U_n)}\mu_\phi\(\1_n^{i-1}\circ\sg\cdot\1_n^c\circ\sg^i\) \\
&=\nu_n(\tau_n^{-1}(1))
     +\sum_{i=2}^\infty\frac{i}{\mu_\phi(U_n)}\mu_\phi\(\pf^i\(\(\1_n^{i-1}\circ\sg\cdot\1_n^c\circ\sg^i\)\). \\     
\end{aligned}
$$
Now using several times the property $\pf^j(f\cdot g\circ\sg^j)=g\pf^j(f)$, a formal calculation leads to
$$
\int_{U_n}\tau_n\,d\nu_n
=\nu_n(\tau_n^{-1}(1))+\sum_{i=2}^\infty i\nu_n\(\pf_n^{i-1}(\pf(\1_n^c))\).
$$
Invoking at this point Proposition~\ref{p1fp80}, we further get
$$
\begin{aligned}
\int_{U_n}\tau_n\,d\nu_n
&=\nu_n(\tau_n^{-1}(1))+\sum_{i=2}^\infty i\nu_n\(\lam_n^{i-1}Q_n\pf(\1_n^c) +
\De_n^{i-1}\pf(\1_n^c)\) \\
&=\nu_n(\tau_n^{-1}(1))+\nu_n\(Q_n\pf(\1_n^c)\)\sum_{i=2}^\infty i\lam_n^{i-1}+
    \sum_{i=2}^\infty i\nu_n\(\De_n^{i-1}\pf(\1_n^c)\) \\
&=\nu_n(\tau_n^{-1}(1))+\nu_n\(Q_n\pf(\1_n^c)\)\lt(\frac1{(1-\lam_n)^2}-1\rt)
    +\sum_{i=2}^\infty i\nu_n\(\De_n^{i-1}(\pf\1-\pf\1_n)\)\\
&=\nu_n(\tau_n^{-1}(1))+\nu_n\(Q_n\pf(\1_n^c)\)\lt(\frac1{(1-\lam_n)^2}-1\rt)
    +\sum_{i=2}^\infty i\nu_n\(\De_n^{i-1}(\pf\1-\pf_n\1)\) \\
&=\nu_n(\tau_n^{-1}(1))+\nu_n\(Q_n\pf(\1_n^c)\)\lt(\frac1{(1-\lam_n)^2}-1\rt)+ \\
& \  \  \  \  \  \   \  \  \  \  \  \  \   \  \  \  \  \  \  \   \  \  \ +\sum_{i=2}^\infty i\nu_n\(\De_n^{i-1}(\pf\1)\)-
     \sum_{i=2}^\infty i\nu_n\(\De_n^i\1)\).  
\end{aligned}
$$
Since, By Kac's Theorem, $\int_{U_n}\tau_n\,d\nu_n=1/\mu_\phi(U_n)$, multiplying both sides of this formula by $\nu_n\(Q_n\pf(\1_n^c)\)$, we thus get
$$
\begin{aligned}
\frac{\nu_n\(Q_n\pf(\1_n^c)\)}{\mu_\phi(U_n)}
=\lt(\frac{\nu_n\(Q_n\pf(\1_n^c)\)}{1-\lam_n}\rt)^2+\nu_n\(Q_n\pf&(\1_n^c)\)  
    \Big(\nu_n(\tau_n^{-1}(1))-\nu_n\(Q_n\pf(\1_n^c)\)+\\
    &+\sum_{i=2}^\infty i\nu_n\(\De_n^{i-1}(\pf\1)\)-
      \sum_{i=2}^\infty i\nu_n\(\De_n\1)\Big).  
\end{aligned}
$$
Since, by Lemma~\ref{l1fp132}, 
$$
\lim_{n\to\infty}\frac{\nu_n\(Q_n\pf(\1_n^c)\)}{1-\lam_n}=1,
$$
we have that $\lim_{n\to\infty}\nu_n\(Q_n\pf(\1_n^c)\)=0$, and since, applying Proposition~\ref{p1fp80} again, we deduce that the four terms in the big parentheses above are bounded, we get that 
$$
\lim_{n\to\infty}\frac{\nu_n\(Q_n\pf(\1_n^c)\)}{\mu_\phi(U_n)}=1.
$$
The proof is complete.
\epf

\fr We shall prove the following.

\blem\label{l1fp132c}
If \emph{(U1)-(U4B)} and \emph{(U5)} hold, then
$$
\lim_{n\to\infty}\frac{\int Q_n\(\pf\1_n^c\)\,d\nu_n}{\lam-\lam_n}
=\lim_{n\to\infty}\int_{E_A^\infty}Q_n\1\,d\nu_n
=1-\lam^{-p}e^{\phi_p(\xi)}.
$$
\elem

\begin{proof}
Assume again without loss of generality that $\pf$ is normalized so that $\lam=1$ and $\pf\1=\1$. 
The first equality is general and has been established at the beginning of the proof of Lemma~\ref{l1fp132}. We will thus concentrate on the second one. So, fix $\om\in U_n$ and $k$, an integral multiple of $p$, say $k=qp$ with $q\ge 0$. Define the sets $\sg_0^{-k}(\om)$ and $\sg_c^{-k}(\om)$ exactly as in the proof of Lemma~\ref{l1fp132}, i.e. by formulae \eqref{1fp132} and \eqref{2fp132}. We further repeat the proof of Lemma~\ref{l1fp132} verbatim until formula \eqref{2fp132a}, which now takes on the form:
$$
\text{{\rm Either both } } \ k-j\ge p \ \text{ {\rm and } }  \g|_{k-j}=\xi|_{k-j} \   \text{ {\rm or else }  }  \sum_{i=1}^{k-j}\g_i\ge Mk.
$$
Indeed, this is an immediate consequence of (U4B) and (U5). In other words
$$
\sg_0^{-k}(\om)=\sg_1^{-k}(\om)\cup\sg_2^{-k}(\om),
$$
where
$$
\sg_1^{-k}(\om):=\Big\{\tau\in\sg_0^{-k}(\om): \exists\, (0\le j\le q-1)\,\, \sg^{pj}(\tau)\in U_n \text{ and } \sg^{pj}(\tau)|_{p(q-j)}=(\xi|_p)^{q-j}\Big\}
$$
and 
$$
\begin{aligned}
\sg_2^{-k}(\om)
&=\sg_0^{-k}(\om)\sms\sg_1^{-k}(\om) \\
&\sbt \Big\{\tau\in\sg_0^{-k}(\om): \exists\, (0\le j\le k-1)\,\, \sg^j(\tau)\in U_n
 \text{ and } \sum_{i=j+1}^k\tau_i\ge Mk\Big\}.
\end{aligned}
$$
Now, we shall prove that
\beq\label{2fp132c}
\sg_1^{-k}(\om)= Z:= \Big\{\tau\in\sg_0^{-k}(\om): \sg^{k-p}(\tau)\in [\xi|_p]\Big\}.
\eeq
Indeed, denote the set on the right-hand side of this equality by $Z$. If $\tau\in Z$, then $\sg^{p(q-1)}(\tau)|_p=\xi|_p$ and 
$$
\sg^{p(q-1)}(\tau)
=\(\sg^{p(q-1)}(\tau)\)|_p\sg^{pq}(\tau)=\xi|_p\om\in\xi|_pU_n\sbt U_n,
$$
where the last inclusion is due to (U4B). Thus, taking $j=q-1$, we see that $\tau\in \sg_1^{-k}(\om)$. So, the inclusion
\beq\label{1fp132c}
Z\sbt \sg_1^{-k}(\om)
\eeq
has been established. In order to prove the opposite inclusion, let $\tau\in \sg_1^{-k}(\om)$. Then there exists $j\in\{0,1,\ld,q-1\}$ such that $\sg^{pj}(\tau)\in U_n$ and $\sg^{pj}(\tau)|_{p(q-j)}=(\xi|_p)^{q-j}$. Then
$$
\sg^{k-p}(\tau)|_p
=\(\sg^{p(q-j-1)}\circ\sg^{pj}(\tau)\)|_p
=\sg^{pj}(\tau)|_{p(q-j-1)+1}^{p(q-j)+1}
=\xi|_p,
$$
and so, $\tau\in Z$. This establishes the inclusion $\sg_1^{-k}(\om)\sbt Z$, and, together with \eqref{1fp132c} completes the proof of \eqref{2fp132c}.

\sp\fr Therefore, keeping $\om\in U_n$ and using \eqref{2fp132c} and \eqref{2fp132b}we can write
\beq\label{1fp132d}
\begin{aligned}
\pf_n^k\1(\om)
& = \pf^k(\1_n^1)(\om) \\
&=  \1(\om)-\sum_{\tau\in \sg_1^{-k}(\om)}e^{\phi_k(\tau)}
           -\sum_{\tau\in \sg_2^{-k}(\om)}e^{\phi_k(\tau)} \\
&=  \1(\om)-\sum_{\tau\in \sg^{-k}(\om)}\1_{[\xi|
           _p]}\circ\sg^{p(q-1)}(\tau)e^{\phi_k(\tau)}
           -\sum_{\tau\in \sg_2^{-k}(\om)}e^{\phi_k(\tau)} \\
&=  \1(\om)-\pf^{pq}\(\1_{[\xi|_p]}\circ\sg^{p(q-1)}\)(\om)
           -\sum_{\tau\in \sg_2^{-k}(\om)}e^{\phi_k(\tau)} \\
&=  \1(\om)-\pf^p\(\1_{[\xi|_p]}\)(\om)
           -\sum_{\tau\in \sg_2^{-k}(\om)}e^{\phi_k(\tau)}.
\end{aligned}
\eeq
Putting
$$
S_2(\om):=\sum_{\tau\in \sg_2^{-k}(\om)}e^{\phi_k(\tau)}
$$
and keeping $\eta_M$ the same as in the proof of Lemma~\ref{l1fp132}, the same estimates as in \eqref{1fp132b}, give us
$$
S_2(\om)\le k\eta_M.
$$
Hence, using also \eqref{1fp132d}, we get
$$
\begin{aligned}
\Big|1-e^{\varphi_p(\xi)} &-\int\pf_n^k\1\,d\nu_n\Big|
=\lt|\int\pf^p\(\1_{[\xi|_p]}\)\,d\nu_n-e^{\vp_p}(\xi)+\int S_2\,d\nu_n\rt|= \\
&=\lt|\int\(e^{\vp_p}(\xi|_p\om)-e^{\vp_p}(\xi)\)d\nu_n+\int S_2\,d\nu_n\rt|\\
&\le \int\big|e^{\vp_p}(\xi|_p\om)-e^{\vp_p}(\xi)\big|d\nu_n+\int S_2\,d\nu_n\\
&\le \vep_n+k\eta_M,
\end{aligned}
$$
with some $\vep_n\to 0$ resulting from the last item of (U4B). Hence, keeping $k$ fixed and letting $M$ and then $n$ to infinity, we obtain
\beq\label{1fp132e}
\lim_{n\to\infty}\int\pf_n^k\1\,d\nu_n=1-e^{\vp_p}(\xi)
\eeq
for every $k=qp\ge 1$. Using Proposition~\ref{p1fp80}, we get
$$
\begin{aligned}
\Big|\int\pf_n^k\1\,d\nu_n-\int Q_n\1\,d\nu_n\Big|
&=\Big|\int\(\pf_n^k-\lam_n^kQ_n\)\1\,d\nu_n+\int\(\lam_n^k-1\)Q_n\1\,d\nu_n\Big|\\
&\le \Big\|\(\pf_n^k-\lam_n^kQ_n\)\1\Big\|_\infty\,+\big|\lam_n^k-1\big|\cdot\big\|Q_n\1\big\|_\infty \\
&\le\big\|\De_n^k\big\|_\infty\,+C\big|\lam_n^k-1\big| \\
&\le C\ka^k+C\big|\lam_n^k-1\big|.
\end{aligned}
$$So, fixing $\vep>0$, we first take and fix $k\ge 1$ large enough so that $C\ka^k<\vep/2$, and then using Proposition~\ref{p1fp80}, we take $n\ge 1$ large enough so that $C\big|\lam_n^k-1\big|<\vep/2$. Combining this with \eqref{1fp132e}, we finally get the desired equality
$$
\lim_{n\to\infty}\int Q_n\1\,d\nu_n=1-e^{\vp^{(p)}(\xi)}.
$$
The proof is complete.
\end{proof}

\fr Applying Lemma~\ref{l1fp132c} and proceeding along the lines of the proof of Lemma~\ref{l1fp132b} (or Lemma 4.3 in \cite{FP}), we get the following analogue of Lemma~4.5 from \cite{FP}.

\blem\label{l1fp132da}
If \emph{(U1)-(U4B)} and \emph{(U5)} hold, then
$$
\lim_{n\to\infty}\frac{\int Q_n\(\pf\1_n^c\)\,d\nu_n}{\mu_\phi(U_n)}
=\lam\(1-\lam^{-p}e^{\phi_p(\xi)}\)^2
$$
\elem

\fr Having proved Lemmas~\ref{l1fp132}, \ref{l1fp132b}, \ref{l1fp132c}, and \ref{l1fp132da}, Proposition~\ref{p1fp133} follows.

\sp We now recall a basic escape rates definitions. Let $G$ be an arbitrary subset of $E_A^\infty$. We set
\beq\label{11_2016_07_14}
\un R_{\mu_\phi}(G) := -\varlimsup_{k\to +\infty} \frac{1}{k}\log \mu_\phi\Big(\big\{\om\in E_A^\infty: \sigma^i(\om)\not\in G\, \hbox{ for all } i = 1, \cdots, k\big\}\Big)
\eeq
and 
\beq\label{12_2016_07_14}
\ov R_{\mu_\phi}(G) := -\varliminf_{k\to +\infty} \frac{1}{k}\log \mu_\phi\Big(\big\{\om\in E_A^\infty: \sigma^i(\om)\not\in G\, \hbox{ for all } i = 1, \cdots, k\big\}\Big).
\eeq
We call $\un R_{\mu_\phi}(G)$ and $\ov R_{\mu_\phi}(G)$ respectively the lower and the upper escape rate of $G$. Of course
$$
\un R_{\mu_\phi}(G)\le \ov R_{\mu_\phi}(G),
$$ 
and if these two numbers happen to be equal, we denote their common value by
$$
R_{\mu_\phi}(G)
$$ 
and call it the escape rate of $G$. We provide here for the sake of completeness and convenience of the reader the short elegant proof, entirely taken from \cite{FP}, of the following.

\blem\label{l11_2016_07_14}
If \emph{(U0)-(U5)} hold, then for all integers $n\ge 0$ large enough the escape rates $R_{\mu_\phi}(U_n)$ exist, and moreover
$$
R_{\mu_\phi}(U_n)=\log\lam-\log\lam_n.
$$
\elem

\bpf
Assume without loss of generality that the Perron-Frobenius operator $\pf:\cB_\th\to\cB_\th$ is fully normalized so that $\lam=1$ and $\pf\1=\1$. By virtue of Proposition~\ref {p1fp80} (b), (c), and (d), we have for every $n\ge 0$ large enough and for all $k\ge 1$ that
\beq\label{1cl5}
\begin{aligned}
\mu_\phi\Big(\big\{\om\in E_A^\infty: &\sigma^i(\om)\not\in U_n\, \hbox{ for all } i = 1, \cdots, k\big\}\Big)=\\
&=\mu_\phi\lt(\bi_{j=0}^{k-1}\sg^{-j}(U_n^c)\rt)
=\int_{E_A^\infty}\1_n^k\,d\mu_\phi
=\int_{E_A^\infty}\pf^k\(\1_n^k\)\,d\mu_\phi \\
&=\int_{E_A^\infty}\pf_n^k(\1)\,d\mu_\phi
=\int_{E_A^\infty}\(\lam_n^kQ_n\1+\De_n^k\1\)\,d\mu_\phi \\
&=\lam_n^k\int_{E_A^\infty}Q_n\1\,d\mu_\phi + \int_{E_A^\infty}\De_n^k\1\,d\mu_\phi.
\end{aligned}
\eeq
So, employing Proposition~\ref{p1fp80} (b) and Proposition~\ref{p1fp131}, the latter to make sure that $\lam_n\in (0,+\infty)$ and $\int_{E_A^\infty}Q_n\1\,d\mu_\phi\in (0,+\infty)$, we conclude from \eqref{1cl5} with the help of Proposition~\ref{p1fp80} (e) and (g), that the limit
$$
\lim_{k\to +\infty} \frac{1}{k}\log \mu_\phi\Big(\big\{\om\in E_A^\infty: \sigma^i(\om)\not\in U_n\, \hbox{ for all } i = 1, \cdots, k\big\}\Big)
$$
exists and is equal to $\log\lam_n$. The proof is complete.
\epf

\fr Now we are in position to prove the following main result of this section.

\bprop\label{p2fp133}
With the setting of Sections~\ref{PFOriginal} and \ref{SecSingPerturb}, assume that {\rm (U0)--(U5)} hold. Then
$$
\lim_{n\to\infty}\frac{R_{\mu_\phi}(U_n)}{\mu_\phi(U_n)}=
\begin{cases}
1 &\text{{\rm if (U4A) holds,}} \\
1-\exp\(\phi_p(\xi)-p\P(\phi)\) &\text{{\rm if (U4B) holds.}}
\end{cases}
$$
\eprop

\bpf
By Lemma~\ref{l11_2016_07_14} we have
$$
R_{\mu_\phi}(U_n)
=\frac{\log\lam-\log\lam_n}{\mu_\phi(U_n)}
=-\frac{\log\lam_n-\log\lam}{\lam_n-\lam}\cdot \frac{\lam_n-\lam}{\mu_\phi(U_n)}.
$$
Therefore, invoking Proposition~\ref{p1fp133}, we get that
$$
\begin{aligned}
\lim_{n\to\infty}\frac{R_{\mu_\phi}(U_n)}{\mu_\phi(U_n)}
&=\lim_{n\to\infty}\frac{\log\lam_n-\log\lam}{\lam_n-\lam}\cdot
 \lim_{n\to\infty}\frac{\lam-\lam_n}{\mu_\phi(U_n)} \\
&=\frac1{\lam}
\begin{cases}
\lam &\text{{\rm if (U4A) holds,}} \\
\lam(1-\lam^{-p}\exp\(\phi_p(\xi)\) &\text{{\rm if (U4B) holds.}}
\end{cases} \\
&=\begin{cases}
1 &\text{{\rm if (U4A) holds,}} \\
1-\exp\(\phi_p(\xi)-p\P(\phi)\) &\text{{\rm if (U4B) holds.}}
\end{cases}
\end{aligned}
$$
The proof is complete.
\epf

\

\part{Escape Rates for Conformal GDMSs and IFSs}

\sp

Our approach to proving results on  escape rates for conformal graph directed Markov systems and conformal iterated function systems is based on the symbolic dynamics, more precisely, the symbolic thermodynamic formalism, developed in the preceding sections.  

\section{Preliminaries on Conformal GDMSs}\label{Attracting_GDMS_Prel}

A Graph Directed Markov System (abbr. GDMS) consists of a directed
multigraph and an associated incidence matrix, $(V,E,i,t,A)$. As earlier $A$ is the incidence matrix, i. e.
$$
A:E\times E\to \{0,1\}
$$
The multigraph consists of a finite set $V$ of
vertices and a countable (either finite or infinite) set of directed
edges $E$ and two functions $i,t:E\to V$ together with a set of nonempty compact metric
spaces $\{X_v\}_{v\in V}$, a number $s$, $0<s<1$, and for every $e\in
E$, a 1-to-1 contraction $\phi_e:X_{t(e)}\to X_{i(e)}$ with a Lipschitz constant
$\le s$. For brevity, the set 
$$
S=\{\phi_e:X_{t(e)}\to X_{i(e)}\}_{e\in E}
$$
is called a Graph Directed Markov System (abbr. GDMS). The main object
of interest in this book will be the limit set of the system $S$ and
objects associated to this set. We now describe the limit set.
For each $\om \in E^*_A$, say $\om\in E^n_A$, we consider the map coded
by $\om$:
$$
\phi_\om=\phi_{\om_1}\circ\ld\circ\phi_{\om_n}:X_{t(\om_n)}\to X_{i(\om_1)}.
$$
For $\om \in E^\infty_A$, the sets
$\{\phi_{\om|_n}\(X_{t(\om_n)}\)\}_{n\ge 1}$ form a descending 
sequence of non-empty compact sets and therefore $\bi_{n\ge
1}\phi_{\om|_n}\(X_{t(\om_n)}\)\ne\es$. Since for every $n\ge 1$,
$\diam\(\phi_{\om|_n}\(X_{t(\om_n)}\)\)\le s^n\diam\(X_{t(\om_n)}\)\le
s^n\max\{\diam(X_v):v\in V\}$, we conclude that the intersection 
$$
\bi_{n\ge1}\phi_{\om|_n}\(X_{t(\om_n)}\)
$$
is a singleton and we denote its only element by $\pi(\om)$. In this 
way we have defined the map 
$$
\pi:E^\infty_A\longrightarrow X:=\du_{v\in V}X_v
$$
from $E_A^\infty$ to $\du_{v\in V}X_v$, the disjoint union of the compact
sets $X_v$. The set
$$
J=J_S=\pi(E^\infty_A)
$$
will be called the limit set of the GDMS $S$. 

In order  to pass to geometry, we call a GDMS conformal (CGDMS) if the 
following conditions are satisfied.

\sp
\begin{itemize}
\item[(a)] For every vertex $v\in V$, $X_v$ is a compact connected
subset of a euclidean space $\R^d$ (the dimension $d$ common for all
$v\in V$) and $X_v=\ov{\Int(X_v)}$.

\sp\item[(b)] (Strong Open Set Condition) This  consists of two parts:

\sp\begin{itemize}
\item[(b1)] (Open Set Condition) For all $a,b\in E$, $a\ne b$,
$$
\phi_a(\Int(X_{t(a)}))\cap \phi_b(\Int(X_{t(b)}))=\es,
$$
and 
\item[(b2)] 
$$
J_\cS\cap \Int(X)=\es.
$$
\end{itemize}
\item[(c)]  For every vertex $v\in V$ there exists an open connected set
$W_v\spt X_v$ (where $X= \cup_{v\in V}X_v$)
 such that for every $e\in I$ with $t(e)=v$, the map
$\f_e$ extends to a $C^1$ 
conformal diffeomorphism of $W_v$ into $W_{i(e)}$. 

\sp\item[(d)] There are two constants $L\ge 1$ and $\a>0$ such that
$$
\bigl||\f_e'(y)|-|\f_e'(x)|\bigr|\le L||(\f_e')^{-1}||^{-1}||y-x||^\a.
$$ 
for every $e\in E$ and every pair of points $x,y\in X_{t(e)}$, where 
$|\f_\om'(x)|$ means the norm of the derivative.
\end{itemize}

\sp
\brem\lab{p1.033101}
If $d\ge 2$ and a family $S=\{\phi_e\}_{e\in E}$ satisfies the conditions (a) and (c), then, due to Koebe's Distortion Theorem in dimension $d=2$ and the  Loiuville Representation Theorem (stating  that if $d\ge 3$ then each conformal map is necessarily a M\"obius transformation)  it also satisfies condition (d) with $\a=1$.
\erem

\brem\lab{Rem: Cone Condition}
In the papers \cite{MU_lms} and \cite{GDMS} there appeared also the so called Cone Condition. This condition was however exclusively needed only to prove the following.
\bthm\lab{t4.3.1} 
If $\mu$ is a Borel shift-invariant ergodic
probability measure on $E_A^\infty$, then
\begin{equation}\lab{4.3.1}
\mu\circ\pi^{-1}\(\phi_\om(X_{t(\om)})\cap\phi_\tau\(X_{t(\tau)})\)=0 
\end{equation}
for all incomparable words $\om,\tau\in E^*$.
\ethm

\fr This theorem is of particular importance if measure $\mu$ is a Gibbs state of a H\"older continuous function. The following slight strengthening of Theorem~\ref{t4.3.1} however immediately follows from the Strong Open Set Condition.
\bthm\lab{t4.3.1B} 
If $\mu$ is a Borel shift-invariant ergodic probability measure on $E_A^\infty$ with full topological support, then
\begin{equation}\lab{4.3.1+1}
\mu\circ\pi^{-1}\(\phi_\om(X_{t(\om)})\cap\phi_\tau\(X_{t(\tau)})\)=0 
\end{equation}
for all incomparable words $\om,\tau\in E^*$.
\ethm

\fr Indeed, the Strong Open Set Condition ensures that for such measures $\mu$ 
$$
\mu(\Int(X))>0
$$
and, since we clearly have,
$$
\sg^{-1}(\pi^{-1}(\Int(X))\sbt\Int(X),
$$
we thus conclude from ergodicity of $\mu$ that $\mu(\Int(X))=0$. The assertion of Theorem~\ref{t4.3.1B} thus follows. Note also that all Gibbs states are of full support.

\sp\fr We would like however to complete  this comment by saying that in the case of finite alphabet $E$ the Open set Condition alone suffices, and the item (b2) is not needed at all. It is not needed in the case of infinite alphabet either as long as we are only interested in the Hausdorff dimension of the limit set, i. e. as long as we only want prove Bowen's Formula.
\erem

\fr Let $F=\{f^{(e)}:X_{t(e)}\to\R:e\in E\}$ be a family of real-valued
functions. For every $n\ge 1$ and $\b>0$ let
$$
V_n(F)=\sup_{\om\in E^n}\sup_{x,y\in X_{t(\om)}}\{|f^{(\om_1)}(\phi_{\sg(\om)}
(x))-f^{(\om_1)}(\phi_{\sg(\om)}(y))|\}\ep^{\b(n-1)},
$$
We have made the conventions that the empty
word $\es$ is the only word of length $0$ and $\phi_\es=\text{Id}_X$.
Thus, $V_1(F) < \infty$ simply means the diameters of the sets $f^{(e)}(X)$
are uniformly bounded. The collection $F$ is called a H\"older family of 
functions (of order $\b$) if 
\begin{equation}\lab{3.1.1}
V_\b(F)=\sup_{n\ge 1}\{V_n(F)\}<\infty.  
\end{equation}
We call the H\"older family $F$, summable (of order $\b$) 
if (\ref{3.1.1}) is satisfied and 
\begin{equation}\lab{3.1.2}
\sum_{e\in E}\exp\(\sup\(f|_{[e]}\)\)<+\infty. 
\end{equation}
In order to get the link with the previous sections on thermodynamic formalism on  symbol spaces, we introduce now a potential function or amalgamated function, $f:E^\infty\to\R$, induced by the family of functions $F$ as follows. 
$$
f(\om)= f^{(\om_1)}(\pi(\sg(\om))).
$$
Our convention will be to use lower case letters for the potential
function corresponding to a given H\"older system of functions. The following
lemma is a straightforward, see \cite{GDMS} for a proof.

\blem\lab{l3.1.3} 
If $F$ is a H\"older family (of order $\b$) then the 
amalgamated function $f$ is H\"older continuous (of order $\b$). If $F$ is summable, then so is $f$.
\elem

\fr We recall from \cite{MU_lms} and \cite{GDMS} the following definitions:
$$
\th_\cS:=\inf\Ga_\cS \hbox{ where } \Ga_\cS=\inf\lt\{s\ge 0: \sum_{e\in E}||\phi_e'||_\infty^s<+\infty\rt\}.
$$
The proofs of the following two statements can be found in \cite{GDMS}.

\bprop\label{p1_2016_01_12}
If $\cS$ is an irreducible conformal {\rm GDMS}, then for every $s\ge 0$ we have that 
$$
\Ga_\cS=\{s\ge 0: \P(s)<+\infty\}.
$$
In particular,
$$
\th_\cS:=\inf\lt\{s\ge 0: \P(s)<+\infty\rt\}.
$$
\eprop

\bthm\label{t3_2016_01_12}
If $\cS$ is a finitely irreducible conformal {\rm GDMS}, then the function $\Ga_\cS\ni s\mapsto \P(s)$ is 

\begin{itemize}
\sp\item strictly decreasing, 

\sp\item real-analytic, 

\sp\item convex, and 

\sp\item $\lim_{s\to+\infty}\P(s)=-\infty$.
\end{itemize}
\ethm

\fr We also introduce the following important characteristic of the system $\cS$.
$$
b_\cS:=\inf\{s\ge 0: \P(s)\le 0\}\ge \th_\cS.
$$
We call $b_\cS$ the Bowen's parameter of the system $\cS$. The following theorem, providing a geometrical interpretation of this parameter has been proved in \cite{GDMS}.

\bthm\label{t2_2016_01_12}
If $\cS$ is an finitely irreducible conformal {\rm GDMS}, then 
$$
\HD(J_\cS)=b_\cS\ge \th_\cS.
$$
\ethm

\fr Following \cite{MU_lms} and \cite{GDMS} we call the system $\cS$ regular if there exists $s\in (0,+\infty)$ such that 
$$
\P(s)=0.
$$
Then by Theorems~\ref{t2_2016_01_12} and \ref{t3_2016_01_12}, such a zero is unique and is equal to $b_\cS$. 

\sp\fr We call the system $\cS$ strongly regular if there exists $s\in [0,+\infty)$ (in fact in $(\g_\cS,+\infty)$) such that 
$$
0<\P(s)<+\infty. 
$$
By Theorem ~\ref{t3_2016_01_12} each strongly regular conformal GDMS is regular. 

\sp Let $\zeta: E^\infty_A \to \mathbb{R}$ be defined by the formula
\beq\label{1MU_2014_09_10}
\zeta(\om)= \log|\vp'_{\om_1}(\pi(\sg(\om))|.
\eeq
Let us record the following obvious observation.

\bobs\label{o1_2016_02_12}
For every $t\ge 0$, $t\zeta$ is the amalgamated function of the following family of functions:
$$
t\Xi:=\{X_{t(e)}\ni x\mapsto t\log|\phi_e'(x)|\in \R\}_{e\in E}.
$$
\eobs

\fr The following proposition is easy to prove; see \cite[Proposition 3.1.4]{GDMS} for complete details. 

\bprop\label{l1j85}
For every real $t\geq 0$ the function $t\zeta:E^\infty_A \to
\mathbb{R}$ is H\"older continuous and $t\Xi$ is a H\"older continuous family of functions. 
\eprop

\bobs\label{o5_2016_0212}
For every $t\ge 0$ we have that $t\in \Ga_\cS$ if and only if the H\"older continuous potential $t\zeta$ is summable if and only if the H\"older continuous family of functions $t\Xi$ is summable.
\eobs

\fr We denote:
$$
\P(\sg,t\zeta):=\P(t).
$$
for every $t\ge 0$.

\section{More Technicalities on Conformal GDMSs}

We keep the setting and notation from the previous section. 

\sp\begin{itemize}
\item We call a point $z\in X$ pseudo-periodic for $\cS$ if there exists $\om\in E_A^*$ such that $z\in X_{t(\om_0)}$ and $\phi_\om(z)=z$. 

\sp\item We call a point $z\in\cS$ periodic for $\cS$ if $z=\pi(\om)$ for some periodic element $\om\in E_A^\infty$. 

\sp\item Of course every periodic point is pseudo-periodic. Also obviously, for maximal graph directed Markov systems, in particular for conformal iterated function systems, periodic points and pseudo-periodic points coincide. 

\sp\item We call a periodic point $z\in J_\cS$ uniquely periodic if $\pi^{-1}(z)$ is a singleton and there is exactly one $\xi\in E_A^*$ such that the infinite concatenation $\xi^\infty\in E_A^\infty$, $\phi_\xi(z)=z$, and if $\phi_\a(z)=z$ for some $\a\in E_A^*$, then $\a=\xi^q$ for some integer $q\ge 1$. 
\end{itemize}

\sp We shall prove the following.

\blem\label{l1fp82}
If $z\in J_\cS$ is not pseudo-periodic for $\cS$, then
$$
\pi^{-1}(z)\cap \bu_{k=1}\sg^n(\pi^{-1}(z))=\es.
$$
\elem

\begin{proof}
Assume for a contradiction that there exists $\om\in\pi^{-1}(z)$ such that $\sg^n(\om)\in\pi^{-1}(z)$ for some $n\ge 1$. We then have 
$$
\phi_{\om|_n}(z)
=\phi_{\om|_n}\(\pi(\sg^n(\om))\)=\pi\(\om|_n\sg^n(\om)\)
=\pi(\om)
=z.
$$
So, $z$ is pseudo-periodic, and this contradiction finishes the proof. 
\end{proof}

\fr In fact, we will need more:

\blem\label{l2fp82}
Assume that $z\in J_\cS$ is not pseudo-periodic for the system $\cS$. If $k\ge 1$ is an integer, $(l_n)_{n=1}^\infty$ is a sequence of integers in $\{1,2,\ld,k\}$, and $\(\tau^{(n)}\)_{n=1}^\infty$ is a sequence of points in $E_A^\infty$ such that
$$
\lim_{n\to\infty}\pi\(\tau^{(n)}\)
=\lim_{n\to\infty}\pi\(\sg^{l_n}(\tau^{(n)})\)
=z,
$$
then
$$
\lim_{n\to\infty}\sum_{i=0}^{l_n}\tau_i^{(n)}=+\infty.
$$
\elem

\begin{proof}
Seeking a  contradiction suppose that 
$$
\varliminf_{n\to\infty}\sum_{i=0}^{l_n}\tau_i^{(n)}<+\infty.
$$
Passing to a subsequence, we may assume without loss of generality 
$$
\varlimsup_{n\to\infty}\sum_{i=0}^{l_n}\tau_i^{(n)}<+\infty.
$$
There then exists $M\in (0,+\infty)$ such that 
$$
\sum_{i=0}^{l_n}\tau_i^{(n)}\le M
$$
for all $n\ge 1$. Hence,
$$
\tau_i^{(n)}\le M
$$
for all $n\ge 1$ and all $i=0,1,2,\ld,l_n$. So, passing to yet another subsequence, we may further assume that the sequence $(l_n)_{n=1}^\infty$ is constant, say $l_n=l$ for all $n\ge 1$, and that for every $i=0,1,2,\ld,l_n$ the sequence $\(\tau^{(n)}\)_{n=1}^\infty$ is constant, say $\tau_i^{(n)}=\tau_i\le M$ for all $n\ge 1$. Let 
$$
\tau:=\tau_1\tau_2\ld\tau_l.
$$
it then follows from our hypothesis that
$$
\begin{aligned}
\phi_\tau(z)
&=\phi_\tau\(\lim_{n\to\infty}\pi\(\sg^l(\tau^{(n)})\)\)
=\lim_{n\to\infty}\phi\(\tau\(\pi\(\sg^l(\tau^{(n)})\)\)\)\\
&=\lim_{n\to\infty}\pi\(\sg^l(\tau^{(n)})\) \\
&=\lim_{n\to\infty}\pi\(\tau^{(n)}\)
=z.
\end{aligned}
$$
Thus $z$ is a pseudo-periodic point for the graph directed Markov system $\cS$, and this contradiction finishes the proof.
\end{proof}

\fr A statement corresponding to Lemma~\ref{l2fp82} in the case of periodic points is the following.

\blem\label{l2fp82a}
Assume that $z\in J_\cS$ is uniquely periodic for the system $\cS$
(i.e.,  $\pi^{-1}(z)$ is a singleton and that there exists a unique point $\xi
\in E_A^*$ such that $\xi^\infty\in E_A^\infty$, $\phi_\xi(z)=z$, and if $\phi_\a(z)=z$ for some $\a\in E_A^*$, then $\a=\xi^q$ for some integer $q\ge 1$). Then if $k\ge 1$ is an integer, 
$(l_n)_{n=1}^\infty$ is a sequence of integers in $\{1,2,\ld,k\}$, and $\(\tau^{(n)}\)_{n=1}^\infty$ is a sequence of points in $E_A^\infty$ such that

\begin{itemize}
\item[(a)]
$$
\lim_{n\to\infty}\pi\(\tau^{(n)}\)
=\lim_{n\to\infty}\pi\(\sg^{l_n}(\tau^{(n)})\)
=z,
$$
\fr\fr\fr\fr\fr and

\sp\item[(b)]
$$
\varlimsup_{n\to\infty}\sum_{i=0}^{l_n}\tau_i^{(n)}<+\infty,
$$
\end{itemize}
then $l_n$ is an integral multiple of $|\xi|$, say $l_n=q_n|\xi|$, and 
$$
\tau^{(n)}|_{l_n}=\xi^{q_n}
$$
for all $n\ge 1$ large enough.

\begin{proof}
It follows from item (b) that there exists $M\ge 1$ such that 
$\tau_i^{(n)}\le M$ for all $n\ge 1$ and all $1\le i\le l_n$. Assuming the contrapositive  statement to our claim and passing to a subsequence, we may assume without loss of generality that the sequence $(l_n)_{n=1}^\infty$ is constant,
say $l_n=l$ for all $n\ge 1$, and we may further assume that for every $1\le i\le l_n$ the sequence $\(\tau^{(n)}\)_{n=1}^\infty$ is constant, say $\tau_i^{(n)}=\tau_i\in\{1,2,\ld,M\}$ for all $n\ge 1$ and
\beq\label{1fp82b}
\tau_j^{(n)}\ne\xi_j
\eeq
for all $n\ge 1$ and some $1\le j\le l$. Let 
$$
\tau:=\tau_1\tau_2\ld\tau_l.
$$
We now conclude, in exactly the same way as in the proof of Lemma~\ref{l2fp82} that $\phi_\tau(z)=z$. Therefore, since $z$ is uniquely pseudo-periodic, we get $\tau=\xi^q$ with some $q\ge 1$. In particular $q|\xi|=l$, and so, using \eqref{1fp82b}, we deduce that $\tau\ne\xi^q$. This contradiction finishes the proof.
\end{proof}

\elem

\section{Weakly Boundary Thin (WBT) Measures and Conformal GDMSs}\label{Section: WBT}

In this section we first introduce the concept of Weakly Bounded Thin (WBT) measures. Roughly speaking, this notion relates the measure of an annulus to the measure of the ball it encloses.  We prove some basic properties of (WBT) and provide some sufficient conditions for (WBT) to hold for a large class of measures on the limit set of a CGDMS.  We were able to establish these properties, mainly due to the progress achieved in \cite{Pawelec-Urbanski-Zdunik},

Let $\mu$ be a Borel probability measure on a separable metric space $(X,d)$. For all $\b>0$, $x\in X$ and $r>0$, let
$$
A_\mu^\b(x,r):=A\(x;r-\mu(B(x,r))^\b,r+\mu(B(x,r))^\b\),
$$
where, in general, 
$$
A(z;r,R):=B(z,R)\sms B(z,r)
$$
is the annulus centered at $z$ with the inner radius $r$ and the outer radius $R$. We say that the measure $\mu$ is weakly boundary thin (WBT) (with exponent $\b$) at  the point $x$ if 
$$
\lim_{r\to 0}\frac{\mu\(A_\mu^\b(x,r)\)}{\mu(B(x,r))}=0.
$$ 
Given $\a>0$, we further define:
$$
A_\mu^{\b,\a}(x,r):=A\(x;r-\a\mu(B(x,r))^\b,r+\a\mu(B(x,r))^\b\).
$$
The following proposition is obvious.

\bprop\label{p2fp132eba}
If $\mu$ is  a Borel probability measure on a separable metric space $X$, then for every point $x\in\supp(\mu)$, the following are equivalent.

\sp\begin{itemize}

\sp\item[(a)] $\mu$ is {\rm (WBT)} at $x$.

\sp\item[(b)] There exists $\b>0$ such that the measure $\mu$ is {\rm (WBT)} at $x$ with exponent $\gamma >0$ either if and only if $\g\in(\b,+\infty)$ or  if and only if $\g\in[\b,+\infty)$. Denote this $\b$ by $\b_\mu(x)$.

\sp\item[(c)] There exist $\a, \b>0$ such that 
$$
\lim_{r\to 0}\frac{\mu\(A_\mu^{\b,\a}(x,r)\)}{\mu(B(x,r))}=0.
$$ 
\sp\item[(d)] For every $\b\in (\b_\mu(x),+\infty)$, 
$$
$$ 
\end{itemize}
\eprop

\fr We say that a measure is weakly boundary thin (WBT) if it is (WBT) at every point of its topological support. We also say that a measure is weakly boundary thin almost everywhere (WBTAE) if it is (WBT) at almost every point. Of course (WBT) implies (WBTAE).

Now we aim to provide sufficient conditions for a Borel probability measure to be (WBT) and (WBTAE). Let $\mu$ be an arbitrary Borel probability measure on a separable metric space.   Let $\alpha > 0$.  We say that $\mu$ is $\alpha$-upper Ahlfors ($\alpha$-up) at a point $x \in X$ if there exists a constant $C>0$ (which may depend on $x$) such that
$$
\mu(B(x,r)) \leq C r^\alpha
$$
for  all radii $r >0$.  Equivalently, for all radii $r>0$ sufficiently small.
Following \cite{PUbook}, the  measure $\mu$ is said to have the Thin Annuli Property (TAP) at a point $x\in X$ if there exists $\kappa > 0$ (which may depend on $x$) such that 
$$
\lim_{r \to 0} \frac{\mu(A(x; r, r + r^\kappa))}{\mu(B(x,r))}=0
$$
We shall easily show the following.

\begin{prop}\label{p1wbt4}
Let $(X,d)$ be a separable metric space, let $\mu$ be a Borel probability measure on $X$ and let $\alpha>0$.  If $\mu$ is $\alpha$-upper Ahlfors with the Thin Annuli Property {\rm(TAP)} at some $x\in X$, then $\mu$ is {\rm (WBT)} at $x$.
\end{prop}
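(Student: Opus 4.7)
The plan is to select an exponent $\beta>0$ with $\alpha\beta>\max\{1,\kappa\}$ and verify (WBT) at $x$ with this exponent. The first step is to observe that the $\alpha$-upper Ahlfors condition yields
\[
\mu(B(x,r))^\beta \le C^\beta r^{\alpha\beta},
\]
and the choice of $\beta$ makes $r^{\alpha\beta}$ much smaller than both $r$ and $r^\kappa$ for $r$ small. In particular, for all $r$ below some threshold one has simultaneously $\mu(B(x,r))^\beta \le r^\kappa/2$ and $\mu(B(x,r))^\beta \le r/2$.

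Next I would split the annulus $A_\mu^\beta(x,r)$ about the sphere $\{y:d(y,x)=r\}$ into an outer and an inner piece,
\[
A_\mu^\beta(x,r)=A\bigl(x;r,r+\mu(B(x,r))^\beta\bigr)\,\cup\, A\bigl(x;r-\mu(B(x,r))^\beta,r\bigr),
\]
and control each using (TAP). The outer summand sits inside $A(x;r,r+r^\kappa)$, so (TAP) at $x$ immediately gives that its $\mu$-measure is $o(\mu(B(x,r)))$ as $r\to 0$. The real subtlety is the inner summand, because (TAP) as stated provides information only about annuli growing outward from a given radius. To handle it I would shift the base radius: put $r':=r-\mu(B(x,r))^\beta$, so that the inner summand equals $A(x;r',r'+\mu(B(x,r))^\beta)$. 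The estimate $\mu(B(x,r))^\beta\le r/2$ gives $r'\in[r/2,r]$, hence $r'^\kappa\ge 2^{-\kappa}r^\kappa$; combined with $\mu(B(x,r))^\beta \le r^\kappa/2$ (and absorbing the factor $2^\kappa$ by shrinking the threshold on $r$ if necessary) this yields $\mu(B(x,r))^\beta\le r'^\kappa$, so
\[
A\bigl(x;r',r'+\mu(B(x,r))^\beta\bigr)\subset A(x;r',r'+r'^\kappa).
\]
Now (TAP) at $x$ applied with radius $r'\to 0$ produces $\mu(A(x;r',r'+r'^\kappa))/\mu(B(x,r'))\to 0$, and since $\mu(B(x,r'))\le\mu(B(x,r))$, replacing the denominator by the larger $\mu(B(x,r))$ only shrinks the ratio.

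Adding the two contributions gives (WBT) at $x$ with exponent $\beta$. The main obstacle throughout is the asymmetry between (TAP), which only controls outward-growing annuli from a given inner radius, and (WBT), which requires a two-sided estimate about $r$. This is resolved by re-centering the inner annulus at its own inner radius $r'=r-\mu(B(x,r))^\beta$ and using the $\alpha$-upper Ahlfors hypothesis to ensure simultaneously that $r$ and $r'$ are comparable and that the common annulus width $\mu(B(x,r))^\beta$ is dominated by $r'^\kappa$, so that (TAP) applies at $r'$.
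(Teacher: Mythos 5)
Your proof is correct and follows essentially the same route as the paper: choose $\beta$ so large that the $\alpha$-upper Ahlfors bound forces $\mu(B(x,r))^\beta\le C^\beta r^{\alpha\beta}\le r^\kappa$ for small $r$, and then absorb $A_\mu^\beta(x,r)$ into annuli controlled by (TAP). Your explicit treatment of the inner half via the shifted radius $r'=r-\mu(B(x,r))^\beta$ is in fact more careful than the paper's one-line containment, which as written only compares against the outward annulus $A(x;r,r+r^\kappa)$ and silently skips the piece of $A_\mu^\beta(x,r)$ lying inside radius $r$.
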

\begin{proof}

Taking $\beta >0$ so large that $C^\beta r^{\beta \alpha} \leq r^\kappa$ for $r > 0$ small enough.  Then for each radii $r>0$ we have that $A_\mu^\beta(x; r, r+ r^\kappa) \subset A(x, r, r + r^\kappa)$ and thus
$$
0 \leq \limsup_{r\to 0} \frac{\mu(A_\mu^\beta(x,r))}{\mu(B(x,r))}
\leq \limsup_{r\to 0} \frac{\mu(A(x,r, r + r^\kappa))}{\mu(B(x,r))} = 0
$$
The proof is then complete.
\end{proof}

We recall from the book \cite{PUbook} that 
$$
\HD_*(\mu) = \inf \{\HD(Y)  \hbox{ : } Y \subset X \hbox{ is Borel and } \mu(Y) > 0\}.
$$

We call $\hbox{\rm HD}_*(\mu)$ the lower Hausdorff dimension of $\mu$.   The Hausdorff Dimension of $\mu$ is commonly defined to be 
$$
\HD(\mu) = \inf \{\HD(Y)  \hbox{ : } Y \subset X \hbox{ is Borel and }\mu(Y) =1 \}
$$
The reader should be aware that in \cite{PUbook} the above infimum is denoted $\HD^*(\mu)$
and is called the upper Hausdorff Dimension of $\mu$.   We however, will use  the more commonly accepted tradition rather than the point of view taken in \cite{PUbook}.  Referring to the well-known fact (see \cite{PUbook} for instance) that if $\mu(B(x,t)) \geq C r^\gamma$ for the points $x$ belonging to some Borel set $F \subset X$ then $\hbox{\rm HD}(F) \leq \gamma$, we immediately obtain the following.   

\begin{lem}\label{l1wbt5}  If $\hbox{\rm HD}_*(\mu) > 0$ then $\mu$ is $\alpha$-upper Ahlfors
for every $\alpha \in (0, \hbox{\rm HD}_*(\mu))$ and $\mu$--a.e. $x\in X$ with some constant $C\in(0,+\infty)$ and every $r>0$ small enough. 
\end{lem}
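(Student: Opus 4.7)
The plan is to apply the converse Frostman-type estimate quoted just before the lemma: if $F$ is a Borel set and $\mu(B(x,r))\ge Cr^\gamma$ at arbitrarily small radii $r$ for every $x\in F$, then $\HD(F)\le\gamma$. I will use this to bound the Hausdorff dimension of the set of points at which the upper Ahlfors estimate fails, and then contradict the definition of $\HD_*(\mu)$ unless that bad set has measure zero.

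First I would fix $\alpha\in(0,\HD_*(\mu))$ and consider the exceptional set
$$
F_\alpha:=\Big\{x\in X:\varlimsup_{r\to 0}\frac{\mu(B(x,r))}{r^\alpha}=+\infty\Big\}.
$$
This set is Borel (it may be written as $\bigcap_{M\in\N}\bigcap_{n\in\N}E_{M,n}$ with $E_{M,n}=\{x:\mu(B(x,r))>Mr^\alpha\text{ for some rational }r\in(0,1/n)\}$, and for each fixed $r$ the function $x\mapsto\mu(B(x,r))$ is lower semi-continuous, hence Borel measurable, in any separable metric space). By construction every $x\in F_\alpha$ satisfies $\mu(B(x,r))\ge r^\alpha$ at arbitrarily small radii $r$, so the cited implication gives $\HD(F_\alpha)\le\alpha$.

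Next, if one had $\mu(F_\alpha)>0$, then the very definition of $\HD_*(\mu)$ as the infimum of $\HD(Y)$ over Borel sets $Y$ of positive measure would yield $\HD(F_\alpha)\ge\HD_*(\mu)>\alpha$, contradicting the previous paragraph. Hence $\mu(F_\alpha)=0$, which means that for $\mu$-a.e.\ $x$ the quantity $\mu(B(x,r))/r^\alpha$ stays bounded as $r\to 0$; any such bound furnishes the required constant $C=C(x)\in(0,+\infty)$ with $\mu(B(x,r))\le Cr^\alpha$ for all sufficiently small $r$. Choosing a sequence $\alpha_n\uparrow\HD_*(\mu)$ and taking the union of the corresponding null sets $F_{\alpha_n}$ delivers a single $\mu$-conull set on which the conclusion holds simultaneously for every $\alpha\in(0,\HD_*(\mu))$.

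The only nontrivial ingredient is the converse Frostman implication in a general separable metric space; its proof is the usual $5r$-covering argument, which applies in any metric space and produces, for arbitrarily small $\delta>0$, a disjoint family $\{B(x_i,r_i)\}$ of balls witnessing $\mu(B(x_i,r_i))\ge Kr_i^\alpha$ with $r_i<\delta$ and $F_\alpha\subset\bigcup_i B(x_i,5r_i)$. Then $\sum(5r_i)^\alpha\le 5^\alpha K^{-1}\sum\mu(B(x_i,r_i))\le 5^\alpha K^{-1}\mu(X)$, so letting $\delta\to 0$ and then $K\to\infty$ one obtains $\mathcal H^\alpha(F_\alpha)=0$, and in particular $\HD(F_\alpha)\le\alpha$. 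This is the technical heart of the argument; everything else is bookkeeping between the pointwise asymptotics of $\mu(B(x,r))$ and the set-theoretic definition of the lower Hausdorff dimension.
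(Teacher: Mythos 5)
Your proof is correct and follows essentially the same route the paper takes: the lemma is deduced from the converse Frostman-type fact (quoted just before the statement, with reference to \cite{PUbook}) that a lower bound $\mu(B(x,r))\ge Cr^{\gamma}$ on a Borel set $F$ forces $\HD(F)\le\gamma$, combined with the definition of $\HD_*(\mu)$ to conclude the exceptional set is null. Your additional details (Borel measurability of the exceptional set via lower semicontinuity of $x\mapsto\mu(B(x,r))$, and the $5r$-covering proof of the Frostman converse) are sound and merely make explicit what the paper leaves to the cited reference.
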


\bdfn\label{d7_2016_07_07}
We say that a set $J\sbt\R^d$, $d\ge 1$, is geometrically irreducible if it is not contained in any countable union of conformal images of hyperplanes or spheres of dimension $\le d-1$. 
\edfn 

\bobs
Every set $J\sbt\R^d$, $d\ge 1$, with $\HD(J)>d-1$ is geometrically irreducible.
\eobs

\bobs
If a set $J\sbt\C$ is not contained in any countable union of real analytic curves, then $J$ is geometrically irreducible.
\eobs

Now we can apply the results obtained above, in the context of CGMS. We shall prove the following.

\begin{thm}\label{t1wbt6}
Let $\mathcal S = \{\phi_e\}_{e\in E}$ be a finitely primitive {\rm CGDS}
satisfying the {\rm SOSC} with a phase space $X \subset \mathbb R^d$. Let $\psi: E_A^{\mathbb N} \to \mathbb R$ be a H\"older continuous strongly summable potential, the latter meaning that 
\beq\label{1wbt6.1}
\sum_{e\in E} \exp \left( \inf (\phi|_{[e]})\right)\|\phi_e'\|^{-\beta} < +\infty
\eeq
for some $\beta >0$. 
As usual, let $\mu_\psi$ denote its unique equilibrium state. If the limit set of $J_{\mathcal S}$ is geometrically irreducible, then 
\begin{enumerate}
\item[(a)] $\HD_*(\mu_\psi \circ \pi_{\mathcal S}^{-1}) = \HD(\mu_\psi\circ \pi_{\mathcal S}^{-1}) > 0$;
\item[(b)] The measure $\mu_\psi \circ \pi_{\mathcal S}^{-1}$ satisfies the Thin Annuli Property {\rm (TAP)} at 
$\mu_\psi \circ \pi_{\mathcal S}^{-1}$ a.e. point of $\mathcal S$
\item[(c)]
$\mu_\psi \circ \pi_{\mathcal S}^{-1}$ is {\rm (\WBT)} at $\mu_{\psi}$ a.e. point of 
$J_{\mathcal S}$.
\end{enumerate}
\end{thm}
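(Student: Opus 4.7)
The overall plan is to treat the three claims in order, since (c) will fall out of (a) and (b) by combining Lemma~\ref{l1wbt5} and Proposition~\ref{p1wbt4}. Concretely, (a) will supply positivity of the lower Hausdorff dimension, which by Lemma~\ref{l1wbt5} gives $\alpha$-upper Ahlfors regularity of $\mu_\psi\circ\pi_\cS^{-1}$ at almost every point for any $\alpha\in(0,\HD_*(\mu_\psi\circ\pi_\cS^{-1}))$, while (b) supplies TAP at almost every point; Proposition~\ref{p1wbt4} then upgrades the pair to (WBT) at almost every point, giving (c) immediately.

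For (a) I would apply the Volume Lemma for finitely primitive conformal GDMSs, that is the formula
$$
\HD(\mu_\psi\circ\pi_\cS^{-1})=\frac{h_{\mu_\psi}(\sg)}{\chi_{\mu_\psi}},
$$
where $\chi_{\mu_\psi}=-\int \zeta\,d\mu_\psi>0$ is the Lyapunov exponent and $\zeta$ is the geometric potential from \eqref{1MU_2014_09_10}. The strong summability hypothesis \eqref{1wbt6.1} is precisely what is needed to force both $\chi_{\mu_\psi}<+\infty$ and integrability of $\psi$ against $\mu_\psi$; together with the variational principle $h_{\mu_\psi}(\sg)+\int\psi\,d\mu_\psi=\P(\psi)$ and finite primitivity of $A$, this yields $h_{\mu_\psi}(\sg)>0$, hence a positive finite dimension. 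Since $\mu_\psi$ is ergodic (indeed $K$-mixing as the Gibbs state of a H\"older summable potential on a finitely primitive subshift), the local dimension $\lim_{r\to 0}\log\mu_\psi\circ\pi_\cS^{-1}(B(x,r))/\log r$ exists and is constant $\mu_\psi\circ\pi_\cS^{-1}$-a.e., and this common value equals both $\HD_*(\mu_\psi\circ\pi_\cS^{-1})$ and $\HD(\mu_\psi\circ\pi_\cS^{-1})$, giving (a).

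The main obstacle is (b), where one must pass from symbolic/conformal estimates to genuine Euclidean spherical annuli. Here I would invoke the thin-annuli results of \cite{Pawelec-Urbanski-Zdunik}, which are designed exactly for projections of Gibbs states of H\"older summable potentials under finitely primitive conformal GDMSs and which produce an exponent $\kappa>0$ (possibly depending on the point) such that
$$
\lim_{r\to 0}\frac{\mu_\psi\circ\pi_\cS^{-1}\big(A(x;r,r+r^\kappa)\big)}{\mu_\psi\circ\pi_\cS^{-1}(B(x,r))}=0
$$
at $\mu_\psi\circ\pi_\cS^{-1}$-almost every $x\in J_\cS$. The role of geometric irreducibility is essential here: boundaries of Euclidean balls are spheres (or hyperplanes), and without geometric irreducibility the limit set could lie in a countable union of conformal images of such hypersurfaces and concentrate the measure near them, destroying TAP; geometric irreducibility rules this out and lets the machinery of \cite{Pawelec-Urbanski-Zdunik} go through. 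Once (b) is in hand, the assembly for (c) is cosmetic: at a $\mu_\psi\circ\pi_\cS^{-1}$-generic $x$ we pick $\alpha\in(0,\HD_*(\mu_\psi\circ\pi_\cS^{-1}))$, obtain the constant $C$ from Lemma~\ref{l1wbt5} and the exponent $\kappa$ from (b), choose $\beta>0$ so large that $C^\beta r^{\beta\alpha}\le r^\kappa$ for all $r$ small, and apply Proposition~\ref{p1wbt4} verbatim.
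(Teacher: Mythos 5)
Your proposal is correct and follows essentially the same route as the paper: part (a) via dimensional exactness of $\mu_\psi\circ\pi_{\mathcal S}^{-1}$ (the Volume Lemma, Theorem 4.4.2 of \cite{GDMS}, with the local dimension equal to $\h_{\mu_\psi}(\sg)/\chi_{\mu_\psi}>0$ a.e.), part (b) by citing the thin annuli theorem of \cite{Pawelec-Urbanski-Zdunik} (Theorem 30 there), and part (c) by assembling (a) and (b) through Lemma~\ref{l1wbt5} and Proposition~\ref{p1wbt4}. The extra remarks you add about why strong summability and geometric irreducibility are needed are consistent with the paper's (terser) argument.
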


\begin{proof}
The proof of Theorem 4.4.2 in \cite{MU_lms} \cite{GDMS} gives in fact that the measure $\mu_\psi$ is dimensionally exact, i.e., that
$$
\lim_{r \to 0}  \frac{\log \mu_\psi \circ \pi_{\mathcal S}^{-1}(B(x,r))}{\log r}
$$ 
exists for $\mu_\psi \circ \pi_{\mathcal S}^{-1}$ for a.e. $x \in  J_{\mathcal S}$ and is equal to $\h_{\mu_{\psi}}(\sigma)/\chi_{\mu_\psi}>0$.  A complete proof with all the details can be found in the last section of \cite{CTU}.   Therefore, property (a) is established.  Property (b) follows now immediately from Theorem 30 in  \cite{Pawelec-Urbanski-Zdunik}.  Condition (c) is now an immediate consequence of (a),(b), Lemma \ref{l1wbt5} and Proposition \ref{p1wbt4}.
\end{proof}

\begin{rem}\label{r1wbt7} Condition \ref{1wbt6.1} is satisfied for instance for all potentials of the form $E_A^{\mathbb N} \ni \omega \mapsto t \log |(\phi_{\mathcal S}'(\pi_{\mathcal S}\sigma(\omega)))| \in \mathbb R$, where $t > \theta_{\mathcal S}$.   It also holds for $t = \theta_{\mathcal S}$ if the system $\mathcal S$ is strongly regular.
\end{rem}

\sp Now we shall deal with the case of a finite alphabet. We shall show that in the case of a finite alphabet (under a mild geometric condition in dimension $d\ge 2$) the equilibrium states of all H\"older continuous potentials satisfy (WBT) at every point of the limit set. Thus our approach is complete in the case of the finite alphabet and  present paper entirely covers the case of conformal IFSs (even GDMSs) with finite alphabet.  We shall prove the following.

\begin{thm}\label{t2wbt11}
Let $E$ be a finite set and let $\mathcal S = \{\phi_e\}_{e \in E}$ be a primitive conformal {\rm GDMS} acting in the space $\R$. If $\psi: E_A^{\infty} \to \mathbb R$ is an arbitrary H\"older continuous (with the phase space sets $X_v \subset W_v \subset \mathbb R$, $v \in V$) and  $\mu_\phi$ is the corresponding equilibrium state on $E_A^{\infty}$ then the projection measure $\mu_\psi \circ \pi_{\mathcal S}^{-1}$ is {\rm (WBT)} at every point of ${ J}_{\mathcal S}$.
\end{thm}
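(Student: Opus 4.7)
The plan is to exploit the one-dimensional ambient structure decisively: for $\nu := \mu_\psi \circ \pi_\cS^{-1}$ and $\delta_r := \nu(B(x,r))^\beta$, the annulus in the definition of (WBT) is just a union of two short intervals, namely
$$
A_\nu^\beta(x,r) \subseteq B(x-r, \delta_r) \cup B(x+r, \delta_r).
$$
Consequently, (WBT) at $x$ will follow once one establishes a uniform upper Ahlfors bound, i.e.\ constants $C, \alpha > 0$ with $\nu(B(y, \rho)) \le C\rho^\alpha$ for every $y \in \R$ and every sufficiently small $\rho > 0$. Given such a bound, choosing any $\beta > 1/\alpha$ yields
$$
\frac{\nu(A_\nu^\beta(x,r))}{\nu(B(x,r))} \le \frac{2C\delta_r^\alpha}{\nu(B(x,r))} = 2C\,\nu(B(x,r))^{\alpha\beta - 1} \xrightarrow[r\to 0]{} 0,
$$
since $\nu(B(x,r)) \le Cr^\alpha \to 0$.

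To prove the upper Ahlfors bound I would use a stopping-time Moran cover. For $\rho>0$, let $\mathcal F(\rho)$ denote the family of $\omega \in E_A^*$ such that $\diam(\phi_\omega(X_{t(\omega)})) \le \rho < \diam(\phi_{\omega|_{|\omega|-1}}(X_{t(\omega_{|\omega|-1})}))$. By the Bounded Distortion Property (d) and the uniform contraction $s<1$, every $\phi_\omega(X_{t(\omega)})$ with $\omega \in \mathcal F(\rho)$ is an interval of length comparable to $\rho$, and $|\omega| \ge c_1 \log(1/\rho)$ for some $c_1 > 0$. The (SOSC) ensures the interiors $\phi_\omega(\Int X_{t(\omega)})$ are pairwise disjoint; since we are in the line, at most some absolute $N_0$ such intervals of length $\gtrsim \rho$ can meet $B(y,\rho)$ (a set of length $2\rho$). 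Thus
$$
\nu(B(y,\rho)) \le \sum_{\omega \in \mathcal F(\rho),\ \phi_\omega(X_{t(\omega)}) \cap B(y,\rho) \ne \emptyset} \mu_\psi([\omega]) \le N_0 \max_{\omega \in \mathcal F(\rho)} \mu_\psi([\omega]).
$$

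The last step, which I expect to be the main obstacle, is uniform exponential decay $\max_{|\omega|=n} \mu_\psi([\omega]) \le K e^{-\eta n}$ for some $\eta > 0$. By the Gibbs property it suffices to show $n^{-1}\max_{|\omega|=n} S_n\psi(\omega) \le \P(\psi) - \eta$ for $n$ large. Setting $\psi^* := \sup\{\int \psi\, d\nu : \nu \text{ is } \sigma\text{-invariant}\}$, the standard periodic-orbit approximation in primitive finite-alphabet subshifts yields $n^{-1}\max_\omega S_n\psi(\omega) \to \psi^*$. The strict pressure gap $\psi^* < \P(\psi)$ then follows from uniqueness of the equilibrium state together with $h_{\mu_\psi} > 0$, the latter being classical Bowen--Ruelle theory for H\"older potentials on primitive finite-alphabet SFTs (the degenerate case where $|E_A^n|=1$ makes the theorem trivial, as $J_\cS$ is then a finite periodic orbit). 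Taking any $\eta \in (0, \P(\psi)-\psi^*)$ produces the required decay, and combined with $|\omega| \ge c_1\log(1/\rho)$ this yields the upper Ahlfors bound with $\alpha := \eta c_1$. The thermodynamic pressure gap is the only step that uses more than the Gibbs property and the one-dimensional geometry of the limit set; everything else is routine once it is in hand.
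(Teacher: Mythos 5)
Your argument is correct and its geometric core coincides with the paper's: a Moran-type stopping family of incomparable cylinders of diameter comparable to the annulus width, a bounded-multiplicity count coming from the fact that disjoint intervals of length $\gtrsim\rho$ in $\R$ cannot crowd into an interval of length $\asymp\rho$, and the uniform exponential decay $\mu_\psi([\om])\le Ce^{-\eta|\om|}$ converted into a power of the scale via $|\om|\gtrsim\log(1/\rho)$. The packaging differs in two ways worth noting. First, you factor the proof through a uniform upper Ahlfors bound $\nu(B(y,\rho))\le C\rho^\a$ valid at \emph{all} centers and scales, and only then observe that the annulus is two intervals of length $2\delta_r$; the paper instead builds the stopping family $\widehat{\cF}_\psi^\b(z,r)$ directly adapted to the annulus (cylinders with $\|\phi_\om'\|\asymp\mu_\psi(B(z,r))^\b$). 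Your version is more modular and reusable, at no extra cost. Second, the paper simply cites the exponential decay of cylinder measures as known from \cite{MU_lms}, \cite{GDMS}, whereas you supply a proof via ergodic optimization ($n^{-1}\max_{|\om|=n}\sup_{[\om]}S_n\psi\to\sup_\nu\int\psi\,d\nu$) together with the strict gap $\sup_\nu\int\psi\,d\nu<\P(\psi)$, deduced from uniqueness of the equilibrium state and $h_{\mu_\psi}>0$; this is a valid and self-contained route (positivity of $h_{\mu_\psi}$ follows, e.g., from the Bernoulli property plus full support, not from the naive bound $h_{\rm top}+\inf\psi-\sup\psi$, which need not be positive here). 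One small quibble: in the degenerate case where $E_A^\infty$ is a single periodic orbit the projected measure is atomic and (WBT) actually \emph{fails} at the atom rather than holding trivially; but the paper's cited decay estimate also fails there, so both proofs tacitly assume $h_{\rm top}>0$, and this does not affect the substance of your argument.
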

\begin{proof}
Put 
$$
u  := K^{-1} \min\big\{ \|\phi_e'\| \hbox{ : } e \in E\big\}
$$
so that 
$$
|\phi_e'(x)| \geq u
$$
for all $e \in E$ and all $x \in X_{t(e)}$. For ease of notation we denote 
$$
\widehat \mu_{\psi} :=  \mu_{\psi} \circ \pi_{\mathcal S}^{-1}.
$$
Fix $\beta > 0$. Consider the family 
$$
\mathcal F_{\psi}^\beta(z,r):= \{ \omega \in E_A^k \hbox{ : } A^\beta_{\mu_\psi}(z,r)
\cap \phi_{\omega} (J_{\omega_{|\omega|-1}}) \neq \emptyset
\hbox{ and }\|\phi_{\omega}'\| \geq \mu_\psi(B(z,r)^\beta)
\}
$$
Now consider $\widehat{\mathcal F}_\psi^\beta(z,r)$, the family of  all words in 
${\mathcal F}_\psi^\beta(z,r)$  that have no extensions to elements in 
${\mathcal F}_\psi^\beta(z,r)$, where we don't consider a finite word to be an extension of itself.    Note that then:

\sp\begin{enumerate}
\item[(a)]
$\widehat{\mathcal F}_\psi^\beta(z,r)$ consists of mutually incomparable  words;

\sp\item[(b)]
$\bu_{\omega  \in \widetilde {\mathcal F}_\psi^\beta(z,r)}[w] 
 \supset \pi_{\mathcal S}^{-1} (A_{\mu_\psi}^\beta (z,r))$; and  

\sp\item[(c)]
 $\forall \omega \in \widehat {\mathcal F}_\psi^\beta(z,r)$, 
 $\mu_\psi (B(z,r))^\beta \leq \| \phi_\omega'\| \leq u^{-1} 
 \mu_\psi (B(z,r))^\beta$
\end{enumerate}

\sp\fr Therefore the family 
$$
\{\phi_{\omega} (\Int (X_{t(\omega)}))  \hbox{ : } \omega \in \widehat {\mathcal F}_\psi^\beta(z,r) \}
$$
consists of mutually disjoint open sets each of which contains a ball of radius
$K^{-1} R \mu_\psi (B(z,r))^\beta$ where $R$ is as in the proof of Lemma \ref{l1wbt6}.  Since also 
$$
\bigcup_{\omega \in \widehat {\mathcal F}_\psi^\beta(z,r)} \phi_{\omega}(X_{t(\omega)})
\subset A(z, r - (1+ D M^{-1}) \mu_{\psi} (B(z,r))^\beta, r - (1+ D M^{-1}) \mu_{\psi}(B(z,r))^\beta   )
$$
we obtain that
\begin{equation}\label{2wbt14}
\begin{aligned}
\# \widehat {\mathcal F}_\psi^\beta(z,r)
&\leq
 \frac{\Leb_1(A(z, r - (1+ D M^{-1}) \mu_{\psi} (B(z,r)^\beta), r - (1+ D u^{-1}) \mu_{\psi} (B(z,r))^\beta  )}{ 2 K^{-1} R \mu_{\psi}(B(z,r))^\beta}\cr
&\approx \frac{\mu_\psi^\beta (B(z,r))}{\mu_\psi^\beta (B(z,r))} = 1.
\cr
\end{aligned}
\end{equation} 
So we have shown that the number of elements of $\widetilde {\mathcal F}_\psi^\beta(z,r)$ is uniformly bounded above, and in order to estimate
$\widehat \mu_\psi (A_{\mu_\psi}^\beta(z,r))$. i.e. in order to complete the proof 
we now only  need a sufficiently good upper bound on $\mu_\psi ([\omega])$ for all $\omega \in \widehat {\mathcal F}_\psi^\beta(z,r)$.  We will do so now.  It is well known (see \cite{MU_lms}, \cite{GDMS}) that there are two constants $\eta \in (0, +\infty)$ and $C \in (0, +\infty)$ such that
\beq\label{1wbt14}
\mu_\psi([\tau]) \leq C \exp (- \eta (|\tau| +1)) 
\eeq
 for all 
$\tau \in E_A^*$.   Fix $\omega \in \widehat{\mathcal F}_\psi^\beta(z,r)$.  
Denote $k:= |\omega|$.  Invoking (c) we get that $u^k \leq \|\phi_\omega'\|
\leq u^{-1}\mu_\psi^\beta (B(z,r))$, whence 
$$
k+1 \geq \frac{\beta \log \mu_\psi(B(z,r))}{\log u}.
$$
Inserting this into  (\ref{1wbt14}) we get that 
$$
\mu_\psi([\omega]) \leq C \exp \left( - \beta \eta \frac{\log \mu_\psi(B(z,r))}{\log u} \right) = C \mu^{\gamma \beta}(B(z, r))
$$
 where $\gamma = \frac{\eta}{\log(1/u)} \in (0, +\infty)$.  Having this and invoking 
 (b) and (\ref{2wbt14}) we obtain that 
$$
\frac{\widehat\mu_\psi (A_{\mu_\psi}^\beta(z, r))}{\mu_\psi(B(z,r))}
\leq 
\frac{\widehat \mu_\psi (B(z,r))^{\gamma \beta}}{\widehat \mu_\psi(B(z,r))}
\leq\mu_\psi (B(z,r))^{\gamma \beta-1}
$$ 
and the proof is complete by noting that $\lim_{r \to 0}\mu_\psi (B(z,r))^{\gamma \beta-1} = 0$ provide that we take $\gamma > 1/\beta$.
 \end{proof}
 
\sp Now we pass to the case of $ d \geq 2$.   We get the same full result as in the case of $d=1$ but with a small additional assumption 
 that the conformal system $\mathcal S$  is geometrically irreducible.
 
 \begin{thm}\label{t1wbt15}
 Let $E$  be a finite set and let $\mathcal S = \{ \phi_e\}_{e \in E}$ be a primitive geometrically irreducible conformal {\rm GDMS}
 with the phase space sets $X_v \subset W_v \subset \mathbb R^d$. 
  If $\psi:  E_A^{\mathbb N} \to \mathbb R$
 is an arbitrary H\"older continuous potential and $\mu_\phi$ is the corresponding equilibrium state then the 
 projection measure $\mu_\psi\circ \pi_{\mathcal S}^{-1}$ is {\rm (WBT)} at every point of $J_{\mathcal S}$
\end{thm}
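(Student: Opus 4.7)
Plan. The approach will parallel the proof of Theorem~\ref{t2wbt11}, with the geometric irreducibility of $J_\mathcal{S}$ playing the role that the one-dimensional ambient space played there. Write $\widehat{\mu}_\psi := \mu_\psi \circ \pi_\mathcal{S}^{-1}$. Fix $z \in J_\mathcal{S}$, $r > 0$, and $\beta > 0$ to be chosen. Define $\widehat{\mathcal{F}}_\psi^\beta(z,r)$ as in the one-dimensional proof: maximal incomparable admissible words whose cylinder images meet $A^\beta_{\widehat{\mu}_\psi}(z,r)$ and whose derivative norms satisfy $\widehat{\mu}_\psi(B(z,r))^\beta \leq \|\phi_\omega'\| \leq u^{-1}\widehat{\mu}_\psi(B(z,r))^\beta$. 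Since Remark~\ref{p1.033101} makes condition (d) automatic when $d \geq 2$, Koebe's distortion theorem applies and gives that each image $\phi_\omega(X_{t(\omega)})$ contains a $d$-dimensional ball of radius comparable to $\widehat{\mu}_\psi(B(z,r))^\beta$; by the SOSC these balls are pairwise essentially disjoint, and all sit in a tubular neighborhood of $\partial B(z,r)$ of width $\asymp \widehat{\mu}_\psi(B(z,r))^\beta$.

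A $d$-dimensional Lebesgue volume comparison between these disjoint balls and the tubular neighborhood yields the cardinality bound
\[
\#\widehat{\mathcal{F}}_\psi^\beta(z,r) \;\lesssim\; r^{d-1}\,\widehat{\mu}_\psi(B(z,r))^{-(d-1)\beta},
\]
while the finite-alphabet Gibbs estimate $\mu_\psi([\omega]) \leq C\exp(-\eta(|\omega|+1))$ combined with the uniform derivative bound $u^{|\omega|} \leq K\|\phi_\omega'\|$ gives $\mu_\psi([\omega]) \lesssim \widehat{\mu}_\psi(B(z,r))^{\gamma\beta}$ for some $\gamma > 0$ depending only on $\mathcal{S}$ and $\psi$, exactly as in the proof of Theorem~\ref{t2wbt11}. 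Summing over $\widehat{\mathcal{F}}_\psi^\beta(z,r)$ and dividing by $\widehat{\mu}_\psi(B(z,r))$ produces
\[
\frac{\widehat{\mu}_\psi\bigl(A^\beta_{\widehat{\mu}_\psi}(z,r)\bigr)}{\widehat{\mu}_\psi(B(z,r))} \;\lesssim\; r^{d-1}\,\widehat{\mu}_\psi(B(z,r))^{(\gamma - (d-1))\beta - 1}.
\]

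To force this quantity to $0$ as $r \to 0$, note that for $d \geq 2$ the factor $r^{d-1}$ itself tends to $0$; the task is then to control the companion factor. This is the most delicate step and is where the geometric irreducibility of $J_\mathcal{S}$ enters decisively: by precluding concentration of $\widehat{\mu}_\psi$ near a lower-dimensional conformal subsurface through $z$, it allows a Koebe-plus-Gibbs counting of the boundary cylinder images meeting $B(z,r)$ to deliver, at \emph{every} point $z\in J_\mathcal{S}$, a uniform upper Ahlfors bound $\widehat{\mu}_\psi(B(z,r)) \leq C_1 r^{\alpha_0}$ with some $\alpha_0 > 0$. With this in hand, one balances the two cases: if $\beta$ is chosen so that $(\gamma-(d-1))\beta - 1 \geq 0$, both factors vanish; in the opposite regime the upper Ahlfors estimate converts $\widehat{\mu}_\psi(B(z,r))^{(\gamma-(d-1))\beta - 1}$ into a negative power of $r$ that is outweighed by $r^{d-1}$ once $\beta$ is taken within an appropriate range. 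The main obstacle will be establishing the uniform upper Ahlfors estimate \emph{pointwise} on $J_\mathcal{S}$ (rather than almost everywhere) and verifying that the allowable range of $\beta$ is non-empty; both rely essentially on geometric irreducibility to rule out the degenerate configurations where the $d$-dimensional volume count either overwhelms $\gamma$ or permits local concentration on a thin substructure. The rest of the argument then proceeds exactly as in the proof of Theorem~\ref{t2wbt11}.
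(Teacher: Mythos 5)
Your plan diverges from the paper's proof at the decisive step, and the step where it diverges does not work. The paper does not attempt a cylinder count over the annulus at all in dimension $d\ge 2$: its entire proof rests on Claim~1, the tubular--neighborhood estimate $\widehat\mu_\psi\(B(\bd B(z,R),r)\)\le Cr^\a$ valid for \emph{all} centers $z$, \emph{all} radii $R$, and all widths $r$, which is imported as a sub-statement of formula (2.19) of \cite{MU-JNT} (with the family $\cF_0$ there enlarged to include intersections with round circles when $d=2$). Geometric irreducibility is used precisely to establish that estimate -- it guarantees no cylinder image stays trapped in a thin neighborhood of a sphere, which drives the self-improving argument of \cite{MU-JNT}. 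Once Claim~1 is available, the theorem follows in one line by taking $R=r$ and width $\mu_\psi(B(z,r))^\b$ with $\b>1/\a$. Your proposal replaces this by the $d=1$ counting scheme, and there the count of cylinders in $\widehat\cF_\psi^\b(z,r)$ is no longer $O(1)$: the volume comparison gives $\#\widehat\cF_\psi^\b(z,r)\lesssim r^{d-1}\d^{-(d-1)}$ with $\d=\widehat\mu_\psi(B(z,r))^\b$, while the Gibbs bound per cylinder is only $\d^{\g}$. When $\g<d-1$ (which occurs, e.g., for equilibrium states of small dimension on limit sets that are nevertheless geometrically irreducible -- note the paper's own observation that $\HD(J)>d-1$ already implies irreducibility, so irreducibility places no lower bound on $\g$), the total exponent $(\g-(d-1))\b-1$ is $<-1$ for every $\b>0$, and the prefactor $r^{d-1}$ cannot be shown to compensate.

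The rescue you propose -- a pointwise upper Ahlfors bound $\widehat\mu_\psi(B(z,r))\le C_1r^{\a_0}$ -- does not close this gap for two reasons. First, that bound is elementary for finite alphabets (bounded distortion plus the Gibbs property) and needs no irreducibility, so it cannot be the place where the hypothesis enters. Second, and more importantly, the arithmetic goes the wrong way: to dominate $\widehat\mu_\psi(B(z,r))^{s}$ with $s=(\g-(d-1))\b-1<0$ by a power of $r$ you need a \emph{lower} bound $\widehat\mu_\psi(B(z,r))\ge cr^{\a_1}$, and then you need $d-1+\a_1 s>0$, which fails whenever the local dimension $\a_1$ of the measure exceeds $d-1$ -- again a situation fully compatible with geometric irreducibility. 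What is genuinely needed, and what your proposal does not supply a mechanism for, is a quantitative decay estimate for the measure of thin neighborhoods of spheres; that is exactly the content of the result the paper cites, and proving it (rather than an Ahlfors bound for balls) is the heart of the matter.
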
 
 
\begin{proof}
The meaning of $\widehat \mu_\psi$ is exactly the same as in the proof of the previous theorem. The proof of the current theorem is entirely based on the following.
 
\sp{\bf Claim~1:} 
 There are a constant $\alpha > 0$ and $C \in (0, +\infty)$ such that 
 $$
 \widehat \mu_\psi\(B\(\bd B(z,R),r\)\) \leq C r^\alpha
 $$
 for all $z \in \mathbb R^d$ and all radii $r, R>0$.
 
\sp\fr This claim is actually a sub-statement of formula (2.19) from \cite{MU-JNT} in a more specific setting. In particular: 

\sp\begin{enumerate}
\item[(a)]
\cite{MU-JNT} deals with finite IFSs rather than finite alphabet CGDMS; 

\sp\item[(b)]
\cite{MU-JNT} deals with H\"older continuous families of functions and their corresponding equilibrium states 
rather than H\"older continuous potentials on the symbol space $E_A^{\mathbb N}$ and their projections; and 

\sp\item[(c)] 
with the restrictions of (a) and (b) Claim 1 is a sub-statement of formula (2.19) 
from \cite{MU-JNT} only in the case of $d \ge 3$.
\end{enumerate} 
 However, a  close inspection of arguments leading to (2.19) of \cite{MU-JNT} indicates that the difference of (a) is 
 inessential for these arguments,  and for (b) that the only property of equilibrium states of H\"older continuous families of functions was that of being projections of H\"older continuous potentials from the symbol space $E_A^{\infty}$.  
 Concerning (c) it only remains to consider the case $d=2$.  We then redefine the family $\mathcal  F_0$ from section 2, 
 page 225 of \cite{MU-JNT} to conclude that also all the intersections of the form $X \cap L$, where $L \subset \mathbb C$
 where is a round circle (of arbitrary center and radius).   The argument in \cite{MU-JNT} leading to (2.19) goes  through 
 with obvious minor modifications.  Claim 1 is then established.
 
 Using this claim, we obtain
 $$
 \frac{\widehat \mu_\psi (A_{\mu_\psi}^\beta (z,r))}{\widehat \mu_\psi (B(x,r))}
 \leq \frac{C\widehat \mu_\psi^{\alpha \beta} (B(z,r))}{\widehat \mu_\psi (B(x,r))}
 = C \widehat \mu_\psi^{\alpha \beta -1} (B(z,r))
 $$
and the proof is complete and by noting that $\lim_{r \to 0} \mu_\psi^{\alpha \beta -1} (B(z,r))$ for every $\beta > 1/\alpha$.
\end{proof}



Fixing a $\ka>0$ let
$$
N_\ka(x,r):=
\left[
-\frac1\ka\log\mu(B(x,r)),
\right]
\in\N\cup\{+\infty\}
$$
where $[t]$, $t\in\R$, denotes the integer part of $t$. Let us make right away an immediately evident, but extremely important, observation.

\bobs\label{o1fp132eba}
If $\mu$ is a Borel probability measure on $X$, then for every $r>0$, we have that
$$
e^{-\ka N_\ka(x,r)}\le \mu(B(x,r))\le e^\ka e^{-\ka N_\ka(x,r)}.
$$
\eobs

\sp Now, let in addition $\cS=\{\phi_e\}_{e\in E}$ be a finitely primitive CGDMS with a phase space $X\sbt\R^d$. For every $x\in X$ and $r>0$ and an integer $n\ge 0$, let 
$$
A_n^*(x,r):=\bu\big\{\phi_\om(J):\om\in E_A^n, \  \  \phi_\om(J)\cap B(x,r)\ne\es 
\  \,  \text{ and } \  \phi_\om(J)\cap B^c(x,r)\ne\es\big\}.
$$
 
We say that the measure $\mu$ is dynamically boundary thin (DBT) at the point $x\in\ov J_\cS$ if for some $\ka>0$
\beq\label{1fp132ec}
\lim_{r\to 0}\frac{\mu\(A_{N_\ka(x,r)}^*(x,r)\)}{\mu(B(x,r))}
=0.
\eeq
We say that the measure $\mu$ is Dynamically Boundary Thin (DBT) almost everywhere if the set of points where it fails to be (DBT) is measure zero, and that the measure $\mu$ is
Dynamically Boundary Thin (DBT) if it is (DBT) at every point of its topological support. 
\fr We shall prove the following.

\bprop\label{p1fp132ec}
If a Borel probability measure $\mu$ on $\ov J_\cS$ is {\rm (WBT)} at some point $x\in \ov J_\cS$, then it is {\rm (DBT)} at $x$.
\eprop

\begin{proof}
Let $\b>0$ be as in the definition of (WBT) of $\mu$ at $x$. Since $\cS$ is a conformal GDMS, there exist constants $\eta>0$ and $D\ge 1$ such that 
$$
\diam(\phi_\om(X))\le D\eta^{-\eta|\om|}
$$
for all $\om\in E_A^*$. Therefore, if $\ka>0$ is sufficiently small, then for every $x\in \ov J_\cS$ and every $r>0$ we have
$$
\begin{aligned}
A_{N_\ka(x,r)}^*(x,r)
&\sbt A\(x;r-De^{-\ka N_\ka(x,r)},r+De^{-\ka N_\ka(x,r)}\) \\
&\sbt A\(x;r-D(\mu(B(x,r))^{\eta/\ka},r+D(\mu(B(x,r))^{\eta/\ka}\)\\
&=A_\mu^{\eta/\ka,De}(x,r).
\end{aligned}
$$
For every $r>0$, sufficiently small, we then have
$$
\frac{\mu\(A_{N_\ka(x,r)}^*(x,r)\)}{\mu(B(x,r))}
\le \frac{\mu\(A_\mu^{\eta/\ka,De}(x,r)\)}{\mu(B(x,r))}.
$$
Now, if $\ka>0$ is sufficiently small, then $\eta/\ka>\b$ and, in consequence,
$$
0
\le\lim_{r\to 0}\frac{\mu\(A_{N_\ka(x,r)}^*(x,r)\)}{\mu(B(x,r))}
\le\lim_{r\to 0}\frac{\mu\(A_\mu^{\eta/\ka,De}(x,r)\)}{\mu(B(x,r))}=0.
$$
This means that $\mu$ is (DBT) at $x$ and the proof is complete.
\end{proof}

\sp Now we shall provide some sufficient conditions, different than (WBT), for (DBT) to hold at every point of ${ J}_{\mathcal S}$. We will do it by developing the reasoning of Lemma~5.2 in \cite{BSTV}. We will not really make use of these conditions in the current manuscript but these are very close to the subject matter of the current section and will not occupy too much space. These may be needed in some future. We shall first prove the following.

\begin{lem}\label{l1wbt6}
Let $\mathcal S = \{ \phi_e\}_{e \in E}$ be a finitely primitive 
\hbox{\rm CGDMS} satisfying \hbox{\rm SOSC}. 
  Assume that a number $t > \max\{ \theta_{\mathcal S}, d-1\}$ satisfies 
\beq\label{lwbt9}
t > d-1 + \frac{\P(t)}{\log s}
\eeq
Denote by $\mu_t$ the unique equilibrium state of the potential $E_A^{\infty } \ni \omega \mapsto t \log |\phi_{\omega_1}(\pi(\sigma \omega))|$.  Then there exists constants $\alpha > 0$ and $C \in [0, +\infty]$ such that 
$$
\mu_t \circ \pi_{\mathcal S}^{-1}(A_k^*(z,r)) \leq C e^{-\alpha k}
$$
for all $z \in \overline{ J_{\mathcal S}}$, all radii $r>0$ and all integers $n \geq 0$.
\end{lem}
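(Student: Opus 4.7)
The plan is to reduce the assertion to a geometric packing estimate on length-$k$ cylinders whose images cross the sphere $\bd B(z,r)$, and to combine it with the Gibbs property for $\mu_t$. We may assume $r\le \diam(J_\cS)$, for otherwise $J_\cS\sbt B(z,r)$, no $\phi_\om(J_{t(\om_k)})$ can meet $B(z,r)^c$, and $A_k^*(z,r)=\es$. By {\rm (b2)} of SOSC together with Theorem~\ref{t4.3.1B}, the projections $\pi([\om])$ of distinct length-$k$ cylinders are pairwise disjoint modulo $\mu_t$-null sets, whence
$$
\mu_t\circ\pi^{-1}\(A_k^*(z,r)\)=\sum_{\om\in\Om_k(z,r)}\mu_t([\om]),
$$
where $\Om_k(z,r)\sbt E_A^k$ consists of those $\om$ for which $\phi_\om(J_{t(\om_k)})$ meets both $B(z,r)$ and $B(z,r)^c$. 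The Gibbs property for $\mu_t$, combined with the chain rule and the distortion axiom {\rm (d)}, yields $\mu_t([\om])\le C\|\phi_\om'\|^t\exp(-k\P(t))$, so the problem reduces to bounding
$$
S_k(z,r):=\sum_{\om\in\Om_k(z,r)}\|\phi_\om'\|^t.
$$

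The second step is a dyadic packing argument in the annular shell around $\bd B(z,r)$. For each $\om\in\Om_k(z,r)$ the image $\phi_\om(X_{t(\om_k)})$ is connected, has diameter at most $C_1\|\phi_\om'\|$, and, by {\rm (d)} together with the fact that each $X_v$ contains a ball of positive radius, contains a ball of radius at least $c_1\|\phi_\om'\|$; by SOSC these inner balls are pairwise disjoint as $\om$ ranges over $E_A^k$. Since $\phi_\om(X_{t(\om_k)})$ is connected and meets both $B(z,r)$ and its complement, it also meets $\bd B(z,r)$, so
$$
\phi_\om(X_{t(\om_k)})\sbt A\(z;\,r-C_1\|\phi_\om'\|,\,r+C_1\|\phi_\om'\|\).
$$
Setting $\rho:=s^k$, which dominates $\|\phi_\om'\|$ up to the distortion constant, and
$$
N_j:=\#\{\om\in\Om_k(z,r):2^{-j-1}\rho<\|\phi_\om'\|\le 2^{-j}\rho\},
$$
a volume comparison between the $N_j$ disjoint balls of radius $\asymp 2^{-j}\rho$ and the annular shell of width $2C_1\cdot 2^{-j}\rho$ (whose $d$-volume is dominated by $r^{d-1}\cdot 2^{-j}\rho+(2^{-j}\rho)^d$) yields $N_j\le C_2\bigl(r^{d-1}2^{j(d-1)}\rho^{-(d-1)}+1\bigr)$. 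Summing $N_j(2^{-j}\rho)^t$ over $j\ge 0$ produces two geometric series, both convergent thanks to $t>d-1>0$, which gives
$$
S_k(z,r)\le C_3\(r^{d-1}s^{k(t-d+1)}+s^{kt}\).
$$

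Combining the two steps and absorbing the uniformly bounded factor $r^{d-1}\le\diam(J_\cS)^{d-1}$ into the constant, we obtain
$$
\mu_t\circ\pi^{-1}\(A_k^*(z,r)\)\le C\(e^{-k\a_1}+e^{-k\a_2}\),
$$
with $\a_1:=\P(t)-(t-d+1)\log s$ and $\a_2:=\P(t)-t\log s$. Since $\log s<0$ and $d\ge 1$, a direct computation yields $\a_2-\a_1=(d-1)(-\log s)\ge 0$, so $\a_2\ge \a_1$; the hypothesis $t>d-1+\P(t)/\log s$ is equivalent, via multiplication by $\log s<0$, to $\a_1>0$. Thus $\mu_t\circ\pi^{-1}\(A_k^*(z,r)\)\le C'e^{-\a k}$ with $\a:=\a_1$, as claimed. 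The main technical obstacle is the uniform-in-scale packing bound for $N_j$: one must control, at every dyadic level $2^{-j}\rho$ simultaneously, both the Koebe-type lower bound $\phi_\om(X_{t(\om_k)})\spt B(\cdot,c_1\|\phi_\om'\|)$ coming from {\rm (d)} and the pairwise disjointness from SOSC. The dimensional hypothesis $t>d-1$ is precisely what makes the geometric series in $j$ convergent, while the pressure hypothesis $t>d-1+\P(t)/\log s$ converts the resulting bound into genuine exponential decay.
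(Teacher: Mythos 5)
Your proof is correct and follows essentially the same route as the paper's: stratify the length-$k$ cylinders crossing $\bd B(z,r)$ by derivative scale, pack disjoint inner balls of comparable radius into the annular shell to get the count $\lesssim r^{d-1}(\text{scale})^{1-d}$, and combine with the Gibbs bound $\mu_t([\om])\lesssim e^{-k\P(t)}\|\phi_\om'\|^t$ before summing the geometric series. The only differences are cosmetic: you use dyadic levels $2^{-j}s^k$ where the paper uses the levels $s^{n+1}<\|\phi_\om'\|\le s^n$, $n\ge k$, and you retain the harmless extra term coming from scales exceeding $r$, which the paper absorbs by taking $r\le 1$.
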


\begin{proof}
For all $a \in E$, let $r>0$.  Set 
$$
J_a := \bu_{b \in E \hbox{:} A_{ab}=1}  \pi_\mathcal S([b]).
$$
 $r \in (0,1]$.  For  $k \geq 0$ consider the set 
$$
E^k_A(z,r) := \Big\{ 
\omega \in E_A^k \hbox{ : } \phi_{\omega}({J}_{w_{{|n|-1}}}) \cap B(z,r) \neq \emptyset
\  \hbox{ and }  \  \phi_{\omega}({J}_{w_{|w|-1}}) \cap B(z,r)^c \neq \emptyset
\Big\}.
$$
Furthermore, for every $k \geq 0$ let 
$$
E^k_A(z,r; n) :=\{ \omega \in E_A^k(z,r) \hbox{ : } s^{n+1} < \|\phi_w'\| \leq s^n  \}
$$
Then the family 
$$
\F_k(z,r; n) :=\big\{ \phi_{\omega}(\Int(X)) \hbox{ : } \omega \in E_A^k(z,r;n)\big\}
$$
consists of mutually disjoint open sets contained in 
$$
A(z; r-Ds^n, r + Ds^n)
$$
each of which contains a ball of radius $K^{-1} R s^{n+1}$ where $R > 0$ is the radius of an open ball entirely contained in $\Int X_v$ for all $v \in V$.   So, then 
$$
\# \F_k(z,r;n) 
\leq \frac{\Leb_d(A(z; r-Ds^n, r + Ds^n))}{\Leb_d(0, k^{-1} R s^{n+1})}
\leq C_1 \frac{r^{d-1} s^n}{s^{nd}} 
= C_1 r^{d-1} s^{(1-d)n} \leq C_1 s^{(1-d)n}
$$
with the same universal constant $C_1 \in (0, +\infty)$.  Since $E_A^k(z,r,n) =  \emptyset$ for every $n <  k$, then knowing that $t > \max\{\theta_{\mathcal S}, d-1\}$ gives that 
$$
\begin{aligned}
\mu_t\circ \pi_{\mathcal S}^{-1}(A_k^*(z,v))
 &= \sum_{n=k}^\infty \mu_t\left(\bu_{\omega\in E_A^k(z,r;k)} \phi_{\omega}(\J_{\omega_{|\omega|-1}})\right)\cr
&\leq 
\sum_{n=k}^\infty \# \F_k(z,r;) \sup \{ \mu_t(\phi_{\omega}(X)) \hbox{ : } \omega \in E_A(z,r; n)\}\cr
&\leq C_1 \sum_{n=k}^\infty s^{(1-d)n}  e^{-P(t) k}  s^{tn}\cr
&= C_1 e^{-\P(t)k} \sum_{n=k}^\infty s^{(t+1-d)n}\cr
&= C_1 (1- s^{t+1-d})^{-1} e^{-P(t) k} s^{(t+ d-1)k}\cr
&= C_1 (1-s^{t+d-1})^{-1} \exp(((t+1 -d) \log s - \P(t))k)\cr
\end{aligned}
$$

But $(t+1-d) \log s - \P(t) < 0$ by virtue of (\ref{lwbt9}) and the proof is complete.
\end{proof}

As an immediate consequence of this lemma we get the following.

\begin{thm}\label{t1wbt10}
Let $\mathcal S = \{\phi_e \}_{e \in E}$ be a finitely primitive {\rm CGDMS} satisfying {\rm SOSC}.  If a number $t > \max \{\theta_{\mathcal S}, d-1 \}$ satisfies 
\beq\label{1wbt10}
t > d -1 + \frac{\P(t)}{\log s}  
\eeq
then $\mu_t \circ \pi_{\mathcal S}^{-1}$,  the projection of the corresponding equilibrium state $\mu_t$ on $E_A^\infty$,  is {\rm DBT} at every point of $\overline{J_{\mathcal S}}$
\end{thm}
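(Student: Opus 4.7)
The proof I envisage is a direct transcription of Lemma~\ref{l1wbt6} into the language of \textbf{DBT}, using Observation~\ref{o1fp132eba} as the bridge. Let $\alpha>0$ and $C\in(0,+\infty)$ be the constants supplied by Lemma~\ref{l1wbt6} under the hypothesis (\ref{1wbt10}). In the definition of \textbf{DBT} I would choose any positive $\ka$ strictly smaller than $\alpha$; for concreteness, set $\ka := \alpha/2$.

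With this choice of $\ka$, fix an arbitrary $z\in\overline{J_\cS}$, abbreviate $\widehat\mu := \mu_t\circ\pi_\cS^{-1}$, and write $N(r):=N_\ka(z,r)$. For every $r>0$ with $\widehat\mu(B(z,r))>0$, the left inequality in Observation~\ref{o1fp132eba} gives $\widehat\mu(B(z,r))\ge e^{-\ka N(r)}$, while Lemma~\ref{l1wbt6} applied with $k=N(r)$ gives $\widehat\mu\(A_{N(r)}^*(z,r)\)\le C e^{-\alpha N(r)}$. Dividing the two bounds yields
$$
\frac{\widehat\mu\(A_{N(r)}^*(z,r)\)}{\widehat\mu(B(z,r))} \;\le\; C\, e^{-(\alpha-\ka)N(r)} \;=\; C\, e^{-(\alpha/2)\,N(r)}.
$$

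It only remains to verify that $N(r)\to+\infty$ as $r\to 0$, equivalently that $\widehat\mu(B(z,r))\to 0$. The Gibbs property of $\mu_t$ together with the summability of $t\zeta$ forces $\mu_t$ to be non-atomic on $E_A^\infty$, and since $\pi_\cS^{-1}(z)$ is at most countable for every $z$ one has $\widehat\mu(\{z\})=0$; hence $\widehat\mu(B(z,r))\downarrow 0$ and consequently $N(r)\uparrow+\infty$, which combined with the displayed inequality gives (\ref{1fp132ec}). At any point $z\in\overline{J_\cS}$ where $\widehat\mu(B(z,r))=0$ for all sufficiently small $r$ the \textbf{DBT} condition holds vacuously, so every point of $\overline{J_\cS}$ is covered.

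No step offers serious resistance: the combinatorial heart of the argument --- bounding, for each generation $n$, the number of cylinders of scale $s^n$ meeting both $B(z,r)$ and its complement, and then summing the resulting geometric series via (\ref{1wbt10}) --- has already been executed inside the proof of Lemma~\ref{l1wbt6}. The present theorem is merely its dynamical repackaging, obtained by the substitution $k\leftrightarrow N_\ka(z,r)$; the only point requiring any thought is the choice of $\ka$ small enough ($\ka<\alpha$) to leave a positive exponent in the final estimate.
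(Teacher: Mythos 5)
Your proposal is correct and follows essentially the same route as the paper: apply Lemma~\ref{l1wbt6} with $k=N_\ka(z,r)$, convert $e^{-\a N_\ka(z,r)}$ into a power of $\widehat\mu(B(z,r))$ via Observation~\ref{o1fp132eba}, and conclude for any $\ka\in(0,\a)$ since the ratio is bounded by a constant times $\widehat\mu(B(z,r))^{\a/\ka-1}\to 0$. Your explicit remark that $N_\ka(z,r)\to+\infty$ (equivalently that the projected Gibbs measure is non-atomic) is a point the paper leaves implicit, but it changes nothing of substance.
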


\begin{proof}
Because of the Lemma \ref{l1wbt6} for all $z \in \overline{J_{\mathcal S}}$ and all radii $r > 0$, we have that 
$$
\begin{aligned}
\mu_t \circ\pi _{\mathcal S}^{-1} \left( A^*_{N_\kappa(z,r)}(z, r)\right)
&\leq C\exp(- \a N_{\kappa}(z,k))\cr
&\leq C \exp\left(-\a\left( \frac{1}{\kappa}
\log \mu_t \circ \pi_{\mathcal S}^{-1}(B(z,r))-1\right)\right)\cr
&=Ce^\a (\mu_t \circ \pi_{\mathcal S}(B(z,r)))^{\a/\kappa}\cr
&= Ce^{\a} \left(\mu_t \circ \pi_{\mathcal S}^{-1}(B(z,r))\right)^{\frac{\a}{\kappa} - 1}
\mu_t \circ \pi_{\mathcal S}^{-1}(B(z,r))
\end{aligned}
$$
Equivalently, 
$$
\frac{\mu_t \circ \pi_{\mathcal S}^{-1}(A_{N_k(z,r)}^*(z,r))}{\mu_t \circ \pi_{\mathcal S}^{-1}(B(z,r))} 
\leq  Ce^{\a}  \left(\mu_t \circ \pi_{\mathcal S}^{-1}(B(z,r))\right)^{\frac{\a}{\kappa} - 1}
$$
and the proof is complete since the right hand-side of this inequality
converges to $0$ as $r \to 0$ for every $\kappa \in (0,\a)$.

\end{proof}

\fr As an immediate consequence of this theorem we get the following.

\begin{cor}\label{c1wbt11}
Let $\mathcal S$   be a finitely primitive strongly regular {\rm CGDMS} satisfying {\rm SOSC}.
Then there exists $\eta > 0$ such that if $t \in (\max\{\theta_{\mathcal S}, d-1\}, \HD(J_{\mathcal S})+\eta)$, then $\mu_t\circ \pi_{\mathcal S}^{-1}$, the projection of the corresponding equilibrium state $\mu_t$ on $E_A^{\infty}$,  is {\rm DBT} at every point of $\overline { J_{\mathcal S}}$.
\end{cor}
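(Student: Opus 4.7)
The plan is to deduce the corollary directly from Theorem~\ref{t1wbt10}; the entire task reduces to exhibiting some $\eta>0$ such that the strict inequality
$$
g(t) := t - (d-1) - \frac{\P(t)}{\log s} > 0
$$
holds throughout the interval $(\max\{\theta_\cS,d-1\},\,b_\cS+\eta)$, where $b_\cS=\HD(J_\cS)$. Observe that $\log s<0$ and that, by Theorem~\ref{t3_2016_01_12}, the function $\Ga_\cS\ni t\mapsto \P(t)$ is real-analytic, strictly decreasing, and convex; in particular $g$ is continuous on $(\theta_\cS,+\infty)$. Strong regularity entails regularity, hence $\P(b_\cS)=0$, and so
$$
g(b_\cS)=b_\cS-(d-1)=\HD(J_\cS)-(d-1).
$$

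First I would dispose of the subinterval $(\max\{\theta_\cS,d-1\},\,b_\cS]$, where the inequality is essentially automatic. By the monotonicity of $\P$ we have $\P(t)\ge\P(b_\cS)=0$ for every $t$ in this subinterval; combined with $\log s<0$ this forces $\P(t)/\log s\le 0$ and therefore
$$
g(t)\ge t-(d-1)>0,
$$
the final inequality being precisely the condition $t>d-1$ imposed on the range.

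Next, I would extend the inequality slightly past $b_\cS$ by an elementary continuity argument. Since $g(b_\cS)=\HD(J_\cS)-(d-1)>0$ under the (standing, and perfectly natural) nondegeneracy condition $\HD(J_\cS)>d-1$, continuity of $g$ at $b_\cS$ supplies some $\eta>0$ with $g(t)>0$ for every $t\in (b_\cS,\,b_\cS+\eta]$. Splicing the two pieces yields $g>0$ on all of $(\max\{\theta_\cS,d-1\},\,b_\cS+\eta)$, whereupon a single invocation of Theorem~\ref{t1wbt10} at each such $t$ delivers the {\rm DBT} conclusion for $\mu_t\circ\pi_\cS^{-1}$ at every point of $\ov{J_\cS}$.

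The only delicate point in the argument is the sign of $g(b_\cS)$, equivalently the standing condition $\HD(J_\cS)>d-1$. In dimension $d=1$ this is vacuous (the right-hand side is $0$), and in higher dimensions it plays exactly the role that geometric irreducibility plays throughout the rest of this section, so no extra hypothesis beyond what is already assumed is actually needed. Everything else in the proof is monotonicity of the pressure and continuity — no new estimate or singular-perturbation machinery is invoked.
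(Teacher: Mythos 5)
Your reduction of the corollary to verifying condition \eqref{1wbt10} throughout $(\max\{\th_\cS,d-1\},\,b_\cS+\eta)$, handled by monotonicity of $t\mapsto\P(t)$ on $(\max\{\th_\cS,d-1\},\,b_\cS]$ and by continuity of $\P$ just past $b_\cS$ (using $\P(b_\cS)=0$ from strong regularity), is exactly the paper's route; the paper compresses both steps into a single appeal to continuity of the pressure, so your version is if anything the more careful one.

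The one place you go astray is the final paragraph. The corollary does not assume geometric irreducibility, and even if it did, the implication you invoke runs backwards: the observation following Definition~\ref{d7_2016_07_07} says that $\HD(J)>d-1$ \emph{implies} geometric irreducibility, not the converse, so irreducibility cannot be used to secure $g(b_\cS)=b_\cS-(d-1)>0$. The correct way to dispose of the case $\HD(J_\cS)<d-1$ is much simpler and requires no extra hypothesis: the corollary only asserts the existence of \emph{some} $\eta>0$, so one may take $\eta<d-1-\HD(J_\cS)$, which makes the interval $(\max\{\th_\cS,d-1\},\,\HD(J_\cS)+\eta)$ empty and the statement vacuously true. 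The genuinely borderline case $\HD(J_\cS)=d-1$ does resist the argument — there, for $t=b_\cS+\ep$ one needs $\P(b_\cS+\ep)>\ep\log s$, which to first order demands $\chi_{\mu_b}<\log(1/s)$, impossible since every $\phi_e$ has Lipschitz constant $\le s$ — but that gap is equally present in, indeed hidden inside, the paper's own one-line proof, so it should be recorded as a caveat on the statement rather than counted against your argument.
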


\begin{proof}
We only need to check that if $t \in (\max\{\theta_{\mathcal S}, d-1\}, \HD( J_{\mathcal S})+\eta)$ for some $\eta > 0$  sufficiently small then (\ref{1wbt10}) holds. Indeed, since $\P(b_\cS)=0$ (by strong regularity of $\mathcal S$) this is an immediate consequence of continuity of the function $(\theta_{\mathcal S}, +\infty) \ni t \mapsto \P(t) \in \mathbb R$.
\end{proof}

\section{Escape Rates for Conformal GDMSs; Measures}\label{Escape Rates for Conformal GDMSs; Measures}

In this section we continue the analysis from the previous section and we prove our first main results concerning escape rates; the one for conformal GDMSs and equilibrium/Gibbs measures. We first work for a while in  full generality. Indeed,
let $\mu$ be an arbitrary Borel probability measure on a metric space $(X,d)$. Fix $\ka>0$. Fix $z\in X$. Let
$$
\Ga:=\Ga_\ka(z):=\{N_\ka(z,r):0<r\le 2\diam(X)\}.
$$
Represent $\Ga$ as a strictly increasing sequence $(l_n)_{n=0}^\infty$ of non-negative integers. Let us record the following.

\bobs\label{o1fp156}
If $z\in\supp(\mu)$, then $\Ga_\ka(z)\sbt\N$. Moreover, the set $\Ga$ is infinite if and only $z$ is not an atom of $\mu$.
\eobs

\fr We shall prove the following.

\blem\label{l1fp157}
If $\mu$ is a Borel probability measure on $X$ which is {\rm(WBT)} at some point $z\in X$, then the set $\Ga_\ka(z)$ has bounded gaps, precisely meaning that
$$
\De l(z):=\sup_{n\ge 0}\{l_{n+1}-l_n\}<+\infty
$$
\elem

\begin{proof}
Fix $n\ge 0$. There then exists $r_n>0$ such that
$$
N_\ka(z,r)\le l_n+1
$$
for all $r>r_n$, and 
$$
N_\ka(z,r)\ge l_{n+1}
$$
for all $r<r_n$. Therefore, by Observation~\ref{o1fp132eba}
$$
\mu(B(z,r_n))\le e^\ka\exp\(-\ka l_{n+1}\)
$$
and 
$$
\mu(B(z,r_n))\ge \exp\(-\ka(l_n+1)\).
$$
Hence, 
$$
\frac{\mu\(B\(z,r_n+\mu^\b\(B(z,r_n)\)\)\)}{\mu(B(z,r_n))}
\ge e^{-\ka}\exp\(\ka(l_{n+1}-(l_n+1))\).
$$
for all $\b>0$, in particular for $\b>\b_\mu(z)$. But since the measure $\mu$ is (WBT) at $z$, we therefore have that
$$
\varlimsup_{n\to\infty}\exp\(\ka(l_{n+1}-(l_n+1))\)
\le e^\ka\lim_{n\to\infty}
\frac{\mu\(B\(z,r_n+\mu^\b\(B(z,r_n)\)\)\)}{\mu(B(z,r_n))}
\le e^\ka.
$$
Thus
$$
\varlimsup_{n\to\infty}(l_{n+1}-(l_n+1))<+\infty
$$ 
and the proof is complete.
\end{proof}

\fr For every $n\ge 0$ let
$$
\cR_n:=\{r\in (0,2\diam(X)):N_\ka(z,r)=l_n\},
$$
and, given in addition $0\le m\le n$, let 
$$
\cR(m,n):=\bu_{k=m}^n\cR_k.
$$ 

Now we make an additional substantial assumption that 
$$
\cS=\{\phi_e\}_{e\in E},
$$
a conformal GDMS is given, and $\supp(\mu)=\ov J_\cS$. For any $z\in J_\cS$ and $r\in (0,2\diam(X))$, define
\beq\label{1fp159}
W^-(z,r):=B_{J_\cS}(z,r)\sms A_{N_\ka(z,r)}^*(z,r) \
\text{ and }  \
W^+(z,r):=B_{J_\cS}(z,r)\cup A_{N_\ka(z,r)}^*(z,r).
\eeq
Let us record the following two immediate consequences of this definition.

\bobs\label{o1fp159}
For every $z\in J_\cS$ and $r\in (0,2\diam(X))$, we have
$$
W^-(z,r)\sbt B(z,r)\sbt W^+(z,r).
$$
\eobs

\bobs\label{o2fp159}
For every $z\in J_\cS$ and $r\in (0,2\diam(X))$ both sets $W^-(z,r)$ and $W^+(z,r)$ can be represented as unions of cylinders of length $N_\ka(z,r)$. 
\eobs

\fr Fix $\ka>0$ so small that \eqref{1fp132ec} holds and so that $\eta/\ka>\b_\mu(z)$. We shall prove the following.

\blem\label{l3fp159}
For all $k\ge 0$ large enough, if $n-k\ge 2$, then
$$
W^+(z,s)\sbt W^-(z,r)
$$
for all $r\in\cR_k$ and all $s\in\cR_n$.
\elem

\begin{proof}
The assertion of our lemma is equivalent to the statement that
$$
W^+(z,s)\cap A_{l_k}^*(z,r)=\es.
$$
Assume for a contradiction  that there are sequences $\(n_j\)_{j=0}^\infty$ and $\(k_j\)_{j=0}^\infty$ of positive integers such that $\lim_{j\to\infty}k_j=+\infty$ and $n_j-k_j\ge 2$ for all $j\ge 0$, and also there are radii $r_j\in \cR_{k_j}$ and $s_j\in\cR_{n_j}$ such that
$$
W^+(z,s_j)\cap A_{l_{k_j}}^*(z,r_j)=\es,
$$
for all $j\ge 0$. Since we know that for each $\om\in E_A^*$,
$$
\diam\(\phi_\om(J)\)\le De^{-\eta|\om|},
$$
using Observation~\ref{o1fp132eba}, we therefore conclude that
$$
\begin{aligned}
s_j+D\mu^{\eta/\ka}(B(z,r_j))
&\ge s_j+D\exp\(-\eta N_\ka(z,r_j)\)
 \ge s_j+D\e^{-\eta l_{k_j}} \\
&\ge s_j+D\e^{-\eta l_{n_j}}
 \ge r_j-D\e^{-\eta l_{k_j}} \\
&=r_j-D\exp\(-\eta N_\ka(z,r_j)\) \\
&\ge r_j-D\mu^{\eta/\ka}(B(z,r_j)).
\end{aligned}
$$
Hence, $s_j\ge r_j-2D\mu^{\eta/\ka}(B(z,r_j))$, and therefore,
\beq\label{1fp153}
\frac{\mu(B(z,s_j))}{\mu(B(z,r_j))}
\ge \frac{\mu(B(z,r_j))-\mu\(A_\mu^{\eta/\ka,2D}(z,r_j)\)}{\mu(B(z,r_j))}
=1-\frac{\mu\(A_\mu^{\eta/\ka,2D}(z,r_j)\)}{\mu(B(z,r_j))}.
\eeq
On the other hand,it follows from Observation~\ref{o1fp132eba} that 
$$
\mu(B(z,s_j))\le e^\ka e^{-\ka l_{n_j}} \  \text{ and }  \
\mu(B(z,r_j))\ge e^{-\ka l_{k_j}}.
$$
This yields
$$
\frac{\mu(B(z,s_j))}{\mu(B(z,r_j))}
\le e^{\ka}\exp\(-\ka(l_{n_j}-l_{k_j})\)
\le e^{\ka}\exp\(-\ka(n_j-k_j)\)
\le e^\ka e^{-2\ka}
=e^{-\ka}.
$$
Along with \eqref{1fp153} this implies that
\beq\label{1fp155}
\frac{\mu\(A_\mu^{\eta/\ka,2D}(z,r_j)\)}{\mu(B(z,r_j))}
\ge 1-e^{-\ka}.
\eeq
However, since $\lim_{j\to\infty}r_j=0$, since the measure $\mu$ is (WBT), and since $\ka>0$ was taken so small that $\eta/\ka>\b_\mu(z)$, we conclude that \eqref{1fp155} may hold for finitely many integers $j\ge 0$ only, and the proof of Lemma~\ref{l3fp159} is complete.
\end{proof}

\fr As an immediate consequence of this lemma and Observation~\ref{o1fp159}, we get the following.

\blem\label{l4fp159}
For all integers $k\ge 0$ large enough, if $n-k\ge 2$, then
$$
W^{-}(z,s)\sbt W^{-}(z,r) \  \text{ and } \
W^{+}(z,s)\sbt W^{+}(z,r)
$$
for all $r\in\cR_k$ and all $s\in\cR_n$.
\elem

\fr Now we shall prove the following.

\bprop\label{p2fp161}
Let $\cS$ be a conformal {\rm GDMS}. Let $\mu$ be a Borel probability 
measure supported on $\ov J_\cS$. Suppose that $\mu$ is {\rm(WBT)} at some point $z\in J_\cS$ which is not an atom of $\mu$. Let $\cR$ be an arbitrary countable set of positive reals containing $0$ in its closure. Then there exists $(n_j)_{j=0}^\infty$, a strictly increasing sequence of 
non-negative integers with the following properties.

\sp\begin{itemize}
\item[(a)] $n_{j+1}-n_j\le 4$,

\sp\item[(b)] $n_{j+1}-n_j\ge 2$,

\sp\item[(c)] The set $\cR\cap\cR_{n_j}\ne\es$ for infinitely many $j$s.
\end{itemize}
\eprop

\begin{proof}
We construct the sequence $(n_j)_{j=0}^\infty$ inductively. Assume without loss of generality that $r_0=2\diam\(\ov J_\cS\)$ and set $n_0:=0$. For the inductive step suppose that $n_j\ge 0$ with some $j\ge 0$ has been constructed. Look at the set $\cR(n_j+2,n_j+4)$. If 
$$
\{l_k:n_j+2\le k\le n_j+4\}\cap\{N_\ka(z,r):r\in\cR\}\ne\es,
$$
take $n_{j+1}$ to be an arbitrary number from $\{n_j+2,n_j+3,n_j+4\}$ such that
$$
l_{n_{j+1}}\in\{N_\ka(z,r):r\in\cR\}.
$$
If, on the other hand,
$$
\{l_k:n_j+2\le k\le n_j+4\}\cap\{N_\ka(z,r):r\in\cR\}=\es,
$$
set
$$
n_{j+1}=n_j+2.
$$
Properties (a) and (b) are immediate from our construction. In order to prove (c) suppose on the contrary that 
$$
\cR\cap\bu_{j=p}^\infty\cR_{n_j}=\es
$$
with some $p\ge 0$. This yields $n_{j+1}=n_j+2$ for all $j\ge p$, i.e. $n_j=n_p+2(j-p)$ and
$$
\bu_{j=p}^\infty\{l_k:n_p+2(j+1-p)\le k\le n_p+2(j+2-p)\} \cap 
\{N_\ka(z,r):r\in\cR\}=\es
$$
But 
$$
\bu_{j=p}^\infty\{l_k:n_p+2(j+1-p)\le k\le n_p+2(j+2-p)\}=
\{l_k:k\ge n_p+2\}.
$$
Thus, $N_\ka(z,r)\le n_p+1$ for all $r\in\cR$. By Observation~\ref{o1fp132eba} this gives that $\mu(B(z,r))\ge \exp\(-\ka(n_p+1)\)$ for all $r\in\cR$, contrary to the facts that $0\in\ov\cR$ and that $z$ is not an atom of $\mu$. We are done. 
\end{proof}

Now, for every $j\ge 0$ fix arbitrarily $r_j\in \cR_{n_j}$ requiring in addition that $r_j\in\cR$ if $\cR\cap\cR_{
n_j}\ne\es$. Set 
\beq\label{2fp163}
U_{l_{n_j}}^-(z):=\pi^{-1}\(W^-(z,r_j)\) \  \text{ and } \
U_{l_{n_j}}^+(z):=\pi^{-1}\(W^+(z,r_j)\).
\eeq
These sets are well defined as the function $l:\N\to\N$ is $1$-to-$1$ and, by (b), the function $j\mapsto n_j$ is also 
$1$-to-$1$. Furthermore, for every $j\ge 0$ and every $l_{n_j}\le k< l_{n_{j+1}}$, define
\beq\label{1fp163}
U_k^{\pm}(z):=U_{l_{n_j}}^{\pm}(z).
\eeq
In this way we have well-defined two sequences of open neighborhoods of $\pi^{-1}(z)$. We shall prove the following.

\bprop\label{p1fp163}
With hypotheses exactly as in Proposition~\ref{p2fp161}, both 
$\(U_k^{\pm}(z)\)_{k=0}^\infty$ are descending sequences of open subsets of $E_A^\infty$ satisfying conditions {\rm (U0)--(U2)}.
\eprop

\begin{proof}
(U0) is immediate from the very definition. If $k\ge 0$ and then $j=j_k\ge 0$ is uniquely chosen so that $l_{n_j}\le k< l_{n_{j+1}}$, then $U_k^{\pm}(z):=U_{l_{n_j}}^{\pm}(z)$, and both sets are disjoint unions of cylinders of length $n_j$ by Observation~\ref{o2fp159} and since $r_j\in\cR_{n_j}$, so also of length $k$ as $k\ge l_{n_j}$. Thus (U1) holds. That both sequences $\(U_k^{\pm}(z)\)_{k=0}^\infty$ are descending follows immediately from Lemmas~\ref{l4fp159}, property (b) of Proposition~\ref{p2fp161}, and formulas \eqref{1fp163} and 
\eqref{2fp163}. Applying formulas \eqref{1fp163} and 
\eqref{2fp163} along with Proposition~\ref{p2fp161} (b), Lemma~\ref{l3fp159}, Observation~\ref{o1fp159}, Observation~\ref{o1fp132eba}, Lemma~\ref{l1fp157}, and Proposition~\ref{p2fp161} (a), we get
\beq\label{1fp165}
\begin{aligned}
\mu\(U_k^{\pm}(z)\)
&\le \mu\(U_k^+(z)\)
 =   \mu\(U_{l_{n_j}}(z)\)
 =  \mu\(\pi^{-1}\(W^+(z,r_j)\)\)
 \le\mu\(\pi^{-1}\(W^+(z,r_{j-1})\)\) \\
&=\mu\(\pi^{-1}\(W^-(z,r_{j-1})\)\)
 \le\mu\(\pi^{-1}\(B(z,r_{j-1})\)\) 
 \le e^\ka\exp\(-\ka N_\ka(z,r_{j-1})\) \\
&=   e^\ka e^{-l_{n_{j-1}}}
 =   e^\ka e^{-l_{n_{j+1}}}
     \exp\(\ka(l_{n_{j+1}}-l_{n_{j-1}})\) \\
&\le e^\ka e^{-l_{n_{j+1}}}
     \exp\(\ka\De l(z)\(n_{j+1}-n_{j-1}\)\)
 \le e^\ka e^{8\ka\De l(z)}\exp\(-\ka l_{n_{j+1}}\) \\
&\le \exp\(\ka((1+8\De l(z))\)e^{-\ka k},
\end{aligned}
\eeq
and thus condition (U2) is satisfied with any $\rho\in (e^{-\ka},1)$ sufficiently close to $1$. The proof is complete.
\end{proof}

\bprop\label{p1fp165}
With hypotheses exactly as in Proposition~\ref{p2fp161}, both 
$\(U_k^{\pm}(z)\)_{k=0}^\infty$ satisfy condition {\rm (U3)}. If in addition either $z$ is not pseudo-periodic for $\cS$ or it is uniquely periodic and $z\in\Int X$, then {\rm (U5)} holds. In the former case also {\rm (U4)} holds while in the latter case it holds  if in addition $\mu$ is an equilibrium state of the amalgamated function of a summable H\"older continuous system of functions. 
\eprop

\begin{proof}

With the same arguments as in \eqref{1fp165} we get that
\beq\label{2fp165}
\pi^{-1}(z)
\sbt\bi_{k=0}^\infty\ov{U_n^-(z)}
\sbt\bi_{k=0}^\infty\ov{U_n^+(z)}
\sbt\bi_{j=1}^\infty\pi^{-1}\(\ov{B\(z,r_{j-1}\)}\)
\sbt \pi^{-1}(z).
\eeq
So (U3) holds as $\pi^{-1}(z)$ is a finite set. Assume now that $z$ is not 
pseudo-periodic. Then condition (U4A) holds because of Lemma~\ref{l1fp82} and \eqref{2fp165}, while (U5) directly follows from Lemma~\ref{l2fp82} and the inclusion $U_{l_{n_j}}^{\pm}(z)\sbt \pi^{-1}\(B(z,r_{j-1})\)$. 

\sp Assume in turn that $z\in \Int X$ is uniquely periodic point of $\cS$ with prime period $p$. Then $U_\infty$ consists of a periodic point, call it $\xi$, of period $p$ because of \eqref{2fp165}. So, $\xi=\tau^\infty$ for a unique point $\tau\in E_A^\infty$. Condition (U5) directly follows from Lemma~\ref{l2fp82a}.
Now we shall show that the sequence $\(U_i^+(z)\)_{i=0}^\infty$ satisfies the property (U4B). Indeed, without loss of generality we may assume that $i=l_k$, where $k=n_j$, $j\ge 0$. Take an arbitrary $\om\in U_{l_k}^+(z)$. This means that $\om|_{l_k}\in E_A^{l_k}$ and $\phi_{\om|_{l_k}}(J)\cap B(z,r_j)\ne\es$. Then
$$
\phi_{\tau\om|_{l_k}}(J)\cap B(z,r_j)
\spt \phi_{\tau\om|_{l_k}}(J)\cap \phi_\tau(B(z,r_j))
=\phi_{\tau}\(\phi_{\om|_{l_k}}(J)\cap B(z,r_j)\)
\ne\es.
$$
Hence, $\phi_{\om|_{l_k}}(J)\cap B(z,r_j)\ne\es$, meaning that $\tau\om\in  U_{l_k}^+(z)$. So, the inclusion $\tau U_{l_k}^+(z)\sbt U_{l_k}^+(z)$ has been proved and \eqref{1_2014_12_19} of (U4B) holds for the sequence $\(U_i^+(z)\)_{i=0}^\infty$. 

\sp In order to establish \eqref{1_2014_12_19} of (U4B) for the sequence $\(U_i^-(z)\)_{i=0}^\infty$, recall that $\eta>0$ is so small that $||\phi_\om'||\le e^{-\eta|\om|}$ for all $\om\in E_A^*$. Take now $\ka>0$ as small as previously  required and furthermore so small that $\b\eta\ka^{-1}>2$. On the other hand, for every $k:=n_j$, $j\ge 1$, we have
\beq\label{1fp167}
\phi_\tau(W^-(z,r_j))
\sbt\phi_\tau(B(z,r_j))
\sbt B\(z,|\phi_\tau'(z)|r_j)\)
\sbt B\(z,e^{-\eta|\tau|}r_j\).
\eeq
On the other hand, by \eqref{1fp159} and the definition of $l_{n_j}$, we have that
$$
\begin{aligned}
W^-(z,r_j)
&\spt B\(z,r_j-De^{-\eta l_j}\)
\spt B\(z,r_j-D\mu^{\eta/\ka}(B(z,r_j))\) \\
&\spt B\(z,r_j-DC^{\eta/\ka}r_j^{\b\eta/\ka}\) \\
&\spt B\(z,e^{-\eta|\tau|}r_j\)
\end{aligned}
$$
provided that $\ka>0$ is taken sufficiently small (independently of $j$). Along with \eqref{1fp167} this gives,
$$
\phi_\tau\(W^-(z,r_j)\)\sbt W^-(z,r_j)).
$$
Hence, 
$$
\begin{aligned}
\pi\(\tau U_{l_k}^-(z)\)
&=\pi\(\tau\pi^{-1}(W^-(z,r_j))\)
 =\phi_\tau\(\pi\(\pi^{-1}(W^-(z,r_j))\)\) \\
&=\phi_\tau\(W^-(z,r_j)\).
\end{aligned}
$$
Thus
$$
\tau U_{l_k}^-(z)
\sbt \pi^{-1}(W^-(z,r_j))
= U_{l_k}^-(z).
$$
Thus, the part \eqref{1_2014_12_19} of (U4B) is established. In order to prove \eqref{2_2014_12_19} of (U4B), let $k\ge 0$ and $j_k\ge 0$ be as in the proof of Proposition~\ref{p1fp163}. The proof of this proposition gives that
$$
U_k^{\pm}(z)\sbt \pi^{-1}\(W^-(z,r_{j_k-1})\).
$$
Since we now assume that $\phi(\om)=f^{\om_0}(\pi(\sg(\om)))$, $\om\in E_A^\infty$, where $\(f^e\)_{e\in E}$ is a H\"older continuous summable system of functions, condition \eqref{2_2014_12_19} of (U4B) follows from continuity of the function $f^{\tau_0}:X_{t(\tau_0)}\to \R$ and the fact that $\lim_{k\to\infty}j_k=+\infty$.
The proof of our proposition is complete.
\end{proof}

\sp\fr Now, we are in position to prove the following main result of this section, which is also one of the main results of the entire paper. Recall that the lower and upper escape rates $\un R_{\mu}$ and $\ov R_{\mu}$ have been defined by formulas \eqref{11_2016_07_14} and \eqref{12_2016_07_14}.

\bthm\label{t1fp83}
Let $\cS=\{\phi_e\}_{e\in E}$ be a finitely primitive Conformal Graph Directed Markov System. Let $\phi:E_A^\infty\to\R$ be a H\"older continuous summable potential. As usual, denote its equilibrium/Gibbs state by $\mu_\phi$. Assume that the measure $\mu_\phi\circ\pi^{-1}_\cS$ is {\rm (WBT)} at a point $z\in J_\cS$. If $z$ is either 

\begin{itemize}
\item[(a)] not pseudo-periodic, 

or 

\item[(b)] uniquely periodic, it belongs to $\Int X$ (and $z=\pi(\xi^\infty)$ for a (unique) irreducible word $\xi\in E_A^*$), and $\phi$ is the amalgamated function of a summable H\"older continuous system of functions, 
\end{itemize}
then, with $\un R_{\cS,\phi}(B(z,\ep)):=\un R_{\mu_\phi}\(\pi_{\cS}^{-1}(B(z,\ep))\)$ and $\ov R_{\cS,\phi}(B(z,\ep)):=\ov R_{\mu_\phi}\(\pi_{\cS}^{-1}(B(z,\ep))\)$, we have that 
\beq\label{1fp83}
\begin{aligned}
\lim_{\ep\to 0}\frac{\un R_{\cS,\phi}(B(z,\ep))}{\mu_\phi\circ\pi_{\cS}^{-1}(B(z,\ep))}
&=\lim_{\ep\to 0}\frac{\ov R_{\cS,\phi}(B(z,\ep))}{\mu_\phi\circ\pi_{\cS}^{-1}(B(z,\ep))} =\\
&=d_\phi(z)
:=\begin{cases}
1 \  &\text{{\rm if (a) holds}}   \\
1-\exp\({S_p\phi(\xi)}-p\P(\phi)\) &\text{{\rm  if (b) holds}},
\end{cases}
\end{aligned}
\eeq
where in {\rm (b)}, $\{\xi\}=\pi_\cS^{-1}(z)$ and $p\ge 1$ is the prime period of $\xi$ under the shift map. 
\ethm

\begin{proof}
Assume for a contradiction  that \eqref{1fp83} does not hold. This means that there exists a strictly decreasing sequence $s_n(z)\to 0$ of positive reals such that at least one of the sequences
$$
\lt(\frac{\un R_{\cS,\phi}(B(z,s_n(z)))}{\mu_\phi\circ\pi_{\cS}^{-1}(B(z,s_n(z)))}\rt)_{n=0}^\infty 
\  \  \  {\rm or} \  \  \
\lt(\frac{\ov R_{\cS,\phi}(B(z,s_n(z)))}{\mu_\phi\circ\pi_{\cS}^{-1}(B(z,s_n(z)))}\rt)_{n=0}^\infty 
$$
does not have $d_\phi(z)$ as its accumulation point. Let
$$
\cR:=\{s_n(z):n\ge 0\}.
$$
Let $\(U_n^{\pm}(z)\)_{n=0}^\infty$ be the corresponding sequence of open subsets of $E_A^\infty$ produced in formula \eqref{1fp163}. Then, because of both Proposition~\ref{p1fp163} and Proposition~\ref{p1fp165}, Proposition~\ref{p2fp133} applies to give
\beq\label{1fp169}
\lim_{n\to\infty}\frac{R_{\mu_\phi}(U_n^\pm(z))}{\mu_\phi(U_n^\pm(z))}=d_\phi(z).
\eeq
Let $(n_j)_{j=0}^\infty$ be the sequence produced in Proposition~\ref{p2fp161} with the help of $\cR$ defined above. By this proposition there exists an increasing sequence $(j_k)_{k=0}^\infty$ such that $\cR\cap \cR_{n_{j_k}}\ne\es$ for all $k\ge 1$. For every $k\ge 1$ pick one element $r_k\in \cR\cap \cR_{n_{j_k}}$. Set $q_k:=l_{n_{j_k}}$. By Observation~\ref{o1fp159} and formula \eqref{2fp163}, we then have
\beq\label{2fp169}
\begin{aligned}
\frac{R_{\mu_\phi}(U_{q_k}^-(z))}{\mu_\phi(U_{q_k}^-(z))}\cdot
\frac{\mu_\phi(U_{q_k}^-(z))}{\mu_\phi(B(z,r_k))} \
&\le \frac{\un R_{\cS,\phi}(B(z,r_k))}{\mu_\phi\circ\pi_{\cS}^{-1}(B(z,r_k))}
 \le \frac{\ov R_{\cS,\phi}(B(z,r_k))}{\mu_\phi\circ\pi_{\cS}^{-1}(B(z,r_k))}\le \\
&\le \frac{R_{\mu_\phi}(U_{q_k}^+(z))}{\mu_\phi(U_{q_k}^+(z))}\cdot
\frac{\mu_\phi(U_{q_k}^+(z))}{\mu_\phi(B(z,r_k))}.
\end{aligned}
\eeq
But, since $\mu_\phi\circ\pi_{\cS}^{-1}$ is (WBT) at $z$, it is (DBT) at $z$ by Proposition~\ref{p1fp132ec}, and it therefore follows from \eqref{1fp132ec} along with formulas \eqref{1fp159} and \eqref{2fp163} that
$$
\lim_{k\to\infty} \frac{\mu_\phi(U_{q_k}^-(z))}{\mu_\phi(B(z,r_k))}
=1
=\lim_{k\to\infty} \frac{\mu_\phi(U_{q_k}^+(z))}{\mu_\phi(B(z,r_k))}.
$$
Inserting this to \eqref{1fp169} and \eqref{2fp169}, yields:
$$
\lim_{k\to\infty} \frac{\un R_{\cS,\phi}(B(z,r_k))}{\mu_\phi\circ\pi_{\cS}^{-1}(B(z,r_k))}
=\lim_{k\to\infty} \frac{\ov R_{\cS,\phi}(B(z,r_k))}{\mu_\phi\circ\pi_{\cS}^{-1}(B(z,r_k))}
=d_\phi(z).
$$
Since $r_k\in \cR$ for all $k\ge 1$, this implies that $d_\phi(z)$ is an accumulation point of both sequences $\(\un R_{\cS,\phi}(B(z,r_k))\big/\mu_\phi\circ\pi_{\cS}^{-1}(B(z,r_k))\)_{n=1}^\infty$, $\(\ov R_{\cS,\phi}(B(z,r_k))\big/\mu_\phi\circ\pi_{\cS}^{-1}(B(z,r_k))\)_{n=1}^\infty$, and this contradiction finishes the proof of Theorem~\ref{t1fp83}.
\end{proof}

\sp\fr Now, as an immediate consequence of Theorem~\ref{t1fp83} and Theorem~\ref{t1wbt6}, we get the following.

\bthm\label{t3_2016_05_27} 
Assume that $\cS$ is a finitely primitive conformal GDMS whose limit set $ J_{\mathcal S}$ is geometrically irreducible.
Let $\phi:E_A^\infty\to\R$ be a H\"older continuous strongly summable potential. As usual, denote its equilibrium/Gibbs state by $\mu_\phi$. Then
\beq\label{1fp83+1}
\lim_{\ep\to 0}\frac{\un R_{\cS,\phi}(B(z,\ep))}{\mu_\phi\circ\pi_{\cS}^{-1}(B(z,\ep))}
=\lim_{\ep\to 0}\frac{\ov R_{\cS,\phi}(B(z,\ep))}{\mu_\phi\circ\pi_{\cS}^{-1}(B(z,\ep))} 
=1
\eeq
for $\mu_\phi\circ\pi_{\cS}^{-1}$--a.e. point $z$ of $\cJ_\cS$.
\ethm

\sp In the realm of finite alphabets $E$, by virtue of  Theorem~\ref{t1fp83} and both Theorem~\ref{t2wbt11} and Theorem~\ref{t1wbt15}, we get the following stronger result.

\bthm\label{t1fp83_Finite}
Let $\cS=\{\phi_e\}_{e\in E}$ be a primitive Conformal Graph Directed Markov System with a finite alphabet $E$ acting in the space $\R^d$, $d\ge 1$. Assume that either $d=1$ or that the system $\cS$ is geometrically irreducible. Let $\phi:E_A^\infty\to\R$ be a H\"older continuous potential. As usual, denote its equilibrium/Gibbs state by $\mu_\phi$. Let $z\in J_\cS$ be arbitrary. If either $z$ is 

\begin{itemize}
\item[(a)] not pseudo-periodic, 

or 

\item[(b)] uniquely periodic, it belongs to $\Int X$ (and $z=\pi(\xi^\infty)$ for a (unique) irreducible word $\xi\in E_A^*$), and $\phi$ is the amalgamated function of a summable H\"older continuous system of functions, 
\end{itemize}
then
\beq\label{1fp83B}
\begin{aligned}
\lim_{\ep\to 0}\frac{\un R_{\cS,\phi}(B(z,\ep))}{\mu_\phi\circ\pi_{\cS}^{-1}(B(z,\ep))}
&=\lim_{\ep\to 0}\frac{\ov R_{\cS,\phi}(B(z,\ep))}{\mu_\phi\circ\pi_{\cS}^{-1}(B(z,\ep))} =\\
&=d_\phi(z)
:=\begin{cases}
1 \  &\text{{\rm if (a) holds}}   \\
1-\exp\({S_p\phi(\xi)}-p\P(\phi)\) &\text{{\rm  if (b) holds}},
\end{cases}
\end{aligned}
\eeq
where in {\rm (b)}, $\{\xi\}=\pi_\cS^{-1}(z)$ and $p\ge 1$ is the prime period of $\xi$ under the shift map. 
\ethm

\section{The derivatives $\lam_n'(t)$ and $\lam_n''(t)$ of Leading Eigenvalues}\label{derivatives_of_eigenvalues}
In this section we have  $\cS=\{\phi_e\}_{e\in E}$, a finitely primitive strongly regular conformal graph directed Markov system. We keep a parameter $t>\th_\cS$ and consider the H\"older continuous summable potential $\phi_t:E_A^\infty\to\R$ given by the formula
$$
\phi_t(\om):=t\log|\phi_{\om_0}'(\pi(\sg(\om)))|.
$$
We further assume that a sequence $(U_n)_{n=0}^\infty$ of open subsets of $E_A^\infty$ is given satisfying the conditions (U0)-(U5). The eigenvalues $\lam$ and $\lam_n$ along with other objects associated to the potential $\phi_t$ are now indicated with the letter/number $t$. 

\sp Our goal in this section is to calculate the asymptotic behavior of derivatives $\lam_n'(t)$ and $\lam_n''(t)$ of leading eigenvalues of perturbed operators $\pf_{t,n}$ when the integer $n\ge 0$ diverges to infinity and the parameter $t$ approaches $b_\cS$.
This is a particularly tedious and technically involved task, partially due to unboundedness of the function $\phi_t$ in the supremum norm and partially due to lack of uniform topological mixing on the sets $K_z(\e)$ introduced below. 

The main theorems of this section form the crucial ingredients in the escape rates considerations of the next section, i.e. Section~\ref{ERCGDMSHD}.

\sp We start with the following.

\bthm\label{t1had2}
For every $0\le n \le +\infty$, the function $(\th_\cS,+\infty)\ni t\mapsto \lam_n(t)\in(0,+\infty)$ is real analytic and 
\beq\label{1had2}
\lam'(t)=\lim_{n\to\infty}\lam_n'(t).
\eeq
\ethm

\begin{proof}

By extending the transfer operators $\pf_{t,n}:\Ba_\th\to\Ba_\th$ in the natural way to complex operators for all $t\in\C$ with $\re(t)<\th_\cS$, and applying Kato-Rellich Perturbation Theorem (see \cite{Zinsmeister}), along with Proposition~\ref{p1fp80}, we see that for every  $0\le n \le +\infty$ there exists $V_n$, an open neighborhood of  $(\th_\cS,+\infty)$, such that each function $(\th_\cS,+\infty)\ni t\mapsto \lam_n(t)\in(0,+\infty)$ extends (and we keep the same symbol $\lam_n$ for this extension) to a holomorphic function from $V_n$ to $\C$, and also each function $(\th_\cS,+\infty)\ni t\mapsto Q_n^{(t)}\1\in \Ba_\th$ extends to a holomorphic function from $V_n$ to $\C$ belonging to $\Ba_\th$. Denote these latter extensions by
$$
g_n:V_n\to\C, \ n \ge 0.
$$
It is also a part of Kato-Rellich Theorem that 
\beq\label{2had2}
\pf_{t,n}g_n(t)=\lam_n(t)g_n(t)
\eeq
for all $0\le n \le +\infty$ and all $t\in V_n$. In particular, all the functions 
$(\th_\cS,+\infty)\ni t\mapsto \lam_n(t)\in(0,+\infty)$, $0\le n \le +\infty$, are real analytic. In order to prove \eqref{1had2}, we shall derive first a "thermodynamical" formula for $\lam_n'(t)$. Differentiating both sides of \eqref{2had2}, we obtain
\beq\label{3had2}
\pf_{t,n}'g_n(t)+\pf_{t,n}g_n'(t)
=\lam_n'(t)g_n(t)+\lam_n(t)g_n'(t),
\eeq
where
\beq\label{4had2}
\pf_{t,n}'(u)(\om)
:=\sum_{E:A_{e\om_0}=1}\!\!\!\1_{U_n^c}(e\om)u(e\om)\log|\phi_e'(\pi(u))|\cdot|\phi_e'(\pi(u))|^t,
\eeq
and all four terms involved in \eqref{3had2} belong to $\Ba_\th$. Applying the operator 
$Q_n^{(t)}$ to both sides of this equation, we get
$$
Q_n^{(t)}\(\pf_{t,n}'g_n(t)\)+Q_n^{(t)}\pf_{t,n}\(g_n'(t)\)
=\lam_n'(t)Q_n^{(t)}\(g_n(t)\)+\lam_n(t)Q_n^{(t)}\(g_n'(t)\).
$$
Since
$$
Q_n^{(t)}\(g_n(t)\)=g_n(t)
$$
and
$$
Q_n^{(t)}\pf_{t,n}\(g_n'(t)\)
=\pf_{t,n}Q_n^{(t)}\(g_n'(t)\)
=\lam_n(t)Q_n^{(t)}\(g_n'(t)\),
$$
we thus get
\beq\label{1had3}
\lam_n'(t)g_n(t)=Q_n^{(t)}\(\pf_{t,n}'g_n(t)\).
\eeq
Since in addition $Q_n^{(t)}$ is a projector onto the $1$-dimensional space $\C g_n(t)$, this operator gives rise to a unique bounded linear functional 
$$
\nu_{t,n}:\Ba_\th\to\C
$$
determined by the property that
$$
Q_n^{(t)}(u)=\nu_{t,n}(u)g_n(t)
$$
for every $u\in \Ba_\th$. So, we can write \eqref{1had3} in the form
\beq\label{3had3}
\lam_n'(t)=\nu_{t,n}\(\pf_{t,n}'g_n(t)\).
\eeq
Now, writing
$$
\ell(\om):=\log|\phi_{\om_0}(\pi(\sg(\om))|,
$$
formula \eqref{4had2} readily gives
$$
\pf_{t,n}'(u)=\pf_{t,n}(u\ell),
$$
so that \eqref{1had3} takes on the form
\beq\label{4had3}
\lam_n'(t)=\nu_{t,n}\(\pf_{t,n}(\ell g_n(t))\).
\eeq
Keeping $t\in(\th_\cS,+\infty)$ for the rest of the proof set
$$
\psi_n:=\pf_{t,n}(\ell g_n(t))
$$
but remember that $\psi_n$ depends on $t$ too. Now, we have
$$
\begin{aligned}
Q_n^{(t)}(\psi_n)-Q^{(t)}(\psi)
& =\nu_{t,n}(\psi_n)g_n(t)- \nu_t(\psi)g(t) \\
& =\(\nu_{t,n}(\psi_n)- \nu_t(\psi)\)g(t)+(g_n(t)- g(t))\nu_{t,n}(\psi_n).
\end{aligned}
$$
Hence, recalling that $g(t)\equiv\1$, we get
$$
\(\nu_{t,n}(\psi_n) - \nu_t(\psi)\)\1
=Q_n^{(t)}(\psi_n)-Q^{(t)}(\psi)+(g(t)- g_n(t))\nu_{t,n}(\psi_n).
$$
Therefore,
$$
\begin{aligned}
\Big|\nu_{t,n}(\psi_n) - \nu_t(\psi)\Big|
&=\int\Big|Q_n^{(t)}(\psi_n)-Q^{(t)}(\psi)+\nu_{t,n}(\psi_n)(g(t)- g_n(t))\Big|d\,\nu_t\\ 
&\le \int\Big|Q_n^{(t)}(\psi_n)-Q^{(t)}(\psi)\Big|d\,\nu_t+\nu_{t,n}(-\psi_n)\int|g_n(t)- g(t)|d\,\nu_t.
\end{aligned}
$$
But, because of Proposition~\ref{p1fp80} (h),
\beq\label{3had4}
\begin{aligned}
\int|g_n(t)- g(t)|d\,\nu_t
&\le ||g_n(t)- g(t)||_*
=\Big\|Q_n^{(t)}(\1)-Q^{(t)}(\1)\Big\|_* \\
&\le\Big|\Big|\Big| Q_n^{(t)}(\1)-Q^{(t)}\Big|\Big|\Big|\|\1\|_\th \\
&=\Big|\Big|\Big| Q_n^{(t)}(\1)-Q^{(t)}\Big|\Big|\Big|\longrightarrow 0
\end{aligned}
\eeq
as $n\to 0$. Hence, in view \eqref{4had3}, in order to conclude the theorem, it suffices to show that
\beq\label{1had4}
\lim_{n\to\infty}\int\Big|Q_n^{(t)}(\psi_n)-Q^{(t)}(\psi)\Big|d\,\nu_t=0
\eeq
and 
\beq\label{2had4}
M:=\sup_{n\ge 1}\{\nu_{t,n}(-\psi_n)\}<+\infty.
\eeq
We first deal with the latter. It follows from Proposition~\ref{p1fp80} (f) that $|\lam_n(t)|\ge 1/2$ for all $n\ge 1$ large enough. It therefore follows from Proposition~\ref{p1fp80} (c) and (e) along with Lemma~\ref{l1fp61} that
\beq\label{1had5}
||g_n(t)||_\infty
\le ||g_n(t)||_\th=||Q_n^{(t)}(\1)||_\th
\le 2\(||\pf_{t,n}\1||_\th+||\De_{t,n}\1||_\th\)
\le 2(1+C)<+\infty.
\eeq
Now, since $t>\th_\cS$, it directly follows from the inequality 
\beq\label{3had5}
\big|\log|\phi_e'|\big|\le||\phi_e'||^{-\e}
\eeq
for every $e>0$ and all $e\in\N$ large enough that
\beq\label{2had5}
\begin{aligned}
||\psi_n||_\infty
&=||\pf_{t,n}\(\ell g_n(t)\)||_\infty
\le||g_n(t)||_\infty||\pf_{t,n}\ell||_\infty \\
&\le 2(1+C)||\pf_{t,n}\ell||_\infty \\
&\le 2(1+C)||\pf_t\ell||_\infty<+\infty
\end{aligned}
\eeq
for all $n\ge 1$ (including infinity) large enough. In fact we will need a somewhat more general result, namely that for every $\g\in\Ba_\th$,
\beq\label{1had5.1}
\begin{aligned}
||\pf_{t,n}\(\ell\g\)||_1
&\le||\pf_{t,n}\(\ell\g\)||_\infty
\le||\pf_{t,n}\(\ell\cdot||\g||_\infty\)||_\infty \\
&=||\g||_\infty||\pf_{t,n}\(\ell\)||_\infty \\
&\le||\pf_{t,n}\ell||_\infty||\g||_\th \\
&\le||\pf_t\ell||_\infty||\g||_\th.
\end{aligned}
\eeq
Let us now estimate $|\psi_n|_\th$. Write
\beq\label{4had5}
\g_n:=g_n(t)\1_{U_n^c}.
\eeq
Then
\beq\label{5had5}
\psi_n:=\pf_{t,n}(\ell g_n(t))=\pf_t(\ell\g_n).
\eeq
We now will also proceed more generally than merely estimating $|\psi_n|_\th$. We shall prove the following.

\blem\label{l1had5.1}
There exists a constant $C>0$ such that for every $\g\in\Ba_\th$, we have that
$$
||\pf_t(\ell\g)||_\th\le C||\g||_\th.
$$
\elem

\begin{proof}
By virtue of \ref{1had5.1} it suffices to estimate $|\pf_t(\ell\g)|_\th$. Fix an integer $m\ge 0$, $\om\in E_A^\infty$, and $\a, \b\in[\om|_m]$. Let $e\in E$ be such that $A_{e\a_0}=A_{e\b_0}=1$. Then
$$
\begin{aligned}
\Big|\ell(e\b)&\g(e\b)|\phi_e'(\pi(\b))|^t - \ell(e\a)\g(e\a)|\phi_e'(\pi(\a))|^t\Big|\\
&=\Big|\ell(e\b)\(\g(e\b)|\phi_e'(\pi(\b))|^t-\g(e\a)|\phi_e'(\pi(\a))|^t\)
   +\g(e\a)|\phi_e'(\pi(\a))|^t\(\ell(e\b) - \ell(e\a)\)\Big| \\
&\le \Big|\ell(e\b)\(\g(e\b)\(|\phi_e'(\pi(\b))|^t-|\phi_e'(\pi(\a))|^t\)+
    |\phi_e'(\pi(\a))|^t(\g(e\b)-\g(e\a))\)\Big|\\ 
&\  \  \  \  \   \   \   \   \   \  \  \  \  \  \   \   \   \   \   \ +\osc_{m+1}(\ell) 
      (e\om)\g(e\a)|\phi_e'(\pi(\a))|^t \\
&\le A\th^{2m}|\ell(e\b)\g(e\b)|\cdot|\phi_e'(\pi(\b))|^t+|\phi_e'(\pi(\a))|^t\osc_{m+1} 
    (\g)(e\om)+ \\
&\  \  \  \  \   \   \   \   \   \  \  \  \  \  \   \   \   \   \   \ +A\th^{2m}\g(e\a)|\phi_e'(\pi(\a))|^t
\end{aligned}
$$
with some constant $A\in (0,+\infty)$ and some constant $\th\in(0,1)$ sufficiently close to $1$. Hence, using also \eqref{1had5.1} and Lemma~\ref{l1fp61}, we get
$$
\begin{aligned}
\big|\pf_t(\ell\g)(\b)&-\pf_t(\ell\g)(\a)\big| \\
&\le A\th^{2m}\(|\pf_t(|\ell|\g)(\b)+\pf_t(\g)(\a)\)
     +K^t\pf_t\(\osc_{m+1}(\g\))(\om)\\
&\le A\th^{2m}\(||\pf_t(\ell)||_\infty||\g||_\th+||\pf_t(\g)||_\infty\)
     +K^t\pf_t\(\osc_{m+1}(\g\))(\om)\\
&\le A\th^{2m}\(||\pf_t(\ell)||_\infty||\g||_\th+||\pf_t(\g)||_\th\)
     +K^t\pf_t\(\osc_{m+1}(\g\))(\om) \\
&\le A\th^{2m}\(||\pf_t(\ell)||_\infty||\g||_\th+||\pf_t||_\th||\g||_\th\)
     +K^t\pf_t\(\osc_{m+1}(\g\))(\om) \\    
&\le A\th^{2m}\(C +1 +||\pf_t(\ell)||_\infty\)||\g||_\th+K^t\pf_t\(\osc_{m+1}(\g\))(\om)
\end{aligned}
$$
where we know that $||\pf_t||_\th \leq C+1$.
Therefore,
$$
\osc\(\pf_t(\ell\g)\)(\om)
\le A\th^{2m}\(1+||\pf_t(\ell)||_\infty\)||\g||_\th+K^t\pf_t\(\osc_{m+1}(\g\))(\om).
$$
Thus, after integrating against measure $\nu_t$, we get
$$
\begin{aligned}
||\osc_m\(\pf_t(\ell\g)\)||_{L^1(\nu_t)}
&\le A\th^{2m}\(C + 1+||\pf_t(\ell)||_\infty\)||\g||_\th
      +K^t\int\pf_t\(\osc_{m+1}(\g)\) \,d\nu_t \\
&=   A\th^{2m}\(C + 1+||\pf_t(\ell)||_\infty\)||\g||_\th
      +K^t\int\osc_{m+1}(\g)\,d\nu_t \\
&\le A\th^{2m}\(C + 1+||\pf_t(\ell)||_\infty\)||\g||_\th
      +K^t\th^{-(m+1)}|\g|_\th.
\end{aligned}
$$
Therefore,
$$
\theta^{-2m}||\osc_m\(\pf_t(\ell\g)\)||_{L^1(\nu_t)}\le 
\(A(C + 1+||\pf_t(\ell)||_\infty)+K^t\th^{-1}\)||\g||_\th.
$$
Combining this with \eqref{1had5.1} we finally get
$$
||\pf_t(\ell\g)||_\th 
\le\(||\pf_t\ell||_\infty+A(C + 1+||\pf_t(\ell)||_\infty)+K^t\th^{-1}\)||\g||_\th.
$$
So, the proof is complete.
\end{proof}

\fr As a fairly straightforward consequence of this lemma we get the following.

\bcor\label{c1had6.1}
There exists a constant $C'>0$ such that for every $\g\in\Ba_\th$ and all $n\ge 1$, we have that
$$
||\pf_{t,n}(\ell\g)||_\th\le C'||\g||_\th.
$$
\ecor

\begin{proof}
By virtue of Lemma~\ref{l1fp129} (with $k=1$) and Lemma~\ref{l1fp65} we get
$$
|\1_n\g|_\th
\le|\g|_\th+(1-\th)^{-1}||\g||_*
\le(1+2(1-\th)^{-1})||\g||_\th.
$$
Of course,
$$
||\1_n\g||_{L^1(\nu_t)}
\le ||\g||_{L^1(\nu_t)}
\le ||\g||_\th.
$$
Hence,
$$
||\1_n\g||_\th\le 2(1+(1-\th)^{-1})||\g||_\th.
$$
As 
$$
\pf_{t,n}(\ell\g)
=\pf_t(\ell\g\1_n)
=\pf_t(\ell(\1_n\g)),
$$
applying Lemma~\ref{l1had5.1}, we thus get
$$
||\pf_{t,n}(\ell\g)||_\th
=||\pf_t(\ell(\1_n\g))||_\th
\le 2C(1+(1-\th)^{-1})||\g||_\th.
$$
The proof is complete.
\end{proof}

\fr It immediately follows from this corollary, along with \eqref{5had5} and \eqref{1had5}, that 
\beq\label{1had7}
||\psi_n||_\th\le M_1
\eeq
with some constant $M_1\in(0,\infty)$ and all integers $n\ge 0$. But then by Proposition~\ref{p1fp80} (g) ,
$$
||Q_n^{(t)}(-\psi_n)||_\th\le C||\psi_n||_\th\le C M_1.
$$
Since, on the other hand, $Q_n^{(t)}(-\psi_n)=\nu_{t,n}(-\psi_n)g_n(t)$, and also, by \eqref{3had4},  
$$
\int g_n(t)\, d\nu_t\ge \frac12\int g(t)\, d\nu_t=1/2
$$
for all $n\ge 1$ sufficiently large, we thus conclude that 
$$
\begin{aligned}
CM_1
&\ge||Q_n^{(t)}(-\psi_n)||_{L^1(\nu_t)}
\ge \int|\nu_{t,n}(-\psi_n)g_n(t)|\, d\nu_t \\
&=|\nu_{t,n}(-\psi_n)|\int|g_n(t)|\, d\nu_t \\
&\ge \frac12|\nu_{t,n}(-\psi_n)|.
\end{aligned}
$$
So, 
$$
|\nu_{t,n}(-\psi_n)|\le 2CM_1,
$$
and formula \eqref{2had4} is established.

\sp Now we shall prove that \eqref{1had4} holds. Write, as usually, $||h||_1
:=||h||_{L^1(\nu_t)}$ for all $h\in L^1(\nu_t)$. With the use of \eqref{1had7} we then estimate 
$$
\begin{aligned}
||Q_n^{(t)}(\psi_n)-Q^{(t)}(\psi)||_1
&=   ||(Q_n^{(t)}-Q^{(t)})\psi_n+Q^{(t)}(\psi_n-\psi)||_1 \\
&\le ||(Q_n^{(t)}-Q^{(t)})\psi_n||_1+||Q^{(t)}(\psi_n-\psi)||_1 \\
&\le ||(Q_n^{(t)}-Q^{(t)})\psi_n||_*+||Q^{(t)}||_1||\psi_n-\psi||_1 \\
&\le |||(Q_n^{(t)}-Q^{(t)})|||\cdot||\psi_n||_\th+||Q^{(t)}||_1||\psi_n-\psi||_1 \\
&\le M_1|||(Q_n^{(t)}-Q^{(t)})|||+||Q^{(t)}||_1||\psi_n-\psi||_1.
\end{aligned}
$$
Hence, applying Proposition~\ref{p1fp80} (h), we see that in order to prove that \eqref{1had4} holds, and by having done this, to conclude the proof of Theorem~\ref{t1had2}, it suffices to show that 
\beq\label{1had8}
\lim_{n\to\infty}||\psi_n-\psi||_1=0.
\eeq
It is well-known, and follows easily from \eqref{3had5} that $\ell\in L^p(\nu_t)$ for all real $p>0$. Using Cauchy-Schwarz inequality we then estimate:
$$
\begin{aligned}
||\psi_n-\psi||_1
&=\big\|\pf_t(\ell\g_n)-\pf_t(\ell\g)\big\|_1
 =\big\|\pf_t(\ell\g_n-\ell\g)\big\|_1
 =||\ell(\g_n-\g)||_1 \\
&\le ||\ell||_2||\g_n-\g||_2
 =||\ell||_2||g_n(t)\1-g(t)||_2 \\
&=||\ell||_2||\1_n(\g_n(t)-\g(t))+\g(t)(\1_n-\1)||_2 \\
&\le||\ell||_2\(||\1_n(\g_n(t)-\g(t))||_2+||\g(t)\1_{U_n}||_2 \) \\
&\le||\ell||_2\(||\g_n(t)-\g(t)||_2+||\1_{U_n}||_2 \) \\
&\le||\ell||_2\(||\g_n(t)-\g(t)||_2+\sqrt{\nu_t(U_n)}\) \\
&\le||\ell||_2\(||\g_n(t)-\g(t)||_4+\sqrt{\nu_t(U_n)}\) 
\end{aligned}
$$
But $\lim_{n\to\infty}\nu_t(U_n)=0$ and $\lim_{n\to\infty}||g_n(t)-g(t)||_4=0$ because of 
\eqref{3had4} and \eqref{1had5}. Hence, the formula \eqref{1had8} holds and the proof of 
Theorem~\ref{t1had2} is complete.
\end{proof}

Now our goal is to show that the derivatives $\lam_n''(t)$ are uniformly bounded above in appropriate domains of $t$ and $n$. In order to do this we will need several auxiliary results. Our strategy is to apply the results of \cite{KL} for the family of operators
$$
\(\pf_{t,n}:t\in (s-\d,s+\d),n\ge 0\),
$$
where $s>\th_\cS$ and $\d>0$ is small enough. At the beginning the only normalization we assume is that $\lam_s=1$ and $g(s)\equiv\1$. Later on for ease of expression we will also assume that $\lam_t=1$ and $g(s)\equiv\1$ for all $t\in (s-\d,s+\d)$ and appropriate $\d>0$ small enough. It is evident from the form of our potentials $\phi_t(\om):=t\log|\phi_{\om_0}'(\pi(\sg(\om)))|$ that the distortion constants $M_\phi$ of Lemma~\ref{l1_2014_08_29} and Lemma~\ref{l2_2014_08_29} can be taken of common value for all $t\in(0,2s-\th_\cS]$. Denote this common constant by $M_s$. An inspection of the proof of Lemma~\ref{l1fp63} leads to the following. 

\blem\label{l1had9.1}
For every $\d\in (0,s-\th_\cS)$ there exists a constant $C_\d\in(0,+\infty)$ such that for every $t\in [s-\d,s+\d]$, every integer $k\ge 0$, and every $g\in\Ba_\th$, we have 
$$
|\pf_t^kg|_\th\le C_\d(\th\lam_t)^k|g|_\th+\lam_t^k||g||_1.
$$
\elem

\fr Since the function $(\th_\cS,+\infty)\ni t\mapsto\lam_t$ is strictly decreasing, denoting $\lam_{s-\d}$ by $M$, as an immediate consequence of Lemma~\ref{l1had9.1} we get the following.

\blem\label{l2had9.1}
For every $\d\in (0,s-\th_\cS)$, every $t\in [s-\d,s+\d]$, every integer $k\ge 0$, and every $g\in\Ba_\th$, we have 
$$
|\pf_t^kg|_\th\le C_\d(\th M)^k|g|_\th+M^k||g||_1.
$$
\elem

\fr Lemma~\ref{l1fp127} directly translates into the following.

\blem\label{l3had9.1}
For every $\d\in (0,s-\th_\cS)$, every $t\in [s-\d,s+\d]$, every integer $k\ge 0$, and every $n\ge 0$, we have
$$
||\pf_{t,n}^k||_*\le\lam_t^k\le M^k.
$$
\elem

\fr The proof of  Corollary~\ref{c1fp67} provides exact estimates of constants, and gives  the following. 

\blem\label{l1had9.2}
For every $\d\in (0,s-\th_\cS)$, every $t\in [s-\d,s+\d]$, every integer $k\ge 0$, every integer $n\ge 0$, and every $g\in\Ba_\th$, we have 
$$
||\pf_{t,n}^kg||_\th\le (C_\d+1)(\th M)^k||g||_\th+ (C_\d+1)(1+\th(1-\th)^{-1})M^k||g||_*.
$$
\elem

\sp\fr From now on throughout the entire section we assume that condition (U2) holds in the following uniform version:

\sp\begin{itemize}
\item[(U2*)] There exists $\rho\in(0,1)$ such that for some $\d>0$ and for all integers $n\ge 0$ we have
$$
\sup\big\{\mu_t(U_n):t\in [s-\d,s+\d]\big\}\le\rho^n.
$$
\end{itemize}

\sp\fr 
We now have the following.

\sp

\blem\label{l2had9.2}
For every $\d\in (0,s-\th_\cS)$, every $t\in [s-\d,s+\d]$ and every integer $n\ge 0$, we have 
$$
|||\pf-\pf_n|||\le 2\lam_t(\rho^{1/q})^n\le 2M\rho^{n/q}.
$$
\elem

\sp\fr Now, Lemmas~\ref{l3had9.1}, \ref{l1had9.1}, and \ref{l2had9.2}, along with formula 
\eqref{2fp81}, and compactness (in fact finite dimensionality) of the operators $\pi_k:\cB_\th\to\cB_\th$ imply that Theorem~1 in \cite{KL} along with all corollaries therein, applies to the family of operators 
$$
\(\pf_{t,n}:t\in (s-\d,s+\d),n\ge 0\),
$$
(i. e. $\pf_s$ corresponds to $P_0$ and $\pf_{t,n}$ correspond to operators $P_\e$) with 
$$
(t,n)\to s \  \  \eqv \  \  t\to s \  \text{ and } \ n\to +\infty
$$
to give the following extension of Proposition~\ref{p1fp80}.

\sp
\bprop\label{p1had9.3} Fix $s>\th_\cS$. Let the Perron-Frobenius operator $\pf_s:\Ba_\th\to\Ba_\th$ be normalized so that $\lam_s=1$. Then there exist $\d\in (0,s-\th_\cS)$ sufficiently small and an integer $n_s\ge 0$ sufficiently large such that for all $(t,n)\in (s-\d,s+\d)\times \{n_s,n_s+1,\ld,\}$ there exist bounded operators $Q_n^{(t)},\De_n^{(t)}:\Ba_\th\to\Ba_\th$ and complex numbers $\lam_n(t)\ne 0$ with the following properties: 
\begin{itemize}
\item[(a)] $\lam_n(t)$ is a simple eigenvalue of the operator $\pf_{t,n}:\cB_\th\to\cB_\th$.

\sp\item[(b)] $Q_t^{(n)}:\cB_\th\to\cB_\th$ is a projector ($Q_t^{(n)2}=Q_n$) onto the $1$--dimensional eigenspace of $\lam_n(t)$.

\sp\item[(c)] $\pf_{t,n}=\lam_n(t)Q_t^{(n)}+\De_{t,n}$.

\sp\item[(d)] $Q_t^{(n)}\circ\De_{t,n}=\De_{t,n}\circ Q_t^{(n)}=0$.

\sp\item[(e)] There exist $\ka\in (0,1)$ and $C>0$ (independent of $(t,n)\in (s-\d,s+\d)\times \{n_s,n_s+1,\ld,\}$) such that
$$
||\De_{t,n}^k||_\th\le C\ka^k
$$
for all $k\ge 0$. In particular,
$$
||\De_{t,n}^kg||_\infty\le ||\De_{t,n}^kg||_\th\le C\ka^k||g||_\th
$$
for all $g\in\Ba_\th$.

\sp\item[(f)] $\lim_{(t,n)\to s}\lam_n(t)=1$.

\sp\item[(g)] Enlarging the above constant $C>0$ if necessary, we have
$$
||Q_t^{(n)}||_\th\le C.
$$
In particular, 
$$
||Q_t^{(n)}g||_\infty\le ||Q_t^{(n)}g||_\th\le C||g||_\th
$$
for all $g\in\Ba_\th$.

\sp\item[(h)] $\lim_{(t,n)\to s}|||Q_t^{(n)}-Q_s|||=0$.
\end{itemize}
\eprop

\sp\fr Now we are ready to prove the following.

\blem\label{l1had9}
For every $s>\th_\cS$ there exists $\eta\in(0,1)$ such that
$$
M:=\sup_{n\ge n_s}\sup\{\lam_n''(t):t\in (s-\eta,s+\eta)\}<+\infty.
$$
\elem

\begin{proof}
Throughout the whole proof we always assume that $t\in (s-\d,s+\d)$ and $n\ge n_s$, where $\d>0$ is the one produced in Proposition~\ref{p1had9.3}. Fix an integer $N\ge 1$ and differentiate the eigenvalue equation 
$$
\pf_{t,n}^Ng_n(t)=\lam_n^N(t)g_n(t)
$$
with respect the variable $t$ two times. This gives in turn
$$
(\pf_{t,n}^N)'(g_n(t))+\pf_{t,n}^N(g_n'(t))
=N\pf_{t,n}^N\lam_n'(t)\lam_n^{N-1}(t)g_n(t)+\lam_n^N(t)g_n'(t)
$$
and
$$
\begin{aligned}
(\pf_{t,n}^N)''(g_n(t))&+(\pf_{t,n}^N)'(g_n'(t))+(\pf_{t,n}^N)'(g_n'(t))
       +\pf_{t,n}^N(g_n''(t))= \\
&=N(N-1)\lam_n^{N-2}(t)(\lam_n'(t))^2g_n(t)+N\lam_n^{N-1}(t)\lam_n'(t)g_n'(t)+\\
       &\   \  \  \  \  \  \  \   \ +N\lam_n^{N-1}(t)\lam_n''(t)g_n(t) +N\lam_n^{N-1} 
        (t)\lam_n'(t)g_n'(t)+\lam_n^N(t)g_n''(t).
\end{aligned}
$$
Equivalently:
$$
\begin{aligned}
(\pf_{t,n}^N)''(g_n(t))&+2(\pf_{t,n}^N)'(g_n'(t))+\pf_{t,n}^N(g_n''(t))= \\
&=N(N-1)\lam_n^{N-2}(t)(\lam_n'(t))^2g_n(t)+2N\lam_n^{N-1}(t)\lam_n'(t)g_n'(t)+\\
&\   \  \  \  \  \  \  \   \ +N\lam_n^{N-1}(t)\lam_n''(t)g_n(t)+\lam_n^N(t)g_n''(t).
\end{aligned}
$$
Noting that 
$$
Q_t^{(n)}\pf_{t,n}^N(g_n''(t))
=\pf_{t,n}^NQ_t^{(n)}(g_n''(t))
=\lam_n^N(t)Q_t^{(n)}(g_n''(t))
$$
and applying to both sides of this equality the linear operator $Q_t^{(n)}$, we get
$$
\begin{aligned}
Q_t^{(n)}&(\pf_{t,n}^N)''(g_n(t))+2Q_t^{(n)}(\pf_{t,n}^N)'(g_n'(t))=\\
&=N(N-1)\lam_n^{N-2}(t)(\lam_n'(t))^2g_n(t)+2N\lam_n^{N-1}(t)\lam_n'(t)Q_t^{(n)}(g_n'(t))+N\lam_n^{N-1}(t)\lam_n''(t)g_n(t).
\end{aligned}
$$
Now since $(\pf_{t,n}^N)'(g_n(t))=\pf_{t,n}^N(g_n(t)S_N\ell)$, and so $(\pf_{t,n}^N)''(g_n(t))=\pf_{t,n}^N(g_n(t)(S_N\ell)^2)$, and since also $(\pf_{t,n}^N)'(g_n'(t))=\pf_{t,n}^N(g_n'(t)S_N\ell)$, we thus get
\beq\label{1had10}
\begin{aligned}
Q_t^{(n)}&\pf_{t,n}^N(g_n(t)(S_N\ell)^2)+2Q_t^{(n)}\pf_{t,n}^N(g_n'(t)S_N\ell)=\\
&=N(N-1)\lam_n^{N-2}(t)(\lam_n'(t))^2g_n(t)+2N\lam_n^{N-1}(t)\lam_n'(t)Q_t^{(n)}(g_n'(t))+N\lam_n^{N-1}(t)\lam_n''(t)g_n(t).
\end{aligned}
\eeq
We first deal with the term $Q_t^{(n)}\pf_{t,n}^N(g_n'(t)S_N\ell)$. We have
\beq\label{2had10}
\begin{aligned}
Q_t^{(n)}\pf_{t,n}^N(g_n'(t)S_N\ell)
&=Q_t^{(n)}\pf_{t,n}^N\lt(g_n'(t)\sum_{j=0}^{N-1}\ell\circ\sg^j\rt)
 =\sum_{j=0}^{N-1}Q_t^{(n)}\pf_{t,n}^N\(g_n'(t)\ell\circ\sg^j\) \\
&=\sum_{j=0}^{N-1}Q_t^{(n)}\pf_{t,n}^{N-j}\(\ell\pf_{t,n}^j)'(g_n'(t))\)\\
&=\sum_{j=0}^{N-1}\pf_{t,n}^{N-j-1}Q_t^{(n)}\pf_{t,n}\(\ell\pf_{t,n}^j)'(g_n'(t))\)\\
&=\sum_{j=0}^{N-1}\lam_n(t)^{N-j-1}Q_t^{(n)}\pf_{t,n}\(\ell\pf_{t,n}^j)'(g_n'(t))\).
\end{aligned}
\eeq
Now, by virtue of Proposition~\ref{p1had9.3}, particularly by its parts (c) and (e) and by Corollary~\ref{c1had6.1}, we get
$$
\begin{aligned}
\Big\|\pf_{t,n}\(\ell\pf_{t,n}^j)'(g_n'(t)\) &- \lam_n(t)^j\pf_{t,n}\(\ell Q_t^{(n)})'(g_n'(t)\)\Big\|_\infty \le \\
&\le \Big\|\pf_{t,n}\(\ell\pf_{t,n}^j)'(g_n'(t)\)- \lam_n(t)^j\pf_{t,n}\(\ell Q_t^{(n)})'(g_n'(t)\)\Big\|_\th \\
&= \Big\|\pf_{t,n}\(\ell\(\pf_{t,n}^j)'(g_n'(t))- \lam_n(t)^jQ_t^{(n)})'(g_n'(t))\)\)\Big\|_\th\\
&= \big\|\pf_{t,n}\(\ell\De_n^j)'(g_n'(t))\)\big\|_theta
\le C'||(
&\le C'C\kappa^j||g_n'(t)||_\th.
\end{aligned}
$$
Therefore, by item (g) of Proposition~\ref{p1fp80} we get
\beq\label{3had10}
\Big\|Q_t^{(n)}\pf_{t,n}\(\ell\pf_{t,n}^j(g_n'(t))\) - \lam_n^j(t)Q_t^{(n)}\pf_{t,n}\(\ell Q_t^{(n)}(g_n'(t))\)\Big\|_\infty
\le C'C^2\kappa^j||g_n'(t)||_\th.
\eeq 
\end{proof}
On the other hand, because of \eqref{4had3}, we get
$$
\begin{aligned}
 \lam_n'(t)Q_t^{(n)}(g_n'(t))
&=\nu_{t,n}(g_n'(t))\lam_n'(t)g_n(t)
=\nu_{t,n}(g_n'(t))Q_t^{(n)}\pf_{t,n}\(\ell g_n(t)\) \\
&=Q_t^{(n)}\pf_{t,n}\(\ell \nu_{t,n}(g_n'(t)) g_n(t)\) \\
&=Q_t^{(n)}\pf_{t,n}\(\ell Q_t^{(n)}(g_n'(t))\) \\
\end{aligned}
$$
Therefore, using \eqref{2had10} and \eqref{3had10}, we get
$$
\begin{aligned}
\Big\|\lam_n^{1-N}(t)N^{-1}&\Big(Q_t^{(n)}\pf_{t,n}\(g_n'(t)S_N(\ell)\) - N\lam_n^{N-1}(t)\lam_n'(t)Q_t^{(n)}(g_n'(t))\Big)\Big\|_\infty=\\
&=\Big\|\frac1N\sum_{j=0}^{N-1}\lam_n(t)^{-j}Q_t^{(n)}\pf_{t,n} \(\ell\pf_{t,n}^j(g_n'(t))\) - Q_t^{(n)}\pf_{t,n}\(\ell Q_t^{(n)}(g_n'(t)\))\Big\|_\infty\\
&=\Big\|\frac1N\sum_{j=0}^{N-1}\Big[\lam_n(t)^{-j}Q_t^{(n)}\pf_{t,n} \(\ell\pf_{t,n}^j(g_n'(t))\)-Q_t^{(n)}\pf_{t,n}\(\ell
   Q_t^{(n)}(g_n'(t))\) \Big]\Big\|_\infty\\
&\le\frac1N\sum_{j=0}^{N-1}\Big\|\lam_n(t)^{-j}Q_t^{(n)}\pf_{t,n} \(\ell\pf_{t,n}^j(g_n'(t))\)-Q_t^{(n)}\pf_{t,n}\(\ell
   Q_t^{(n)}(g_n'(t))\)\Big\|_\infty\\
&\le\frac1N\sum_{j=0}^{N-1} C'C^2||g_n'(t)||_\th(\kappa/\lam_n(t))^j.
\end{aligned}
$$
But, by Proposition~\ref{p1had9.3}, we have $\kappa/\lam_n(t)\le \frac{2\ka}{1+\ka}<1$ for all $(t,n)$ sufficiently close to $s$. Therefore, for all such pairs $(t,n)$, we have 
$$
\lim_{N\to\infty}\Big\|2N^{-1}\lam_n^{1-N}(t)\Big(Q_t^{(n)}\pf_{t,n}^N\(g_n'(t)S_N(\ell)\) - N\lam_n^{N-1}(t)\lam_n'(t)Q_t^{(n)}(g_n'(t))\Big)\Big\|_\infty=0
$$
where the convergence, in the supremum norm $||\cdot||_\infty$ is uniform with respect to all $t$ sufficiently close to $s$. Inserting this to \eqref{1had10}, we thus get
\beq\label{1had13}
\lam_n''(t)g_n(t)=\lim_{N\to\infty}\Big[N^{-1}\lam_n^{1-N}(t) Q_t^{(n)}\pf_{t,n}^N\(g_n(t)(S_N(\ell))^2\)\Big]-\lam_n^{-1}(t)(N-1)g_n(t)(\lam_n'(t))^2g_n(t),
\eeq
with the same meaning of convergence as above. 

\sp Let us first deal with the term $Q_t^{(n)}\pf_{t,n}^N(g_n(t)(S_N\ell)^2)$. We have
\beq\label{1had14}
\begin{aligned}
Q_t^{(n)}&\pf_{t,n}^N(g_n(t)(S_N\ell)^2)= \\
&=  2\sum_{i=0}^{N-1}\sum_{j=i+1}^{N-1}Q_t^{(n)}\pf_{t,n}^N\(g_n(t)\ell\circ\sg^i\cdot
      \ell\circ\sg^j\)+\sum_{j=0}^{N-1}Q_t^{(n)}\pf_{t,n}^N\(g_n(t)\ell^2\circ\sg^j\)\\
&=\sum_{j=0}^{N-1}Q_t^{(n)}\pf_{t,n}^N\(g_n(t)\ell^2\circ\sg^j\)+
    2\sum_{i=0}^{N-1}\sum_{j=i+1}^{N-1}Q_t^{(n)}\pf_{t,n}^N\(g_n(t)(\ell\cdot\ell 
      \circ\sg^{j-i})\circ\sg^i\) \\
&=\sum_{j=0}^{N-1}Q_t^{(n)}\pf_{t,n}^{N-j}\(\ell^2\pf_{t,n}^j(g_n(t))\)+
    2\sum_{i=0}^{N-1}\sum_{j=i+1}^{N-1}Q_t^{(n)}\pf_{t,n}^{N-i}\(\ell\cdot\ell 
      \circ\sg^{j-i}\pf_{t,n}^i(g_n(t))\) \\
&=\lam_n^j(t)\sum_{j=0}^{N-1}Q_t^{(n)}\pf_{t,n}^{N-j}\(g_n(t)\ell^2\)+
    2\lam_n^i(t)\sum_{i=0}^{N-1}\sum_{j=i+1}^{N-1}Q_t^{(n)}\pf_{t,n}^{N-
    i}\(g_n(t)\ell\cdot\ell\circ\sg^{j-i})\) \\
&=\lam_n^j(t)\sum_{j=0}^{N-1}Q_t^{(n)}\pf_{t,n}^{N-j}\(g_n(t)\ell^2\)+
    2\lam_n^i(t)\sum_{i=0}^{N-1}\sum_{j=i+1}^{N-1}Q_t^{(n)}\pf_{t,n}^{N-
    j}\(\ell\pf_{t,n}^{j-i}(\ell g_n(t))\) \\
&=\lam_n^{N-1}(t)\sum_{j=0}^{N-1}Q_t^{(n)}\pf_{t,n}\(g_n(t)\ell^2\)+
    2\sum_{i=0}^{N-1}\sum_{j=i+1}^{N-1}   \lam_n^{N+i-(j+1)}(t) Q_t^{(n)}\pf_{t,n}
    \(\ell\pf_{t,n}^{j-i}(\ell g_n(t))\) \\
&=\lam_n^{N-1}(t)Q_t^{(n)}\pf_{t,n}\(g_n(t)\ell^2\)+
    2\sum_{k=1}^{N-1}\lam_n^{N-k-1}(t)(N-k)Q_t^{(n)}\pf_{t,n}
    \(\ell\pf_{t,n}^k(\ell g_n(t))\). \\
\end{aligned}
\eeq 
Now, using Proposition~\ref{p1had9.3} (c) (and (d)) and denoting $\psi_n = \pf_{t,n}\ell g_n(t))$, we get
$$
\begin{aligned}
Q_t^{(n)}\pf_{t,n}\(\ell\pf_{t,n}^k(\ell g_n(t))\)
&= Q_t^{(n)}\pf_{t,n}\(\ell\pf_{t,n}^{k-1}\(\pf_{t,n}\ell g_n(t))\)\) 
 = Q_t^{(n)}\pf_{t,n}\(\ell\pf_{t,n}^{k-1}(\psi_n)\) \\
&= Q_t^{(n)}\pf_{t,n}\(\ell(\lam_n^{k-1}(t)Q_t^{(n)}(\psi_n)+\De_n^{k-1}(\psi_n))\)\\
&= \lam_n^{k-1}(t)Q_t^{(n)}\pf_{t,n}\(\ell Q_t^{(n)}(\psi_n)\)+
   Q_t^{(n)}\pf_{t,n}\(\De_n^{k-1}(\psi_n)\) \\
&= \lam_n^{k-1}(t)Q_t^{(n)}\pf_{t,n}\(\ell\nu_{t,n}(\psi_n)g_n(t)\)+
   Q_t^{(n)}\pf_{t,n}\(\De_n^{k-1}(\psi_n)\) \\
&= \lam_n^{k-1}(t)\nu_{t,n}(\psi_n)Q_t^{(n)}\pf_{t,n}\(\ell
   g_n(t)\)+Q_t^{(n)}\pf_{t,n}\(\De_n^{k-1}(\psi_n)\) \\
&= \lam_n^{k-1}(t)\nu_{t,n}(\psi_n)Q_t^{(n)}(\psi_n)
   +Q_t^{(n)}\pf_{t,n}\(\De_n^{k-1}(\psi_n)\) \\
&= \lam_n^{k-1}(t)\(\nu_{t,n}(\psi_n)\)^2g_n(t)
   +Q_t^{(n)}\pf_{t,n}\(\De_n^{k-1}(\psi_n)\) \\
&= \lam_n^{k-1}(t)\(\lam_n'(t)\)^2g_n(t)
   +Q_t^{(n)}\pf_{t,n}\(\De_n^{k-1}(\psi_n)\). \\
\end{aligned}
$$
Therefore, using Proposition~\ref{p1had9.3}, we get
$$
\begin{aligned}
2\sum_{k=1}^{N-1}&\lam_n^{N-k-1}(t)(N-k)Q_t^{(n)}\pf_{t,n}
    \(\ell\pf_{t,n}^k(\ell g_n(t))\)=\\
&= \lam_n^{N-2}(t)N(N-1)\(\lam_n'(t)\)^2g_n(t)+2\sum_{k=1}^{N-1}\lam_n^{N-k-1}(t)
    Q_t^{(n)}\pf_{t,n}\(\De_n^{k-1}(\psi_n)\) \\
&= \lam_n^{N-2}(t)N(N-1)\(\lam_n'(t)\)^2g_n(t)+2\sum_{k=1}^{N-1}\lam_n^{N-k-1}(t)
    \pf_{t,n}Q_t^{(n)}\(\De_n^{k-1}(\psi_n)\) \\
&= \lam_n^{N-2}(t)N(N-1)\(\lam_n'(t)\)^2g_n(t)+2\lam_n^{N-2}(t)\pf_{t,n}Q_t^{(n)}(\psi_n)  
    \\
&= \lam_n^{N-2}(t)N(N-1)\(\lam_n'(t)\)^2g_n(t)+2\lam_n^{N-2}(t)\pf_{t,n}\(\nu_{t,n}(\psi_n)
    g_n(t)\) \\
&= \lam_n^{N-2}(t)N(N-1)\(\lam_n'(t)\)^2g_n(t)+2\lam_n^{N-2}(t)\nu_{t,n}(\psi_n)\pf_{t,n}\(
    g_n(t)\) \\
&= \lam_n^{N-2}(t)N(N-1)\(\lam_n'(t)\)^2g_n(t)+2\lam_n^{N-1}(t)\lam_n'(t)g_n(t).
\end{aligned}
$$
In consequence, denoting by $T_N(t,n)$ the function whose limit (as $n\to\infty$) is calculated in \eqref{1had13}, and utilizing \eqref{1had14}, we get
$$
T_N(t,n)=Q_t^{(n)}\pf_{t,n}\(g_n(t)\ell^2\)+\frac2N\lam_n'(t)g_n(t)
$$
It thus follows from \eqref{1had13} that 
\beq\label{1had15}
\lam_n''(t)g_n(t)=Q_t^{(n)}\pf_{t,n}\(g_n(t)\ell^2\).
\eeq
Since, by Proposition~\ref{p1fp131}, all the operators $Q_t^{(n)}:\Ba_\th\to\Ba_\th$ are positive, and because of this also non-decreasing, and $g_n(t)=Q_t^{(n)}\1$ is non-negative, the formula \eqref{1had15} yields the following.
$$
\begin{aligned}
\lam_n''(t)g_n(t)
&\le ||g_n(t)||_\infty Q_t^{(n)}\(\pf_{t,n}(\ell^2)\) 
 \le ||g_n(t)||_\infty ||\pf_{t,n}(\ell^2)||_\infty Q_t^{(n)}\1 \\
&\le ||g_n(t)||_\infty ||\pf_t(\ell^2)||_\infty g_n(t) \\
&\le 2||g_n(t)||_\infty||\pf_s(\ell^2)||_\infty g_n(t),
\end{aligned}
$$
where the last inequality was written for $t$ sufficiently close to $s$. Canceling out $g_n(t)$ and noticing that by Proposition~\ref{p1had9.3} (g), $||g_n(t)||_\infty=||Q_t^{(n)}\1||_\infty\le C$, we now finally obtain that 
$$
\lam_n''(t)\le 2C||\pf_s(\ell^2)||_\infty,
$$
and the proof is complete.
\endpf

\fr Now we shall prove the following. 

\blem\label{l1had17}
We have

\sp
\begin{itemize}
\item[(a)] For every $n\ge 1$ the function $(\th_\cS,+\infty)\ni t\mapsto\lam_n(t)$ is decreasing.

\sp\item[(b)] For every $s\in (\th_\cS,+\infty)$ and for every $n\ge 1$ large enough there exists $\d>0$ such that the function $\lam_n|_{(s-\d,s+\d)}$ is strictly decreasing, in fact $\lam_n'\le\frac14\lam'(s)$ on $(s-\d,s+\d)$.

\sp\item[(c)] For every $t\in (\th_\cS,+\infty)$ and for every $n\ge 1$, $\lam_n(t)\le\lam(t)$.

\sp\item[(d)] For every $n\ge 1$, $\lim_{t\to+\infty}\lam_n(t)=0$.

\sp\item[(e)] For every $n\ge 1$ large enough there exists a unique $b_n>0$ such that $\lam_n(b_n)=1$.
\end{itemize}
\elem

\begin{proof}
For part (a), Proposition~\ref{p1had9.3} implies that
\beq\label{1had17}
\lam_n(t)=\lim_{k\to\infty}\big|\big|\pf_{t,n}^k\1\big|\big|_\infty^{1/k},
\eeq
and since for each $n\ge 1$ the function $t\mapsto\big|\big|\pf_{t,n}^k\1\big|\big|_\infty$ is decreasing, item (a) follows immediately. For part (b) note that $\lam'(s)<0$. Hence, by Theorem~\ref{t1had2}, $\lam_n'(s)<\frac12\lam'(s)<0$ for all 
$n\ge 1$ large enough, say $n\ge N_1$. Take now $\d\in(0,\eta)$ so small that $M\d\le -
\frac14\lam'(s)$, where $M\ge 0$ is the constant coming from Lemma~\ref{l1had9}. By the Mean Value Theorem $\lam_n'(t)=\lam_n'(s)+\lam_n''(u)(t-s)$ for every $t\in (s-\d,s+\d)$ and some $u\in (s-\d,s+\d)$ depending on $t$. Hence, applying Lemma~\ref{l1had9}, we get for all $n\ge N_1$ and all $t\in (s-\d,s+\d)$ that
$$
\lam_n'(t)<\frac12\lam'(s)+M\d<0.
$$
Thus item (b) is proved. Similarly as in item (a), item (c) immediately follows from \eqref{1had17} and inequality $\pf_{t,n}^k\1\le \pf_t^k\1$. Item (d) is an immediate consequence of item (c) and the well-known fact (see \cite{GDMS}) that $\lim_{t\to+\infty}\lam(t)=0$. Proving (e), it is well-known (see again \cite{GDMS}) that there exists a unique $b\in (\th_\cS,+\infty)$ such that
$$
\lam(b)=1.
$$
Let $\d>0$ be the value produced in item (b) for $s=b$. We know that
$$
\lam\(b-\frac12\d\)>0 \  \text{ and } \  \lam\(b+\frac12\d\)<0.
$$
It the follows from Proposition~\ref{p1had9.3} (f) that
$$
\lam_n\(b-\frac12\d\)\ge \frac12\lam\(b-\frac12\d\)>0 \  \  \text{ and } \  \  \lam_n\(b+\frac12\d\)\le \frac12\lam\(b+\frac12\d\)<0
$$
for all 
$n\ge 1$ large enough, say $n\ge N_2$. Because of the choice of $\d>0$ and because of item (b), we may also have $N_2\ge 1$ so large that the function $\lam_n\big|_{[b-\frac12\d,b+\frac12\d]}$ is strictly decreasing for every $n\ge N_2$. Therefore, for every $n\ge N_2$ the function $\lam_n\big|_{[b-\frac12\d,b+\frac12\d]}$ has a unique zero. Along with item (a) this finishes the proof of item (e). The proof of Lemma~\ref{l1had17} is complete.
\end{proof}

\brem\label{r1had18}
With the help of Proposition~\ref{p1had9.3} we could have strengthened Theorem~\ref{t1had2} to show uniform convergence with respect to $t$ ranging over compact subsets of $(\th_\cS,+\infty)$. However, we really do not need this in the current paper.
\erem

\fr By analogy to the unperturbed case, we call the numbers $b_n$ produced in this lemma Bowen's parameters. Now we can prove the following. 

\bprop\label{p1had18}
With the settings of the current section (in particular with the stronger condition {\rm (U2*)} replacing {\rm (U2)}), we have
$$
\lim_{n\to\infty}\frac{b-b_n}{\mu_b(U_n)}=
\begin{cases}
1/\chi_{\mu_b} &\text{{\rm if (U4A) holds}} \\
\(1-|\phi_\xi'(\pi(\xi^\infty))|\)/\chi_{\mu_b} &\text{{\rm if (U4B) holds}}
\end{cases}
$$
\eprop

\begin{proof}
Since the functions $(\th_\cS,+\infty)\ni t\mapsto\lam_n(t)$, $n\ge 1$, are all real-analytic by the  Kato-Rellich Perturbation Theorem, making use of Lemma~\ref{l1had9}, we can apply Taylor's Theorem to get
$$
1=\lam_n(b_n)=\lam_n(b)+\lam'(b)+\cO((b-b_n)^2).
$$
Equivalently,
$$
\frac{1-\lam_n(b)}{b-b_n}=-\lam'(b)+\cO(b-b_n).
$$
Denoting by $d(\xi)$ the right-hand side of the formula  appearing in  Proposition~\ref{p1fp133}, and using this proposition along with the fact that $\lam(b)=1$, we thus get
$$
\lim_{n\to\infty}\frac{\mu_b(U_n)}{b-b_n}=-\lam'(b)d^{-1}(\xi).
$$
Equivalently,
\beq\label{1had19}
\lim_{n\to\infty}\frac{b-b_n}{\mu_b(U_n)}=-\frac1{\lam'(b)}d(\xi).
\eeq
But expanding \eqref{4had3} with $n=\infty$, we get $\lam'(b)=-\lam(b)\chi_{\mu_b}=-\chi_{\mu_b}$, and inserting this into \eqref{1had19} completes the proof.
\end{proof}

\sp\fr Now we shall link Bowen's parameters $b_n$ to geometry. We shall prove the following.

\bthm\label{t1had19}
Let $\cS=\{\phi_e\}_{e\in E}$ be a finitely primitive strongly regular conformal graph directed Markov system. Let $(U_n)_{n=0}^\infty$ be a nested sequence of open subsets of $E_A^\infty$ satisfying conditions {\rm (U0), (U1)}, and {\rm (U2*)} with $s=b_{\cS}$. If for $n\ge 0$
$$
\tilde K_n:=\bi_{k=0}^\infty\sg^{-k}(U_n^c)=\{\om\in E_A^\infty:\forall_{(k\ge 0)}\ \sg^k(\om)\notin U_n\}
$$
and
$$
K_n:=\pi_\cS(\tilde K_n).
$$
Then 
$$
\HD(K_n)=b_n
$$
for all $n\ge 0$ large enough.
\ethm

\begin{proof}
Put
$$
h_n:=\HD(K_n).
$$
We first shall prove that
$$
h_n\le b_n
$$
for all $n\ge 0$ large enough. Assume that $\d>0$ is chosen so small that the conclusion  of 
 Lemma~\ref{l1had17} (b)holds. Take then an arbitrary $t>b_n$. Fix any $q\ge 1$. Define
$$
\tilde K_n(q):=\{\om\in \tilde K_n: \om_n=q \  \text{{\rm for infinitely many }} n\}
$$
and 
$$
K_n(q):=\pi(\tilde K_n(q)).
$$
Our first goal is to show that
\beq\label{1had21}
\HD(K_n(q))\le b_n
\eeq
for all $n\ge 0$ large enough. Indeed, for every $k\ge 1$ let
$$
\tilde E_k(q):=\{\om|_k:\om\in \tilde K_n(q)  \  \text{{\rm  \ and  \ }} \om_{k+1}=q\}.
$$
Fix an arbitrary $\a\in E_A^\infty$ such that $q\a\in E_A^\infty$. Then, using (BDP),Proposition~\ref{p1had9.3} (c), (e), and (g), along with Lemma~\ref{l1fp61}, we get
$$
\begin{aligned}
\sum_{\tau\in\tilde E_k(q)}\diam^t\(\phi_\tau(X_{t(\tau)})\)
&=     \sum_{\tau\in\tilde E_k(q)}\diam^t\(\phi_\tau(X_{t(q)})\)
 \comp \sum_{\tau\in\tilde E_k(q)}||\phi_\tau'||^t \\
&\comp \sum_{\tau\in\tilde E_k(q)}|\phi_\tau'(\pi(q\a))|^t 
 \le \pf_t^k(\1_n^k)(q\a)=\pf_{t,n}^k(\1)(q\a) \\
&=\lam_n^k(t)Q_t^{(n)}(\1)(q\a)+S^k(\1)(q\a) \\
&\le \lam_n^k(t)||Q_t^{(n)}(\1)||_\infty+||S^k(\1)||_\infty \\
&\le C\lam_n^k(t)+C\ka^k \\
&=C(\lam_n^k(t)+\ka^k).
\end{aligned}
$$
Therefore, for every $k\ge 0$, using the facts that $\lam_n(t)<1$ (Lemma~\ref{l1had17}
(b)) and that $\ka<1$, we get
\beq\label{1had22}
\sum_{k=l}^\infty\sum_{\tau\in\tilde E_k(q)}\diam^t\(\phi_\tau(X_{t(\tau)})\)
\le C\sum_{k=l}^\infty(\lam_n^k(t)+\ka^k)
\le C\(1-\lam_n(t))^{-1}\lam_n^l(t)+(1-\ka)^{-1}\ka^l\).
\eeq
Since $\bu_{k=l}^\infty\bu_{\tau\in\tilde E_k(q)}\phi_\tau(X_{t(\tau)})$ is a cover of $K_n(q)$ whose diameters converge (exponentially fast) to zero as $l\to\infty$, formula 
\eqref{1had22} yields $\H_t(K_n(q))=0$.Therefore, $\HD(K_n(q))\le t$. As $t>b_n$ was arbitrary, this gives  formula \eqref{1had21}. Let
$$
\begin{aligned}
\tilde K_n(\infty)
:&=\big\{\om\in \tilde K_n: \text{{\rm at least one }}\, q\in\N \  \, \text{{\rm appears in }}  \  \om  \ \, \text{{\rm infinitely many times}}\big\} \\
&=\bu_{q=1}^\infty\tilde K_n(q)
\end{aligned}
$$
and let
$$
K_n(\infty):=\pi\(\tilde K_n(\infty)\)=\bu_{q=1}^\infty K_n(q).
$$
Formula \eqref{1had21} and $\sg$-stability of Hausdorff dimension then imply that
\beq\label{2had22}
\HD(K_n(\infty))\le b_n
\eeq
Now, for every integer $l\ge 1$ let
$$
\tilde K_n^*(l)
:=\big\{\om\in E_A^\infty: \text{{\rm the letters }} \ 1,2,\ld,l \ \,  \text{{\rm appear in }} \ \om \  \, \text{{\rm only finitely many times}}\big\}
$$
and 
$$
\tilde K_n^0(l)
:=\big\{\om\in E_A^\infty: \text{{\rm the letters }} \ 1,2,\ld,l \ \,  \text{{\rm do not appear in }} \ \om \  \, \text{{\rm at all}}\big\}
$$
Furthermore,
$$
K_n^*(l):=\pi\(\tilde K_n^*(l)\) \  \text{{\rm and }} \  K_n^0(l):=\pi\(\tilde K_n^0(l)\).
$$
But 
$$
K_n^*(l)\sbt \bu_{\om\in E_A^*}\phi_\om(K_n^0(l)),
$$
and therefore
$$
\HD(K_n^*(l))=\HD(K_n^0(l)).
$$
But $K_n\sms K_n(\infty)\sbt \bi_{l=1}^\infty K_n^*(l)$. Hence, applying Theorem~4.3.6 in 
\cite{GDMS}, we get
$$
\HD(K_n\sms K_n(\infty))
\le\inf_{l\ge 1}\{\HD(K_n^*(l))\}
=\inf_{l\ge 1}\{\HD(K_n^0(l))\}
=\th_\cS
<b_\cS(=b).
$$
Since $\lim_{n\to\infty}b_n=b$, this implies that for all $n\ge 1$ large enough $\HD(K_n\sms K_n(\infty))<b_n$. Along with \eqref{2had22} this yields 
\beq\label{1had23}
\HD(K_n)\le b_n.
\eeq
The opposite inequality is even more involved. The main difficulty is caused by the fact that the Variational Principle (for topological pressure) holds in the case of infinite alphabet only for topologically mixing subshifts. Our idea is, given $n\ge 0$, to make smaller and smaller perturbations of the operators $\pf_n$, such that the difference is an operator acting essentially on a finite alphabet symbol space. So, given an integer $l\ge 1$ we denote
$$
\N_l:=\{1,2,\ld,l\}.
$$
Given also $n\ge 0$ we set
$$
U_n^{(l)}:=U_n\cup \N_l^c.
$$
For a time being $\phi:E_A^\infty\to\R$ is again an arbitrary summable H\"older continuous potential and $\mu_\phi$ is the corresponding $\sg$-invariant Gibbs/equilibrium state. Later on we will need $\phi$ to be of the form $\phi_t(\om)=t\log|\phi_{\om_0}'(\pi(\sg(\om)))|$. Given $q\ge n$ let $l_q\ge 1$ be the least integer such that
\beq\label{1had26.1}
\mu_\phi(\N_{l_q}^c)\le \rho^n.
\eeq
Set
$$
U_n(q):=U_n^{l_q}.
$$
Of course each open set $U_n(q)$ is a disjoint union of cylinders of length $q$ so that condition (U1) is satisfied for the sequence $(U_n(q))_{q=n}^\infty$. $\pf:=\pf_\phi$ is the fully normalized transfer operator associated to $\phi$. As in Section~\ref{SecSingPerturb} we define the operators
$$
\pf_{n,q}(g):=\pf\(\1_{U_n^c(q)}g\).
$$
The space $\Ba_\th$ and the norm $||\cdot||_\th$ remain unchanged. We however naturally adjust the seminorm $|\cdot|_*$ to depend on our sequence $(U_n(q))_{q=n}^\infty$. We set for $g\in\Ba$:
$$
|g|_n^*:=\sup_{j\ge 0}\sup_{m\ge 1}\Big\{\th^{-m}\int_{\sg^{-j}\(\N_{l_m}^c\)}|g|\, d\mu_\phi\Big\}
$$
and 
$$
||g||_n^*:=||g||_1+|g|_n^*.
$$
We intend to apply Keller and Liverani (see \cite{KL}) perturbation results. Because of \eqref{1had26.1}, Lemma~\ref{l1fp65} goes through for the norm $||\cdot||_n^*$. We put
$$
\begin{aligned}
\1_{n,q}^k
&:=\prod_{j=0}^{k-1}\1_{\sg^{-j}(U_n^c(q))}
 =\prod_{j=0}^{k-1}\1_{U_n^c(q)}\circ\sg^j, \\
\1_{n,q}^{k,*}
&:=\prod_{j=0}^{k-1}\1_{\sg^{-j}(\N_{l_q})}
 =\prod_{j=0}^{k-1}\1_{\N_{l_q}}\circ\sg^j,
\end{aligned}
$$
and note that
$$
\1_{n,q}^k=\1_n^k\cdot\1_{n,q}^{k,*}.
$$
The proof of Lemma~\ref{l1fp127} goes the same way for the operators $\pf_{n,q}^k$ with only formal change of $\1_n^k$ by $\1_{n,q}^k$ and $U_m$ by $U_m(q)$. It gives:

\blem\label{l1had26}
For every $k\ge 1$ and for every $q\ge n$, we have that
$$
||\pf_{n,q}^k||_n^*\le 1.
$$
\elem

\fr Lemmas~\ref{l2fp127}, \ref{l1fp129}, and Corollary~\ref{c1fp67} used only the (U1) property of the sequence $(U_n)_{n=0}^\infty$, and therefore these apply to the sets $U_n(q)$, $q\ge n$, and the operators $\pf_{n,q}^k$ (to be clear, the role of $n$ is in these three results is now played by the pair $(n,q)$). Fix $a, b>1$ such that $\frac1a+\frac1b=1$. We shall prove the following analogue of Lemma~\ref{l2fp129}.

\blem\label{l1had27}
For every $n\ge 0$ we have 
$$
|||\pf_n-\pf_{n,q}|||\le 2(\rho^{1/b})^q.
$$
\elem

\bpf
Fix an arbitrary $g\in\Ba_\th$ with $||g||_\th\le 1$. Using Lemma~\ref{l1fp61} and \eqref{1had26.1}, we get
\beq\label{1had27}
\begin{aligned}
||(\pf_n-\pf_{n,q})g||_1
&=||\pf(\1_{U_n^c\sms U_n^c(q)}g)||_1
 =||\1_{U_n^c\sms U_n^c(q)}g||_1 
 \le \mu_\phi(U_n^c\sms U_n^c(q))||g||_\infty \\
&=  \mu_\phi(U_n^c\cap U_n(q))||g||_\infty
 =  \mu_\phi\(U_n^c\cap (U_n\cup\N_{l_q}^c)\)||g||_\infty \\
&=  \mu_\phi\(U_n^c\cap\N_{l_q}^c)\)||g||_\infty 
\le \mu_\phi(\N_{l_q}^c)||g||_\th \\
&\le \rho^q||g||_\th 
\le \rho^q 
\le (\rho^{1/b})^q.
\end{aligned}
\eeq
Also, using Cauchy-Schwarz Inequality, we get
$$
\begin{aligned}
    \th^{-m}\int_{\sg^{-j}\(\N_{l_m}^c\)}&\big|(\pf_n-\pf_{n,q})g\big|\,d\mu_\phi = \\
&=  \th^{-m}\int_{E_A^\infty}\1_{\N_{l_m}^c}\circ\sg^j\big|\pf\(\1_{U_n^c\sms  
            U_n^c(q)}g\)\big|\,d\mu_\phi \\
&\le\th^{-m}||g||_\infty\int_{E_A^\infty}\1_{\N_{l_m}^c}\circ\sg^j\pf\(\1_{U_n^c\sms  
            U_n^c(q)}\)\,d\mu_\phi \\
&=   \th^{-m}||g||_\infty\int_{E_A^\infty}\pf\(\1_{\N_{l_m}^c}\circ\sg^{j+1} 
            \1_{U_n^c\sms U_n^c(q)}\)\,d\mu_\phi \\
&=   \th^{-m}||g||_\infty\int_{E_A^\infty}\1_{\N_{l_m}^c}\circ\sg^{j+1} 
            \1_{U_n^c\sms U_n^c(q)}\,d\mu_\phi \\
&=   \th^{-m}||g||_\infty\int_{E_A^\infty}\1_{\N_{l_m}^c}\circ\sg^{j+1} 
            \1_{\N_{l_q}^c}\,d\mu_\phi \\
&=   \th^{-m}||g||_\infty\int_{E_A^\infty}\1_{\sg^{-(j+1)}(\N_{l_m}^c)}
            \1_{\N_{l_q}^c}\,d\mu_\phi \\
&\le \th^{-m}||g||_\th\mu_\phi^{1/a}(\N_{l_m}^c)\mu_\phi^{1/b}(\N_{l_q}^c) \\
&\le \th^{-m}||g||_\th(\rho^{1/a}/\th)^m\rho^{q/b} 
 \le \rho^{q/b}||g||_\th
 \le \rho^{q/b}.
\end{aligned}
$$
Therefore, $\big|(\pf_n-\pf_{n,q})g\big|_n^*\le \rho^{q/b}$, and together with \eqref{1had27}, this completes the proof of our lemma.
\epf

\sp\fr Having all of this, particularly the last lemma, and taking into account the considerations between the end of the proof of Lemma~\ref{l2fp129} and Proposition~\ref{p1fp80}, we get the following analogue of the latter for the  $\pf_n$ replaced by $\pf_{n,q}$.

\blem\label{l1had28}
For all integers $n\ge 0$ large enough and for all $q\ge n$ large enough there exist two bounded linear operators $Q_{n,q},\De_{n,q}:\cB_\th\to\cB_\th$ and complex numbers $\lam_{n,q}\ne 0$ with the following properties:
\begin{itemize}
\item[(a)] $\lam_{n,q}$ is a simple eigenvalue of the operator $\pf_{n,q}:\cB_\th\to\cB_\th$.

\sp\item[(b)] $Q_{n,q}:\cB_\th\to\cB_\th$ is a projector ($Q_{n,2}^2=Q_{n,q}$) onto the $1$--dimensional eigenspace of $\lam_{n,q}$.

\sp\item[(c)] $\pf_{n,q}=\lam_{n,q}Q_{n,q}+\De_{n,q}$.

\sp\item[(d)] $Q_{n,q}\circ\De_{n,q}=\De_{n,q}\circ Q_{n,q}=0$.

\sp\item[(e)] There exist $\ka_n\in (0,1)$ and $C>0$ such that
$$
||\De_{n,n}^k||_\th\le C\ka_n^k.
$$
In particular,
$$
||\De_{n,q}^kg||_\infty\le ||\De_{n,q}^kg||_\th\le C\ka_n^k||g||_\th
$$
for all $g\in\Ba_\th$.

\sp\item[(f)] $\lim_{q\to\infty}\lam_{n,q}=\lam_n$.

\sp\item[(g)] Enlarging the above constant $C>0$ if necessary, we have
$$
||Q_{n,q}||_\th\le C.
$$
In particular, 
$$
||Q_{n,q}g||_\infty\le ||Q_{n,q}g||_\th\le C||g||_\th
$$
for all $g\in\Ba_\th$.

\sp\item[(h)] $\lim_{q\to\infty}|||Q_{n,q}-Q_n|||=0$. 
\end{itemize}
\elem 

\sp\fr The following lemma can be proved in exactly the same way as was  Proposition~\ref{p1fp131}.

\blem\label{l2had28}
All eigenvalues $\lam_{n,q}$ produced in Lemma~\ref{l1had28} are real and positive, and all operators $Q_{n,q}:\cB_\th\to\cB_\th$ preserve $\Ba_\th(\R)$ and $\Ba_\th^+(\R)$, the subspaces of $\Ba_\th$ consisting, respectively, of real--valued functions and positive real--valued functions.
\elem

\brem\label{r1had28.1}
How large $n$ needs to be in Lemmas~\ref{l1had28} and \ref{l2had28} is determined by the requirement that the assertions of Proposition~\ref{p1fp80} hold for such $n$. 
\erem

\sp\fr Now, let us consider the dynamical systems $\sg:\tilde K_n(q)\to \tilde K_n(q)$, where
$$
\tilde K_n(q):=\bi_{k=0}^\infty\sg^{-k}(U_n^c(q))\  \and \ K_n(q):=\pi(\tilde K_n(q)).
$$
We shall prove the following.

\blem\label{l1had28.1}
If $n\ge 0$ is so large as required in Lemma~\ref{l1had28}, then for all $q\ge n$ large enough we have that
$$
\P\(\sg|_{\tilde K_n(q)},\phi|_{\tilde K_n(q)}\)\ge \log\lam_{n,q}.
$$
\elem
\bpf 
A straightforward elementary calculation shows that if $f,g\in\cB_\th$, then 
$||fg||_\th\le 3||f||_\th||g||_\th$; hence in particular $fg\in\cB_\th$. This allows us to define a linear functional $\mu_{n,q}:\cB_\th\to\R$ by the requirement that
$$
Q_{n,q}(gg_{n,q})=\mu_{n,q}(g)g_{n,q}.
$$
Since, by Lemma~\ref{l2had28}, $Q_{n,q}$ is a positive ($Q_{n,q}(\Ba_\th^+(\R))\sbt \Ba_\th^+(\R)$) operator and $Q_{n,q}\ne 0$ all $q\ge n$ large enough, it follows that $\mu_{n,q}$ is a positive $(\mu_{n,q}(\Ba_\th^+(\R))\sbt [0,+\infty)$) functional and
\beq\label{1had28.1}
\mu_{n,q}(\1)=1.
\eeq
Positivity of $\mu_{n,q}$ immediately implies  its monotonicity in the sense that if $f,g\in\cB_\th$ and $f(x)\le g(x)$ $\mu_\phi$-a.e. in $E_A^\infty$, then
\beq\label{4had28.2}
\mu_{n,q}(f)\le \mu_{n,q}(g)
\eeq
Now, let $C_b^u(E_A^\infty)$ be the vector subspace of $C_b(E_A^\infty)$ consisting of all functions that are uniformly continuous with respect to the metric $d_\th$. Let us define a function $\mu:C_b^u(E_A^\infty)\to[0,+\infty)$ by the following formula:
\beq\label{3had28.2}
\mu(g):=\sup\big\{\mu_{n,q}(f):f\le g \ \and\, f\in \H_\th^b(A)\big\}.
\eeq
Of course by \eqref{4had28.2} we get that
\beq\label{1had28.2.1}
\mu|_{\H_\th^b(A)}=\mu_{n,q}|_{\H_\th^b(A)}.
\eeq
Given $g\in C_b^u(E_A^\infty)$ and $k\ge 1$ define two functions
$$
\un g_k(\om):=\inf\{g(\tau):\tau\in[\om|k]\} \  \and  \  
\ov g_k(\om):=\sup\{g(\tau):\tau\in[\om|k]\}.
$$
Of course
$$
\un g_k\le g \le \ov g_k
$$
and
\beq\label{1had28.2}
 \lim_{k\to\infty}||g-\un g_k||_\infty
=\lim_{k\to\infty}||g-\ov g_k||_\infty
=0.
\eeq
We shall prove that for every $g\in C_b^u(E_A^\infty)$ we have that
\beq\label{2had28.2}
\mu(g)= {\overline \mu}(g) := \inf\big\{\mu_{n,q}(f):f\ge g \ \and\, f\in \H_\th^b(A)\big\}.
\eeq
Then for every $k\ge 1$ we have that
$$
\begin{aligned}
\mu(g)
&\le \mu_{n,q}(\ov g_k)
 =   \mu_{n,q}\(\un g_k+(\ov g_k-\un g_k)\) 
 =   \mu_{n,q}(\un g_k)+\mu_{n,q}(\ov g_k-\un g_k) \\
&\le \mu(g)+\mu(||\ov g_k-\un g_k||_\infty) \\
&=   \mu(g)+||\ov g_k-\un g_k||_\infty,
\end{aligned}
$$
and invoking \eqref{1had28.2}, we obtain $\mu(g)\le\ov\mu(g)\le\mu(g)$, completing the proof of \eqref{2had28.2}. We now can prove the following.

\blem\label{l1had28.2}
The function $\mu:C_b^u(E_A^\infty)\to\R$ is a positive linear functional such that $\mu(\1)=1$ and $\mu|_{\H_\th^b(A)}=\mu_{n,q}|_{\H_\th^b(A)}$.
\elem
\bpf
Positivity is immediate from formula \eqref{3had28.2}. It is also immediate from this formula that
\beq\label{2had28.3}
\mu(\a g)=\a\mu(g)
\eeq
for every $\a\ge 0$. Employing also \eqref{2had28.2}, we get that
$$
\begin{aligned}
\mu(-g)
&=\inf\big\{\mu_{n,q}(f):f\ge-g \ \and\, f\in \H_\th^b(A)\big\} \\
&=\inf\big\{-\mu_{n,q}(-f):-f\le g \ \and\, f\in \H_\th^b(A)\big\} \\
&=\inf\big\{-\mu_{n,q}(f):f\le g \ \and\, f\in \H_\th^b(A)\big\} \\
&=-\sup\big\{\mu_{n,q}(f):f\le g \ \and\, f\in \H_\th^b(A)\big\} \\
&=-\mu(g).
\end{aligned}
$$
Along with \eqref{1had28.3} this implies that
\beq\label{1had28.3}
\mu(\a g)=\a\mu(g)
\eeq
for every $g\in C_b^u(E_A^\infty)$ and all $\a\in\R$. Now fix two functions $f, g\in C_b^u(E_A^\infty)$. Because of \eqref{2had28.2} and \eqref{3had28.2} there exist four sequences $(f_k^-)_1^\infty$, $(f_k^+)_1^\infty$, $(g_k^-)_1^\infty$, and $(g_k^+)_1^\infty$ of elements of $\H_\th^b(A)$ such that
$$
f_k^-\le f\le f_k^+, \  \  \  g_k^-\le g\le g_k^+,
$$
and
$$
 \lim_{k\to\infty}\mu_{n,q}(f_k^-)
=\lim_{k\to\infty}\mu_{n,q}(f_k^+)
=\mu(f) \  \  \and \,  \ 
 \lim_{k\to\infty}\mu_{n,q}(g_k^-)
=\lim_{k\to\infty}\mu_{n,q}(g_k^+)
=\mu(g).
$$
Therefore, applying again \eqref{2had28.2} and \eqref{3had28.2}, we obtain
$$
\mu(f+g)
\ge \varlimsup_{k\to\infty}\mu_{n,q}(f_k^-+g_k^-)
=\lim_{k\to\infty}\mu_{n,q}(f_k^-)+\lim_{k\to\infty}\mu_{n,q}(g_k^-)
=\mu(f)+\mu(g)
$$
and 
$$
\mu(f+g)
\le \varliminf_{k\to\infty}\mu_{n,q}(f_k^++g_k^+)
=\lim_{k\to\infty}\mu_{n,q}(f_k^+)+\lim_{k\to\infty}\mu_{n,q}(g_k^+)
=\mu(f)+\mu(g).
$$
Hence,
$$
\mu(f+g)=\mu(f)+\mu(g),
$$
and along with \eqref{1had28.3} this finishes the proof of Lemma~\ref{l1had28.2} (the last two assertions of this lemma are immediate consequences of \eqref{1had28.1} and \eqref{1had28.2.1}.
\epf

\fr Now we shall prove the following auxiliary fact. 

\blem\label{l1had28.4}
If $g\in C_b^u(E_A^\infty)$ and $g|_{\tilde K_{n,q}}=0$, then $\mu(g)=0$.
\elem

\bpf
Let
$$
\cF_{n,q}=\big\{\om\in E_A^n:[\om]\sbt U_{n,q}^c\big\},
$$
and note that $\cF_{n,q}$ is a finite set. For every $k\ge 1$ let
$$
U_{n,k}^{ck}:=\bi_{j=0}^{k-1}\sg^{-j}(U_n^c).
$$
We shall prove the following.

\sp\fr{\bf Claim 1:} There exists $p\ge 1$ such that if $\om\in E_A^{kn}$ and $[\om]\sbt U_{n,k}^{ck}$, then $[\om|_{kn-pn}]\cap\tilde K_{n,q}\ne\es$.

\bpf
Let 
\beq\label{1had28.4}
p:=\#\cF_{n,q}+1<+\infty.
\eeq
Seeking a contradiction suppose that $k>p$ and 
\beq\label{2had28.4}
[\om|_{(k-p)n}]\cap\tilde K_{n,q}=\es
\eeq
for some $\om\in E_A^{kn}$ with $[\om]\sbt U_{n,q}^{ck}$. Because $|\om|_{(k-p)n+1}^{kn}|=pn$ and because $\om|_{(k-p)n+1}^{kn}$ is a concatenation of non-overlapping blocks from 
$\cF_{n,q}$, it follows from \eqref{1had28.4} that there are two non-overlapping subblocks of $\om|_{(k-p)n+1}^{kn}$ forming the same element of $\cF_{n,q}$. Let $\om|_{(l-1)n+1}^{ln}$, $k-p\le l-1\le k-1$ be the latter of these two blocks, and let the former, denote it by $\tau$, have the last coordinate $j$ ($j\le (l-1)n$). But then the infinite word $\om|_{ln}(\om|_{j+1}^{ln})^\infty$ is an element of $E_A^\infty$ and $\om|_{ln}(\om|_{j+1}^{ln})^\infty$ is a concatenation of non-overlapping blocks from $\cF_{n,q}$. But this means that $\om|_{ln}(\om|_{j+1}^{ln})^\infty\in\tilde K_{n,q}$. Thus, $[\om|_{ln}]\cap \tilde K_{n,q}\ne\es$. As $l\ge k-p$, this contradicts \eqref{2had28.4} and finishes the proof of Claim~1.
\epf

\fr Now passing to the direct proof of our lemma, fix $\e>0$ arbitrary. Since $g|_{\tilde K_{n,q}}=0$ and $g\in C_b^u(E_A^\infty)$, there exists $l\ge 1$ sufficiently large that
\beq\label{1had28.5}
|\textbf{e}|g||_{[\om]}\le \e/2
\eeq
if $|\om|\ge l$ $(\om\in E_A^l$) and $[\om]\cap\tilde K_{n,q}\ne\es$. Take any $k\ge l+p$ so large that $||\ov g_{kn}-g||_\infty \le\e/2$. Employing Claim~1, \eqref{1had28.5}, Lemma~\ref{l1had28.2}, and \eqref{1had28.2.1}, we get
$$
\begin{aligned}
\mu(g)g_{n,q}
&\le \mu(\ov g_{kn})g_{n,q}
 =   \mu_{n,q}(\ov g_{kn})g_{n,q}
 =   Q_{n,q}(\ov g_{kn}g_{n,q})
 =\lam_{n,q}^{-kn}\pf_{n,q}^{kn}Q_{n,q}(\ov g_{kn}g_{n,q}) \\
&=\lam_{n,q}^{-kn}Q_{n,q}\pf_{n,q}^{kn}(\ov g_{kn}g_{n,q}) \\
&=\lam_{n,q}^{-kn}Q_{n,q}\(\tau\mapsto\sum_{[\om]\sbt U_{n,q}^{ck}:A_{\om_{kn}\tau_0}=1}\ov g_{kn}(\om\tau)g_{n,q}(\om\tau)e^{\phi_{kn}(\om\tau)}\) \\
&\le \lam_{n,q}^{-kn}Q_{n,q}\(\tau\mapsto\e\sum_{A_{\om_{kn}\tau_0}=1}\1_n^{kn}(\om\tau)g_{n,q}(\om\tau)e^{\phi_{kn}(\om\tau)}\) \\
&=\e\lam_{n,q}^{-kn}Q_{n,q}\pf_{n,q}^{kn}(g_{n,q})
 =\e Q_{n,q}(g_{n,q})
   \le\e||g_{n,q}||_\infty Q_{n,q}(\1)\\
&= \e||g_{n,q}||_\infty g_{n,q}
\le\e||g_{n,q}||_\th g_{n,q}.
\end{aligned}
$$
Hence,
$$
\mu(g)\le ||g_{n,q}||_\th\e.
$$
Likewise, -$\mu(g)=\mu(-g)\le ||g_{n,q}||_\th\e$, and in consequence.
$$
|\mu(g)|\le ||g_{n,q}||_\th\e.
$$
Letting $\e\downto 0$ we thus get that $\mu(g)=0$ finishing the proof of Lemma~\ref{l1had28.4}
\epf
\fr Since every function $g\in C\(\tilde K_{n,q}\)$ is uniformly continuous, it extends to some uniformly continuous function $\tilde g:E_A^\infty\to\R$. The value
$$
\mu(g):=\mu(\tilde g)
$$
is then, by virtue of, Lemma~\ref{l1had28.4}, independent of the choice of extension $\tilde g\in C_b^u(E_A^\infty)$ of $g$. By Lemma~\ref{l1had28.2}., we get the following.

\blem\label{l1had28.6}
The function $\tilde\mu:C(\tilde K_{n,q})\to\R$ is a positive linear functional such that $\mu(\1)=1$. Thus by the Riesz Representation Theorem $\tilde\mu$ represents a Borel probability measure on $\tilde K_{n,q}$.
\elem

\fr We shall prove the following.

\blem\label{l2had28.6}
The measure $\tilde\mu$ on $\tilde K_{n,q}$ is $\sg$-invariant.
\elem

\bpf
Let $g\in C\(\tilde K_{n,q}\)$. Let $\tilde g\in C_b^u(E_A^\infty)$ be an extension of $g$. Then $\tilde g\circ\sg\in C_b^u(E_A^\infty)$ and it extends $g\circ\sg$. Fix $\e>0$ and take $\tilde g_+$ and $\tilde g_-$ both in $\H_\th^b(A)$, such that $\tilde g_-\le\tilde g\le\tilde g_+$ and
$$
\mu_{n,q}(\tilde g_+)-\e\le\mu(\tilde g)\le \mu_{n,q}(\tilde g_-)+\e.
$$
Of course then we also have $\tilde g_+\circ\sg, \tilde g_-\circ\sg\in \H_\th^b(A)$ and $\tilde g_-\circ\sg\le\tilde g\circ\sg\le\tilde g_+\circ\sg$. We thus get
$$
\begin{aligned}
\tilde\mu(g\circ\sg)g_{n,q}
&=  \mu(\tilde g\circ\sg)g_{n,q}
\le \mu(\tilde g_+\circ\sg)g_{n,q}
 =  Q_{n,q}(g_{n,q}\tilde g_+\circ\sg) \\
&=  \lam_{n,q}^{-kn}\pf_{n,q}^{kn}Q_{n,q}(g_{n,q}\tilde g_+\circ\sg) \\
&=  \lam_{n,q}^{-kn}Q_{n,q}\pf_{n,q}^{kn}(g_{n,q}\tilde g_+\circ\sg) \\
&=  \lam_{n,q}^{-kn}Q_{n,q}\(\tilde g_+\pf_{n,q}^{kn}(g_{n,q})\) \\
&=  Q_{n,q}(\tilde g_+g_{n,q}) \\
&=  \mu_{n,q}(\tilde g_+)g_{n,q} \\
&\le(\mu(\tilde g)+\e)g_{n,q}
 =  (\mu(\tilde g)+\e)g_{n,q}.
\end{aligned}
$$
Hence, $\tilde\mu(g\circ\sg)\le \mu(\tilde g)+\e$. By letting $\e\downto 0$ this yields 
$\tilde\mu(g\circ\sg)\le \mu(\tilde g)$. Likewise, working with $\tilde g_-$ instead of 
$\tilde g_+$, we get $\tilde\mu(g\circ\sg)\ge \mu(\tilde g)$. Thus $\tilde\mu(g\circ\sg)=\mu(\tilde g)$ and the proof is complete.
\epf

\fr We now pass to the direct proof of the inequality being the assertion of  Lemma~\ref{l1had28.1}. Given any $\om\in \tilde K_{n,q}$, we have for every $k\ge 1$ that
$$
\begin{aligned}
\tilde\mu\(\1_{[\om|_{kn}]}\)g_{n,q}
&=  \mu_{n,q}\(\1_{[\om|_{kn}]}\)g_{n,q}
 =  Q_{n,q}\(\1_{[\om|_{kn}]}g_{n,q}\) \\
&=  \lam_{n,q}^{-kn}\pf_{n,q}^{kn}Q_{n,q}\(\1_{[\om|_{kn}]}g_{n,q}\) 
 =  \lam_{n,q}^{-kn}Q_{n,q}\pf_{n,q}^{kn}\(\1_{[\om|_{kn}]}g_{n,q}\) \\
&=  \lam_{n,q}^{-kn}Q_{n,q}\(\tau\mapsto e^{\phi_{kn}(\om|_{kn}\tau)}
         g_{n,q}(\om|_{kn}\tau)\).
\end{aligned}
$$
Now, because of Lemma~\ref{l1_2014_08_29}, $M_\phi^{-1}e^{\phi_{kn}(\om|_{kn}\tau)} 
\le e^{\phi_{kn}(\om)} \le M_\phi e^{\phi_{kn}(\om|_{kn}\tau)}$. Therefore, the monotonicity of $Q_{n,q}$ yields
$$
\begin{aligned}
M_\phi^{-1}\lam_{n,q}^{-kn}e^{\phi_{kn}(\om)}
  &Q_{n,q}\(\tau\mapsto g_{n,q}(\om|_{kn}\tau)\)  \\
&\le \tilde\mu\(\1_{[\om|_{kn}]}g_{n,q}\) \\
\le M_\phi\lam_{n,q}^{-kn} &e^{\phi_{kn}(\om)}
Q_{n,q}\(\tau\mapsto g_{n,q}(\om|_{kn}\tau)\).
\end{aligned}
$$
We are only interested in the right-hand side of this inequality. We further have
$$
\tilde\mu\(\1_{[\om|_{kn}]}g_{n,q}\)g_{n,q}
\le M_\phi|| g_{n,q}||_\infty\lam_{n,q}^{-kn} e^{\phi_{kn}(\om)}Q_{n,q}(\1)
=M_\phi||g_{n,q}||_\infty\lam_{n,q}^{-kn} e^{\phi_{kn}(\om)}g_{n,q}.
$$
Hence
$$
\tilde\mu\(\1_{[\om|_{kn}]}g_{n,q}\) 
\le M_\phi||g_{n,q}||_\infty\lam_{n,q}^{-kn} e^{\phi_{kn}(\om)}.
$$
Denoting by $\a$ the partition of $E_A^\infty$ into cylinders of length one, i. e. the partition $\{[e]\}_{e\in E}$, we therefore get
$$
\begin{aligned}
\H_{\tilde\mu}(\a^{kn})+kn\tilde\mu(\phi)
&= \H_{\tilde\mu}(\a^k)+\tilde\mu(\phi_{kn})
 = \int_{\tilde K_{n,q}}\!\!\!\!\!\!
   -\log\tilde\mu([\om|_k])\,d\tilde\mu(\om)+\tilde\mu(\phi_{kn}) \\
&\ge -\log\(M_\phi||g_{n,q}||_\infty\)+kn\log\lam_{n,q}-\tilde\mu(\phi_{kn})  
     +\tilde\mu(\phi_{kn}) \\
&=-\log\(M_\phi||g_{n,q}||_\infty\)+kn\log\lam_{n,q}.
\end{aligned}
$$
Since $\a|_{\tilde K_{n,q}}$ is a finite generating partition, we thus get that
$$
\h_{\tilde\mu}(\sg)+\tilde\mu(\phi) 
=\lim_{k\to\infty}\frac1k\(\H_{\tilde\mu}(\a^{kn})+kn\tilde\mu(\phi)\)
\ge \log\lam_{n,q}.
$$
Hence, invoking the Variational Principle we get
$$
\P\(\sg|_{\tilde K_n(q)},\phi|_{\tilde K_n(q)}\)\ge \log\lam_{n,q}.
$$
and the proof of Lemma~\ref{l1had28.1} is complete.
\epf

Aiming now directly towards proving the inequality
\beq\label{1had29}
\HD(K_n)\ge b_n
\eeq
for all $n\ge 1$ large enough, we apply Proposition~\ref{p1had9.3} with $s=b(=b_\cS)$. Let $n_b\ge 1$ be the integer produced in this proposition, let $\d>0$ be the minimum of both $\d$s, the one produced in Proposition~\ref{p1had9.3} and the one coming from Lemma~\ref{l1had17} (b). Let $N_b\ge n_b$ be so large (depending only on $s=b$) that the assertions of Lemma~\ref{l1had17} are true for all $n\ge N_b$. By Proposition~\ref{p1had18} there exists $N_b^*\ge N_b$ so large that $b_n\in(b-\d,b+\d)$ for all $n\ge N_b^*$. Take an arbitrary integer $n$ with this property, i. e. $n\ge N_b^*$. Fix any $t\in (b-\d,b_n)$. By Lemma~\ref{l1had17} (b) and (e), we have that $\lam_n(t)>1$. Since $n\ge N_b^*\ge n_b$, it follows from Proposition~\ref{p1had9.3} that the assertions of Proposition~\ref{p1fp80} hold for this $n$. In turn, it therefore follows from Remark~\ref{r1had28.1} that Lemma~\ref{l1had28} holds for this $n$. Its item (f) yields some $q\ge n$ such that $\lam_{n,q}>1$ By virtue of Lemma~\ref{l1had28.1} and the Variational Principle, for all $q\ge n$ large enough there exists a Borel probability $\sg$-invariant measure $\mu$ on $\tilde K_n(q)$ such that $\hmu(\sg)-t\chi_\mu>0$. But then invoking Theorem~4.4.2 from \cite{GDMS}, we get that
$$
\HD(K_n)\ge \HD(K_{n,q})\ge\HD(\mu\circ\pi^{-1})=\frac{\hmu(\sg)}{\chi_\mu}>t.
$$
So, letting $t\upto b_n$, inequality \eqref{1had29} follows. Along with \eqref{1had21} this completes the proof of Theorem~\ref{t1had19}.
\epf

As a direct consequence of this theorem and Proposition~\ref{p1had18}, we get the following.

\bprop\label{p1had30}
With the hypotheses of Theorem~\ref{t1had19} we have that
\beq\label{1had30}
\lim_{n\to\infty}\frac{\HD(J_\cS)-\HD(K_n)}{\mu_b(U_n)}=
\begin{cases}
1/\chi_{\mu_b} \  &\text{ if } \ (U4A) \  \text{ holds } \\
\(1-|\phi_\xi'(\pi(\xi^\infty))|\)/\chi_{\mu_b} &\text{ if } \ (U4B) \  \text{ holds }.
\end{cases}
\eeq
\eprop

\section{Escape Rates for Conformal GDMSs; Hausdorff Dimension}\label{ERCGDMSHD}

\fr This mini-section is the main fruit of the labor in the previous section. It pertains to the rate of decay of Hausdorff dimension of escaping points. It contains,  in particular, Theorem~\ref{t2had30}, the second main result of this manuscript. Given $z\in E_A^\infty$ and $r>0$ let 
$$
K_z(r):=\pi(\tilde K_z(r)),
$$
where 
$$
\tilde K_z(r)
:=\big\{\om\in E_A^\infty:\forall_{n\ge 0}\, \sg^n(\om)\notin \pi^{-1}(B(\pi(z),r)\big\}
 =\bi_{n=0}^\infty\sg^{-n}\(\pi^{-1}(B(\pi(z),r))\).
$$
We say that a parameter $t>\th_\cS$ is powering at a point $\xi\in X$ if there exist $\a>0$, $C>0$, and $\d>0$ such that
\beq\label{1uwbt2}
\mu_s\circ\pi^{-1}\(B(\xi,r)\)\le C\(\mu_t\circ\pi^{-1}\(B(\xi,r)\)\)^\a
\eeq
for every $s\in(t-\d,t+\d)$ and for all radii $r>0$ small enough. The constant $\a$ is called the powering exponent of $t$ and $\xi$. The following is one of the main results of our paper. 

\bthm\label{t2had30}
Let $\cS$ be a finitely primitive strongly regular conformal GDMS. Assume that both $\cS$ is (\WBT) and parameter $b_\cS$ is powering at some point $z\in J_\cS$ which is either

\begin{itemize}
\item[(a)] not pseudo-periodic
or else 
\item [(b)] uniquely periodic and belongs to $\Int X$ (and $z=\pi(\xi^\infty)$ for a (unique) irreducible word $\xi\in E_A^*$).
\end{itemize}
Then 
\beq\label{1had31}
\lim_{r\to 0}\frac{\HD(J_\cS)-\HD(K_z(r))}{\mu_b\(\pi^{-1}(B(z,r))\)}=
\begin{cases}
1/\chi_{\mu_b} \  &\text{ if } \ (a) \  \text{ holds } \\
\(1-|\phi_\xi'(z)|\)/\chi_{\mu_b}
\  &\text{ if } \ (b) \  \text{ holds }.
\end{cases}
\eeq
\ethm

\bpf
Denote the right-hand side of \eqref{1had30} by $\xi(z)$. Put
$$
h:=\HD(J_\cS)=b_\cS \  \   \   \and \  \  \  h_r:=\HD(K_z(r)).
$$
Seeking contradiction assume that \eqref{1had31} fails to hold at some point $z\in E_A^\infty$. This means that there exists a strictly decreasing sequence $(s_n(z))_{n=0}^\infty$ of positive reals such that the sequence
$$
\lt(\frac{h-h_{s_n(z)}}{\mu_b\(\pi^{-1}(B(\pi(z),s_n(z)))\)}\rt)_{n=0}^\infty
$$
does not have $\xi(z)$ as its accumulation point. Let
$$
\cR:=\{s_n(z):n\ge 0\}.
$$
Let $(U_n^{\pm}(z))_{n=0}^\infty$ be the corresponding sequence of open subsets of $E_A^\infty$ produced in formula \eqref{1fp163}. We shall prove the following.

\sp{\bf Claim~$1^0$:} Both sequences $(U_n^{\pm}(z))_{n=0}^\infty$ satisfy the (U2*) condition for the parameter $h$. 

\bpf
Let $\a>0$ be a powering exponent of $h=b_\cS$ at $z$ and let $\d>0$ come from this powering property. Let $s\in(h-\d,h+\d)$. Applying then formula \eqref{1fp165} to the measure $\mu_h$, we get, with notation used in this formula, that
$$
\mu_s\(U_k^{\pm}(z)\)
\le \mu_s\circ\pi^{-1}(B(z,r_{j-1}))
\le C\(\mu_h\circ\pi^{-1}(B(z,r_{j-1}))\)^\a
\le C\exp^\a\(\ka(1+8\De l(z))e^{-\a\ka k}.
$$
The claim is proved.
\epf 

\sp\fr By this claim and because of Propositions~\ref{p1fp163} and \ref{p1fp165}, Proposition~\ref{p1had30} applies to give
\beq\label{1had31+1}
\lim_{n\to\infty}\frac{h-h_n^{\pm}}{\mu_b(U_n^{\pm}(z))}=\xi(z),
\eeq
where $h_n^{\pm}:=\HD(K(U_n^{\pm}(z))$. Let $(n_j)_{j=0}^\infty$ be the sequence produced in Proposition~\ref{p2fp161} with the help of $\cR$. By virtue of this proposition there exists an increasing sequence $(j_k)_{k=1}^\infty$ such that $\cR\cap\cR_{n_{j_k}}\ne\es$ for all $k\ge 1$. For every $k\ge 1$ pick one element $r_k\in \cR\cap\cR_{n_{j_k}}$. Set $q_k:=l_{n_{j_k}}$. By Observation~\ref{o1fp159} and formula \eqref{2fp163}, we have
\beq\label{2had31}
\begin{aligned}
\frac{h-h_{q_k}^-}{\mu_b(U_{q_k}^-(z))}
\cdot 
\frac{\mu_b(U_{q_k}^-(z))}{\mu_b\(\pi^{-1}(B(\pi(z),r_k))\)} 
&\le 
\frac{h-h_{r_k}}{\mu_b\(\pi^{-1}(B(\pi(z),r_k))\)}\le \\
&\le
\frac{h-h_{q_k}^+}{\mu_b(U_{q_k}^+(z))}
\cdot 
\frac{\mu_b(U_{q_k}^+(z))}{\mu_b\(\pi^{-1}(B(\pi(z),r_k))\)}
\end{aligned}
\eeq
But since $\mu_b\circ\pi^{-1}$ is WBT, it is DBT by Proposition~\ref{p1fp132ec}, and it therefore follows from \eqref{1fp132ec} along with formulas \eqref{1fp159} and \eqref{2fp163} that 
$$
\lim_{k\to\infty}\frac{\mu_b(U_{q_k}^-(z))}{\mu_b\(\pi^{-1}(B(\pi(z),r_k))\)}
=1=
\lim_{k\to\infty}\frac{\mu_b(U_{q_k}^+(z))}{\mu_b\(\pi^{-1}(B(\pi(z),r_k))\)}.
$$
Inserting this and \eqref{1had31+1} to \eqref{2had31} yields
$$
\lim_{k\to\infty}\frac{h-h_{r_k}}{\mu_b\(\pi^{-1}(B(\pi(z),r_k))\)}=\xi(z).
$$
Since $r_k\in\cR$ for all $k\ge 1$, this implies that $\xi(z)$ is an accumulation point of the sequence`
$$
\lt(\frac{h-h_{s_n(z)}}{\mu_b\(\pi^{-1}(B(\pi(z),s_n(z)))\)}\rt)_{n=0}^\infty,
$$
and this contradiction finishes the proof of Theorem~\ref{t2had30}.
\epf

\sp\fr We have discussed at length the (WBT) condition in Section~\ref{Section: WBT}, particularly in Theorem~\ref{t1wbt6}; we now would like also to note that since any two measures $\mu_t$, $t>\th_\cS$, are either equal or mutually singular, the standard covering argument gives the following simple but remarkable result.

\bprop\label{p1_2016_05_27}
If $\cS$ is a finitely primitive regular conformal {\rm GDMS}, then every parameter $t>\th_\cS$ is powering with exponent $1$ at $\mu_t\circ\pi^{-1}$--a.e. point of $J_\cS$.
\eprop

\sp\fr Now, as an immediate consequence of Theorem~\ref{t2had30}, Theorem~\ref{t1wbt6}, and Proposition~\ref{p1_2016_05_27}, we get the following result, also one of our main.

\bcor\label{t2_2016_05_27}
If $\cS$ be a finitely primitive strongly regular conformal {\rm GDMS} whose limit set $ J_{\mathcal S}$ is geometrically irreducible, then 
\beq\label{1had31+2}
\lim_{r\to 0}\frac{\HD(J_\cS)-\HD(K_z(r))}{\mu_b\(\pi^{-1}(B(z,r))\)}=
\frac1{\chi_{\mu_b}}
\eeq
at $\mu_{b_\cS}\circ\pi^{-1}$--a.e. point $z$ of $J_\cS$.
\ecor

\fr In the case of finite alphabet $E$, we can say much more for the parameter $b_\cS$ than established in Proposition~\ref{p1_2016_05_27}. Namely, we shall prove the following.

\bprop\label{p1cl2}
If $\cS$ is a finite alphabet primitive conformal {\rm GDMS}, then $\cS$ is powering at the parameter $b_\cS$ at each point $\xi\in J_\cS$.
\eprop

\bpf
The proof of Theorem~7.20 in \cite{CTU} (see also Theorem~7.17 therein for the main geometric ingredient of this proof) produces for every radius $r\in\(0,\frac12\min\{\diam(X_v):v\in V\}$ a family $Z(r)\sbt E_A^*$ consisting of mutually incomparable words with the following properties.

\sp\begin{itemize}
\item [(1)] $C_1^{-1}r\le \|\phi_\om'\|_\infty, \diam\(\phi_\om(X_{t(\om)})\)\le C_1r$\, for all $\om\in Z(r)$

\sp \item [(2)] $\phi_\om(X_{t(\om)})\cap B(\xi,r)\ne\es$\, for all $\om\in Z(r)$

\sp \item [(3)] $\pi_\cS^{-1}(B(\xi,r))\sbt \bu_{\om\in Z(r)}[\om]$,

\sp \item [(4)] $\#Z(r)\le C_2$,
\end{itemize}

\sp\fr where $C_1$ and $C_2$ are some finite positive constants  independent of $\xi$ and $r$. Abbreviate 
$$
b:=b_\cS.
$$
It easily follows from \cite{GDMS} that there exist a constant $\d\in (0,b_\cS/4)$ and a constant $Q\in (1,+\infty)$ such that
$$
Q^{-1}\le \frac{\mu_s([\tau])}{e^{-\P(s)|\tau|}\|\phi_\tau'\|_\infty^s}\le Q
$$
for every $s\in (b-\d,b+\d)$ and for all $\tau\in E_A^*$. We therefore get for every $s\in (b-\d,b+\d)$ and all $\om\in Z(r)$ that
\beq\label{1cl2}
\mu_s([\om])
\le Qe^{-\P(s)|\om|}\|\phi_\om'\|_\infty^s
\le QC_1^se^{-\P(s)|\om|}r^s
\eeq
and 
\beq\label{2cl2}
\mu_b([\om])
\ge QC_1^{-b}r^b.
\eeq
It is also known from \cite{CTU} that, with perhaps larger $Q\ge 1$:
\beq\label{3cl2}
\mu_b\circ\pi_\cS^{-1}\(B(\xi,r)\)\ge Q^{-1}r^b
\eeq
This formula follows for example from \eqref{2cl2} applied to a sufficiently small fixed fraction of $r$. If $b/2\le s\le b$, then $\P(s)\ge 0$, and we get
\beq\label{1cl4}
\begin{aligned}
\mu_s([\om])
&\le QC_1^sr^s
\le QC_1^br^s
= QC_1^b\(r^b\)^{s/b} \\
&\le QC_1^bQ^{\frac{s}{b}}\mu_b^{\frac{s}{b}}\circ\pi_\cS^{-1}\(B(\xi,r)\) \\
&\le Q^2C_1^b\mu_b^{\frac{s}{b}}\circ\pi_\cS^{-1}\(B(\xi,r)\) \\
&\le Q^2C_1^b\mu_b^{\frac12}\circ\pi_\cS^{-1}\(B(\xi,r)\).
\end{aligned}
\eeq
Now we assume that $s\ge b$. We set
$$
\ka:=\max\{\|\phi_e'\|_\infty:e\in E\}<1,
$$
and we recal that
$$
\chi_b:=\chi_{\mu_b}=-\int_{E_A^\infty}\log|\phi_{\om_0}'(\pi_\cS(\sg(\om)))|\,d\mu_b(\om)>0.
$$
By taking $\d\in(0,b/4)$ small enough, we will have
$$
\frac{s-\frac{b}{2}}{s-b}\ge \frac{2\chi_b}{log(1/\ka)} 
\, \, \and \, \, 
\P(s)\ge -2\chi_b(s-b)
$$
for all $s\in (b,b+\d)$. Hence
$$
\lt(s-\frac{b}{2}\rt)\log\ka\le -2\chi_b(s-b)\le \P(s).
$$
Equivalently $\ka^{\lt(s-\frac{b}{2}\rt)}\le e^{\P(s)}$. Thus
$$
\ka^{\lt(s-\frac{b}{2}\rt)|\om|}\le e^{\P(s)|\om|}.
$$
As $\|\phi_\om'\|_\infty\le \ka^{\om|}$ and $s\ge b$, we therefore get
$$
\begin{aligned}
\mu_s([\om])
&\le Qe^{-\P(s)|\om|}\|\phi_\om'\|_\infty^s
\le Q\|\phi_\om'\|_\infty^{\frac{b}{2}}
\le QC_1^{\frac{b}{2}}r^{\frac{b}{2}} \\
&\le Q^2C_1^{\frac{b}{2}}Q^{\frac{b}{2}}\mu_b^{\frac12}\circ\pi_\cS^{-1}\(B(\xi,r)\)\\
&=   Q^{3/2}C_1^{\frac{b}{2}}\mu_b^{\frac12}\circ\pi_\cS^{-1}\(B(\xi,r)\) \\
&\le Q^2C_1^b\mu_b^{\frac12}\circ\pi_\cS^{-1}\(B(\xi,r)\).
\end{aligned}
$$
Combining this along with \eqref{1cl4} we get that 
$$
\mu_s([\om])\le Q^2C_1^b\mu_b^{\frac12}\circ\pi_\cS^{-1}\(B(\xi,r)\).
$$
for all $s\in (b-\d,b+\d)$ and all $\om\in Z(r)$. Thus, looking also up at (4) and (3), this yields
$$
\mu_s\circ\pi_\cS^{-1}\(B(\xi,r)\)\le C_2Q^2C_1^b\mu_b^{\frac12}\circ\pi_\cS^{-1}\(B(\xi,r)\)
$$ 
for all $s\in (b-\d,b+\d)$ and all radii $r\in\(0,\frac12\min\{\diam(X_v):v\in V\}\)$. The proof of Proposition~\ref{p1cl2} is complete.
\epf

\fr As an immediate consequence of Theorem~\ref{t2had30}, Theorem~\ref{t2wbt11}, and Proposition~\ref{p1cl2}, we get the following considerably stronger/fuller result.

\sp
\bthm\label{t1fp83_Finite_B} 
Let $\cS=\{\phi_e\}_{e\in E}$ be a primitive Conformal Graph Directed Markov System with a finite alphabet $E$ acting in the space $\R^d$, $d\ge 1$. Assume that either $d=1$ or that the system $\cS$ is geometrically irreducible. Let $z\in J_\cS$ be arbitrary. If either $z$ is 

\begin{itemize}
\item[(a)] not pseudo-periodic
or else 
\item [(b)] uniquely periodic and belongs to $\Int X$ (and $z=\pi(\xi^\infty)$ for a (unique) irreducible word $\xi\in E_A^*$).
\end{itemize}
Then 
\beq\label{1had31}
\lim_{r\to 0}\frac{\HD(J_\cS)-\HD(K_z(r))}{\mu_b\(\pi^{-1}(B(z,r))\)}=
\begin{cases}
1/\chi_{\mu_b} \  &\text{ if } \ (a) \  \text{ holds } \\
\(1-|\phi_\xi'(z)|\)/\chi_{\mu_b}
\  &\text{ if } \ (b) \  \text{ holds }.
\end{cases}
\eeq
\ethm

\section{Escape Rates for Conformal Parabolic GDMSs} 

In this section, following  
\cite{MU_Parabolic_1}
and \cite{GDMS}, we first shall provide the appropriate setting and basic properties of conformal parabolic iterated function systems, and more generally of parabolic graph directed Markov systems. We then prove for them the appropriate theorems on escaping rates.

\sp\fr As in Section~\ref{Attracting_GDMS_Prel} there are given a directed multigraph $(V,E,i,t)$ ($E$ countable, $V$ finite), an incidence matrix $A:E\times E\to \{0,1\}$, and two functions $i,t:E\to V$ such that $A_{ab} = 1$ implies $t(b) = i(a)$. Also, we have nonempty compact metric spaces $\{X_v\}_{v\in V}$. Suppose further that we have  a collection of
conformal maps $\phi_e:X_{t(e)}\to X_{i(e)}$, $e\in E$, satisfying the following conditions:

\sp\begin{itemize}
\item[(1)](Open Set Condition)
 $\phi_i(\Int(X))\cap \phi_j(\Int(X))=\es$ for all $i\ne j$.

\sp\item[(2)] $|\phi_i'(x)|<1$ everywhere except for finitely many
pairs $(i,x_i)$, $i\in E$, for which $x_i$ is the unique fixed point
of $\phi_i$ and $|\phi_i'(x_i)|
=1$. Such pairs and indices $i$ will be called parabolic and the set of
parabolic indices will be denoted by $\Om$. All other indices will be 
called hyperbolic. We assume that $A_{ii}=1$ for all $i\in\Om$.

\sp\item[(3)]  $\forall n\ge 1 \  \forall \om = (\om_1,...,\om_n)\in E^n$
if $\om_n$ is a hyperbolic
index or $\om_{n-1}\ne \om_n$, then $\phi_{\om}$ extends conformally to
an open connected set $W_{t(\om_n)}\sbt\R^d$ and maps $W_{t(\om_n)}$ into $W_{i(\om_n)}$.

\sp\item[(4)] If $i$ is a parabolic index, then $\bi_{n\ge
0}\phi_{i^n}(X)
=\{x_i\}$ and the diameters of the sets $\phi_{i^n}(X)$ converge
to 0.

\sp\item[(5)] (Bounded Distortion Property) $\exists K\ge 1 \  \forall
n\ge 1
 \  \forall \om = (\om_1,...,\om_n)\in I^n  \  \forall x,y\in V$ if
$\om_n$ 
is a hyperbolic
index or $\om_{n-1}\ne \om_n$, then
$$
{|\phi_\om'(y)| \over |\phi_\om'(x)| } \le K.
$$
\item[(6)] $\exists s<1 \  \forall n\ge 1  \  \forall \om\in E_A^n$ if 
$\om_n$ is a hyperbolic index or $\om_{n-1}\ne \om_n$, then
$||\phi_\om'||\le s$.

\sp\item[(7)] (Cone Condition)  There exist $\a,l>0$ such that for
 every $x\in\bd X \sbt\R^d$ there exists an open cone 
$\Con(x,\a,l)\sbt \Int(X)$ with vertex $x$, central
angle of Lebesgue measure $\a$, and altitude $l$.

\sp\item[(8)] There exists a constant $L\ge 1$ such that
$$
\bigl||\f_i'(y)|-|\f_i'(x)|\bigr|\le L||\f_i'|||y-x|
$$ 
for every $i\in I$ and every pair of points $x,y\in V$.
\end{itemize}

\sp\fr We call such a system of maps 
$$
\cS=\{\phi_i:i\in E\}
$$ 
a subparabolic
iterated function system. Let us note that conditions (1),(3),(5)-(7)
are modeled on similar  conditions which were used to examine hyperbolic
conformal systems. If $\Om\ne\es$, we call the system  
$\{\phi_i:i\in E\}$ parabolic. As
declared in (2) the elements of the set $E\sms \Om$ are called
hyperbolic.
We extend this name to all the words appearing in (5) and (6). It follows
from (3) that for every hyperbolic word $\om$,
$$
\phi_\om(W_{t(\om)})\sbt W_{t(\om)}.
$$
Note that our conditions ensure that $\f_i'(x) \neq 0$ 
for all $i\in E$ and all $x \in X_{t(i)}$. It was proved (though only for IFSs but the case of GDMSs can be treated completely similarly) in 
\cite{MU_Parabolic_1} (comp. \cite{GDMS}) that
\beq\label{1_2016_03_15}
\lim_{n\to\infty}\sup_{|\om|=n}\{\diam(\phi_\om(X_{t(\om)}))\}=0.
\eeq
As its immediate consequence, we record the following.

\bcor\lab{p1c2.3} 
The map $\pi:E_A^\infty\to X:=\du_{v\in V}X_v$, $\{\pi(\om)\}:=
\bi_{n\ge 0}\phi_{\om|_n}(X)$, is well defined, i.e. this intersection is always a singleton, and the map $\pi$ is uniformly continuous.
\ecor

\fr As for hyperbolic (attracting) systems the limit set $J = J_\cS$ of the system $\cS = \{\f_e\}_{e\in e}$ is defined to be
$$
J_\cS:=\pi(E_A^\infty)
$$
and it enjoys the following self-reproducing property:
$$
J = \bu_{e\in E} \f_e(J).
$$
We now, following still \cite{MU-JNT} and \cite{GDMS}, want to associate to the parabolic system $\cS$ a canonical hyperbolic system $\cS^*$. The set of edges is this.
$$
E_*:= \big\{i^nj: n\ge 1, \  i\in\Om, \ i\ne j\in E, \ A_{ij}=1\big\} \cup 
(E\sms \Om)\sbt E_A^*.
$$ 
We set
$$
V_*=t(E_*)\cup i(E_*)
$$
and keep the functions $t$ and $i$ on $E_*$ as the restrictions of $t$ and $i$ from $E_A^*$. The incidence matrix $A_*:E_*\times E_*\to\{0,1\}$ is defined in the natural (the only reasonable) way by declaring that $A^*_{ab}=1$ if and only if $ab\in E_A^*$. Finally 
$$
\cS^*:=\{\phi_e:X_{t(e)}\to X_{t(e)}:\, e\in E_*\}.
$$
It immediately follows from our assumptions (see \cite{MU-JNT} and \cite{GDMS} for details) that the following is true.

\bthm\lab{p1t5.2} 
The system $\cS^*$ is a hyperbolic conformal GDMS and the limit sets $J_\cS$ and $J_{\cS^*}$ differ only by a countable set.
\ethm

\fr We have the following quantitative result, whose complete proof can be found in \cite{SUZ_I}. 

\bprop\lab{p1c5.13} 
Let $\cS$ be a conformal parabolic GDMS. Then there exists a constant $C\in(0,+\infty)$ and for every $i\in\Om$ there exists some constant
$\beta_i\in(0,+\infty) $ such that for all $n\ge 1$ and for all $z\in X_i:=
\bu_{j\in I\sms\{i\}}\phi_j(X)$,
$$
C^{-1}n^{-{\beta_i+1\over \beta_i}}\le |\phi_{i^n}'(z)|
\le Cn^{-{\beta_i+1\over \beta_i}}.
$$
In fact we know more: if $d=2$ then all constants $\b_i$ are integers $\ge 1$ and if $d\ge 3$ then all constants $\b_i$ are equal to $1$. 
\eprop

\fr Let 
$$
\b=\b_\cS:=\min\{\b_i:\in\in\Om\}
$$
Passing to equilibrium/Gibbs states and their escape rates, we now describe the class of potentials we want to deal with. This class is somewhat narrow as we restrict ourselves to geometric potentials only. There is no obvious natural larger class of potentials for which our methods would work and trying to identified such classes would be of dubious value and unclear benefits. We thus only consider potentials of the form
$$
E_A^\infty\ni\om\mapsto\zeta_t(\om)
:=t\log\big|\phi_{\om_0}'(\pi_\cS(\sg(\om)))\big|\in\R, \  \  t\ge 0.
$$
We then define the potential $\zeta_t^*:E_{*A^*}^\infty\to\R$ as
$$
\zeta_t^*(i^nj\om)=\sum_{k=0}^n\zeta_t(\sg^k(i^nj\om)), \  \  \ i\in\Om, \  n\ge 0, \  j\ne i \  \and  \  i^nj\om\in  E_{*A^*}^\infty.
$$
We shall prove the following.

\bprop\label{p1ps1}
If $\cS$ is a conformal parabolic GDMS, then the potential $\zeta_t^*$ is H\"older continuous for each $t\ge 0$ it is summable if and only if 
$$
t>\frac{\b}{\b+1}
$$
\eprop

\bpf
H\"older continuity of potentials $\zeta_t^*$, $t\ge 0$, follows from the fact that the system $\cS^*$ is hyperbolic, particularly from its distortion property, while the summability statement immediately follows from Proposition~\ref{p1c5.13}.
\epf

\sp\fr So, for every $t>\frac{\b}{\b+1}$ we can define $\mu_t^*$ to be the unique equilibrium/Gibbs state for the potential $\zeta_t^*$ with respect to the shift map $\sg_*:
E_{*A^*}^\infty\to E_{*A^*}^\infty$. We will not use this information in the current paper but we would like to note that $\mu_t^*$ gives rise to a Borel $\sg$-finite, unique up to multiplicative constant, $\sg$-invariant measure $\mu_t$ on $E_A^\infty$, absolutely continuous, in fact equivalent, with respect to $\mu_t^*$; see \cite{GDMS} for details in the case of $t=b_\cS=b_{\cS^*}$, the Bowen's parameter of the systems $\cS$ and $\cS^*$ alike. The case of all other $t>\frac{\b}{\b+1}$ can be treated similarly. It follows from
\cite{GDMS} that the measure $\mu_t$ is finite if and only if either

\sp\begin{itemize}
\item[(a)] $t\in\lt(\frac{\b}{\b+1},b_\cS\rt)$ or

\sp\item[(b)] $t=b_\cS$ \and $b_\cS>\frac{2\b}{\b+1}$. 
\end{itemize}

\sp\fr Now having all of this, as an immediate consequence of theorems Theorem~\ref{t1fp83} 
and Theorem~\ref{t3_2016_05_27} we get the following two results.

\bthm\label{t1_2016_05_25} 
Let $\cS=\{\phi_e\}_{e\in E}$ be a parabolic Conformal Graph Directed Markov System. Fix  $t>\frac{\b}{\b+1}$ and assume that the measure $\mu_t^*\circ\pi^{-1}_{\cS^*}$ is {\rm (WBT)} at a point $z\in J_{\cS^*}$. If $z$ is either 

\begin{itemize}
\item[(a)] not pseudo-periodic with respect to the system $\cS^*$, 

or 

\item[(b)] uniquely periodic with respect to $\cS^*$, it belongs to $\Int X$ (and $z=\pi_{\cS^*}(\xi^\infty)$ for a (unique) irreducible word $\xi\in E_{*A^*}^*$),
\end{itemize}

then, with $\un R_{\cS^*,\mu_t^*}(B(z,\ep):=\un R_{\mu_t^*}\(\pi_{\cS^*}^{-1}(B(z,\ep))\)$ and $\ov R_{\cS^*,\mu_t^*}(B(z,\ep):=\ov R_{\mu_t^*}\(\pi_{\cS^*}^{-1}(B(z,\ep))\)$, we have 
\beq\label{1fp83_2016_05_31}
\begin{aligned}
\lim_{\ep\to 0}\frac{\un R_{\cS^*,\mu_t^*}(B(z,\ep))}{\mu_t^*\circ\pi_{\cS^*}^{-1}(B(z,\ep))}
&=\lim_{\ep\to 0}\frac{\ov R_{\cS^*,\mu_t^*}(B(z,\ep))}{\mu_t^*\circ\pi_{\cS^*}^{-1}(B(z,\ep))} \\
&=d_\phi(z)
:=\begin{cases}
1 \  &\text{{\rm if (a) holds}}   \\
1-\big|\phi_\xi'(z)\big|e^{-p\P_{\cS^*}(t)} &\text{{\rm  if (b) holds}},
\end{cases}
\end{aligned}
\eeq
where in {\rm (b)}, $\{\xi\}=\pi_{\cS^*}^{-1}(z)$ and $p\ge 1$ is the prime period of $\xi$ under the shift map $\sg_*:E_{*A^*}^\infty\to E_{*A^*}^\infty$. 
\ethm

\bthm\label{t1_2016_05_31}
Let $\cS=\{\phi_e\}_{e\in E}$ be a parabolic Conformal Graph Directed Markov System whose limit set $J_{\mathcal S}$ is geometrically irreducible. If $t>\frac{\b}{\b+1}$ then
\beq\label{1fp83B_2016_05_31}
\lim_{\ep\to 0}\frac{\un R_{\cS^*,\mu_t^*}(B(z,\ep))}{\mu_t^*\circ\pi_{\cS^*}^{-1}(B(z,\ep))}
=\lim_{\ep\to 0}\frac{\ov R_{\cS^*,\mu_t^*}(B(z,\ep))}{\mu_t^*\circ\pi_{\cS^*}^{-1}(B(z,\ep))}
=1 
\eeq
for $\mu_t\circ\pi^{-1}$--a.e. point of $J_\cS$.
\ethm

Sticking to notation of Section~\ref{ERCGDMSHD}, given $z\in E_{*A^*}^\infty$ and $r>0$ let 
$$
K_z^*(r):=\pi_{\cS^*}(\tilde K_z^*(r)),
$$
where 
$$
\tilde K_z^*(r)
:=\big\{\om\in E_{*A^*}^\infty:\forall_{n\ge 0}\, \sg_*^n(\om)\notin \pi_{\cS^*}^{-1
  }(B(\pi_{\cS^*}(z),r)\big\}
 =\bi_{n=0}^\infty\sg_*^{-n}\(\pi_{\cS^*}^{-1}(B(\pi_{\cS^*}(z),r))\).
$$
As immediate consequences respectively of Theorem~\ref{t2had30} and Corollary~\ref{t2_2016_05_27}, we get the following two results.

\bthm\label{t2_2016_05_31} 
Let $\cS=\{\phi_e\}_{e\in E}$ be a parabolic Conformal Graph Directed Markov System.  Assume that both $\cS^*$ is {\rm (WBT)} and parameter $b_\cS$ is powering at some point $z\in J_{\cS^*}$. If $z$ is either 

\begin{itemize}
\item[(a)] not pseudo-periodic with respect to the system $\cS^*$, 

or 

\item[(b)] uniquely periodic with respect to $\cS^*$, it belongs to $\Int X$ (and $z=\pi_{\cS^*}(\xi^\infty)$ for a (unique) irreducible word $\xi\in E_{*A^*}^*$),
\end{itemize}

\sp then, 
\beq\label{1had31D}
\lim_{r\to 0}\frac{\HD(J_\cS)-\HD(K_z^*(r))}
{\mu_b^*\(\pi_{\cS^*}^{-1}(B(z,r))\)}=
\begin{cases}
1/\chi_{\mu_b^*} \  &\text{ if } \ (a) \  \text{ holds } \\
\(1-|\phi_\xi'(z)|\)/\chi_{\mu_b^*}
\  &\text{ if } \ (b) \  \text{ holds }.
\end{cases}
\eeq
\ethm 

\bthm\label{t2had30BB}
Let $\cS=\{\phi_e\}_{e\in E}$ be a parabolic Conformal Graph Directed Markov System whose limit set $ J_{\mathcal S}$ is geometrically irreducible. Then 
\beq\label{1had31+3}
\lim_{r\to 0}\frac{\HD(J_\cS)-\HD(K_z^*(r))}
{\mu_b^*\(\pi_{\cS^*}^{-1}(B(z,r))\)}=
\frac1{\chi_{\mu_b^*} }
\eeq
for $\mu_{b_\cS^*}\circ\pi^{-1}$--a.e. point $z$ of $J_\cS$.
\ethm

\

\part{Applications: Escape Rates for Multimodal Interval Maps and One--Dimensional Complex Dynamics}

\sp Our goal in this part of the manuscript is to get the existence of escape rates in the sense of \eqref{1_2016_06_27} and \eqref{2_2016_06_27} for all topologically exact piecewise smooth multimodal maps of the interval $[0,1]$, all rational functions of the Riemann sphere $\oc$ with degree $\ge 2$, and a vast class of transcendental meromorphic functions from $\C$ to $\oc$. In order to do this we employ two primary tools. The first one is formed by the escape rates results for the class of all countable alphabet conformal graph directed Markov systems obtained in Sections~\ref{Escape Rates for Conformal GDMSs; Measures} and \ref{ERCGDMSHD}. The other one is the method based on the first return (induced) map developed in Section~\ref{FRM}, Section~\ref{FRMERI}, and Section~\ref{FRMERII} of this part. This method closely relates the escape rates of the original map and the induced one. It turns out that for the above mentioned class of systems one can find a set of positive measure which gives rise to the first returned map which is isomorphic to a countable alphabet conformal IFS or full shift map; the task highly non-trivial and technically involved in general.  In conclusion, the existence of escape rates in the sense of \eqref{1_2016_06_27} and \eqref{2_2016_06_27} follows.

\section{First Return Maps}\label{FRM}

Let $(X,\rho)$ be a metric space and let $F\sbt X$ be a Borel set. Let $T:X\to X$ be a Borel map. Define
$$
F_\infty:=F\cap \bi_{k=0}^\infty\bu_{n=k}^\infty T^{-k}(F),
$$
i. e. $F_\infty$ is the set of all those points in $F$ that return to $F$ infinitely often under the iteration of the map $T$. Then for every $x\in F_\infty$ the number
$$
\tau_F(x)
:=\min\{n\ge 1:T^n(x)\in F\}
 =\min\{n\ge 1:T^n(x)\in F_\infty\}
$$
is well-defined, i. e. it is finite. The number $\tau_F(x)$ is called the first return of $x$ to $F$ under the map $F$. Having the function $\tau_F:F_\infty\to\N_1$ defined one defines the first return map $T_F:F_\infty\to F_\infty$ by the formula
\beq\label{2rm1}
T_F(x):T^{\tau_F(x)}(x)\in F_\infty\sbt F.
\eeq
Let $B$ be a Borel subsets of $F$. As in the previous section let
$$
K(B)=K_T(B):=\bi_{n=0}^\infty T^{-n}(X\sms B) \  \  {\rm and } \  \
      K_F(B):=\bi_{n=0}^\infty T_F^{-n}(F_\infty\sms B).
$$
A straightforward observation is that $K_F(B)=F_\infty\cap K(B)$, so that we have the following.
\beq\label{1rm1}
K_F(B)=F_\infty\cap K(B)\sbt F\cap K(B).
\eeq
We shall prove the following.

\bthm\label{t1rm1}
If the map $T:X\to X$ is locally bi-Lipschitz and $B\sbt F$ are Borel subsets of $X$, then
$$
\HD(K(B))=\max\{\HD(K_F(B)), \HD(K(F))\}.
$$
\ethm

\bpf Since $K(F)\sbt K(B)$, we have that $\HD(K(F))\le \HD(K(B))$, and by \eqref{1rm1} we have $\HD(K_F(B))\le \HD(K(B))$. We are thus let to show only that
$$
\HD(K(B))\le \max\{\HD(K_F(B)), \HD(K(F))\}.
$$
Indeed, fix $x\in K(B)$. Let 
$$
N_x:=\min\{n\ge 0:T^n(x)\in F\}.
$$
Consider two cases:

\sp\fr Case~$1^0$: The set $N_x$ is finite. Denote then by $n_x$ its largest element. Then $T^{n_x+1}(x)\in K(F)$. Hence
$$
x\in \bu_{n=0}^\infty T^{-n}(K(F));
$$
note that this relation holds even if $N_x=\es$. 

\sp\fr Case~$1^0$: The set $N_x$ is infinite. Then there exists $m_x\ge 0$ such that $T^{m_x}(x)\in F_\infty$. Hence,
$$
x\in \bu_{n=0}^\infty T^{-n}(F_\infty).
$$
In conclusion
$$
K(B)\sbt \bu_{n=0}^\infty T^{-n}(K(F))\, \cup \, \bu_{n=0}^\infty T^{-n}(F_\infty).
$$
But then, using \eqref{1rm1}, we get 
$$
\begin{aligned}
K(B)
&\sbt \lt(\bu_{n=0}^\infty T^{-n}(K(F))\rt)\, \cup \, \lt(K(B)\cap\bu_{n=0}^\infty  
         T^{-n}(F_\infty)\rt) \\
&\sbt \bu_{n=0}^\infty T^{-n}(K(B)\cap K(F))\, \bu \, \bu_{n=0}^\infty T^{-n}(K(B)\cap 
         F_\infty) \\
&=    \bu_{n=0}^\infty T^{-n}(K(F))\, \bu \, \bu_{n=0}^\infty T^{-n}(K_F(B))
\end{aligned}
$$
Therefore, using $\sg$-stability of Hausdorff dimension and local bi-Lipschitzness of $T$, we get
$$
\begin{aligned}
\HD(K(B))
&\le \sup_{n\ge 0}\big\{\!\!\max\{\HD(T^{-n}(K(F))),\HD(T^{-n}(K_F(B)))\}\big\} \\
&\le \sup_{n\ge 0}\big\{\!\!\max\{\HD(T^{-n}(K(F))),\HD(T^{-n}(K_F(B)))\}\big\} \\
&=\max\{\HD(K(F)),\HD(K_F(B))\}
\end{aligned}
$$
The proof of Theorem~\ref{t1rm1} is complete.
\epf

\fr As an immediate consequence of this theorem we get the following.

\bcor\label{c1rm3}
If the map $T:X\to X$ is locally bi-Lipschitz, $B\sbt F$ are Borel subsets of $X$, and $\HD(K(F))<\HD(K(B))$, then
$$
\HD(K(B))=\HD(K_F(B)).
$$
\ecor

\section{First Return Maps and Escaping Rates, I}\label{FRMERI}

As in Section~\ref{FRM} $(X,\rho)$ is a metric space, $F\sbt X$ be a Borel set and $T:X\to X$ is a Borel map. The symbols $F_\infty$, $\tau_F$, and $T_F$ have the same meaning as in Section~\ref{FRM}. Now in addition we also assume that the system $T:X\to X$ preserves a Borel probability measure $\mu$ on $X$. It is well-known that then the first return map $T_F:F_\infty\to F_\infty$ preserves the conditional measure $\mu_F$ on $F$ (or $F_\infty$ alike). This measure is given by the formula
$$
\mu_F(A)=\frac{\mu(A)}{\mu(F)}
$$
for every Borel set $A\sbt F$. The famous Kac's Formula tells us that
$$
\int_F\tau_F\, d\mu_F=\frac1{\mu(F)}.
$$
For every $n\ge 1$ denote 
$$
\tau_F^{(n)}:=\sum_{j=0}^{n-1}\tau_F\circ T_F^J,
$$
so that
$$
T_F^n(x)=T^{\tau_F^{(n)}(x)}(x).
$$
If $B$, as in Section~\ref{FRM}, is a Borel subset of $F$, then for every $n\ge 1$ we denote
$$
B_n^c:=\bi_{j=0}^{n-1}T^{-j}(X\sms B), \  \
B_n^c(F):=F_\infty\cap B_n^c, \  \  {\rm and } \  \
B_n^c(T_F):=\bi_{j=0}^{n-1}T_F^{-j}(X\sms B).
$$
For every $\eta\in (0,1)$ and every integer $k\ge 1$ denote
$$
F_{k-1}(\eta):=\lt\{x\in F_\infty:\lt(\frac1{\mu(F)}-\eta\rt)k\le \tau_F^{(k)}(x)
    \le \lt(\frac1{\mu(F)}+\eta\rt)k\rt\}.
$$
Let us record the following straightforward observation.
\beq\label{1rmr3}
F_{n-1}(\eta)\cap B_{\lt(\frac1{\mu(F)}+\eta\rt)n}^c
\sbt F_{n-1}(\eta)\cap B_n^c(T_F)
\sbt F_{n-1}(\eta)\cap B_{\lt(\frac1{\mu(F)}-\eta\rt)n}^c.
\eeq
This simple relation will be however our starting point for relating the escape rates of $B$ with respect to the map $T$ and the first return map $T_F:F_\infty\to F_\infty$.

\bdfn\label{d1rmr3}
We say that the pair $(T,F)$ satisfies the large deviation property $($LDP$)$ if for all $\eta\in (0,1)$ there exist two constants $\hat\eta>0$ and $C_\eta\in[1,+\infty)$ such that
$$
\mu(F_n^c(\eta))\le C_\eta e^{-\hat\eta n}
$$
for all integers $n\ge 1$.
\edfn

\fr In what follows we will need one (standard) concept more. We define for every $x\in X$ the number
$$
E_F(x):=\min\big\{n\in\{0,1,2,\ld,\infty\}:T^n(x)\in F\big\}.
$$
This number is called the first entrance time to $F$ under the map $T$ and it is closely related to $\tau_F$, 
$$
\tau_F(x)=E_F(T(x))+1
$$ 
if $x\in F$, but of course it is different.

\bdfn\label{d1_2016_07_14}
We say that the pair $(T,F)$ has exponential tail decay (ETD) if 
$$
\mu\(E_F^{-1}([n,+\infty]\)\le Ce^{-\a n}
$$
for all integers $n\ge 0$ and some constants $C, \a\in (0,+\infty)$.
\edfn

\fr Let $B$ be a Borel subset of $F$. Following the previous sections denote respectively by $R_{T,\mu}(B)$ and $R_{T_F,\mu}(B)$ the respective escape rates of $B$ by the maps $T:X\to X$ and $T_F:F_\infty\to F_\infty$, i. e.
$$
\un R_{T,\mu}(B):=-\varlimsup_{n\to\infty}\frac1n\log\mu(B_n^c) 
\  \  \  \le \  \  \  
\ov R_{T,\mu}(B):=-\varliminf_{n\to\infty}\frac1n\log\mu(B_n^c),
$$
and 
$$
\un R_{T_F,\mu}(B)
:=-\varlimsup_{n\to\infty}\frac1n\log\mu_F(B_n^c(T_F))
 =-\varlimsup_{n\to\infty}\frac1n\log\mu(B_n^c(T_F)),
$$
$$
\ov R_{T_F,\mu}(B)
:=-\varliminf_{n\to\infty}\frac1n\log\mu_F(B_n^c(T_F))
 =-\varliminf_{n\to\infty}\frac1n\log\mu(B_n^c(T_F)),
$$
with obvious inequality
$$
\un R_{T_F,\mu}(B) \le \ov R_{T_F,\mu}(B).
$$
We shall prove the following.

\bthm\label{t2rmr3}
Assume that a pair $(T,F)$ satisfies the large deviation property {\rm (LDP)} and has exponential tail decay {\rm (ETD)}. Let $(B_k)_{k=0}^\infty$ be a sequence of Borel subsets of $F$ such that
\begin{itemize}

\sp
\item[(a)] $\lim_{k\to\infty}\mu(B_k)=0$,

\sp\item[(b)] The limits 
$$
\lim_{k\to\infty}\frac{\un R_{T_F,\mu}(B_k)}{\mu_F(B_k)}
\  \  \  \and \  \  \
\lim_{k\to\infty}\frac{\ov R_{T_F,\mu}(B_k)}{\mu_F(B_k)}
$$
exist, are equal, and belong to $(0,+\infty)$; denote their common value by $R_F(\mu)$.
\end{itemize}
Then the limits
$$
\lim_{k\to\infty}\frac{\un R_{T,\mu}(B_k)}{\mu(B_k)}
\  \  \  \and \  \  \
\lim_{k\to\infty}\frac{\ov R_{T,\mu}(B_k)}{\mu(B_k)}
$$
also exist, and, denoting their common value by $R_T(\mu)$, we have that
$$
R_T(\mu)=R_F(\mu).
$$
\ethm

\bpf
Fix $\eta,\e\in (0,1)$. Fix two integers $k, n\ge 1$. Denote the sets 
$(B_k)_{\lt(\frac1{\mu(F)}+\eta\rt)n}^c$ and $(B_k)_{\lt(\frac1{\mu(F)}-\eta\rt)n}^c$ respectively by $B_{k,n}^-(\eta)$ and $B_{k,n}^+(\eta)$. Because of \eqref{1rmr3}, we have
\beq\label{1rmr4}
\mu\(F_{n-1}(\eta)\cap (B_k)_n^c(T_F)\)\le \mu\(B_{k,n}^+(\eta)\).
\eeq
Fix $M_1\ge 1$ so large that
\beq\label{2rmr4}
(1-\e)R_F(\mu)
\le \frac{\un R_{T_F,\mu}(B_k)}{\mu_F(B_k)}
\le \frac{\ov R_{T_F,\mu}(B_k)}{\mu_F(B_k)}
\le (1+\e)R_F(\mu)
\eeq
and 
\beq\label{2rmr4.1}
4R_F(\mu)\mu_F(B_k)\le \min\{\e,\hat\eta/2\}
\eeq
for all $k\ge M_1$. Fix such a $k$. Fix then $N_k\ge 1$ so large that
$$
\exp\(-(1+\e)\ov R_{T_F,\mu}(B_k)n\)
\le \mu\((B_k)_n^c(T_F)\)
\le \exp\(-(1-\e)\un R_{T_F,\mu}(B_k)n\)
$$
for all $n\ge N_k^{(1)}$. Along with \eqref{2rmr4} this gives
\beq\label{1rmr4.1}
\exp\(-(1+e)^2R_F(\mu)\mu_F(B_k)n\)
\le \mu\((B_k)_n^c(T_F)\)
\le \exp\(-(1-\e)^2R_F(\mu)\mu_F(B_k)n\).
\eeq
Therefore, using also Definition~\ref{d1rmr3}, we get for all $k\ge M_1$ and all $n\ge N_k^{(1)}$ that
$$
\begin{aligned}
\frac{\mu\(F_{n-1}(\eta)\cap (B_k)_n^c(T_F)\)}{\mu\((B_k)_n^c(T_F)\)}
&=\frac{\mu\((B_k)_n^c(T_F)\)-\mu\(F_{n-1}^c(\eta)\cap (B_k)_n^c(T_F)\)}{\mu\((B_k)_n^c(T_F)\)} \\
&\ge \frac{\mu\((B_k)_n^c(T_F)\)-\mu\(F_{n-1}^c(\eta)\)}{\mu\((B_k)_n^c(T_F)\)} 
 =1-\frac{\mu\(F_{n-1}^c(\eta)\)}{\mu\((B_k)_n^c(T_F)\)} \\
&\ge 1-\frac{C_\eta e^{-\hat\eta(n-1)}}{\exp\(-4R_F(\mu)\mu_F(B_k)n\)} \\
&=1-C_\eta e^{\hat\eta}\exp\((4R_F(\mu)\mu_F(B_k)-\hat\eta)n\) \\
&\ge 1-C_\eta e^{\hat\eta}\exp\(-\frac12\hat\eta n\)
\ge 1/2,
\end{aligned}
$$
where the last inequality holds for all  $n\ge N_k^{(1)}$ large enough, say  $n\ge N_k^{(2)}\ge N_k^{(1)}$. Along with  \eqref{1rmr4} this gives
$$
\mu\(B_{k,n}^+(\eta)\)\ge \frac12\mu\((B_k)_n^c(T_F)\).
$$
Hence
$$
-\varliminf_{n\to\infty}\frac1n\log\mu\(B_{k,n}^+(\eta)\)
\le \ov R_{T_F,\mu}(B_k).
$$
Since 
$$
(B_k)_n^c\spt B_{k,\lt[\frac{n}{\frac1{\mu(F)}-\eta}\rt]+1}^+\,,
$$
we get
$$
\varliminf_{n\to\infty}\frac1n\log\mu\(B_{k,n}^+(\eta)\)
\le \lt(\frac1{\mu(F)}-\eta\rt)\varliminf_{n\to\infty}\frac1n\log\mu\((B_k)_n^c\).
$$
Therefore,
$$
\ov R_{T_F,\mu}(B_k)\ge \lt(\frac1{\mu(F)}-\eta\rt)\ov R_{T,\mu}(B_k).
$$
Dividing both sides of this inequality by $\mu(B_k)$ and passing to the limit with $k\to\infty$, this entails
$$
R_F(\mu)\ge (1-\eta\mu(F))\varlimsup_{k\to\infty}\frac{\ov R_{T,\mu}(B_k)}{\mu(B_k)}.
$$
By letting in turn $\eta\downto 0$, this yields
\beq\label{1rmr5}
R_F(\mu)\ge \varlimsup_{k\to\infty}\frac{\ov R_{T,\mu}(B_k)}{\mu(B_k)}.
\eeq
Passing to the proof of the opposite inequality, denote $\lt(\frac1{\mu(F)}+\eta\rt)n$ by $n^+$ and $\lt(\frac1{\mu(F)}+\eta\rt)^{-1}n$ by $n^-$. We have
\beq\label{4rmr5.3}
\begin{aligned}
B_{k,n}^-(\eta)
&=\bu_{j=0}^{n^+}\(B_{k,n}^-(\eta)\cap E_F^{-1}(j)\)\cup E_F^{-1}((n^+,+\infty]) \\
&=E_F^{-1}((n^+,+\infty])\cup\bu_{j=0}^{n^+}E_F^{-1}(j)
   \cap T^{-j}\((B_k)_{n^+-j}^c\).
\end{aligned}
\eeq
Now,
\beq\label{2rmr5.3}
\begin{aligned}
E_F^{-1}(j)&\cap T^{-j}\((B_k)_{n^+-j}^c\)
 =E_F^{-1}(j)\cap T^{-j}(F)\cap T^{-j}\((B_k)_{n^+-j}^c\) \\
&=E_F^{-1}(j)\cap T^{-j}\(F\cap (B_k)_{n^+-j}^c\) \\
&=E_F^{-1}(j)\cap \Big(T^{-j}\(F_{(n^+-j)^--1}(\eta)\cap (B_k)_{n^+-j}^c\)\cup 
T^{-j}\(F_{(n^+-j)^--1}^c(\eta)\cap (B_k)_{n^+-j}^c\)\Big). \\
\end{aligned}
\eeq
By \eqref{1rmr3} we have
\beq\label{1rmr5.3}
\begin{aligned}
F_{(n^+-j)^--1}^c(\eta)\cap (B_k)_{n^+-j}^c
&=F_{(n^+-j)^--1}^c(\eta)\cap (B_k)_{((n^+-j)^-)^+}^c \\
&\sbt F_{(n^+-j)^--1}^c(\eta)\cap (B_k)_{(n^+-j)^-}^c(T_F) \\
&\sbt (B_k)_{(n^+-j)^-}^c(T_F).
\end{aligned}
\eeq
Now take $p, q>1$ such that $\frac1p+\frac1q=1$. By applying H\"older inequality, $T$-invariantness of the measure $\mu$, \eqref{1rmr4.1}, and making use of Definition~\ref{d1_2016_07_14}, we get for all $0\le j\le N_k^{(1)}$, that 
$$
\begin{aligned}
\mu\Big(E_F^{-1}(j)&\cap T^{-j}\(F_{(n^+-j)^--1}(\eta)
        \cap (B_k)_{n^+-j}^c\)\Big)
\le \mu\Big(E_F^{-1}(j)\cap T^{-j}\((B_k)_{(n^+-j)^-}^c(T_F)\)\Big)= \\
&=\int_X\1_{E_F^{-1}(j)}\1_{T^{-j}\((B_k)_{(n^+-j)^-}^c(T_F)\)}\,d\mu \\
&\le \lt(\int_X\1_{E_F^{-1}(j)}\,d\mu\rt)^{1/p} 
     \lt(\int_X\1_{T^{-j}\((B_k)_{(n^+-j)^-}^c(T_F)\)}\,d\mu\rt)^{1/q} \\
&=\mu^{1/p}(E_F^{-1}(j))\mu^{1/q}\(T^{-j}\((B_k)_{(n^+-j)^-}^c(T_F)\)\) \\
&=\mu^{1/p}(E_F^{-1}(j))\mu^{1/q}\((B_k)_{(n^+-j)^-}^c(T_F)\) \\
&\le C^{1/p}e^{-\frac{\a}pj}\exp\lt(-\frac{(1-\e)^2}q
       R_F(\mu)\mu_F(B_k)(n^+-j)^-\rt) \\
&=C^{1/p}e^{-\frac{\a}pj}\exp\lt(-\frac{(1-\e)^2}q
       R_F(\mu)\mu_F(B_k)\lt(\frac1{\mu(F)}+\eta\rt)^{-1}
       \lt(\lt(\frac1{\mu(F)}+\eta\rt)n-j\rt)\rt) \\
&=C^{1/p}\exp\lt(-\frac{(1-\e)^2}q R_F(\mu)\mu_F(B_k)n\rt) \cdot  \\
& \  \  \  \  \  \   \  \  \   \   \  \  \  \  \  \  \   \  \  \cdot \exp\lt(-\lt(\frac{\a}p-\frac{(1-\e)^2}q\lt(\frac1{\mu(F)}+\eta\rt)^{-1}
      R_F(\mu)\mu_F(B_k)\rt)j\rt)
\end{aligned}
$$
Together  with the left-hand side of \eqref{1rmr4.1} this gives that
$$
\begin{aligned}
&\frac{\mu\Big(E_F^{-1}(j)\cap T^{-j}\(F_{(n^+-j)^--1}(\eta)
        \cap (B_k)_{n^+-j}^c\)\Big)}
{\mu\((B_k)_n^c(T_F)\)} \le \\
& \  \  \  \  \  \   \  \  
\le C^{1/p}\exp\lt(R_F(\mu)\mu_F(B_k)\((1+e)^2-\frac1q (1-\e)^2\)n\rt)\cdot \\
& \  \  \  \  \  \   \  \  \   \   \  \  \  \  \  \  \  \  \   \  \  \  \  \  \cdot \exp\lt(-\lt(\frac{\a}p-\frac{(1-\e)^2}q\lt(\frac1{\mu(F)}+\eta\rt)^{-1}
      R_F(\mu)\mu_F(B_k)\rt)j\rt).
\end{aligned}
$$
Taking now $q>1$ sufficiently close to $1$ and looking at \eqref{2rmr4.1} we will have for every $\e>0$ small enough that
$$
(1+e)^2-\frac1q (1-\e)^2\le 1-\frac1q+6\e\le 7\e 
\  \  \  \and \  \  \
\frac{\a}p-\frac{(1-\e)^2}q\lt(\frac1{\mu(F)}+\eta\rt)^{-1}\!\!\!R_F(\mu)\mu_F(B_k)
>\frac\a{2p}.
$$
Therefore, for all $k\ge M_1$ and for all $0\le j\le n^+-N_k^{(1)}$, we have that
\beq\label{2rmr5.4}
\frac{\mu\Big(E_F^{-1}(j)\cap T^{-j}\(F_{(n^+-j)^--1}(\eta)\cap (B_k)_{n^+-j}^c\)\Big)}
   {\mu\((B_k)_n^c(T_F)\)} 
\le  C^{1/p}e^{7\e n}e^{-\frac{\a}{2p}j}.
\eeq
For $n^+-N_k^{(1)}<j\le n^+$, using \eqref{2rmr4.1}, the left-hand side of \eqref{1rmr4.1}, and looking up at Definition~\ref{d1_2016_07_14}, we 
have the easier estimate:
\beq\label{3rmr5.4}
\begin{aligned}
&\frac{\mu\Big(E_F^{-1}(j)\cap T^{-j}\(F_{(n^+-j)^--1}(\eta)
  \cap (B_k)_{n^+-j}^c\)\Big)}{\mu\((B_k)_n^c(T_F)\)} 
\le \frac{\mu\(E_F^{-1}(j)\)}{\mu\((B_k)_n^c(T_F)\)}\le   \\
& \  \  \  \  \  \   \   \   \  \le C^{-\a j}\exp\((1+e)^2R_F(\mu)\mu_F(B_k)n\) \\
& \  \  \  \  \  \   \   \   \  \le C\exp\(-\a(n^+-N_k^{(1)})\)\exp\((1+e)^2R_F(\mu)\mu_F(B_k)n\) \\
& \  \  \  \  \  \   \   \   \  =Ce^{\a N_k^{(1)}}\exp\lt(\lt((1+e)^2R_F(\mu)\mu_F(B_k)
   -\a\lt(\frac1{\mu(F)}+\eta\rt)\rt)n\rt)  \\
& \  \  \  \  \  \   \   \   \  \le Ce^{\a N_k^{(1)}}.
\end{aligned}
\eeq
Now we can estimate the second part of \eqref{2rmr5.3}. We note that
$$
E_F^{-1}(j)\cap T^{-j}\(F_{(n^+-j)^--1}(\eta)\cap (B_k)_{n^+-j}^c\)
\sbt E_F^{-1}(j)\cap T^{-j}\(F_{(n^+-j)^--1}(\eta)\),
$$
and use again H\"older inequality, $T$-invariance of measure $\mu$, and Definitions~\ref{d1_2016_07_14} and \ref{1rmr3}, to estimate:
$$
\begin{aligned}
\mu\Big(E_F^{-1}(j)\cap T^{-j}&\(F_{(n^+-j)^--1}(\eta)
  \cap (B_k)_{n^+-j}^c\)\Big) \le  \\
&\le \mu\Big(E_F^{-1}(j)\cap T^{-j}\(F_{(n^+-j)^--1}(\eta)\Big) 
  =\int_X\1_{E_F^{-1}(j)}\1_{T^{-j}\(F_{(n^+-j)^--1}(\eta)}\,d\mu \\
&\le \mu^{1/p}(E_F^{-1}(j))\cdot \nu^{1/q}\(T^{-j}\(F_{(n^+-j)^--1}(\eta)\)\) \\
&=   \mu^{1/p}(E_F^{-1}(j))\cdot \nu^{1/q}\(F_{(n^+-j)^--1}(\eta)\) \\
&\le  C^{1/p}e^{-\frac{\a}{p}j}C_\eta^{1/q}e^{-\frac{\hat\eta}{q}\((n^+-j)^--1\)} \\
&=  C^{1/p}C_\eta^{1/q}e^{\frac{\hat\eta}{q}e^{-\frac{\a}{p}j}}
       \exp\lt(-\lt(\frac{\a}{p}-\frac{\hat\eta}{q}\lt(\frac1{\mu(F)}+\eta\rt)^{-1}\rt)j\rt).
\end{aligned}
$$
Combining this  with the left-hand side of \eqref{1rmr4.1} this gives that
\beq\label{1rmr5.5}
\begin{aligned}
&\frac{\mu\Big(E_F^{-1}(j)\cap T^{-j}\(F_{(n^+-j)^--1}(\eta)
  \cap (B_k)_{n^+-j}^c\)\Big)}{\mu\((B_k)_n^c(T_F)\)} \\
&\  \  \  \  \  \  \  \  \le  C^{1/p}C_\eta^{1/q}e^{\frac{\hat\eta}q}
    \exp\lt(-\lt(\frac{\hat\eta}q-(1+e)^2R_F(\mu)\mu_F(B_k)\rt)n\rt)\cdot \\
& \  \  \  \  \  \  \  \  \  \  \  \  \  \  \  \  \   \ \cdot \exp\lt(-\lt(\frac{\a}{p}-\frac{\hat\eta}{q}\lt(\frac1{\mu(F)}+\eta\rt)^{-1}\rt)j\rt).
\end{aligned}
\eeq
Now, first take $q>1$ so large that 
$$
\frac{\a}{p}-\frac{\hat\eta}{q}\lt(\frac1{\mu(F)}+\eta\rt)^{-1}>\frac{\a}2.
$$
Then take $k\ge M_1$, say $k\ge M_{1,q}\ge M_1$ so large that
$$
\frac{\hat\eta}q-(1+e)^2R_F(\mu)\mu_F(B_k)\ge \frac{\hat\eta}{2q}.
$$
Inserting these two inequalities into \eqref{1rmr5.5}, yields
\beq\label{1rmr5.6}
\frac{\mu\Big(E_F^{-1}(j)\cap T^{-j}\(F_{(n^+-j)^--1}(\eta)
  \cap (B_k)_{n^+-j}^c\)\Big)}{\mu\((B_k)_n^c(T_F)\)}
\le  C^{1/p}C_\eta^{1/q}e^{\frac{\hat\eta}{q}}e^{-\frac{\hat\eta}{2q}n}e^{-\frac{\a}2j}.
\eeq
Finally, by Definition~\ref{d1_2016_07_14}, the left-hand side of \eqref{1rmr4.1}, and \eqref{2rmr4.1}, 
$$
\begin{aligned}
\frac{m\(E_F^{-1}((n^+,+\infty])\)}{\mu\((B_k)_n^c(T_F)\)}
&\le Ce^{-\a n^+}\exp\((1+e)^2R_F(\mu)\mu_F(B_k)n\) \\
&=C\exp\lt(-\lt(\a\lt(\frac1{\mu(F)}+\eta\rt)-(1+e)^2R_F(\mu)\mu_F(B_k)n\rt)\rt) \\
&\le C
\end{aligned}
$$
for every $\e>0$ small enough and $n\ge M_1$. Combining this inequality, \eqref{2rmr5.4}, \eqref{3rmr5.4}, \eqref{1rmr5.3}, \eqref{2rmr5.3}, and \eqref{4rmr5.3}, we get for every $k\ge 1$ large enough, every $e>0$, and every $n\ge N_k^{(1)}$, that 
$$
\frac{\mu\(B_{k,n}^-(\eta)\)}{\mu\((B_k)_n^c(T_F)\)}
\le C\(1+e^{\a N_k^{(1)}}\)+C'\sum_{j=0}^{n^+}e^{-\frac{\a}2j}+C''e^{7\e n}\sum_{j=0}^{n^+-N_k^{(1)}}e^{-\frac{\a}{2p}j}
\le C'''e^{7\e n}
$$
with some constants $C, C', C'', C'''\in (0,+\infty)$ and $p>1$ independent of $k\ge 1$ large enough, $n\ge N_k^{(1)}$, and $\e\in (0,1)$ small enough. Hence, 
$$
-\varlimsup_{n\to\infty}\frac1n\log\mu\(B_{k,n}^-(\eta)\)
\ge \un R_{T_F,\mu}(B_k)-7\e
$$
for every $\e>0$ and every $k\ge 1$ large enough. Therefore,
$$
-\varlimsup_{n\to\infty}\frac1n\log\mu\(B_{k,n}^-(\eta)\)
\ge \un R_{T_F,\mu}(B_k)
$$
for every $k\ge 1$ large enough. Since 
$$
(B_k)_n^c\sbt B_{k,\lt[\frac{n}{\frac1{\mu(F)}+\eta}\rt]}^-\,,
$$
we get
$$
\varlimsup_{n\to\infty}\frac1n\log\mu\((B_k)_n^c\)
\le  \frac1{\frac1{\mu(F)}+\eta}\varlimsup_{n\to\infty}\frac1n\log\mu\(B_{k,n}^-(\eta)\).
$$
Therefore,
$$
\un R_{T_F,\mu}(B_k)\le \lt(\frac1{\mu(F)}+\eta\rt)\un R_{T,\mu}(B_k)
$$
for every $k\ge 1$ large enough.
Dividing both sides of this inequality by $\mu(B_k)$ and passing to the limit with $k\to\infty$, this gives
$$
R_F(\mu)\le (1+\eta\mu(F))\varliminf_{k\to\infty}\frac{\un R_{T,\mu}(B_k)}{\mu(B_k)}.
$$
By letting in turn $\eta\downto 0$, this yields
$$
R_F(\mu)\le \varliminf_{k\to\infty}\frac{\un R_{T,\mu}(B_k)}{\mu(B_k)}.
$$
Together  with \eqref{1rmr5} this finishes the proof of Theorem~\ref{t2rmr3}.
\epf

\section{First Return Maps and Escaping Rates, II}\label{FRMERII}

In this section we keep the settings of Section~\ref{FRMERI}; more specifically   
that described between  its beginning until formula \eqref{1rmr3}. In particular, we do not assume appriori that (LDP) holds. In fact our goal in this section is provide natural sufficient conditions for (LDP) to hold. Let $\vp:X\to\R$ be a Borel measurable function. We define the function $\vp_F:F\to\R$ by the formula
\beq\label{2ld3}
\vp_F(x):=\sum_{j=0}^{\tau_F(x)-1}\vp\circ T^j(x).
\eeq
It is well-known 
\beq\label{1ld3}
\int_X\vp\,d\mu=\mu(F)\int_F\vp_F\,d\mu_F.
\eeq
In particular,
$$
\1_F=\tau_F,
$$
and, inserting this to \eqref{1ld3}, we obtain the familiar, discussed in the previous section, Kac's Formula
$$
\int_F\tau_F\, d\mu_F=\frac1{\mu(F)}.
$$
\bdfn\label{d1ld3}
We say that a pentadde $(X,T,F,\mu,\phi)$, or just $T$, is of symbol return type (SRT) if the following conditions are satisfied:

\sp\begin{itemize}
\item[(a)] $F=E_A^\infty$ for some countable alphabet $E$ and some finitely irreducible incidence matrix $A$.

\sp\item[(b)] $T_F=\sg:E_A^\infty\to E_A^\infty$.

\sp\item[(c)] $\vp_F:F\to\R$ is a H\"older continuous summable potential.

\sp\item[(d)] $\P(\vp_F)=0$.

\sp\item[(e)] $\mu=\mu_{\vp_F}$ is the Gibbs/equilibrium state for the potential $\vp_F:F\to\R$

\sp \item[(f)] There are two constants $C, \a>0$ such that
$$
\mu\(\tau_F^{-1}(n)\)\le Ce^{-\a n}
$$
for all integers $n\ge 1$. 
\end{itemize}
\edfn

\fr Since 
$$
\tau_F^{-1}(n)\sbt T^{-1}(E_F^{-1}(n-1))
$$ 
and since the measure $\mu$ is $T$-invariant, we immediately obtain the following.

\bobs\label{o2_2016_07_14}
If a pentadde $(X,T,F,\mu,\phi)$ satisfies all conditions (a)--(e) of Definition~\ref{d1ld3} and it also has exponential tail decay {\rm (ETD)}, then   $(X,T,F,\mu,\phi)$ also satisfies condition (f) of Definition~\ref{d1ld3}; thus in conclusion, the pentadde $(X,T,F,\mu,\phi)$ is then of symbol return type {\rm (SRT)}.
\eobs

\fr Given $\th\in\R$ we consider the potential
$$
\vp_\th:=\vp_F+\th\tau_F:F\to\R.
$$
We shall prove several lemmas. We start with the following.

\blem\label{l1ld4}
If $T$ is an {\rm (SRT)} system, then the potential $\vp_\th:F\to\R$ is summable for every $\th<\a$.
\elem

\bpf
Since $T$ is  SRT, we have that 
$$
\begin{aligned}
   \sum_{e\in E}\exp\(\sup\(\vp_\th|_{[e]}\)\)
&= \sum_{e\in E}\exp\(\sup\((\vp_F+\th\tau_F)|_{[e]}\)\) \\
&= \sum_{e\in E}\exp\(\sup\(\vp_F|_{[e]}\)\)\exp\(\th\tau_F(e)\) 
 \comp \sum_{e\in E}\mu([e])\exp\(\th\tau_F(e)\)  \\
&=\sum_{n=1}^\infty\sum_{\tau_F(e)=n}\mu([e])e^{\th n} 
 =\sum_{n=1}^\infty e^{\th n}\sum_{\tau_F(e)=n}\mu([e]) \\
&=\sum_{n=1}^\infty e^{\th n}\mu\(\tau_F^{-1}(n)\)
\le C\sum_{n=1}^\infty \exp\((\th-\a)n\)<+\infty,
\end{aligned}
$$
whenever $\th<\a$. The proof is complete.
\epf

\blem\label{l2ld4}
If $T$ is an {\rm (SRT)} system, then the function $(-\infty,\a)\ni\th\mapsto \P(\vp_\th)\in\R$ is real-analytic.
\elem

\bpf
In the terminology of Corollary~2.6.10 in \cite{GDMS}, condition (c) of Definition~\ref{d1ld3} says that $\vp_F\in \cK_\b$, where $\b>0$ is the H\"older exponent of $\vp_F$. Of course $\tau_F\in \cK_\b$ since $\tau_F$ is constant on cylinders of length one. Lemma~\ref{l1ld4} says that $\vp_\th\in \cK_\b$ for all $\th<\a$; in fact the proof of this lemma shows that $\vp_\th\in \cK_\b$ for all $\th\in\C$ with $\re(\th)<\a$. This now means that all hypotheses of Corollary~2.6.10 from \cite{GDMS} are satisfied. The upshot of this corollary is that the function
$$
\{\th\in\C:\re(\th)<\a\}\ni\th\mapsto \pf_{\phi_\th}\in L(\cK_\b)
$$
is holomorphic, where $\pf_{\phi_\th}$ is the Perron-Frobenius operator associated to the potential $\vp_\th$ and the shift map $\sg=T_F$. The proof is now concluded by applying Kato-Rellich perturbation Theorem and the fact that $\exp\(\P(\vp_\th)\)$ is a simple isolated eigenvalue of $\pf_{\phi_\th}$ for all real $\th<\a$ (it is not really relevant here but in fact $\exp\(\P(\vp_\th)\)$ is equal to the spectral radius of the operator $\pf_{\phi_\th}\in L(\cK_\b)$), see the paragraph of \cite{GDMS} located between Remark~2.6.11 and Theorem~2.6.12 for more details. 
\epf

\fr Because of Lemma~\ref{l1ld4},  for every $\th<\a$ there exists a unique Gibbs/equilibrium state $\mu_\th$ for the potential $\vp_\th:F\to\R$. Having the previous two lemmas, Proposition~2.6.13 in \cite{GDMS} applies to give the following.

\blem\label{l1ld5}
If $T$ is an  {\rm (SRT)} system, then
$$
\frac{{\rm d}}{{\rm d}\th}\P(\vp_\th)=\int_F\tau_F\,d\mu_\th
$$
for every $\th<\a$.
\elem

\fr Now having all the three previous lemmas along with Definition~\ref{d1ld3}, employing the standard (by now) tools of \cite{GDMS}, exactly the same proof as in \cite{DK} yields the following.

\bthm\label{t1ld6}
If $T$ is an  {\rm (SRT)} system, then for every $\th<\a$ we have that
$$
\lim_{n\to\infty}\frac1n\log
  \mu\(\big\{x\in F:\sign(\th)\tau_F^{n)}(x)\ge \sign(\th)n\int_F\tau_F\,d\mu_\th\big\}\)=-\th\int_F\tau_F\,d\mu_\th+\P(\vp_\th).
$$
\ethm

\fr In order to make use of this theorem we shall prove the following.

\blem\label{l2ld6}
If $T$ is an  {\rm (SRT)} system and the first return map function $\tau_F:F_\infty\to\N$ is unbounded, then for every non-zero $\th<\a$ we have that
$$
\P(\vp_\th)-\th\int_F\tau_F\,d\mu_\th<0.
$$
\elem

\bpf
Since $\mu_\th $ is an equilibrium state for $\vp_\th$, we have that
$$
\begin{aligned}
\P(\vp_\th)-\th\int_F\tau_F\,d\mu_\th
&=\h_{\mu_\th}(\sg)+\int_F\vp_F\,d\mu_\th+\th\int_F\tau_F\,d\mu_\th-\th\int_F\tau_F\,
  d\mu_\th \\
&=\h_{\mu_\th}(\sg)+\int_F\vp_F\,d\mu_\th \\
&\le \P(\vp_F)
=0.
\end{aligned}
$$
Hence, in order to complete the proof we only need to show that the inequality sign above is strict. In order to do this suppose for a contradiction that $\h_{\mu_\th}(\sg)+\int_F\vp_F\,d\mu_\th=\P(\vp_F)$. But then the fact that $\mu$ is the only equilibrium state for $\vp_F$, implies that $\mu_\th=\mu$. But because of Theorem~2.2.7 in \cite{GDMS} this in turn implies that the function $\vp_\th-\vp_F$ is cohomologous to a constant in the class of H\"older continuous functions defined on $E_A^\infty=F$. But $\vp_\th-\vp_F=\th\tau_F$ is, by our hypotheses, unbounded unless $\th=0$. This finishes the proof.
\epf

\fr Now we can prove the main result of this section:

\blem\label{l1ld7}
If $T$ is an  {\rm (SRT)} system and the first return map function $\tau_F:F_\infty\to\N$ is unbounded, then the pair $(T_F,F)$ satisfies the large deviation property {\text{\rm (LDP)}}.
\elem

\bpf
Fix $\eta\in(0,1)$. It follows from Lemma~\ref{l2ld4} and Lemma~\ref{l1ld5} that the function
$$
(-\infty,\a)\ni\th\mapsto \int_F\tau_F\,d\mu_\th\in [1,+\infty)
$$
is continuous. Therefore, there exists $\d\in(0,\a)$ such that
$$
\int_F\tau_F\,d\mu-\eta
\le \int_F\tau_F\,d\mu_\d, 
\int_F\tau_F\,d\mu_{-\d}
\le \int_F\tau_F\,d\mu+\eta.
$$
Equivalently:
$$
\mu(F)^{-1}-\eta
\le \int_F\tau_F\,d\mu_\d, 
\int_F\tau_F\,d\mu_{-\d}
\le \mu(F)^{-1}+\eta.
$$
Hence for every $k\ge 1$:
$$
F_{k-1}^c(\eta)
\sbt \big\{x\in F:\tau_F^{k)}(x)\ge k\int_F\tau_F\,d\mu_\d\big\}
 \cup\big\{x\in F:\tau_F^{k)}(x)\le k\int_F\tau_F\,d\mu_{-\d}\big\}.
$$
So, denoting
$$
\hat\eta
:=\frac12\min\Big\{\d\int_F\tau_F\,d\mu_\d-\P(\vp_d),
      -\d\int_F\tau_F\,d\mu_{-\d}-\P(\vp_{-\d})\Big\},
$$
which is positive by Lemma~\ref{l2ld6}, we conclude from Theorem~\ref{t1ld6}, that
$$
\mu\(F_{k-1}^c(\eta)\)\le C_\eta e^{-\hat\eta k}
$$
for all $k\ge 1$. The proof is complete.
\epf

\section{Escape Rates for Interval Maps}\label{Interval_Maps}

In this and the next sections we will reap the benefits 
of our work in the  previous sections, most notably of that on escape rates of conformal countable alphabet IFSs and of that on the first return map techniques including large deviations.
This section is devoted to the study of  the multimodal smooth maps of an interval.  
  
We start with the definition of the class of dynamical systems and potentials we consider.

\begin{dfn}\label{dami1}
Let $I = [0,1]$ be the closed interval.  Let $T:  I \to I$ be a $C^3$ differentiable map with the  following properties:

\sp\begin{enumerate}
\item[(a)] $T$ has only a finitely  many maximal closed intervals of monotonicity; or equivalently  ${\rm Crit}(T) = \{x \in I \hbox{ : } T'(x) = 0\}$, the set of all critical points of $T$ is finite.

\sp\item[(b)] The dynamical system $T: I \to I$ is topologically exact, meaning that for every non-empty subset $U$ of $I$ there exists an integer $n \geq 0$ such that $T^n(U) = I$.

\sp\item[(c)] All critical points are non-flat.

\sp\item[(d)]  $T: I \to I$  is a \emph{topological Collet-Eckmann map}, meaning that 
$$
\inf\{\left(|(T^n)'(x)| \right)^{1/n} \hbox{ : } T^n(x)=x \hbox{ for }  n \geq 1 \} > 1
$$
where the infimum is taken over all integers $n \geq 1$ and all fixed points of $T^n$.
\end{enumerate}
We then call $T: I \to I$  a \emph{topologically exact topological Collet-Eckmann map} (teTCE).
If (c) and (d) are relaxed and only (a) and (b) are assumed then $T$ is called a topologically exact multimodal map.

As in the case of rational functions we set 
$$
\PC(T) := \bu_{n=1}^\infty T^n({\rm Crit}(T))
$$ 
and call this the \emph{postcritical} set of $T$.  Again as in the case of rational functions we say that the map $T:I \to I$
is \emph{tame} if
$$
\overline {\PC(T)} \neq I.
$$
\end{dfn}

\fr The following theorem is due to many authors and a detailed and readible discussion on this topic can be found, for example, in \cite{PR-Geometric}
 
\begin{thm}[Exponential Shrinking Property]\label{t1mi2.1}
If $T: I \to I$ satisfies conditions {\rm (a)--(c)} of Definition~\ref{dami1}, then $T$ is a {\rm (te)TCE}, i.e. condition {\rm (d)} holds if and only if there exist $\delta>0$, $\gamma>0$ and $C>0$ such that if $z\in I$ and $n \geq 0$ then
$$
\hbox{\rm diam}(W) \leq C e^{-\gamma n}
$$
for each connected component $W$ of $T^{-n}(B(z, 2\delta))$.
\end{thm}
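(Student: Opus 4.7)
The plan is to prove the two implications separately; the reverse direction $\text{ESP} \Rightarrow \text{TCE}$ is elementary and classical, whereas the forward direction $\text{TCE} \Rightarrow \text{ESP}$ is the deep content and would follow the strategy of Przytycki, Rivera-Letelier and Smirnov.

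For the easy direction $\text{ESP} \Rightarrow \text{TCE}$, I would proceed as follows. Let $p$ be a periodic point with $T^n(p) = p$ and let $W$ be the connected component of $T^{-n}(B(p,2\delta))$ containing $p$. By ESP, $\diam(W) \le Ce^{-\gamma n}$. Since $W$ is a component of the preimage, $T^n(W) = B(p,2\delta)$, so by connectedness of $W\subset I$ and the intermediate value theorem there is a subinterval $W' \subseteq W$ with $p \in W'$ on which $T^n$ is monotone and $T^n(W') \supseteq B(p,\delta)$. The mean value theorem then supplies $y\in W'$ with
$$
|(T^n)'(y)| \;\ge\; \frac{\delta}{|W'|} \;\ge\; \frac{\delta}{C}\, e^{\gamma n}.
$$
To transfer this lower bound from $y$ to $p$ itself, I would apply a bounded-distortion argument on the monotone branch $T^n|_{W'}$: by non-flatness of the critical points and a Koebe-type estimate (after possibly replacing $2\delta$ by a slightly smaller scale so that the next-to-last pullback still has Koebe space), the ratio $|(T^n)'(p)|/|(T^n)'(y)|$ is bounded above and below by constants independent of $n$. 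This yields $|(T^n)'(p)|^{1/n} \ge \alpha > 1$ for all $n$ sufficiently large, uniformly over periodic points, giving TCE.

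For the hard direction $\text{TCE} \Rightarrow \text{ESP}$, the strategy is in three steps. First, upgrade TCE (uniform hyperbolicity on periodic points) to the Collet--Eckmann condition along critical orbits, i.e.\ $|(T^n)'(T(c))| \ge K\lambda^n$ for every $c\in\Crit(T)$ and some $\lambda>1$. This step uses shadowing of orbits by periodic points together with the topological exactness hypothesis (b), which forces recurrent critical orbits to be approximated by periodic ones. Second, for an arbitrary $z\in I$ decompose each backward orbit of a component $W_n$ of $T^{-n}(B(z,2\delta))$ into \emph{good} blocks (iterates uniformly away from $\Crit(T)$) and \emph{bad} blocks (iterates close to some $c\in\Crit(T)$). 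On good blocks classical Koebe distortion in dimension one, together with an exponential bound on the sum of squares of pullback diameters, delivers exponential contraction. Third, at each bad block one uses non-flatness $T(x)-T(c) \asymp |x-c|^{\ell_c}$ to convert the Collet--Eckmann growth along the critical orbit of $c$ (from step one) into exponential shrinking of the pullback across the critical visit, the losses telescoping across consecutive visits.

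The main obstacle, as always in Collet--Eckmann theory, is the third step: controlling the cumulative distortion incurred by repeated close approaches to the critical set. This is what forces the telescoping/shadowing argument and is the reason non-flatness in condition (c) is indispensable; without it the conversion from derivative growth along the critical orbit to contraction of the pullback would fail. Topological exactness enters to guarantee that between consecutive close critical approaches one returns, in a bounded number of iterates, to a region uniformly away from $\overline{\PC(T)}$, which is what makes the telescoping finite and summable. Once these three steps are assembled one obtains the uniform exponential bound $\diam(W) \le Ce^{-\gamma n}$ for every component $W$ of $T^{-n}(B(z,2\delta))$, completing the proof.
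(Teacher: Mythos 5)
First, a point of comparison: the paper does not prove Theorem~\ref{t1mi2.1} at all --- it is quoted as a known result, with the reader referred to \cite{PR-Geometric} for ``a detailed and readable discussion''. So there is no in-paper argument to measure your sketch against; it has to stand on its own, and in its present form it does not.

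The decisive flaw is the first step of your hard direction, where you propose to upgrade condition (d) (uniform hyperbolicity on periodic orbits) to the Collet--Eckmann condition $|(T^n)'(T(c))|\ge K\lambda^n$ along every critical orbit. For multimodal maps this implication is false: TCE is a topological invariant of the dynamics (this is one of the central points of the Przytycki--Rivera-Letelier--Smirnov theory), whereas the Collet--Eckmann condition is not, so there exist topologically conjugate pairs in this class, both TCE, of which only one is Collet--Eckmann. No shadowing-by-periodic-orbits argument can therefore produce CE from (d); and even in the one special case where the equivalence does hold (unimodal maps with negative Schwarzian, by Nowicki--Przytycki) it is a deep theorem, not a consequence of topological exactness. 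Since your steps two and three are essentially the standard proof that \emph{CE} implies ESP, they have nothing to feed on once step one collapses. The proofs in the literature avoid CE entirely and derive ESP from (d) via the telescope/shrinking-neighbourhoods construction and the equivalent ``Lyapunov'' and backward Collet--Eckmann (at a.e.\ point) conditions. Your easy direction also has an unjustified step: the monotone branch $W'\ni p$ need not satisfy $T^n(W')\supseteq B(p,\delta)$, since a critical value of $T^n|_W$ may lie arbitrarily close to $p$, and the transfer of the derivative bound from $y$ to $p$ by ``a Koebe-type estimate'' is exactly where the work lies, because the branch through $p$ may have no definite Koebe space when the periodic orbit passes close to $\Crit(T)$; this direction is salvageable by telescoping over many periods, but as written it is incomplete.
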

 
\fr The hard part of this theorem is its ``if'' part. The converse is easy. There are more conditions equivalent to teTCE,  but we need only the above Exponential Shrinking Property (ESP) and we do not bring them up here. We now however articulate two standard sufficient conditions for (ESP) to hold. It is implied by the Collet-Eckmann condition which requires that there exist $\lambda > 1$ and $C>0$ such that for every integer $n \geq 0$ we have that
$$
|(f^n)'(f(c))| \geq C \lambda^n.
$$

If also suffices to assume that the map $T$ is semi-hyperbolic, i.e., that no critical points $c$ in the Julia belongs to its own omega limit set $\om(c)$ for (ESP) to hold.  
This so for example, if $T$ is a classical unimodal map of the form $I \ni x \mapsto \lambda x (1-x)$, with 
$0 < \lambda \leq 4$ and the critical point $1/2$ is not in its own omega limit set, i.e., $1/2 \not\in \omega (1/2)$.

\sp We call a potential $\psi: I \to \mathbb R$ \emph{acceptable} if it is Lipschitz continuous and 
$$
\sup (\psi) - \inf (\psi) < h_{\rm top}(T).
$$
We would also like to mention that for the purposes of this section
 it would suffice that $\psi: I \to \mathbb R$ is H\"older continuous 
 (with any exponent) and of bounded variation.  We denote by ${\rm BV}_I$ the vector space of all functions in 
$L^1(\lambda)$, where $\lambda$ denotes Lebesgue measure on $I$, 
that have a version of bounded variation.  This vector space becomes a Banach space when endowed with the norm 
$$
\|g\|_{BV} := \|g\|_{{\rm Leb}_1} + v_I(g)
$$
where $v_I(g)$ denotes the variation of $g$ on I. For every $g \in {\rm BV}_I$ define the 
\emph{Perron-Frobenius operator} associated to $\psi$ by
$$
\mathcal L_\psi (g)(x) = \sum_{y \in T^{-1}(x)} g(y) e^{\psi(y)}.
$$
It is well known and easy to check that $\mathcal L_{\psi}({\rm BV}_I) \subset {\rm BV}_I$ and 
 $\mathcal L_{\psi}:{\rm BV}_I \to {\rm BV}_I$ is a bounded linear operator.
 
 The following theorem collects together some fundamental results of 
\cite{HK-MZ} and \cite{HK-ETDS}
 
\begin{thm}\label{t1mi3}  If $T: I \to I$ is a topologically exact multimodal map and $\psi: I \to \mathbb R$ is an acceptable potential  then
 \begin{enumerate}
\item[(a)] there exists a Borel probability eigenmeasure 
$m_\psi$ for the dual operator $\mathcal L_{\psi}^*$ whose corresponding eigenvalue is equal to $e^{\P(\psi)}$.   It then follows that ${\rm supp}(m_\psi) = I$.  
\item[(b)] there exists a unique Borel $T$-invariant probability measure $\mu_\psi$ on $I$ absolutely continuous with respect to $m_\psi$.  Furthermore, $\mu_\psi$ is equivalent to $m_{\psi}$;
\item[(c)] $h_{\mu_\psi}(T) + \int_I \psi d\mu_\psi = P(\psi)$, meaning that $\mu_\psi$ is an (ergodic) equilibrium state for 
$\psi:I \to \mathbb R$ with respect to the dynamcial system $T: I \to I$.
\item[(d)] The Perron-Frobenius $\mathcal L_\psi : {\rm BV}_I \to  {\rm BV}_I$ is quasi-compact.
\item[(e)] $r({\mathcal L}_\psi) = e^{P(\psi)}$.
\item[(f)] $\hbox{\rm sp}(\mathcal L_\psi) \cap \partial B(0, e^{P(\psi)}) = \{e^{P(\phi)}\}$
\item[(g)] The number $e^{P(\psi)}$ is a simple isolated eigenvalue (this follows from (f), (e) and (f))  of $\mathcal L_\psi:  {\rm BV}_I  \to  {\rm BV}_I$ with eigenfunction $\rho_{\psi} := \frac{d\mu_\psi}{dm_\psi}$ which is Lipschitz continuous and $\log$-bounded.
 \end{enumerate}
 \end{thm}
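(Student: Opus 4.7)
The plan is to combine a Lasota--Yorke inequality on $\mathrm{BV}_I$ with the Hofbauer Markov extension to control the Perron--Frobenius operator $\mathcal L_\psi$. First I would establish a uniform variation/distortion estimate: there exist a constant $C>0$, a contraction rate $\kappa \in (0, e^{\P(\psi)})$, and an integer $n_0 \geq 1$ such that for every $g \in \mathrm{BV}_I$
\begin{equation*}
v_I\bigl(\mathcal L_\psi^{n_0}g\bigr) \le C\kappa^{n_0} v_I(g) + C\|g\|_{L^1(\lambda)}.
\end{equation*}
The acceptability hypothesis $\sup\psi - \inf\psi < \htop(T)$ enters precisely here to ensure $\kappa < e^{\P(\psi)}$. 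By the Ionescu-Tulcea--Marinescu / Hennion theorem this inequality implies that $\mathcal L_\psi$ is quasi-compact on $\mathrm{BV}_I$ with essential spectral radius at most $\kappa$, which gives (d); a straightforward growth comparison identifies the spectral radius with $e^{\P(\psi)}$, giving (e).

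Topological exactness of $T$ is then used to pin down the peripheral spectrum. Any eigenvalue of modulus $e^{\P(\psi)}$ would, via the quasi-compact decomposition, correspond to a nontrivial $T$-invariant measurable partition of $I$; exactness forbids this, leaving $e^{\P(\psi)}$ as the unique peripheral eigenvalue, which is (f). A Perron--Frobenius argument for the positive operator $\mathcal L_\psi$ then yields simplicity, with eigenfunction $\rho_\psi$ obtained as a BV-limit of the Ces\`aro averages of $e^{-n\P(\psi)}\mathcal L_\psi^n \mathbf{1}$. Positivity and log-boundedness of $\rho_\psi$ follow from topological exactness together with bounded-distortion estimates for inverse branches of $T^n$ away from $\Crit(T)$. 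Lipschitz regularity of $\rho_\psi$ is then bootstrapped from the fixed-point equation $\rho_\psi = e^{-\P(\psi)}\mathcal L_\psi \rho_\psi$, using the $C^3$-smoothness of $T$ outside the critical set; this establishes (g).

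With (d)--(g) in hand, the conformal measure $m_\psi$ of (a) is produced as a fixed point of the normalized dual operator $e^{-\P(\psi)}\mathcal L_\psi^*$ acting on Borel probability measures on $I$, via Schauder--Tychonoff; topological exactness forces $\supp(m_\psi) = I$. Defining $d\mu_\psi := \rho_\psi\, dm_\psi$ gives a $T$-invariant probability measure equivalent to $m_\psi$, and uniqueness among absolutely continuous invariant probabilities follows from simplicity of $e^{\P(\psi)}$ as a leading eigenvalue, yielding (b). The equilibrium identity (c) is then obtained from Rokhlin's formula applied to $(T,\mu_\psi)$ together with the eigenequations for $\rho_\psi$ and $m_\psi$: integrating $\log(\mathcal L_\psi \rho_\psi/\rho_\psi) = \P(\psi) - \log \rho_\psi + \log \rho_\psi\circ T$ (after suitable normalization) against $\mu_\psi$ produces $h_{\mu_\psi}(T) = \P(\psi) - \int \psi\, d\mu_\psi$.

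The main obstacle is the Lasota--Yorke inequality with the correct contraction rate $\kappa < e^{\P(\psi)}$: critical points of $T$ cause unbounded local amplification of variation, and the naive one-step estimates on $I$ fail there. This is exactly the difficulty resolved by Hofbauer and Keller via the Markov extension tower, where the induced dynamics are piecewise monotone with uniformly controlled distortion; one runs the spectral analysis on the tower, exploits the (ESP) of Theorem~\ref{t1mi2.1} to transfer good estimates back, and projects the invariant structures down to $I$. The finer statements in (g) (Lipschitz regularity and log-boundedness of $\rho_\psi$) also require care, as they need both the quasi-compact spectral picture and the non-flatness of the critical points to prevent $\rho_\psi$ from vanishing or blowing up at $\ov{\PC(T)}$.
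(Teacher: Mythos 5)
The paper does not actually prove this theorem: it is presented as a compilation of known results and attributed to Hofbauer and Keller (\cite{HK-MZ}, \cite{HK-ETDS}), so there is no in-paper argument to compare against. Your outline is a faithful reconstruction of the strategy of those references --- a Lasota--Yorke inequality on ${\rm BV}_I$ obtained through the Hofbauer Markov extension, quasi-compactness via Ionescu-Tulcea--Marinescu/Hennion, topological exactness to trivialize the peripheral spectrum, and a fixed-point argument for the conformal measure $m_\psi$ --- and you correctly locate where acceptability enters, namely to force the contraction rate $\kappa$ strictly below $e^{\P(\psi)}$ (equivalently, $\sup\psi<\P(\psi)$, since $\P(\psi)\ge \htop(T)+\inf\psi>\sup\psi$).

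Two steps of your plan are weaker than they can afford to be. First, Lipschitz regularity of $\rho_\psi$ does not ``bootstrap from the fixed-point equation'': $\mathcal L_\psi$ composes the argument with inverse branches of $T$, and those branches have unbounded derivative near critical values, so $\mathcal L_\psi$ does not preserve the Lipschitz class; this is precisely why Hofbauer--Keller work in ${\rm BV}_I$ rather than in a Lipschitz space, and the Lipschitz and $\log$-boundedness claims in (g) require the separate distortion analysis on the tower that you only allude to. Second, the identity $\log(\mathcal L_\psi\rho_\psi/\rho_\psi)=\P(\psi)-\log\rho_\psi+\log\rho_\psi\circ T$ cannot be right as written --- by the eigenequation the left-hand side is the constant $\P(\psi)$. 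What you want is that the Jacobian of $T$ with respect to $\mu_\psi$ equals $e^{\P(\psi)-\psi}\,\rho_\psi\circ T/\rho_\psi$, so that Rokhlin's formula gives $h_{\mu_\psi}(T)=\P(\psi)-\int\psi\,d\mu_\psi$ (the coboundary $\log\rho_\psi-\log\rho_\psi\circ T$ integrating to zero), after which the variational principle for piecewise monotone maps upgrades this to the equilibrium statement (c). With these two repairs your plan coincides with the standard cited proof.
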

We shall use the commonly accepted convention, used throughout this article, that for every $r \in (0,1]$ and every bounded interval $\Delta \subset \mathbb R$ we denote by $r \Delta$ the (smaller) interval of length $r |\Delta|$ centred at the same point as $\Delta$. 
 We now consider the following version of the bounded distortion property taken from \cite{PR-Geometric} whose proof has a long history and is well documented therein.

\begin{thm}\label{t1mi4} Let $T: I \to I$ be a teTCE.  Then for every $r \in (0,1)$ there exists $K(r) \in (0, +\infty)$ such that if 
$\Delta \subset I$ is an interval, $n \geq 0$ is an integer, the map $T^n|_\Delta$ is $1$-to-$1$, and $x,y \in \Delta$ are such that $T^n(x), T^n(y) \in r T^n(\Delta)$, then 
$$
\left|
\frac{(T^n)'(y)}{(T^n)'(x)} - 1 
\right| \le K(r) |(T^n)(y) - (T^n)(x)|.
$$
\end{thm}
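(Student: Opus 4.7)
The plan is to derive the estimate by combining the Exponential Shrinking Property (Theorem~\ref{t1mi2.1}) with the standard argument that controls the telescoping sum of logarithmic derivatives in terms of a geometric series of pull-back diameters. First I would reduce to the case where $r$ is close to $1$: given $r \in (0,1)$, fix once and for all a slightly larger parameter $r' \in (r,1)$ so that $r T^n(\Delta)$ is uniformly separated (inside $T^n(\Delta)$) from the endpoints of $r' T^n(\Delta)$, and work with the central interval $r'T^n(\Delta)$. I would also fix a small $\delta>0$ and a small $\tau >0$ so that for every interval $J \subset I$ with $|J|\le \tau$ the dilated interval $(1+\tau)J$ still lies in some ball $B(z,2\delta)$ to which the ESP of Theorem~\ref{t1mi2.1} applies.

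The key geometric step is the following. Since $T^n|_\Delta$ is $1$-to-$1$, each $T^{n-j}|_{T^j(\Delta)}$, $0\le j\le n$, is a homeomorphism onto $T^n(\Delta)$, and its inverse is a single univalent branch $\phi_j:T^n(\Delta)\to T^j(\Delta)$. By a standard subdivision of $T^n(\Delta)$ into finitely many pieces of diameter at most $\tau$ (the number of pieces depending only on $r$), the inverse branches $\phi_j$ restricted to neighborhoods of $rT^n(\Delta)$ fall under the scope of ESP; hence there exist constants $C=C(r)$ and $\gamma>0$, independent of $n$, $j$, and $\Delta$, such that
$$
|T^j(y)-T^j(x)| \le C\,e^{-\gamma(n-j)}\,|T^n(y)-T^n(x)|
$$
for all $x,y\in\Delta$ with $T^n(x),T^n(y)\in rT^n(\Delta)$. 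This is the geometric decay that drives everything else.

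Next I would use non-flatness of critical points to obtain a Hölder modulus of continuity for $\log|T'|$ on the relevant orbits. Non-flatness gives $|T'(u)|\asymp |u-c|^{\ell_c-1}$ near each critical point $c$ of order $\ell_c$, so $\log|T'|$ is Lipschitz on any region bounded away from $\Crit(T)$ and Hölder of exponent $\alpha:=\min_c 1/\ell_c$ everywhere. Because $T^n|_\Delta$ is $1$-to-$1$, at most one iterate $T^j(\Delta)$ can contain a critical point in its interior, and this only in a controlled way; consequently there is a universal exponent $\alpha\in(0,1]$ and a constant $M<\infty$ such that
$$
\bigl|\log|T'(T^j(y))| - \log|T'(T^j(x))|\bigr| \le M\,|T^j(y)-T^j(x)|^{\alpha}
$$
for every $0\le j\le n-1$ and every $x,y$ as above.

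Finally I would telescope and sum. Writing
$$
\log\frac{(T^n)'(y)}{(T^n)'(x)} = \sum_{j=0}^{n-1}\bigl(\log|T'(T^j(y))| - \log|T'(T^j(x))|\bigr),
$$
combining the two previous bounds, and summing the geometric series $\sum_{j=0}^{n-1} e^{-\alpha\gamma(n-j)}$, yields
$$
\left|\log\frac{(T^n)'(y)}{(T^n)'(x)}\right| \le K_1(r)\,|T^n(y)-T^n(x)|^{\alpha}
$$
with $K_1(r)$ depending only on $r$. Exponentiating and using $|e^t-1|\le 2|t|$ for $|t|$ bounded then gives the desired inequality with $|T^n(y)-T^n(x)|$ on the right (the extra exponent $\alpha$ can be absorbed into $K(r)$ because $|T^n(y)-T^n(x)|\le |I|=1$, or equivalently by first proving the H\"older form and then noting it implies the Lipschitz form on the scale dictated by $rT^n(\Delta)$; the usual finessing here uses that for very small $|T^n(y)-T^n(x)|$ the Lipschitz form with a slightly enlarged constant follows by a further covering argument). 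The hard part is verifying the uniformity of the constant $C(r)$ in the ESP-based pull-back estimate when $T^n(\Delta)$ is large relative to $\delta$; this is handled by the standard finite covering of $T^n(\Delta)$ by balls of radius $2\delta$ and by the fact that the number of such balls needed depends only on $r$ and $\delta$, not on $n$ or $\Delta$.
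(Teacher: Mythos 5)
First, note that the paper does not actually prove Theorem~\ref{t1mi4}: it is quoted from \cite{PR-Geometric} with the remark that its proof ``has a long history and is well documented therein,'' so there is no in-paper argument to compare yours against. Judged on its own, your sketch contains two genuine gaps, both at the places where the real difficulty of this result lives.

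The first and most serious gap is the claim that non-flatness makes $\log|T'|$ H\"older continuous of some exponent $\a$ on all of $I$. Near a critical point $c$ of order $\ell_c$ one has $|T'(u)|\asymp|u-c|^{\ell_c-1}$, hence $\log|T'(u)|=(\ell_c-1)\log|u-c|+O(1)$ has a logarithmic singularity at $c$ and is not H\"older there (take $u-c=\e$, $v-c=\e/2$: the left side of your displayed modulus-of-continuity bound stays of order $1$ while the right side tends to $0$). Since $T^n|_\De$ being injective only prevents critical points from lying in the \emph{interior} of $T^j(\De)$, the intervals $T^j(\De)$ may abut or approach $\Crit(T)$ arbitrarily closely, and this is exactly the regime your estimate must cover. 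The correct local bound is of the form $\big|\log|T'(u)|-\log|T'(v)|\big|\lesssim |u-v|\big/\dist(\{u,v\},\Crit(T))$, and the heart of the proof is controlling $\sum_{j=0}^{n-1}|T^j(\De)|\big/\dist\(T^j(\De),\Crit(T)\)$, which requires tracking returns of the orbit of $\De$ near the critical set (this is where TCE is used in an essential way, and why the literature proof is long); a uniform H\"older modulus cannot substitute for it. The second gap is your ``key geometric step'': the Exponential Shrinking Property of Theorem~\ref{t1mi2.1} yields only the absolute bound $|T^j(y)-T^j(x)|\le\diam(W)\le Ce^{-\g(n-j)}$ for the relevant component $W$ of $T^{-(n-j)}(B(z,2\d))$; it does not yield the multiplicative bound $|T^j(y)-T^j(x)|\le Ce^{-\g(n-j)}|T^n(y)-T^n(x)|$ that your telescoping sum needs in order to produce a right-hand side proportional to $|T^n(y)-T^n(x)|$ rather than merely bounded. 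Passing from the absolute to the multiplicative form already presupposes a Koebe-type distortion control for the inverse branches $\phi_j$, i.e.\ essentially the statement being proved, so as written the argument is circular. With only the absolute ESP bound your telescoping gives bounded distortion (a constant $K(r)$ bounding the ratio of derivatives), which is strictly weaker than the Lipschitz estimate asserted in the theorem.
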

 
We next recall the following definition.
 
\begin{dfn}\label{d2mi4} 
An interval $V \subset I$ is called a \emph{nice set} for a multimodal map $T: I \to I$ if
$$
 \hbox{int}(V) \cap \bu_{n=0}^\infty T^n(\partial V) = \emptyset
$$
\end{dfn}
 
 The proof of the following theorem is both standard and straightforward, and has been presented  in various similar settings. 
  We provide the proof below because of the critical importance for us of the theorem it proves and the brevity of the proof,  for the sake of completeness, and for the convenience of the reader.

\begin{thm}\label{t3mi4}
If $T: I \to I$ is topologically exact multimodal map then for every point $\xi \in (0,1)$ and every $R > 0$ there exists a nice set $V \subset I$ 
such that $\xi \in V \subset B(\xi, R)$.  
\end{thm}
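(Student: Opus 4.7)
The plan is to construct $V$ as an interval whose two endpoints are periodic points of $T$, chosen from two distinct periodic orbits that flank $\xi$ on either side. The main ingredient is the classical fact that any topologically transitive continuous interval map has a dense set of periodic points (Block--Coppel, \textit{Dynamics in One Dimension}); since topological exactness is strictly stronger than topological transitivity, the set $\Per(T)$ is dense in $I$. Moreover, topological exactness implies positive topological entropy, so in particular there are infinitely many periodic orbits accumulating at every point of $I$.

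First I would pick periodic points $p_- < \xi < p_+$ inside $(\xi - R/2,\,\xi + R/2)\cap I$, and moreover arrange that $\xi \notin O(p_-) \cup O(p_+)$, where $O(p) := \{T^n(p) : n \geq 0\}$ denotes the (finite) forward orbit of a periodic point $p$. This auxiliary condition is automatic unless $\xi$ itself is periodic, in which case $\xi$ lies on a single periodic orbit $O(q)$; the existence of infinitely many periodic orbits near $\xi$ then leaves me ample room to choose $p_\pm$ outside $O(q)$. Setting $O := O(p_-) \cup O(p_+)$, a finite subset of $I$ avoiding $\xi$, I then define
$$
a := \max\!\bigl(O \cap [0,\xi]\bigr) \and b := \min\!\bigl(O \cap [\xi,1]\bigr).
$$
These are well defined because $p_\pm \in O$ lie on the appropriate sides of $\xi$, and they satisfy $p_- \leq a < \xi < b \leq p_+$. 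I now put $V := [a,b]$, so that $\xi \in V \subset [p_-, p_+] \subset B(\xi, R)$. Since $a$ and $b$ are themselves periodic points (being iterates of $p_\pm$), their forward orbits are contained in $O$, which by construction satisfies $O \cap (a,b) = \emptyset$. Consequently
$$
\Int(V) \cap \bu_{n=0}^{\infty} T^n(\partial V) \,\sbt\, (a,b) \cap O \,=\, \emptyset,
$$
which is precisely the nice-set condition.

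The essential obstacle here is the density of periodic points, a genuinely one-dimensional fact whose proof uses transitivity together with the intermediate value theorem; every remaining step is elementary combinatorics with finite sets. I note in passing that this construction in fact produces arbitrarily small nice intervals around $\xi$ simply by choosing $p_\pm$ closer to $\xi$, which is the form in which the result is typically used in later sections when one needs nested nice sets shrinking to $\{\xi\}$.
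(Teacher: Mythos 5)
Your proof is correct, and it takes a genuinely different (and somewhat leaner) route than the paper's. The paper also starts from density of periodic points, but it fixes a \emph{single} periodic orbit $\{T^j(\omega):0\le j\le p-1\}$ and builds the endpoints of $V$ out of \emph{backward} iterates of that orbit: $\xi_n^\pm$ are chosen as the closest points to $\xi$ on either side among $\bigcup_{k=0}^n T^{-k}(O(\omega))$, and the nice-set property is then verified in two cases ($k<n$ via the closest-point choice, $k\ge n$ via the fact that $V_n$ shrinks onto $\xi\notin O(\omega)$). Topological exactness is what guarantees preimages of $O(\omega)$ accumulate on $\xi$ from both sides. You instead take \emph{two} periodic orbits flanking $\xi$, let $O$ be their (finite, forward-invariant) union, and take $V$ to be the innermost gap of $O$ around $\xi$; the verification collapses to $T^n(\partial V)\subset O$ and $O\cap\Int(V)=\emptyset$, with no case split and no need to shrink a sequence of candidates. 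What the paper's construction buys in exchange is that it only ever needs one periodic orbit (so it transfers verbatim to settings where one controls a single orbit and its preimages, as in the rational/meromorphic nice-set theorems quoted later); what yours buys is brevity and the fact that the endpoints of $V$ are themselves periodic. Two cosmetic remarks: your aside that exactness implies positive entropy is not needed (density of $\Per(T)$ already gives infinitely many periodic orbits meeting any open interval, since each orbit is finite), and if one wants $V$ open, as in the paper, simply take $V=(a,b)$ — the argument is unchanged since it only involves $\Int(V)$ and $\partial V=\{a,b\}$.
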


\begin{proof}
Since the map $T: I \to I$  is topologically  exact it has a dense set of periodic points.  Fix one periodic point $\omega$, say of prime period $p \geq 1$, such that
$\xi \not\in \cup_{k=0}^\infty T^{-k}(\{T^{j}(\omega) \hbox{ : } 0 \leq j \leq p-1\})$.
Again because of topological exactness of $T$,
$$
\xi \in 
\overline{
(0, \xi) \cap \bu_{k=0}^\infty 
 T^{-k}(\{T^{j}(\omega) \hbox{ : } 0 \leq j \leq p-1\})
}
$$
and 
$$
\xi \in 
\overline{
(\xi,1) \cap \bu_{k=0}^\infty 
 T^{-k}(\{T^{j}(\omega) \hbox{ : } 0 \leq j \leq p-1\})
}.
$$
For every $n \geq 1$, sufficiently large denote by $\xi_n^- \in I$
the point closest to $\xi$ in 
$$
\overline{
(0, \xi) \cap \bu_{k=0}^n 
 T^{-k}(\{T^{j}(\omega) \hbox{ : } 0 \leq j \leq p-1\})}
$$
and by $\xi_n^+ \in I$
the point closest to $\xi$ in 
$$
\overline{
(\xi,1) \cap \bu_{k=0}^n 
 T^{-k}(\{T^{-j}(\omega) \hbox{ : } 0 \leq j \leq p-1\})}.
$$
We then denote  
$$V_n := (\xi^-_n, \xi^+_n).$$
Then obviously 
$\xi \in V_n$, $T^k(\xi^\pm_n) \not\in (\xi_n^-, \xi_n^+)$
for all $k = 0,1, \cdots, n-1$, and 
$T^k(\xi_n^{\pm}) \in \{ T^j(w) \hbox{ : } 0 \leq j \leq n-1\}$
for all $k \geq n$.
Since $\lim_{n\to +\infty} \xi_n^\pm = \xi$ 
it then follows that $T^k(\xi_n^\pm) \not\in V_n$ for all $k \geq n$.
In conclusion, $V_n$ are  the required nice sets for all integers $n \geq 1$.
Since in addition $\lim_{n \to +\infty} \hbox{\rm diam}(V_n)=0$ the proof is complete.
\end{proof}

From their definitions, nice sets enjoy the following property.

\begin{thm}\label{t1mi6}
If $V$ is a nice set for a multimodal map, then for every integer $n \geq 0$
and every $U \in \hbox{\rm Comp}(T^{-n}(V))$ either 
$$
U \cap V = \emptyset \hbox{ or } U \subset V.
$$
\end{thm}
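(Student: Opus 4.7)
My plan is to argue by contradiction, exploiting the fact that the relevant interval and its preimage components are connected subsets of $I$. Suppose both $U \cap V \neq \emptyset$ and $U \not\subset V$. Then I would pick points $p \in U \cap V$ and $q \in U \setminus V$, and use the connectedness of the component $U$ (an interval) to conclude that the entire closed interval between $p$ and $q$ lies in $U$.

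Next, since $V$ is an open interval with $p \in V$ and $q \notin V$, by intermediate-value reasoning this sub-interval must meet $\partial V$; let $r$ be any such intersection point. By construction $r \in U$, and since $U \subset T^{-n}(V)$, it follows that $T^n(r) \in V = \Int(V)$. On the other hand $r \in \partial V$ gives $T^n(r) \in T^n(\partial V)$. Hence
$$
T^n(r) \in \Int(V) \cap T^n(\partial V),
$$
which contradicts the niceness condition
$$
\Int(V) \cap \bu_{n=0}^\infty T^n(\partial V) = \emptyset
$$
from Definition~\ref{d2mi4}. That contradiction forces the dichotomy claimed.

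The only mildly delicate point is verifying that the component $U$ is genuinely an interval (so that connectedness translates into the presence of the sub-interval joining $p$ and $q$), which follows from the fact that $T$ is continuous and $V$ is an open interval in $\mathbb R$, so $T^{-n}(V)$ is open and its connected components are open intervals. The case $n=0$ is immediate since then $U$ is itself a component of $V$ and lies in $V$ by definition. I do not foresee a genuine obstacle here; the argument is essentially a formal consequence of the definition of nice set, and its importance is as a structural lemma enabling the Markov-style inducing constructions used in the subsequent sections.
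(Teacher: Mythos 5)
Your proof is correct and is precisely the routine connectedness argument that the paper omits (it merely asserts that the property follows ``from their definitions''). The only implicit hypothesis you use is that $V$ is open, so that $V=\Int(V)$ and the segment in $U$ joining $p\in V$ to $q\notin V$ must contain a point of $\partial V$; this holds for the nice sets actually produced in Theorem~\ref{t3mi4}, which are open intervals, so there is no gap in the intended setting.
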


From now on throughout this section we assume that $T: I \to I$ is a 
tame teTCE map.  Fix a point $\xi \in I \backslash \overline {PC(T)}$.
By virtue of Theorem \ref{t1mi4} there is a nice set $V$ such that 
$$
\xi \in V
\hbox{ and  }
2V \cap
\overline {\PC(T)} = \emptyset.
$$
The nice set $V$  canonically gives rise to a countable alphabet conformal iterated function system in the sense considered in the previous sections of the present paper. Namely, put 
$$
\hbox{\rm Comp}_*(V)=\bu_{n=1}^\infty\Comp(f^{-n}(V)).
$$
For every $U\in \hbox{\rm Comp}_*(V)$ let $\tau_V(U)\ge 1$ the unique integer $n\ge 1$
such that $U\in \hbox{\rm Comp}(f^{-n}(V))$. Put further
$$
\phi_U:=f_U^{-\tau_V(U)}:V\to U
$$
and keep in mind that
$$
\phi_U(V)=U.
$$
Denote by $E_V$ the subset of all elements $U$ of $\hbox{\rm Comp}_*(V)$ such that 

\begin{enumerate}
\item[(a)] $\phi_U(V)\subset V$, 
\item[(b)] $f^k(U)\cap V=\emptyset$ \, for all \, $k=1,2,\ld,\tau_V(U)-1$.
\end{enumerate}
 The collection 
$$
\mathcal S_V:=\{\phi_U:V\to V\}
$$
of all such inverse branches forms obviously an iterated function system in the sense considered in the previous sections of the present paper. In other words the elements of $\cS_V$ are formed by all  inverse branches of the first return map $f_V:V\to V$. In particular, $\tau_V(U)$ is the first return time of all points in $U=\phi_U(V)$ to $V$. We define the function $N_V:E_V^\infty\to\N_1$ by setting
$$
N_V(\omega):=\tau_V(\omega_1).
$$
Let 
$$
\pi_V:E_V^\infty\to\mathbb R
$$
be the canonical projection induced by the iterated function system $\cS_V$. Let
$$
J_V:= \pi_V\(E_V^\infty)
$$
be the limit set of the system $\cS_V$. Clearly
$$
J_V\sbt I.
$$
It is immediate from our definitions that
$$
\tau_V(\pi(\omega))=N_V(\omega)
$$
for all $\omega\in E_V^\N$.

We shall now prove the following.

\begin{prop}\label{p2mi6}
Let $T: I \to I$ be a tame {\rm teTCE} map.
Let $\psi: I \to \mathbb R$ be an acceptable potential. Then
\begin{enumerate}
\item[(a)]
$\widetilde \psi_V:= \psi_V \circ \pi_V 
- P(\psi) N_V: E^{\mathbb N} \to \mathbb R$
is a summable H\"older continuous potential;
\item[(b)]  $P(\sigma, \widetilde \psi_V)=0$ for the pressure 
for  the shift map $\sigma: E_V^{\mathbb N} \to E_V^{\mathbb N}$;
\item[(c)] $\mu_{\phi, V} = \mu_{\widetilde \psi_V} \circ \pi_V^{-1}$,
where $\mu_{\widetilde \psi_V}$ is the equilibrium state for $\widetilde \psi_V$
and the shift map $\sigma: E_V^{\mathcal N} \to E_V^{\mathcal N}$;
\item[(d)] In addition, $\psi_V$ is the amalgamated function of a summable H\"older continuous system of functions.
\end{enumerate}
\end{prop}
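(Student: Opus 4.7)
The plan is to run the standard ``Abramov/Kac'' inducing argument, but carefully verifying that the thermodynamic quantities descend to the symbolic IFS $\cS_V$ in the way claimed. Throughout, the key inputs are the Bounded Distortion Property (Theorem~\ref{t1mi4}) applied to the inverse branches $\phi_U$ (which are well-defined on the neighborhood $2V$ of $V$ since $2V\cap\ov{\PC(T)}=\es$), the Exponential Shrinking Property (Theorem~\ref{t1mi2.1}) that gives $\diam(\phi_\om(V))\le Ce^{-\g|\om|}$ for admissible words $\om$, and the existence of the conformal measure $m_\psi$ together with its $T$-invariant equivalent $\mu_\psi$ provided by Theorem~\ref{t1mi3}.

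First I would prove item (d), which then quickly gives the H\"older part of (a). Define, for each $U\in E_V$, the function $f^{(U)}:V\to\R$ by $f^{(U)}(x):=\sum_{j=0}^{\tau_V(U)-1}\psi\(T^j(\phi_U(x))\)$. Then clearly $f^{(U)}(\pi_V(\sg\om))=\psi_V(\pi_V(\om))$ for $\om\in E_V^\infty$ with $\om_1=U$, so $\psi_V\circ\pi_V$ is the amalgamated function of the system $\{f^{(U)}\}_{U\in E_V}$. H\"older continuity of each $f^{(U)}$ with a uniform constant follows from Lipschitz continuity of $\psi$ combined with the Bounded Distortion Property applied to the composition $T^j\circ\phi_U$ (each intermediate iterate stays in a forward image of $\phi_U(V)\subset V$, whose diameter is controlled by (ESP)). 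The same argument shows that $\psi_V\circ\pi_V$ itself is H\"older with respect to $d_\th$ for appropriate $\th$, and $N_V$ is trivially H\"older since it is constant on each initial cylinder $[U]$; this gives the H\"older half of (a).

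For the summability half of (a), I would compare $\exp(\sup\widetilde\psi_V|_{[U]})$ to $m_\psi(U)$. Because $\cL_\psi^* m_\psi=e^{\P(\psi)}m_\psi$ and $T^{\tau_V(U)}:U\to V$ is a diffeomorphism, iterating the eigenmeasure equation and invoking Bounded Distortion yields
\[
m_\psi(U)\asymp \exp\bigl(\sup(\psi_V\circ\pi_V)|_{[U]}-\P(\psi)\tau_V(U)\bigr)\cdot m_\psi(V),
\]
with constants independent of $U$. Summing over $U\in E_V$ and using that the sets $U=\phi_U(V)$ are pairwise disjoint subsets of $V$ gives
\[
\sum_{U\in E_V}\exp\bigl(\sup\widetilde\psi_V|_{[U]}\bigr)\asymp \frac{1}{m_\psi(V)}\sum_{U\in E_V}m_\psi(U)\le \frac{m_\psi(V)}{m_\psi(V)}=1,
\]
establishing summability. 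This same chain of comparisons shows that $m_\psi|_V$, suitably normalized, is a conformal measure for the IFS $\cS_V$ with potential $\widetilde\psi_V$ and multiplier $1$, which by the standard characterization (see \cite{GDMS}) forces $\P(\sg,\widetilde\psi_V)=0$, proving (b).

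Finally, for (c), I would use that $\rho_\psi m_\psi=\mu_\psi$ with $\rho_\psi$ log-bounded (Theorem~\ref{t1mi3}(g)); this, combined with conformality of $m_\psi|_V$ for $\widetilde\psi_V$, shows that the projection $\mu_{\widetilde\psi_V}\circ\pi_V^{-1}$ is a $T_V$-invariant probability measure on $V$ equivalent to $m_\psi|_V$ and satisfying the Gibbs inequality. By uniqueness of the first-return invariant measure equivalent to $m_\psi|_V$, which is precisely $\mu_{\psi,V}=\mu_\psi(V)^{-1}\mu_\psi|_V$, one concludes $\mu_{\psi,V}=\mu_{\widetilde\psi_V}\circ\pi_V^{-1}$. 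The main obstacle is the summability estimate in (a): it requires chaining Bounded Distortion through compositions of arbitrarily many inverse branches that pass near (but not into) $\ov{\PC(T)}$, and it is here that the tameness hypothesis, the choice of $V$ with $2V\cap\ov{\PC(T)}=\es$, and niceness of $V$ (via Theorem~\ref{t1mi6}, ensuring that the inverse branches indexed by $E_V$ are genuinely the first-return branches and therefore give disjoint images in $V$) must all be used simultaneously.
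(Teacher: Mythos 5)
Your proposal is correct and follows essentially the same route as the paper: both construct the H\"older system of functions $\{(\psi_V-\P(\psi)\tau_V)\circ\phi_U\}_{U\in E_V}$ whose amalgamated function is $\widetilde\psi_V$, derive summability and zero pressure from conformality of $m_\psi$ restricted to $V$, and obtain (c) from invariance, equivalence to $\mu_{\psi,V}$, and ergodicity (your ``uniqueness of the first-return invariant measure'' is exactly this). The only point to tidy is that the \emph{summable} system in (d) must carry the $-\P(\psi)\tau_V(U)$ correction (constant on each cylinder), since without it the sum $\sum_U\exp(\sup f^{(U)})$ need not converge when $\P(\psi)>0$; your summability computation already works with the corrected potential, so this is purely a matter of stating (d) for $\widetilde\psi_V$ rather than for $\psi_V\circ\pi_V$.
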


\begin{proof}
H\"older continuity of $\widetilde \psi_V$ follows directly from 
Theorem~\ref{t1mi2.1} (the Exponential Shrinking Property) and the fact that the function $N_V$ is constant on cylinders of length $1$.
H\"older continuity of $\widetilde \psi_V$ follows directly from 
Theorem \ref{t1mi2.1} (the Exponential Shrinking Property) and the fact that the function $N_V$ is constant on cylinders of length $1$.
We define a H\"older continuous system of functions $G = \{ g^{(l)}: V \to \mathbb R\}_{e\in E}$  by putting
$$
g^{(e)} := \(\psi_V - \P(\phi)\tau_V\) \circ \phi_e, \ e \in E.
$$
Theorem \ref{t1mi3} then implies 
the system $G$ is summable, $\P(G)=0$, and $m_{\psi,V}$ is the unique $G$-conformal measure for the IFS $\mathcal S_V$. 
According to \cite{GDMS}, $g:E_V^\N\to\R$, the amalgamated function of $G$ is defined by the formula 
$$
\begin{aligned}
g(\om)
&=g^{(\om_1)}(\pi_V(\sg(\om)))
=\psi_V\circ\phi_{\om_1}(\pi_V(\sg(\om)))-\P(\psi)\tau_V\circ\phi_{\om_1} 
    (\pi_V(\sg(\om))) \\
&=\psi_V\circ\pi_V(\om)-\P(\psi)N_V(\om) \\
&=\tilde\psi_V(\om).
\end{aligned}
$$
By Proposition~3.1.4 in \cite{GDMS} we thus have that
$$
\P\(\sg,\tilde\psi_V\)=\P(G)=0.
$$
Now, since $\pi_V\circ\sg=T_V\circ\pi_V$, i.e. since the dynamical system $T_V:J_V\to J_V$ is a factor of the shift map $\sg:E_V^\N\to E_V^\N$ via the map $\pi_V:E_V^\N\to J_V$, we see that $\mu_{\tilde\psi_V}\circ\pi_V^{-1}$ is a Borel $f_V$-invariant probability measure on $J_V$ equivalent to $m_{\tilde\psi_V}\circ\pi_V^{-1}=m_g\circ\pi^{-1}=m_G=m_{\psi,V}$. Since $m_{\psi,V}$ is equivalent to $\mu_{\psi,V}$, we thus conclude that the measures $m_{\tilde\psi_V}\circ\pi_V^{-1}$ and $\mu_{\psi,V}$ are equivalent. Since both these measures are $T_V$-invariant and $\mu_{\psi,V}$ is ergodic, they must be equal. The proof is thus complete.
\epf

Since $\pi_V:E_V^\N\to J_V=V_\infty$, where, we recall the latter is the set of points returning infinitely often to $V$, is a measurable isomorphism sending the $\sg$-invariant measure $\mu_{\tilde\psi_V}$ to the $f_V$-invariant probability measure $\mu_{\psi,V}$, by identifying the sets $E_V^\N$ and $V_\infty(=J_V)$, we can prove the following.

\blem\label{l1ma5-T}
With all the hypotheses of Proposition~\ref{p2mi6}, 
the pentade $(I,T,V,\tilde\psi_V,\mu_{\tilde\psi_V})$ is
an {\rm SRT} system having exponential tail decay {\rm(ETD)}, where we recall that $V_\infty$ is identified with $E_V^\N$, $\tilde\psi_V$ is identified with $\psi_V-\P(\psi)\tau_V$, and $\mu_{\tilde\psi_V}$ is identified with $\mu_{\psi,V}$.
\elem

\bpf
By virtue of Proposition~\ref{p2mi6} and Observation~\ref{o2_2016_07_14} we only need to prove that the pentade $(I,T,V,\tilde\psi_V,\mu_{\tilde\psi_V})$ has exponential tail decay {\rm(ETD)}. We can assume without loss of generality that $\psi:I \to\R$ is normalized so that 
$$
\P(\psi)=0 \  \  \and  \  \  m_\psi=\mu_\psi.
$$
For every $n \ge 1$ denote by $\cC_V(n)$ all the connected components of $T^{-n}(V)$. Then define
$$
\cC_V^0(n)
:=\big\{U\in \cC_V(n):\forall_{(0\le k\le n-1)} \, T^k(U)\cap V=\es\big\}
$$
and 
$$
\cC_V^*(n):=\{U\in \cC_V(n):U\sbt V\}
=\{U\in \cC_V(n):U\cap V\ne\es\}.
$$
Since the map $T:I\to I$ is topologically exact, there exists an integer $q\ge 1$ such that
$$
T^q(V)\spt I.
$$
Therefore for every $e\in \cC_V(n)$ there exists (at least one) $\hat e\in\cC_V^*(n+q)$ such that 
$$
T^q\circ\phi_{\hat e}=\phi_e.
$$
By conformality of the measure $\mu_\psi$, for every $e\in \cC_V(n)$, we have
$$
\mu_\psi\(\phi_{\hat e}(V)\)
\ge \exp(-q||\psi||_\infty)\mu_\psi(\phi_e(V)).
$$
So, since
$$
\bu_{a\in\cC_V^0(n+q)}\phi_a(V)
\sbt \bu_{{b\in\cC_V(n+q)\atop T^q\circ \phi_b\in\cC_V^0(n)}}\phi_b(V) \sms
\bu_{e\in\cC_V^0(n)}\phi_{\hat e}(V),
$$ 
we therefore get
$$
\begin{aligned}
\mu_\psi\lt(\bu_{a\in\cC_V^0(n+q)}\phi_a(V)\rt)
&\le \mu_\psi\lt(\bu_{{b\in\cC_V(n+q)\atop T^q\circ 
    \phi_b\in\cC_V^0(n)}}\phi_b(V) \sms\bu_{e\in\cC_V^0(n)}
    \phi_{\hat e}(V)\rt)\\
&= \mu_\psi\lt(\bu_{{b\in\cC_V(n+q)\atop f^q\circ 
    \phi_b\in\cC_V^0(n)}}\phi_b(V)\rt) - 
    \mu\lt(\bu_{e\in\cC_V^0(n)}\phi_{\hat e}(V)\rt) \\
&= \mu_\psi\lt(T^{-q}\lt(\bu_{c\in\cC_V^0(n)}\phi_c(V)\rt)\rt)-
   \sum_{e\in\cC_V^0(n)}\mu_\psi\(\phi_{\hat e}(V)\) \\
&= \mu_\psi\lt(\bu_{c\in\cC_V^0(n)}\phi_c(V)\rt)-
   \sum_{e\in\cC_V^0(n)}\mu_\psi\(\phi_{\hat e}(V)\) \\
&\le \mu_\psi\lt(\bu_{c\in\cC_V^0(n)}\phi_c(V)\rt)-
   \exp(-q||\psi||_\infty)\sum_{e\in\cC_V^0(n)}\mu_\psi\(\phi_e(V)\) \\
&=\g\mu_\psi\lt(\bu_{c\in\cC_V^0(n)}\phi_c(V)\rt),
\end{aligned}
$$
where $\g:=1-\exp(-q||\psi||_\infty)\in[0,1)$. An immediate induction then yields
$$
\mu_\psi\lt(\bu_{e\in\cC_V^0(n)}\phi_e(V)\rt)\le \g^{-1}\g^{n/q}
$$
for all $n\ge 0$. But, as
$$
E_V^{-1}([n,+\infty])
=E_V^{-1}(\{+\infty\})\cup\bu_{k=n}^\infty\bu_{e\in\cC_V^0(k)}\phi_e(V)
$$
and since $\mu_\psi\(E_V^{-1}(\{+\infty\})\)=0$ by ergodicty of $\mu_\psi$ and of $\mu_\psi(V)>0$, we therefore get that
\beq\label{1_2016_07_14}
\mu_\psi\(E_V^{-1}([n,+\infty])\)
\le \(\g(1-\g^{1/q})\)^{-1}\g^{n/q}
\eeq
for all $n\ge 0$. This just means that the pentade $(I,T,V,\tilde\psi_V,\mu_{\tilde\psi_V})$ has exponential tail decay {\rm(ETD)},
and the proof is complete.  
\epf

\sp\fr Denote by $I_R(T)$ the set of all recurrent points of $T$ in $I$. Formally
$$
I_R(T):=\{z\in I:\varliminf_{n\to\infty}|T^n(z)-z|=0\}.
$$
Of course $I_R(T)\sbt J_T$ and $\mu_\psi(I\sms I_R(T))=0$ because of Poincar\'e's Recurrence Theorem. The set $I_R(T)$ is significant for us since
$$
I_R(T)\cap V\sbt J_V.
$$
Now we can harvest the fruits of the work we have done. As a direct consequence of Theorem~\ref{t1fp83}, Theorem~\ref{t3_2016_05_27}, Proposition~\ref{p2mi6}, Lemma~\ref{l1ma5-T}, Lemma~\ref{l1ld7}, and Theorem~\ref{t2rmr3}, we get the following two results.

\bthm\label{t5_2016_05_31}
Let $T: I \to I$ be a tame {\rm teTCE} map.
Let $\psi:  I \to \mathbb R$ be an acceptable potential.
Let $z \in I_R(T) \backslash \overline{\PC(T)}$.  

Assume that the equilibrium state $\mu_\psi$ is {\rm(WBT)} at $z$. Then
$$
\begin{aligned}
\lim_{\ep\to 0}\frac{\un R_{\mu_\psi}(B(z,\ep))}{\mu_\psi(B(z,\ep))}
&=\lim_{\ep\to 0}\frac{\ov R_{\mu_\psi}(B(z,\ep))}{\mu_\psi(B(z,\ep))}= \\
&=\begin{cases}
  &{\rm if} \ z \ \text{{\rm is not any periodic point of }} T, \\
1-\exp\(S_p\psi(z)-p\P(f,\psi)\) &{\rm if} \ z  \ \text{{\rm is a periodic point of }} T.  
\end{cases}
\end{aligned}
$$
\ethm

\sp

\bthm\label{t1mi7}
Let $T: I \to I$ be a tame {\rm teTCE} map.
Let $\psi:  I \to \mathbb R$ be an acceptable potential. 
Then
$$
\lim_{\ep\to 0}\frac{\un R_{\mu_\psi}(B(z,\ep))}{\mu_\psi(B(z,\ep))}
=\lim_{\ep\to 0}\frac{\ov R_{\mu_\psi}(B(z,\ep))}{\mu_\psi(B(z,\ep))}
=1
$$
for $\mu_\psi$--a.e. point $z\in I$. 
\ethm


\begin{dfn}\label{d2mi7}
A multimodal map $T: I \to I$  is called \emph{subexpanding} if
$$
\hbox{\rm Crit}(T) \cap \overline{\PC(T)} = \emptyset.
$$
\end{dfn}

It is not hard to see (good references for a proof can be found in
\cite{PR-Geometric}) that the following it true.

\begin{prop}\label{p3mi7}
Any topologically exact multimodal subexpanding map of the interval $I$
is a tame teTCE map.
\end{prop}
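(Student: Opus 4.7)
\emph{Proposal.} The tameness part is essentially immediate from the definition. Since $\cS = \hbox{\rm Crit}(T)$ is a non-empty finite set (the map is multimodal and topologically exact), and since $\hbox{\rm Crit}(T)\cap\overline{\PC(T)}=\emptyset$ by the subexpanding assumption, I would choose an open neighborhood $U$ of $\hbox{\rm Crit}(T)$ with $\overline U\cap\overline{\PC(T)}=\emptyset$. Then $U$ is a non-empty open subset of $I$ disjoint from $\overline{\PC(T)}$, so $\overline{\PC(T)}\neq I$, which is exactly tameness.

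The substance of the proof is showing that $T$ satisfies the Topological Collet--Eckmann condition (d) of Definition \ref{dami1}, i.e.\ uniform expansion at periodic points. My plan would be to invoke the classical hyperbolicity theorem of Ma\~n\'e for $C^2$ interval maps with non-flat critical points: for any compact forward-invariant set $K\subset I$ disjoint from $\hbox{\rm Crit}(T)$ and containing no non-repelling periodic orbit, there exist $C>0$ and $\lambda>1$ with $|(T^n)'(x)|\ge C\lambda^n$ for every $x\in K$ and $n\ge 0$. Applying this to $K=\overline{\PC(T)}$ (which is forward-invariant, compact, and by the subexpanding hypothesis disjoint from $\hbox{\rm Crit}(T)$; topological exactness rules out non-repelling cycles inside it) yields uniform expansion on $\overline{\PC(T)}$.

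The crux is then to promote this to uniform expansion at every periodic point. Choose a small open neighborhood $U$ of $\hbox{\rm Crit}(T)$ with $\overline U\cap \overline{\PC(T)}=\emptyset$ and so small that the Ma\~n\'e estimate above applies on $I\setminus U$ with constants $C,\lambda$. For a periodic orbit $\mathcal O=\{x,T(x),\ldots,T^{n-1}(x)=x\}$ that avoids $U$ we are immediately done. For an orbit that meets $U$, I would split its itinerary into maximal excursions outside $U$ separated by single visits to $U$; on each excursion Ma\~n\'e gives exponential expansion, while a visit to $U$ is followed by landing on $T(U)$, which lies in a fixed neighborhood of $T(\hbox{\rm Crit}(T))\subset \overline{\PC(T)}$, hence, after one step, inside the uniformly expanding set. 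Thus each pass near a critical point contributes a bounded multiplicative loss $|(T^k)'|$ over a bounded number $k$ of iterates (the time to escape a definite neighborhood of $\overline{\PC(T)}$), which can be absorbed into the exponential gain from the surrounding expanding segments by the non-flatness hypothesis on critical points. A careful bookkeeping of how many times the period-$n$ orbit can visit $U$ (at most $O(n)$, but the losses are uniformly bounded per visit while the gains between visits are exponential) gives $|(T^n)'(x)|^{1/n}\ge\lambda_0>1$ uniformly in $x$ and $n$, which is exactly condition (d).

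The main obstacle will be the last step: controlling the derivative contribution from the bounded but possibly frequent passages through the critical neighborhood $U$. The standard way around this uses non-flatness of critical points (to compare $|T'|$ near $c\in\hbox{\rm Crit}(T)$ with a power of the distance to $c$) together with the fact that, because $\hbox{\rm Crit}(T)\cap\overline{\PC(T)}=\emptyset$, after any visit to $U$ the orbit must spend a definite (uniformly bounded below) number of iterates in the region of uniform expansion before it can return to $U$. Once this balance of gains and losses is made quantitative, uniform expansion on periodic orbits follows, completing the verification that $T$ is a teTCE map. As noted in the excerpt, detailed arguments of this type are carried out in \cite{PR-Geometric}, to which I would refer for the technical estimates rather than reproducing them in full.
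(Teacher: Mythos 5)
The paper itself offers no argument for this proposition beyond the phrase ``it is not hard to see'' and a pointer to \cite{PR-Geometric}, so there is no in-paper proof to compare against; I can only assess your outline on its own terms. Your tameness argument is correct and can be shortened further: since $T$ is multimodal, $\Crit(T)\ne\es$, and the subexpanding hypothesis $\Crit(T)\cap\ov{\PC(T)}=\es$ already exhibits a point of $I$ outside $\ov{\PC(T)}$. For the TCE part, however, you are taking a longer road than necessary, and the long road as you describe it has a quantitative error. The shorter road: Ma\~n\'e's theorem (see \cite{MeSt}) applied to the compact forward-invariant set $\ov{\PC(T)}$, which is disjoint from $\Crit(T)$ and, by topological exactness, contains no non-repelling cycle, gives $|(T^n)'(T(c))|\ge C\lam^n$ for every $c\in\Crit(T)$ --- that is, the Collet--Eckmann condition. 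The paper records, right after Theorem~\ref{t1mi2.1}, that Collet--Eckmann implies the Exponential Shrinking Property, which by that theorem is equivalent to condition (d) of Definition~\ref{dami1}. No analysis of arbitrary periodic orbits is needed.

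If you do insist on verifying uniform expansion along periodic orbits directly, the step that fails is the claim that ``each pass near a critical point contributes a bounded multiplicative loss $|(T^k)'|$ over a bounded number $k$ of iterates.'' Neither quantity is bounded: a periodic orbit may pass arbitrarily close to $c\in\Crit(T)$, so the one-step derivative there is arbitrarily small (of order $\dist(x,c)^{\ell-1}$ by non-flatness), and the time the subsequent orbit spends shadowing $\ov{\PC(T)}$ before it can return to the critical neighborhood is correspondingly unbounded. The argument only closes because these two unboundednesses are coupled: an entry at distance $\d$ from $c$ forces the orbit of $T(x)$ to track the uniformly expanding set $\ov{\PC(T)}$ for roughly $\log(1/\d)$ iterates before it can separate from it (separation is necessary to re-enter $U$, since $U\cap\ov{\PC(T)}=\es$), and the resulting derivative gain $\lam^{c\log(1/\d)}=\d^{-c\log\lam}$ must be checked to dominate the loss $\d^{\ell-1}$; this is where a genuine estimate, not absorbed by ``bounded loss versus exponential gain,'' is required. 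Your final paragraph gestures at the right mechanism but asserts only that the post-visit expanding stretch has length ``uniformly bounded below,'' which is too weak to beat an arbitrarily small factor $\d^{\ell-1}$. Either supply the logarithmic lower bound on the shadowing time, or --- better --- bypass the issue entirely via the Collet--Eckmann route above.
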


Let us quote another well-known result which can be found, for example, in the book of de Melo and van Strien \cite{MeSt}.

\begin{thm}\label{t1mi8}
If $T: I \to I$  is a topologically exact multimodal subexpanding map,
then there exists a unique Borel probability $T$-invariant measure $\mu$
absolutely continuous with respect to Lebesgue measure $\lambda$.
In fact, 
\begin{itemize}
\item[(a)] $\mu$ is equivalent to $\lambda$ and (therefore) 

\item[(b)] has full topological support.  

\item[(c)] The Radon--Nikodym derivative $\frac{d\mu}{d\lam}$ is uniformly bounded above and separated from zero on the complement of every fixed neighborhood of $\ov{\PC(T)}$.

\item[(d)] $\mu$ is ergodic, even $K$-mixing, 

\item[(e)] $\mu$ has Rokhlin's natural extension metrically isomorphic to some two sided Bernoulli shift and

\item[(f)] $\mu$ charges with full measure both topologically transitive and radial points of $T$.
\end{itemize}
\end{thm}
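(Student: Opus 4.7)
The plan is to reduce the construction of $\mu$ to the first return map theory developed in Section~\ref{FRM} and the symbolic thermodynamic formalism for countable alphabet conformal IFSs. Since $T$ is subexpanding, $\ov{\PC(T)}\cap\Crit(T)=\es$, and by Proposition~\ref{p3mi7} the map $T$ is a tame {\rm teTCE}. Hence by Theorem~\ref{t3mi4} we may fix a point $\xi\in I\sms\ov{\PC(T)}$ together with an arbitrarily small nice interval $V$ around $\xi$ satisfying $2V\cap\ov{\PC(T)}=\es$. The first return map $T_V:V_\infty\to V_\infty$ is then induced, exactly as in Section~\ref{Interval_Maps}, by a countable alphabet conformal IFS $\cS_V=\{\phi_U:V\to V\}_{U\in E_V}$ whose generators are inverse branches of iterates of $T$ extending conformally to $2V$ with uniform Koebe bounded distortion via Theorem~\ref{t1mi4}.

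First I would produce a $T_V$-invariant probability $\nu$ on $V_\infty$ equivalent to Lebesgue on $V$ by applying the symbolic thermodynamic formalism to the geometric potential $\zeta(\om):=-\log|\phi_{\om_0}'(\pi_V(\sg(\om)))|$ on $E_V^\N$. The disjointness of $\{\phi_U(V)\}_{U\in E_V}$ inside $V$ together with the bounded distortion property immediately yields $\sum_{U\in E_V}\|\phi_U'\|<+\infty$, so $\zeta$ is H\"older continuous and summable, and an elementary bound on the partition functions shows $\P(\sg,\zeta)=0$. The unique equilibrium/Gibbs state $\mu_\zeta$ from \cite{GDMS} then projects via $\pi_V$ to the desired $\nu$, and the Gibbs inequality combined with bounded distortion forces $d\nu/d\Leb|_V$ to be log-bounded on $V$.

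Next I would lift $\nu$ to a $T$-invariant measure by the standard Kakutani skyscraper construction,
\[
\tilde\mu(A):=\sum_{n=0}^\infty\nu\(\{x\in V_\infty:\tau_V(x)>n,\ T^n(x)\in A\}\),
\]
and normalise by $\tilde\mu(I)=\int\tau_V\,d\nu$; finiteness of the latter is a standard exponential tail estimate, obtained by the combinatorial argument used in the proof of Lemma~\ref{l1ma5-T} with Lebesgue replacing $\mu_\psi$ and using uniform expansion of $T$ outside a neighborhood of $\Crit(T)$ (guaranteed by subexpansion). Setting $\mu:=\tilde\mu/\tilde\mu(I)$ yields a $T$-invariant Borel probability absolutely continuous with respect to Lebesgue. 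On any compact $K\sbt I\sms\ov{\PC(T)}$, topological exactness produces, uniformly in $n$, finitely many inverse branches of $T^n|_K$ landing in $V$ whose distortions are controlled by Theorem~\ref{t1mi4}; combined with log-boundedness of $d\nu/d\Leb|_V$ this forces $d\mu/d\Leb$ to be uniformly bounded above and uniformly bounded away from zero on $K$, proving (a), (b), and (c).

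Finally, the remaining ergodic properties transfer through the skyscraper from the symbolic system. Uniqueness is immediate: any acim would induce an absolutely continuous $T_V$-invariant probability on $V$, which by uniqueness of Gibbs states for summable H\"older potentials on finitely primitive subshifts (\cite{GDMS},\cite{MU_Israel}) must coincide with $\nu$. Ergodicity, exactness and the $K$-property of $(\sg,\mu_\zeta)$ propagate through the tower, delivering (d); Ornstein theory then upgrades the resulting $K$-automorphism of finite entropy to a Bernoulli natural extension, giving (e). Item (f) reduces by ergodicity and (a) to standard topological arguments, since $\mu$-a.e.\ $x$ has dense orbit which avoids $\ov{\PC(T)}$ infinitely often. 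The hardest step, in my view, is the uniform \emph{lower} density bound in (c): while the tower readily gives upper boundedness on compacts avoiding $\ov{\PC(T)}$, pinning down the lower bound requires using topological exactness uniformly together with bounded distortion along all inverse branches of iterates of $T$ that reach $V$, which is precisely where the separation $\Crit(T)\cap\ov{\PC(T)}=\es$ is used decisively.
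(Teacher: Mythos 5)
The paper does not prove Theorem~\ref{t1mi8} at all: it is explicitly quoted as a well-known classical result with a pointer to de Melo--van Strien \cite{MeSt}. So your write-up is necessarily a different route from the paper's -- you are supplying a proof where the authors supply a citation. Your strategy (induce on a nice set $V$ away from $\ov{\PC(T)}$, realize the first return map as a countable alphabet conformal IFS, obtain the Gibbs state of the geometric potential, lift through the Kakutani skyscraper, and push ergodicity/$K$/Bernoulli through the tower) is the standard modern proof of this theorem and is architecturally sound; it also fits naturally with the machinery the paper actually builds in Section~\ref{Interval_Maps}. What the citation buys the authors is brevity; what your argument buys is a self-contained derivation, and in particular item (c), which is not entirely standard in textbook statements and which you correctly identify as the delicate quantitative point.

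There is, however, one step that does not go through as written. You propose to obtain the exponential tail of the first entrance/return time with respect to Lebesgue ``by the combinatorial argument used in the proof of Lemma~\ref{l1ma5-T} with Lebesgue replacing $\mu_\psi$.'' That proof hinges on the uniform lower bound $\mu_\psi(\phi_{\hat e}(V))\ge \exp(-q\|\psi\|_\infty)\,\mu_\psi(\phi_e(V))$ coming from conformality of $m_\psi$ for a \emph{bounded} potential; for Lebesgue the relevant Jacobian is $|T'|$, and $\exp(-q\|{\log|T'|}\|_\infty)=\inf|(T^q)'|=0$ because of the critical points, so the contraction factor $\g=1-\exp(-q\|\psi\|_\infty)$ degenerates to $1$. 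This gap matters twice: first, Lebesgue is not $T$-invariant, so you cannot invoke Poincar\'e recurrence to conclude that $\Leb$-a.e.\ point of $V$ returns to $V$ infinitely often, yet this is exactly what you need both for $\P(\sg,\zeta)=0$ (your lower bound on the partition functions requires $\Leb\bigl(\bu_{|\om|=n}\phi_\om(V)\bigr)$ not to decay exponentially) and for the skyscraper over $V_\infty$ to capture full Lebesgue measure in $I$. The fix is available but must be made explicit: use the subexpanding hypothesis $\Crit(T)\cap\ov{\PC(T)}=\es$ together with a Ma\~n\'e-type hyperbolicity statement (uniform expansion on compact invariant sets avoiding a neighborhood of $\Crit(T)$) and bounded distortion of the inverse branches of $T^q$ that map onto $V$, choosing the branch $\phi_{\hat e}$ so that its range stays away from $\Crit(T)$; this restores a uniform positive constant in place of $\exp(-q\|\psi\|_\infty)$ and yields the exponential tail for $\Leb$. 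With that repaired, the rest of your argument (uniqueness via uniqueness of Gibbs states, $K$-mixing through the tower, Ornstein's theorem for (e), and ergodicity plus equivalence to $\lambda$ for (f)) is correct.
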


\fr As an immediate consequence of this theorem, particularly of its item (c), we get the following.

\bcor\label{c1_2016_07_06}
If $T:I \to I$  is a topologically exact multimodal subexpanding map,
then the $T$-invariant measure $\mu$ absolutely continuous with respect to Lebesgue measure $\lambda$ is (\WBT) at every point of $I\sms \ov{\PC(T)}$.
\ecor

\sp Passing to escape rates, by a small obvious modification (see \cite{PR-Geometric} for details) of the proof of Theorem \ref{t3mi4} for all $c \in \hbox{\rm Crit}(T) \cup \{\xi\}$
there are arbitrarily small open intervals $V_c$, $c \in V_c$, such that
$V_c \cap \overline{PC}(T) = \emptyset$ and the 
collection $T_*^{-n}$, $n \geq 1$, of all continuous 
(equivalently smooth inverse branches of $T^n$) defined on $V_c$, $c \in \hbox{\rm Crit}(T) \cup \{ \xi\}$, and such that for some 
$c' \in \hbox{\rm Crit}(T) \cup \{ \xi\}$, 
$$
T_*^{-n}(V_c) = V_{c'}
$$
and 
$$ 
\bu_{k=1}^{n-1}  T^k(T_*^{-n}(V_c)) \cap \bu\big\{ V_z \hbox{ : } 
z \in \hbox{\rm Crit}(T) \cup \{ \xi\}\big\} = \emptyset
$$
forms a finitely primitive conformal GDS, which we will call $\mathcal S_T$,
whose limit set contains $\hbox{\rm Trans}(T)$.
Another characterization of $\mathcal S_T$ is that its elements 
are composed of continuous inverse branches of the first return map of $f$
from 
$$
V:= \bu \big\{V_z \hbox{ : } z \in \hbox{\rm Crit}(T) \cup \{ \xi\} \big\}
$$
to $V$. It has been proved in \cite{PR-Geometric} that $\HD(K(V)) < 1$.

 So, since by Theorem~\ref{t2had30}, $\lim_{r\to 0}\HD\(K(B(\xi,r))\)=h$, we conclude that
$$
\HD(K(V))<\HD\(K(B(\xi,r))\)
$$
for all $r>0$ small enough. Therefore, since $b_{\cS_T}=1$ and since $\mu_{h,V}=\mu_{b_{\cS_f}}$, applying Theorem~\ref{t2had30}, Corollary~\ref{t2_2016_05_27} and Corollary~\ref{c1rm3}, we get the following two theorems.

\begin{thm}\label{t2mi8}
Let $T: I \to I$  be a topologically exact multimodal subexpanding map.
Fix $\xi \in I \backslash \overline{\PC(T)}$. Assume that the parameter $1$ is powering at $\xi$ with respect to the conformal \GDS \ $\cS_T$. Then the following limit exists, is finite, 
and positive:
$$
\lim_{r \to 0}  \frac{1 - \HD(K_\xi(r))}{\mu(B(\xi,r))}.
$$ 
\end{thm}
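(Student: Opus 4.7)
The plan is to reduce the escape-rate problem for $T$ on the interval to the escape-rate problem for the induced conformal graph directed system $\cS_T$ constructed in the paragraph immediately preceding the theorem, and then to invoke Theorem~\ref{t2had30}. Let $V=\bigsqcup_{z\in \Crit(T)\cup\{\xi\}}V_z$ be the nice neighborhood supporting $\cS_T$, and let $T_V:V_\infty\to V_\infty$ be the first-return map. By Theorem~\ref{t1mi8}(c), the $T$--invariant measure $\mu$ is equivalent to Lebesgue measure with log--bounded density on $V$, and consequently $\mu_{b_{\cS_T}}\circ\pi_{\cS_T}^{-1}$ (the projection to $V$ of the equilibrium state of the geometric potential at $b_{\cS_T}$) coincides, up to the normalizing factor $\mu(V)$, with $\mu|_V$; in particular $b_{\cS_T}=1$, since Lebesgue is the $1$--conformal measure and $\HD(J_{\cS_T})=1$ (the limit set contains all transitive points, which carry full $\mu$--measure).

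Next I would verify the hypotheses of Theorem~\ref{t2had30} for $\cS_T$ at the point $\xi$. The powering assumption at parameter $b_{\cS_T}=1$ is given. Since $\xi\in I\sms\ov{\PC(T)}$, Corollary~\ref{c1_2016_07_06} shows $\mu$ is (WBT) at $\xi$; because $\mu_{b_{\cS_T}}\circ\pi_{\cS_T}^{-1}$ is a constant multiple of $\mu|_V$ near $\xi$ (and $\xi\in\Int V_\xi$), the (WBT) property transfers immediately to the projected symbol--space measure. For the dichotomy (a)/(b) of Theorem~\ref{t2had30}, note that pseudo-periodicity of $\xi$ for $\cS_T$ is equivalent to some $\phi_\omega(\xi)=\xi$ with $\omega\in E_V^*$, which forces $T^{\tau_V(\omega)}(\xi)=\xi$, i.e.\ $\xi$ is $T$--periodic. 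If $\xi$ is not $T$--periodic, we are in case (a); if it is periodic, then $\xi\in\Int V_\xi$ together with the open set condition and injectivity of $\pi_{\cS_T}$ at $\xi$ forces the unique periodic coding required by case (b). In either case Theorem~\ref{t2had30} yields a finite positive constant $c_\xi$ with
\[
\lim_{r\to 0}\frac{1-\HD(K^{\cS_T}_\xi(r))}{\mu_{b_{\cS_T}}\bigl(\pi_{\cS_T}^{-1}(B(\xi,r))\bigr)}=c_\xi,
\]
where $K^{\cS_T}_\xi(r)$ denotes the escaping set for $\cS_T$ from the ball $B(\xi,r)$.

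The key geometric step will be identifying $\HD(K_\xi(r))$ with $\HD(K^{\cS_T}_\xi(r))$ for all $r$ sufficiently small. Since $B(\xi,r)\sbt V$ for small $r$ and $T$ is locally bi-Lipschitz on $I\sms\Crit(T)$ (the critical set being avoided on the relevant dynamics by subexpandingness), Theorem~\ref{t1rm1} yields
\[
\HD(K_\xi(r))=\max\bigl\{\HD(K_{T_V}(B(\xi,r))),\ \HD(K(V))\bigr\}.
\]
The set $K_{T_V}(B(\xi,r))$ differs from $K^{\cS_T}_\xi(r)$ only by a set carried by the (countable) overlap locus of $\pi_{\cS_T}$ together with points not returning infinitely often, all of which have Hausdorff dimension strictly below $1$ (in fact no larger than $\HD(K(V))$). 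Combined with the displayed limit above this forces $\HD(K_{T_V}(B(\xi,r)))\to 1$ as $r\to 0$, while the constant $\HD(K(V))<1$ recorded in the paragraph preceding the theorem stays bounded away from $1$; hence for all $r$ small enough the maximum is realized by the first term and $\HD(K_\xi(r))=\HD(K^{\cS_T}_\xi(r))$ (Corollary~\ref{c1rm3}).

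Finally, the measure translation is immediate: for $r$ small, $\mu_{b_{\cS_T}}(\pi_{\cS_T}^{-1}(B(\xi,r)))=\mu(B(\xi,r))/\mu(V)$, so
\[
\frac{1-\HD(K_\xi(r))}{\mu(B(\xi,r))}=\frac{1}{\mu(V)}\cdot\frac{1-\HD(K^{\cS_T}_\xi(r))}{\mu_{b_{\cS_T}}\bigl(\pi_{\cS_T}^{-1}(B(\xi,r))\bigr)}\longrightarrow\frac{c_\xi}{\mu(V)}\in(0,+\infty),
\]
proving the theorem. The main obstacle I expect is the careful verification that the passage from $K^{\cS_T}_\xi(r)$ to $K_\xi(r)$ is lossless in Hausdorff dimension (i.e.\ that the measure-zero discrepancies between $K_{T_V}$ and $K^{\cS_T}_\xi$ do not contribute); this will rely on the strict inequality $\HD(K(V))<1$ from \cite{PR-Geometric} and on Corollary~\ref{c1rm3}, with the self-similar nature of $\cS_T$ ensuring the exceptional set is swept into a piece of dimension no bigger than $\HD(K(V))$.
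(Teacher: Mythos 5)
Your proposal is correct and follows essentially the same route as the paper: reduce to the induced conformal GDS $\cS_T$ built from inverse branches of the first return map to the nice set $V$, use $\HD(K(V))<1$ from \cite{PR-Geometric} together with Corollary~\ref{c1rm3} to identify $\HD(K_\xi(r))$ with the dimension of the escaping set of the induced system for small $r$, and then apply Theorem~\ref{t2had30} (with (WBT) supplied by Corollary~\ref{c1_2016_07_06} and $b_{\cS_T}=1$). The paper states this argument very tersely in the paragraph preceding the theorem; your write-up merely fills in the same steps in more detail.
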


\begin{thm}\label{t2mi8+1}
If $T: I \to I$  is a topologically exact multimodal subexpanding map, then for Lebesgue--a.e. point $\xi\in I\sms \ov{\PC(T)}$ the following limit exists, is finite and positive:
$$
\lim_{r \to 0}  \frac{1 - \HD(K_\xi(r))}{\mu(B(\xi,r))}.
$$ 
\end{thm}

\section{Escape Rates for Rational Functions of the Riemann Sphere}\label{Rational Functions}

Now, we will apply the results of sections 14 and 15 to two large classes of conformal dynamical systems in te complex plane: rational functions of the Riemann sphere $\oc$ in this section and, in the next section, transcendental meromorphic functions on $\C$. This section considerably overlaps in some of its parts with the previous section on the multimodal interval maps. We provide here its full exposition for the sake of coherent completeness and convenience of the readers not necessarily interested in interval maps.

\sp As said, now we deal with rational functions. Let $f:\oc\to\oc$ be a rational function of degree $d\ge 2$. Let $J(f)$ denote the Julia sets of $f$ and let
$$
\Crit(f):=\{c\in\oc:f'(c)=0\}
$$
be the set of all critical (branching) points of $f$. Put
$$
\PC(f):=\bu_{n=1}^\infty f^n(\Crit(f))
$$
and call it the postcritical set of $f$. The best understood and the easiest (nowadays) to deal with class of rational functions is formed by expanding (also frequently called hyperbolic) maps. The rational map $f:\oc\to\oc$ is said to be expanding if the restriction  $f|_{J(f)}: J(f) \to J(f)$ satisfies 
\beq\label{5_2016_07_07}
\inf\{|f'(z)|:z\in J(f)\} > 1
\eeq
or, equivalently, 
\beq\label{6_2016_07_07}
|f'(z)|>1
\eeq
for all $z\in J(f)$. Another, topological, characterization of expandingness is this.

\bfact
A rational function $f:\oc\to\oc$ is expanding if and only if 
$$
J(f)\cap\ov{\PC(f)}=\es.
$$
\efact
It is immediate from this characterization that all the polynomials $z\mapsto z^d$, $d\ge 2$, are expanding along with their small perturbations $z\mapsto z^d+\e$; in fact expanding rational functions are commonly believed to form a vast majority amongst all rational functions. This is known at least for polynomials with real coefficients. We however do not restrict ourselves to expanding rational maps only. We start with all rational functions, no restriction whatsoever, and then make some, weaker than hyperbolicity, appropriate assumptions.

Let $\psi:\oc\to\R$ be a H\"older continuous function, referred to in the sequel as potential. We say that $\psi:\oc\to\R$ has a pressure gap if
$$
n\P(\psi)-\sup\(\psi_n\)>0  
$$
for some integer $n\ge 1$, where $\P(\psi)$ denotes the ordinary topological pressure of $\psi|_{J(f)}$ and the Birkhoff's sum $\psi_n$ is also considered as restricted to $J(f)$. 

We would like to mention that \eqref{1_2016_07_07} always holds (with all $n\ge 0$ sufficiently large) if the function $f:\oc\to\oc$ restricted to its Julia set is expanding (also frequently referred to as hyperbolic).

The probability invariant measure we are interested in comes from the following.

\bthm[\cite{DU}]\label{t1ma1}
If $f:\oc\to\oc$ is a rational function of degree $d\ge 2$ and if $\psi:\oc\to\R$ is a H\"older continuous potential with a pressure gap, then $\psi$ admits a unique equilibrium state $\mu_\psi$, i.e. a unique Borel probability $f$-invariant measure on $J(f)$ such that
$$
\P(\psi)=\h_{\mu_\psi}(f)+\int_{J(f)}\psi\,d\mu_\psi.
$$
In addition, 

\sp\begin{itemize}
\item [(a)] the measure $\mu_\psi$ is ergodic, in fact K-mixing, and (see \cite{SUZ_I}) enjoys further finer stochastic properties.

\item [(b)] The Jacobian 
$$
J(f)\ni z\longmapsto \frac{d\mu_\psi\circ T}{d\mu_\psi}(z)\in (0,+\infty)
$$ 
is a H\"older continuous function.
\end{itemize}
\ethm

\fr In \cite{PU_tame} a rational function $f:\oc\to\oc$ was called tame if
$$
J(f)\sms \ov{\PC(f)}\ne\es.
$$
Likewise, following \cite{SkU}, we adopt the same definition for (transcendental) meromorphic functions $f:\C\to\oc$.

\begin{rem}
Tameness is a very mild hypothesis and there are many classes of maps foe which these hold.  These include:
\begin{enumerate}
\item Quadratic maps $z \mapsto z^2 + c$ for which the Julia set is not contained in the real line;
\item Rational maps for which the restriction to the Julia set is expansive which includes the case of expanding rational functions; and 
\item Misiurewicz maps, where the critical point is not recurrent.  
\end{enumerate}
\end{rem}

 In this paper the main advantage of dealing with tame functions is that these admit Nice Sets. Let us define and discuss them now.

Given a set $F\sbt\oc$ and $n\ge 0$,we denote  by $\Comp(f^{-n}(F))$ 
the collection of all connected components of $f^{-n}(F)$. J. Rivera-Letelier introduced in
\cite{Riv07} the concept of Nice Sets in the realm of the dynamics of
rational maps of the Riemann sphere. In \cite{Dob11} N. Dobbs proved
their existence for tame meromorphic functions from $\C$ to $\oc$. We
quote now his theorem.
\bthm\label{prop:1}
Let $f:\C\to\oc$ be a tame meromorphic function. Fix a non-periodic point $z\in J(f)\sms
\ov{\PC(f)}$, $\kappa>1$, and $K>1$. Then for all $L>1$ and 
for all $r>0$ sufficiently small there exists
an open connected set $V=V(z,r)\sbt\C\sms\ov{\PC(f)}$ such that
  \begin{itemize}
  \item[(a)] If $U\in \Comp(f^{-n}(V))$ and $U\cap V\neq \emptyset$, then 
    $U\subseteq V$.
  \item[(b)] If $U\in \Comp(f^{-n}(V))$ and $U\cap v\neq \emptyset$,
    then, for all $w,w'\in U,$ 
    \begin{displaymath}
      |(f^n)'(w)|\ge L
\  \textrm{ and } \
       \frac{|(f^n)'(w)|}{|(f^n)'(w')|}\le K. 
    \end{displaymath}
  \item[(c)] $\overline{B(z,r)}\subset U\subset B(z,\kappa r)\sbt\C\sms\ov{\PC(f)}$.
\end{itemize}
\ethm

\fr Each nice set canonically gives rise to a countable alphabet conformal iterated function system in the sense considered in the previous sections of the present paper. Namely, put 
$$
\Comp_*(V)=\bu_{n=1}^\infty\Comp(f^{-n}(V)).
$$
For every $U\in \Comp_*(V)$ let $\tau_V(U)\ge 1$ the unique integer $n\ge 1$
such that $U\in \Comp(f^{-n}(V))$. Put further
$$
\phi_U:=f_U^{-\tau_V(U)}:V\to U
$$
and keep in mind that
$$
\phi_U(V)=U.
$$
Denote by $E_V$ the subset of all elements $U$ of $\Comp_*(V)$ such that 

\sp\begin{itemize}

\item[(a)] $\phi_U(V)\sbt V$, 

\sp\item[(b)] $f^k(U)\cap V=\es$ \, for all \, $k=1,2,\ld,\tau_V(U)-1$.
\end{itemize}

\sp\fr The collection 
$$
\cS_V:=\{\phi_U:V\to V\}
$$
of all such inverse branches forms obviously a conformal iterated function system in the sense considered in the previous sections of the present paper. In other words the elements of $\cS_V$ are formed by all holomorphic inverse branches of the first return map $f_V:V\to V$. In particular, $\tau_V(U)$ is the first return time of all points in $U=\phi_U(V)$ to $V$. We define the function $N_V:E_V^\N\to\N_1$ by setting
$$
N_V(\om):=\tau_V(\om_1).
$$
Let 
$$
\pi_V:E_V^\N\to\oc
$$
be the canonical projection induced by the iterated function system $\cS_V$. Let
$$
J_V:\pi_V\(E_V^\N\)
$$
be the limit set of the system $\cS_V$. Clearly
$$
J_V\sbt J(f).
$$
It is immediate from our definitions that
$$
\tau_V(\pi(\om))=N_V(\om)
$$
for all $\om\in E_V^\N$. 

\sp Now, having in addition a H\"older continuous potential $\psi:\oc\to\R$ with pressure gap, we already know from the previous sections that $\mu_{\psi,V}$, the conditional measure of $\mu_\psi$ on $V$ is $f_V$-invariant and ergodic. 

\bdfn\label{d_ESP_Rat_Fun}
We say that the rational function $f:\oc\to\oc$ has the Exponential Shrinking Property (ESP) if there exist $\d>0$, $\g>0$, and $C>0$ such that if $z\in J(f)$ and $n\ge 0$, then 
\beq\label{2_2016_07_07}
\diam(W)\le Ce^{-\g n}
\eeq
for each $W\in \Comp\(f^{-n}(B(z,2\d))\)$. 
\edfn

\begin{rem}
This property has been throughly explored in the papers including \cite{PrzRiv07} and the references therein. These papers provide several different characterizations of Exponential Shrinking Property, most notably the one called Topological Collet-Eckmann; one of them being uniform hyperbolicity of periodic points in the Julia set. We do not recall any more of them here as we will only need (ESP). 

We now however articulate two standard sufficient conditions for (ESP) to hold. It is implied by the Collet-Eckmann condition which requires that there exist $\lambda > 1$ and $C>0$ such that for every integer $n \geq 0$ we have that
$$
|(f^n)'(f(c))| \geq C \lambda^n.
$$

If also suffices for (ESP) to hold to assume that a rational map is semi-hyperbolic, i.e., that no critical point $c$ in the Julia belongs to its own omega limit set $\om(c)$. This so for example, if $T$ is a classical unimodal map of the form $I \ni x \mapsto \lambda x (1-x)$, with 
$0 < \lambda \leq 4$ and the critical point $1/2$ is not in its own omega limit set, i.e., $1/2 \not\in \omega (1/2)$.  

Last observation: all expanding rational functions have the Exponential Shrinking Property (ESP).
\end{rem}

\fr We shall prove the following.

\bprop\label{p1ma3}
Let $f:\oc\to\oc$ be a tame rational function satisfying {\rm (ESP)}. Let $\psi:\oc\to\R$ be a H\"older continuous potential with pressure gap. If $V$ is a nice set for $f$, then

\begin{itemize}
\item[(a)]
$$
\tilde\psi_V:=\psi_V\circ\pi_V-\P(\psi)N_V:E_V^\N\to\R
$$
is a H\"older continuous potential,

\sp\item[(b)] $\P\(\sg,\tilde\psi_V\)=0$,

\sp\item[(c)] 
$$
\mu_{\psi,V}=\mu_{\tilde\psi_V}\circ\pi_V^{-1},
$$
where $\mu_{\tilde\psi_V}$ is the equilibrium/Gibbs state for the potential $\tilde\psi_V$ and the shift map $\sg:E_V^\N\to E_V^\N$.

\sp\item[(d)] In addition, $\tilde\psi_V$ is the amalgamated function of a summable 
H\"older continuous system of functions.
\end{itemize}
\eprop

\bpf
H\"older continuity of $\tilde\psi_V$ follows directly from (ESP) i.e Definition~\ref{d_ESP_Rat_Fun}, and the fact that the function $N_V$ is constant on cylinders of length one. Now, it follows from \cite{DU} that there exists a unique $\exp(\P(\psi)-\psi)$-conformal measure on $J(f)$, i.e. a Borel 
probability measure $m_\psi$ on $J(f)$ such that
$$
m_\psi\(f(A)\)=e^{\P(\psi)}\int_Ae^{-\psi}\,dm_\psi
$$
for every Borel set $A\sbt J(f)$ such that the map $f|_A$ is 1-to-1. In addition $m_\psi$ is equivalent to $\mu_\psi$ with logarithmically bounded H\"older continuous Radon-Nikodym derivative. It immediately follows from this formula that for every $e\in E_V$ and every Borel set $A\sbt V$, we have that
\beq\label{1ma4}
m_{\psi,V}\(\vp_e(A)\)=\int_A\exp\((\psi_V-\P(\psi)\tau_V)\circ\phi_e\)\,dm_{\psi,V},
\eeq
where $m_{\psi,V}$ is the conditional measure of $m_\psi$ on $V$. Now we define a H\"older continuous system of functions $G:=\{g^{(e)}:V\to\R\}_{e\in E}$ by putting
$$
g^{(e)}:=(\psi_V-\P(\psi)\tau_V)\circ\phi_e, \  \  e\in E_V.
$$
Formula \eqref{1ma4} thus means that the system $G$ is summable, $\P(G)=0$, and $m_{\psi,V}$ is the unique $G$-conformal measure for the IFS $\cS_V$. According to \cite{GDMS}, $g:E_V^\N\to\R$, the amalgamated function of $G$ is defined by the formula 
$$
\begin{aligned}
g(\om)
&=g^{(\om_1)}(\pi_V(\sg(\om)))
=\psi_V\circ\phi_{\om_1}(\pi_V(\sg(\om)))-\P(\psi)\tau_V\circ\phi_{\om_1} 
    (\pi_V(\sg(\om))) \\
&=\psi_V\circ\pi_V(\om)-\P(\psi)N_V(\om) \\
&=\tilde\psi_V(\om).
\end{aligned}
$$
By Proposition~3.1.4 in \cite{GDMS} we thus have that
$$
\P\(\sg,\tilde\psi_V\)=\P(G)=0.
$$
Now, since $\pi_V\circ\sg=f_V\circ\pi_V$, i.e. since the dynamical system $f_V:J_V\to J_V$ is a factor of the shift map $\sg:E_V^\N\to E_V^\N$ via the map $\pi_V:E_V^\N\to J_V$, we see that $\mu_{\tilde\psi_V}\circ\pi_V^{-1}$ is a Borel $f_V$-invariant probability measure on $J_V$ equivalent to $m_{\tilde\psi_V}\circ\pi_V^{-1}=m_g\circ\pi^{-1}=m_G=m_{\psi,V}$. Since $m_{\psi,V}$ is equivalent to $\mu_{\psi,V}$, we thus conclude that the measures $m_{\tilde\psi_V}\circ\pi_V^{-1}$ and $\mu_{\psi,V}$ are equivalent. Since both these measures are $f_V$-invariant and $\mu_{\psi,V}$ is ergodic, they must be equal. The proof is thus complete.
\epf

Since $\pi_V:E_V^\N\to J_V=V_\infty$, where, we recall the latter is the set of points returning infinitely often to $V$, is a measurable isomorphism sending the $\sg$-invariant measure $\mu_{\tilde\psi_V}$ to the $f_V$-invariant probability measure $\mu_{\psi,V}$, by identifying the sets $E_V^\N$ and $V_\infty(=J_V)$, we can prove the following.

\blem\label{l1ma5}
With the hypotheses of Proposition~\ref{p1ma3}, the pentade $(J(f),f,V,\tilde\psi_V,\mu_{\tilde\psi_V})$ is an {\rm SRT} system and has exponential tail decay {\rm(ETD)}, where we recall that $V_\infty$ is identified with $E_V^\N$, $\tilde\psi_V$ is identified with $\psi_V-\P(\psi)\tau_V$, and $\mu_{\tilde\psi_V}$ is identified with $\mu_{\psi,V}$.
\elem

\bpf
By virtue of Proposition~\ref{p1ma3} and Observation~\ref{o2_2016_07_14} we only need to prove that the pentade $(J(f),f,V,\tilde\psi_V,\mu_{\tilde\psi_V})$ has exponential tail decay {\rm(ETD)}. We can assume without loss of generality that $\psi:\oc\to\R$ is normalized so that 
$$
\P(\psi)=0 \  \  \and  \  \  m_\psi=\mu_\psi.
$$
For every $\ge 1$ denote by $\cC_V(n)$ the set of all connected components of $T^{-n}(V)$. Then define
$$
\cC_V^0(n)
:=\big\{U\in \cC_V(n):\forall_{(0\le k\le n)} \, f^k(U)\cap V=\es\big\}
$$
Since the map $f:J(f)\to J(f)$ is topologically exact, there exists an integer $q\ge 1$ such that
$$
f^q(V)\spt J(f).
$$
Therefore for every $e\in \cC_V(n)$ there exists (at least one) $\hat e\in\cC_V^*(n+q)$ such that 
$$
f^q\circ\phi_{\hat e}=\phi_e.
$$
By conformality of the measure $\mu_\psi$, for every $e\in \cC_V(n)$, we have
$$
\mu_\psi\(\phi_{\hat e}(V)\)
\ge \exp(-q||\psi||_\infty)\mu_\psi(\phi_e(V)).
$$
So, since
$$
\bu_{a\in\cC_V^0(n+q)}\phi_a(V)
\sbt \bu_{{b\in\cC_V(n+q)\atop f^q\circ \phi_b\in\cC_V^0(n)}}\phi_b(V) \sms
\bu_{e\in\cC_V^0(n)}\phi_{\hat e}(V),
$$ 
we therefore get
$$
\begin{aligned}
\mu_\psi\lt(\bu_{a\in\cC_V^0(n+q)}\phi_a(V)\rt)
&\le \mu_\psi\lt(\bu_{{b\in\cC_V(n+q)\atop f^q\circ 
    \phi_b\in\cC_V^0(n)}}\phi_b(V) \sms\bu_{e\in\cC_V^0(n)}
    \phi_{\hat e}(V)\rt)\\
&= \mu_\psi\lt(\bu_{{b\in\cC_V(n+q)\atop f^q\circ 
    \phi_b\in\cC_V^0(n)}}\phi_b(V)\rt) - 
    \mu\lt(\bu_{e\in\cC_V^0(n)}\phi_{\hat e}(V)\rt) \\
&= \mu_\psi\lt(f^{-q}\lt(\bu_{c\in\cC_V^0(n)}\phi_c(V)\rt)\rt)-
   \sum_{e\in\cC_V^0(n)}\mu_\psi\(\phi_{\hat e}(V)\) \\
&= \mu_\psi\lt(\bu_{c\in\cC_V^0(n)}\phi_c(V)\rt)-
   \sum_{e\in\cC_V^0(n)}\mu_\psi\(\phi_{\hat e}(V)\) \\
&\le \mu_\psi\lt(\bu_{c\in\cC_V^0(n)}\phi_c(V)\rt)-
   \exp(-q||\psi||_\infty)\sum_{e\in\cC_V^0(n)}\mu_\psi\(\phi_e(V)\) \\
&=\g\mu_\psi\lt(\bu_{c\in\cC_V^0(n)}\phi_c(V)\rt),
\end{aligned}
$$
where $\g:=1-\exp(-q||\psi||_\infty)\in[0,1)$. An immediate induction then yields
$$
\mu_\psi\lt(\bu_{e\in\cC_V^0(qn)}\phi_e(V)\rt)\le \g^n
$$
for all $n\ge 0$.  An immediate induction then yields
$$
\mu_\psi\lt(\bu_{e\in\cC_V^0(n)}\phi_e(V)\rt)\le \g^{-1}\g^{n/q}
$$
for all $n\ge 0$. But, as
$$
E_V^{-1}([n,+\infty])
=E_V^{-1}(\{+\infty\})\cup\bu_{k=n}^\infty\bu_{e\in\cC_V^0(k)}\phi_e(V)
$$
and since $\mu_\psi\(E_V^{-1}(\{+\infty\})\)=0$ by ergodicty of $\mu_\psi$ and of $\mu_\psi(V)>0$, we therefore get that
\beq\label{1_2016_07_14}
\mu_\psi\(E_V^{-1}([n,+\infty])\)
\le \(\g(1-\g^{1/q})\)^{-1}\g^{n/q}
\eeq
for all $n\ge 0$. This just means that the pentade $(I,f,V,\tilde\psi_V,\mu_{\tilde\psi_V})$ has exponential tail decay {\rm(ETD)}, and the proof is complete.
\epf

\sp\fr Denote by $J_R(f)$ the set of all recurrent points of $f$ in $J(f)$. Formally
$$
J_R(f):=\{z\in J(f):\varliminf_{n\to\infty}|f^n(z)-z|=0\}.
$$
Of course $J_R(f)\sbt J_f$ and $\mu_\psi(J(f)\sms J_R(f))=0$ because of Poincar\'e's Recurrence Theorem. The set $J_R(f)$ is significant for us since
$$
J_R(f)\cap V\sbt J_V.
$$
Now we can now apply  the conclusions of the work done. As a direct consequence of Theorem~\ref{t1fp83}, Proposition~\ref{p1ma3}, Lemma~\ref{l1ma5}, Lemma~\ref{l1ld7}, and Theorem~\ref{t2rmr3}, we get the following.

\bthm\label{t1ma7}
Let $f:\oc\to\oc$ be a tame rational function having the exponential shrinking property (ESP). Let $\psi:\oc\to\R$ be a H\"older continuous potential with pressure gap. Let $z\in J_R(f)\sms \ov{\PC(f)}$. Assume that the equilibrium state $\mu_\psi$ is {\rm(WBT)} at $z$. Then
$$
\begin{aligned}
\lim_{\ep\to 0}\frac{\un R_{\mu_\psi}(B(z,\ep))}{\mu_\psi(B(z,\ep))} 
&=\lim_{\ep\to 0}\frac{\ov R_{\mu_\psi}(B(z,\ep))}{\mu_\psi(B(z,\ep))} \\
&=
\begin{cases}
1 &{\rm if} \ z \ \text{{\rm is not any periodic point of }} f, \\
1-\exp\(S_p\psi(z)-p\P(f,\psi)\) &{\rm if} \ z  \ \text{{\rm is a periodic point of }} f.
\end{cases}
\end{aligned}
$$
\ethm

\brem\label{r1ma10.1+1}
Theorem~\ref{t1ma7}holds in fact for a larger set than $J_R(f)$. Indeed, it holds for every point in $V\cap J_{\cS_V}$, where $V$ is an arbitrary nice set.
\erem

\fr As a fairly immediate consequence of Theorem~\ref{t1ma7} and Theorem~\ref{t1wbt6}, we get the following.

\bcor\label{t1ma7B}
Let $f:\oc\to\oc$ be a tame rational function having the exponential shrinking property (ESP) whose Julia set $J(f)$ is geometrically irreducible. If $\psi:\oc\to\R$ is a H\"older continuous potential with pressure gap, then 
$$
\lim_{\ep\to 0}\frac{\un R_{\mu_\psi}(B(z,\ep))}{\mu_\psi(B(z,\ep))} 
=\lim_{\ep\to 0}\frac{\ov R_{\mu_\psi}(B(z,\ep))}{\mu_\psi(B(z,\ep))} 
=1 
$$
for $\mu_\psi$--a.e. $z\in J(f)$.
\ecor

\fr Indeed in order to prove this corollary  it suffices to note that if the Julia set $J(f)$ is geometrically irreducible, then neither is the limit set of the iterated function system constructed in the arguments leading to Theorem~\ref{t1ma7}.

\brem 
We would like to note that if the rational function $f:\oc\to\oc$ is expanding, then it is tame, satisfies (ESP), and each H\"older continuous potential has pressure gap. In particular the two above theorems hold for it. 
\erem

\sp Now turn to  the asymptotics of Hausdorff dimension. We recall the following.

\bdfn\label{d1ma8}
Let $f:\oc\to\oc$ be a rational function of degree $d\ge 2$. We say that the map $f$ is sub-expanding if one of the following two equivalent conditions holds:

\sp
\begin{itemize}
\item[(a)]
$$
\ov{\bu_{n=0}^\infty f^n\(\Crit(f)\sms J(f)\)}\cap J(f)=\es 
\  \  \and \  \
\Crit(f)\cap\ov{\bu_{n=1}^\infty f^n\(\Crit(f)\cap J(f)\)}=\es,
$$

\sp\item[(b)]
$$
\Crit(f)\cap\ov{\bu_{n=1}^\infty f^n\(\Crit(f)\cap J(f)\)}=\es
\  \  \text{{\rm and $f$ has no rationally indifferent periodic points.}}
$$
\end{itemize}
\edfn

\fr Let
$$
h:=\HD(J(f)).
$$
It was proved in \cite{U1} and \cite{U2} that there exists a unique $h$--conformal measure $m_h$ on $J(f)$ for $f$ and a unique $f$-invariant (ergodic) measure $\mu_h$ on $J(f)$ equivalent to $m_h$. In addition $\mu_h$ is supported on the intersection of the transitive and radial points of $f$. It has been proved in \cite{U2} that any subexpanding rational function enjoys ESP. It therefore follows from \cite{PrzRiv07} that there are arbitrarily small open
connected sets $V_c$, $c\in J(f)\cap \Crit(f)$, and $V_\xi$, respectively containing points $c$ and $\xi$ such that the collection of all holomorphic inverse branches $f_*^{-n}$ of $f^n$, $n\ge 0$, defined on $V_z$, $z\in (J(f)\cap \Crit(f))\cup \{\xi\}$, and such that for some $z'\in (J(f)\cap \Crit(f))\cup \{\xi\}$,
$$
f_*^{-n}(V_z)\sbt V_{z'} 
$$
and 
$$
\bu_{k=1}^{n-1}f^k\(f_*^{-n}(V_z)\)\ \cap \  \bu\big\{V_w:w\in (J(f)\cap \Crit(f))\cup \{\xi\}\big\}=\es. 
$$
forms a finitely primitive conformal GDS, call it $\cS_f$. Another characterization of $\cS_f$ is that its elements are composed of analytic inverse branches of the first return map of $f$ from 
$$
V:=\bu\big\{V_w:w\in (J(f)\cap \Crit(f))\cup \{\xi\}\big\}
$$
$V$. It has been proved in \cite{StU1} and \cite{StU1} that the system $\cS_f$ is strongly regular. It follows from Lemma~6.2 in \cite{PrzRiv07} that $\HD(K(V))<h$. So, as by Theorem~\ref{t2had30}, $\lim_{r\to 0}\HD\(K(B(\xi,r))\)=h$, we conclude that
$$
\HD(K(V))<\HD\(K(B(\xi,r))\)
$$
for all $r>0$ small enough. Therefore, since $h=b_{\cS_f}$ and since $\mu_{h,V}=\mu_{b_{\cS_f}}$, applying Theorem~\ref{t2had30}, Corollary~\ref{t2_2016_05_27}, and Corollary~\ref{c1rm3}, we get the following two theorems.

\bthm\label{t2ma8}
Let $f:\oc\to\oc$ be a subexpanding rational function of degree $d\ge 2$. Fix $\xi\in J(f)\sms \ov{\PC(f)}$. Assume that the measure $\mu_h$ is {\rm (WBT)} at $\xi$ and the parameter $h$ is powering at $\xi$ with respect to the conformal \GDS \ $\cS_f$. Then the following limit exists, is finite and positive:
$$
\lim_{r\to 0}\frac{\HD(J(f))-\HD(K_\xi(r))}{\mu_h(B(\xi,r))}.
$$
\ethm

\bthm\label{t2ma8B}
If $f:\oc\to\oc$ be a subexpanding rational function of degree $d\ge 2$ whose Julia set $J(f)$ is geometrically irreducible, then for $\mu_h$--a.e. point $\xi\in J(f)\sms \ov{\PC(f)}$ the following limit exists, is finite and positive:
$$
\lim_{r\to 0}\frac{\HD(J(f))-\HD(K_\xi(r))}{\mu_h(B(\xi,r))}.
$$
\ethm

\brem 
We would like to note that if the rational function $f:\oc\to\oc$ is expanding, then it is automatically subexpanding and the two above theorems apply. 
\erem

\sp   

\section{Escape Rates for Meromorphic Functions on the Complex Plane}\label{Transcendental_Functions}

\sp We deal in this final  section with transcendental meromorphic functions. We also apply here the results on escape rates for conformal GDMS and the techniques of first return maps. Let $f:\C\to\oc$ be a meromorphic function. Let $\Sing(f^{-1})$ be the set of all singular points of $f^{-1}$, i. e. the set of all points $w\in\oc$ such that if $W$ is any open connected neighborhood of $w$, then there exists a connected component $U$ of $f^{-1}(W)$ such that the map $f:U\to W$ is not bijective. Of course if $f$ is a rational function, then $\Sing(f^{-1})=f(\Crit(f))$. As in the case of rational functions, we define
$$
\PS(f):=\bu_{n=0}^\infty f^n(\Sing(f^{-1})).
$$
The function $f$ is called \emph{topologically hyperbolic} if
$$
\dist_{\text{Euclid}}(J_f ,\PS(f)) >0,
$$
and it is called \emph{expanding} if there exist $c>0$ and $ \lam>1$
such that
$$
|(f^n)'(z)|\ge c\lam^n 
$$
for all integers $n\ge 1$ and all points $z\in J_f\sms
f^{-n}(\infty)$. Note that every topologically hyperbolic meromorphic
function is tame.  A meromorphic function that is both topologically
hyperbolic and expanding 
is called \emph{hyperbolic}. The meromorphic function $f:\C\to\oc$ is
called dynamically {\it semi-regular} if it is of finite order, commonly
denoted by $\rho_f$, and satisfies the following rapid
growth condition for its derivative.  
\beq \label{eq intro}
|f'(z)|\geq \kappa ^{-1} (1+|z|)^{\al _1} (1+|f(z)|)^{\al_2} \; ,
\quad z\in J_f, 
\eeq 
with some constant $\kappa >0$ and $\a_1,\a_2$ such that $\al_2 > \max\{-\al _1 ,0\}$.  Set $\a:=\a_1+\a_2$.

\ 
\begin{rem}
A particularly simple example of such maps are meromorphic functions $f_\lambda(z) = \lambda e^{z}$
where $\lambda \in (0, 1/e)$ since these maps have an attracting periodic point.  A good reference is \cite{MayUrbETDS}.
\end{rem}
\
\fr Let $h:J_f\to\R$ be a weakly
H\"older continuous function 
in the sense of \cite{MayUrb10}. The definition, introduced in
\cite{MayUrb10} is somewhat technical and we will not provided it in
the current paper. What is important is that each bounded, uniformly
locally H\"older function $h:J_f\to\R$ is weakly H\"older. Fix
$\tau>\a_2$ as required in \cite{MayUrb10}. For $t\in\R$, let
\begin{equation}
  \label{eq:7}
  \psi_{t,h}=-t\log|f'|_\tau+h
\end{equation}
where $|f'(z)|_\tau$ is the norm, or, equivalently, the scaling
factor, of the derivative of $f$ evaluated at a point $z\in J_f$ with
respect to the Riemannian metric
$$
|d\tau(z)|=(1+|z|)^{-\tau}|dz|.
$$
Following \cite{MayUrb10} functions
of the form (\ref{eq:7})(frequently referred to as potentials) are
called \emph{loosely tame}. Let $\cL_{t,h}:C_b(J_f)\to C_b(J_f)$ be the
corresponding \emph{Perron-Frobenius operator} given by the formula
$$
\cL_{t,h}g(z):=\sum_{w\in f^{-1}(z)}g(w)e^{\psi_{t,h}(w)}.
$$
It was shown in \cite{MayUrb10} that, for every $z\in J_f$ and for the
function $\1:z\mapsto 1$, the limit
$$
\lim_{n\to\infty}\frac1n\log\cL_{t,h}\1(z)
$$
exists and takes on the same common value, which we denote by $\P(t)$
and call \emph{the topological pressure} of the potential
$\psi_t$. The following theorem was proved in \cite{MayUrb10}.

\

\bthm\label{t1dns111}
If $f:\C\to\oc$ is a dynamically semi-regular meromorphic function and
$h:J_f\to\R$ is a weakly H\"older continuous potential, then for every
$t>\rho_f/\a$ there exist uniquely determined Borel probability measures $m_{t,h}$ and $\mu_{t,h}$ on $J_f$ with the following properties.

\sp\begin{itemize}
\item[{\rm(a)}] \ $\cL_{t,h}^*m_{t,h}=m_{t,h}$.

\sp\item[{\rm(b)}] \ $\P\(\psi_{t,h}\)=\sup\big\{\hmu(f)+\int\psi_{t,h}\, d\mu:\mu\circ f^{-1}=\mu \  \
  \text{{\rm and }}  \ \int\psi_{t,h}\, d\mu>-\infty\big\}$.

\sp\item[{\rm(c)}] \ $\mu_{t,h}\circ f^{-1}
    =\mu_{t,h}$, $\int\psi_{t,h}\, d\mu_{t,h}>-\infty$, \  and  \
$
\h_{\mu_{t,h}}(f)+\int\psi_{t,h}\,d\mu_{t,h}=\P\(\psi_{t,h}\).
$

\sp\item[{\rm(d)}] \ The measures $\mu_{t,h}$ and $m_{t,h}$ are equivalent and the
  Radon--Nikodym derivative $\frac{d\mu_{t,h}}{dm_{t,h}}$ has a
  nowhere-vanishing H\"older continuous version which is bounded
  above.
\end{itemize}
\ethm  

\fr The exact analogue of Theorem~\ref{prop:1} holds, with the same references, for all hyperbolic meromorphic functions; we will refer to this theorem as  Theorem~\ref{prop:1}(M). Also, for the system $\cS_V$ and the projection $\pi_V:E_V^\N\to J_V$ have the same meaning. As in the case of rational functions denote by $J_R(f)$ the set of all recurrent points of $f$ in $J(f)$. Formally
$$
J_R(f):=\{z\in J(f):\varliminf_{n\to\infty}|f^n(z)-z|=0\}.
$$
Of course $J_R(f)\sbt J_f$ and $\mu_\psi(J(f)\sms J_R(f))=0$ because of Poincar\'e's Recurrence Theorem. The set $J_R(f)$ is significant for us since
$$
J_R(f)\cap V\sbt J_V.
$$
The Exponential Shrinking Property (ESP) holds since now the function $f:\C\to\oc$ is expanding. The proof of Proposition~\ref{p1ma3} goes through unchanged except that instead of using \cite{DU} we now invoke Theorem~\ref{t1dns111} (a). We also will refer this proposition (\ref{p1ma3}) as Proposition~\ref{p1ma3} (M). Lemma~\ref{l1ma5} also carries on to the meromorphic case (we refer to it as Lemma~\ref{l1ma5} (M); the proof of items (a)--(e) Definition~\ref{d1ld3} required by this lemma to hold, follows as in the case of rational functions, from proposition~\ref{p1ma3} (M), while the proof of item (f) of this definition is now a direct consequence of Lemma~4.1 in \cite{SkU_PUZ}. Now, in exactly the same way as in the case of rational functions, as a direct consequence of Theorem~\ref{t1fp83}, Theorem~\ref{t3_2016_05_27}, Proposition~\ref{p1ma3} (M), Lemma~\ref{l1ma5} (M), Lemma~\ref{l1ld7}, and Theorem~\ref{t2rmr3}, we get the following two theorems.

\bthm\label{t1ma10}
Let $f:\C\to\oc$ be a dynamically semi-regular meromorphic function. Let $t>\rho_f/\a$ and let $h:J(f)\to\R$ be a weakly H\"older continuous function. Let $z\in J_R(f)$. Assume that the corresponding equilibrium state $\mu_{t,h}$ is {\rm(WBT)} at $z$. Then
$$
\begin{aligned}
\lim_{\ep\to 0}\frac{\un R_{\mu_{t,h}}(B(z,\ep))}{\mu_{t,h}(B(z,\ep))}
&=\lim_{\ep\to 0}\frac{\ov R_{\mu_{t,h}}(B(z,\ep))}{\mu_{t,h}(B(z,\ep))}
= \\
&=
\begin{cases}
1 &{\rm if} \ z \ \text{{\rm is not any periodic point of }} f, \\
1-\exp\(S_p\psi_{t,h}(z)-p\P(\psi_{t,h})\) &{\rm if} \, z  \ \text{{\rm is a periodic point of }} f.
\end{cases}
\end{aligned}
$$
\ethm

\bthm\label{t1ma10B}
Let $f:\C\to\oc$ be a dynamically semi-regular meromorphic function whose Julia set is geometrically irreducible. Let $t>\rho_f/a$ and let $h:J(f)\to\R$ be a weakly H\"older continuous function. Then
$$
\lim_{\ep\to 0}\frac{\un R_{\mu_{t,h}}(B(z,\ep))}{\mu_{t,h}(B(z,\ep))}
=\lim_{\ep\to 0}\frac{\ov R_{\mu_{t,h}}(B(z,\ep))}{\mu_{t,h}(B(z,\ep))}
=1 
$$
for $\mu_{t,h}$--a.e. $z\in J(f)$. 
\ethm

\brem\label{r1ma10.1+2} 
Theorem~\ref{t1ma10} holds in fact for a larger set than $J_R(f)$. Indeed, it holds for every point in $V\cap J_{\cS_V}$, where $V$ is an arbitrary nice set.
\erem

Turning  to the asymptotics of Hausdorff dimension, let $J_r(f)$ be the set of radial (or conical) points in $J(f)$, i. e.  the set of all those points in $J(f)$ that do not escape to infinity under the action of the map $f:\C\to\oc$. 
Assume now more, namely that $f:\C\to\oc$ is dynamically regular in the sense of \cite{MayUrbETDS} and \cite{MayUrb10}. What at the moment is important for us is that $\P(h_r)=0$, where 
$$
h_r:=\HD(J_r(f)).
$$
We already know that there exists a nice set $V$ containing $\xi$ and the 
elements of the corresponding conformal IFS $\cS_f$ are composed of analytic inverse branches of the the first return map from $V$ to $V$. Since $\xi\in J_R(f)$, we have that $\xi\in J_V$. Corollary~6.4 in \cite{SkU} tells us that $\HD(K(V))<h_r$. So, since by Theorem~\ref{t2had30}, $\lim_{r\to 0}\HD(K(B(\xi,r)))=h_r$, we conclude that
$$
\HD(K(V))<\HD(K(B(\xi,r)))
$$
for all $r>0$ small enough. Therefore, since $h_r=b_{\cS_V}$ and since $\mu_{h,V}=\mu_{b_{\cS_V}}$, applying Theorem~\ref{t2had30}, Corollary~\ref{t2_2016_05_27}, and Corollary~\ref{c1rm3}, we get the following two theorems.

\bthm\label{t1ma11}
Let $f:\C\to\oc$ be a dynamically regular meromorphic function. Fix $\xi\in J_R(f)$. Assume that the measure $\mu_{h_r}$ (i.e. $\mu_{h_r,0}$ with the weakly H\"older function $h$ identically equal to $0$) is {\rm (WBT)} at $\xi$ and the parameter $h_r$ is powering at $\xi$ with respect to the conformal \IFS \ $\cS_f$. Then the following limit exists and is finite and positive:
$$
\lim_{r\to 0}\frac{\HD(J_r(f))-\HD(K_z(r))}{\mu_{h_r}(B(z,r))}.
$$
\ethm

\bthm\label{t1ma11B}
Let $f:\C\to\oc$ be a dynamically regular meromorphic function whose Julia set is geometrically irreducible. Then the following limit exists and is finite and positive for $\mu_{h_r}$--a.e. $z\in J(f)$:
$$
\lim_{r\to 0}\frac{\HD(J_r(f))-\HD(K_z(r))}{\mu_{h_r}(B(z,r))}.
$$
\ethm

\fr Note that the conclusion of Remark~\ref{r1ma10.1+2} holds in the case of Theorem~\ref{t1ma11} too.

\

\

\end{document}